\numberwithin{equation}{section}
\newtheorem*{rep@theorem}{\rep@title}
\newcommand{\newreptheorem}[2]{%
\newenvironment{rep#1}[1]{%
 \def\rep@title{#2 \ref{##1}}%
 \begin{rep@theorem}}%
 {\end{rep@theorem}}}
\theoremstyle{theorem}
\newtheorem{thm}{Theorem}[section]
\newtheorem*{thm*}{Theorem}
\theoremstyle{definition}
\newtheorem{prop}[thm]{Proposition}
\newtheorem*{prop*}{Proposition}
\newtheorem{defn}[thm]{Definition}
\newtheorem{lem}[thm]{Lemma}
\newtheorem*{cor*}{Corollary}
\theoremstyle{remark}
\newtheorem{rem}[thm]{Remark}
\newcounter{tmp}
\title{Analytic torsion for surfaces with cusps I. \\
Compact perturbation theorem and anomaly formula.} 
\author
{Siarhei Finski
}
\date{}
\newcommand{\imun} {\sqrt{-1}}
\newcommand{\vol}{v}
\newcommand{\comp}{\mathbb{C}}
\newcommand{\real}{\mathbb{R}}
\newcommand{\nat}{\mathbb{N}}
\newcommand{\integ}{\mathbb{Z}}
\newcommand{\cp}{\mathbb{C}{\rm{P}}^1}
\newcommand{\spec}{{\rm{Spec}}}
\newcommand{\dist}{{\rm{d}}}
\newcommand{\reg}{{\textbf{r}}}
\newcommand{\tinyint}{\begingroup\textstyle\int\endgroup}
\newcommand{\enmr}[1]{\text{End}{(#1)}}
\newcommand{\ccal}{\mathscr{C}}
\newcommand{\dbar}{ \overline{\partial} }
\newcommand{\laplcomp}{\Box}
\newcommand{\rk}[1]{{\rm{rk}} ( #1 )}
\newcommand{\tr}[1]{{\rm{Tr}} \big[ #1 \big]}
\renewcommand{\Re}{\operatorname{Re}}
\renewcommand{\Im}{\operatorname{Im}}
\newcommand{\scal}[2]{\big< #1, #2 \big>}
\newcommand{\td}{{\rm{Td}}}
\newcommand{\ch}{{\rm{ch}}}
\newcommand{\dd}{\mathbb{D}}
\newcommand{\hh}{\mathbb{H}}
\newcommand{\Addresses}{{
  \bigskip
  \footnotesize
  \noindent \textsc{Siarhei Finski, UFR de Mathématiques, Case 7012, Université Paris Diderot-Paris 7, France.}\par\nopagebreak
  \noindent  \textit{E-mail }: \texttt{siarhei.finski@imj-prg.fr}.
}}
\newenvironment{sciabstract}{}
\begin{document} 

\maketitle

\begin{sciabstract}
  \textbf{Abstract.}
   Let $\overline{M}$ be a compact Riemann surface and let $g^{TM}$ be a metric over $\overline{M} \setminus D_M$, where $D_M \subset \overline{M}$ is a finite set of points. We suppose that $g^{TM}$ is equal to the Poincaré metric over a punctured disks around the points of $D_M$. The metric $g^{TM}$ endows the twisted canonical line bundle $\omega_M(D)$ with the induced Hermitian norm $\norm{\cdot}_M$ over $\overline{M} \setminus D_M$. Let $(\xi, h^{\xi})$ be a holomorphic Hermitian vector bundle over $\overline{M}$. 
  \par
  \begin{sloppypar}
 In this article we define the analytic torsion $T(g^{TM}, h^{\xi} \otimes \norm{\cdot}_M^{2n})$ associated with $(M, g^{TM})$ and $(\xi \otimes \omega_M(D)^n, h^{\xi} \otimes \norm{\cdot}_M^{2n})$ for $n \leq 0$.
 We prove that $T(g^{TM}, h^{\xi} \otimes \norm{\cdot}_M^{2n})$ is related to the analytic torsion of non-cusped surfaces.
   Then we prove the anomaly formula for the associated Quillen norm.
  The results of this paper will be used in the sequel to study the regularity of the Quillen norm and its asymptotics in a degenerating family of Riemann surfaces with cusps and to prove the curvature theorem. We also prove that our definition of the analytic torsion for hyperbolic surfaces is compatible with the one obtained through Selberg trace formula by Takhtajan-Zograf.
     \end{sloppypar}
\end{sciabstract}

\tableofcontents

\section{Introduction}\label{sect_intro}
	Let $\overline{M}$ be a compact Riemann surface, $D_M = \{ P_1^{M}, \ldots, P_m^{M} \}$ be a finite set of distinct points in $\overline{M}$. Let $g^{TM}$ be a Kähler metric on the punctured Riemann surface $M := \overline{M} \setminus D_M$. 
	\par For $\epsilon \in ]0,1]$, $i = 1, \ldots, m$, let $z_i^{M} : \overline{M} \supset V_i^M(\epsilon) \to D(\epsilon) := \{ z \in \comp : |z| \leq \epsilon \}$ be a local holomorphic coordinate around $P_i^{M}$. We denote
	\begin{equation}\label{defn_v_i}
		V_i^{M}(\epsilon) := \{x \in M :  |z_i^{M}(x)| < \epsilon \}.
	\end{equation}	 
	We say that $g^{TM}$ is \textit{Poincaré-compatible} with coordinates $z_1^{M}, \ldots, z_m^{M}$ if for any $i = 1, \ldots, m$,  there is $
	\eta > 0$ such that $g^{TM}|_{V_i^{M}(\eta)}$ is induced by the Hermitian form
	\begin{equation}\label{reqr_poincare}
		\frac{\imun dz_i^{M} d\overline{z}_i^{M}}{ \big| z_i^{M}  \ln |z_i^{M}| \big|^2}.
	\end{equation}
	We say that $g^{TM}$ is a \textit{metric with cusps} if it is Poincaré-compatible with some holomorphic coordinates near $D_M$.
	A triple $(\overline{M}, D_M, g^{TM})$ of a Riemann surface $\overline{M}$, a set of punctures $D_M$ and a metric with cusps $g^{TM}$ is called a \textit{surface with cusps} (cf. \cite{MullerCusp}). 
	\par For example, if a pointed surface $(\overline{M}, D_M)$ is stable, i.e. the genus $g(\overline{M})$ of $\overline{M}$ satisfies 
	\begin{equation}\label{cond_stable}
		2 g(\overline{M}) - 2 + m > 0, 
	\end{equation}
	 then, by the uniformization theorem (cf. \cite[Chapter IV]{FarKra}, \cite[Lemma 6.2]{Auvr}), there is the \textit{canonical hyperbolic metric} $g^{TM}_{\rm{hyp}}$ of constant scalar curvature $-1$ on $M$. 
	 Once again, by the uniformization theorem, there are local holomorphic coordinates $z_i^{M}$ of $P_i^{M}$, $i = 1, \ldots, m$, such that $g^{TM}_{\rm{hyp}}$ is induced by (\ref{reqr_poincare}) in the neighbourhood of $D_M$.
	 Thus, $(\overline{M}, D_M, g^{TM}_{\rm{hyp}})$ is a surface with cusps.
	\par 	Let $\xi$ be a holomorphic vector bundle over a complex manifold $X$ with a Hermitian metric $h^{\xi}$ over $X$. A pair $(\xi, h^{\xi})$ is called a \textit{Hermitian vector bundle} over $X$. 	
	\par From now on, we fix a surface with cusps $(\overline{M}, D_M, g^{TM})$ and a Hermitian vector bundle $(\xi, h^{\xi})$ over it. 
	We denote by $\omega_{\overline{M}} := T^{*(1,0)}\overline{M}$ the \textit{canonical line bundle} over $\overline{M}$. 
	Let $\mathscr{O}_{\overline{M}}(D_M)$ be the line bundle associated to the divisor $D_M$. 	
	The \textit{twisted canonical line bundle} on $\overline{M}$ is defined as 
	\begin{equation}\label{eq_om_md_def}
			\omega_M(D) :=  \omega_{\overline{M}} \otimes  \mathscr{O}_{\overline{M}}(D_M).
	\end{equation}		
	The metric $g^{TM}$ endows the line bundle $\omega_M$ (resp. $\omega_M(D)$) with the induced Hermitian metric $\norm{\cdot}^{\omega}_{M}$ (resp. $\norm{\cdot}_{M}$ via the canonical isomorphism $\omega_M(D) \simeq \omega_M$) over $M$.
	In other worlds, there is $\epsilon > 0$, such that for the canonical section $s_{D_M}$ of $\mathscr{O}_{\overline{M}}(D_M)$, over $V_i^{M}(\epsilon)$, we have
	\begin{equation}\label{eqn_norms_local}
		\big\| dz_i^{M}\big\| _M^{\omega} = \big| z_i^{M} \ln |z_i^{M}| \big|, \qquad 
		\big\| dz_i^{M} \otimes s_{D_M} / z_i^{M} \big\|_M = \big| \ln |z_i^{M}| \big|.
	\end{equation}
	\begin{sloppypar}
	We denote by $\laplcomp^{\xi \otimes \omega_M(D)^n}$ the Kodaira Laplacian associated with $(M, g^{TM})$ and $(\xi \otimes \omega_M(D)^n, h^{\xi} \otimes \norm{\cdot}_M^{2n})$.
	\par
	In this article, apart from the discussion of the $L^2$-norm, we only consider the restriction of $\laplcomp^{\xi \otimes \omega_M(D)^n}$ on the sections of degree $0$.
	\par
	Assume first $m=0$, then the analytic torsion was defined by Ray-Singer \cite[Definition 1.2]{Ray73} as the regularized determinant of $\laplcomp^{\xi \otimes \omega_M(D)^n}$. More precisely, let $\lambda_i, i \in \nat$ be the non-zero eigenvalues of $\laplcomp^{\xi \otimes \omega_M(D)^n}$. By Weyl's law, the associated zeta-function 
	\begin{equation}\label{defn_zeta_comp}
		\zeta_M(s) := \sum \lambda_i^{s},
	\end{equation}
	 is defined for $\Re (s) > 1$ and it is holomorphic in this region. Moreover, as it can be seen by the small-time expansion of the heat kernel and the usual properties of the Mellin transform, it extends meromorphically to the entire $s$-plane. This extension is holomorphic at $0$, and the \textit{analytic torsion} is defined by
	\begin{equation}\label{defn_an_t_st}
		T(g^{TM}, h^{\xi} \otimes \norm{\cdot}_M^{2n}) := \exp(- \zeta'_M(0)).
	\end{equation}
	By (\ref{defn_zeta_comp}) and (\ref{defn_an_t_st}), we may interpret the analytic torsion as
	\begin{equation}\label{an_tors_intepr}
		T(g^{TM}, h^{\xi} \otimes \norm{\cdot}_M^{2n}) := \prod_{i=0}^{\infty} \lambda_i.
	\end{equation}
	Now, assume $m > 0$. Then $M$ is non-compact, and the heat operator associated to $\laplcomp^{\xi \otimes \omega_M(D)^n}$ is no longer of trace class. Also the spectrum of $\laplcomp^{\xi \otimes \omega_M(D)^n}$ is not discrete in general. Thus, neither the definition (\ref{defn_an_t_st}), nor the interpretation (\ref{an_tors_intepr}) are applicable, and another approach should be used.
	\end{sloppypar}
	\par Suppose for the moment that $(\overline{M}, D_M)$ satisfies (\ref{cond_stable}). Let $g^{TM}_{\rm{hyp}}$ be the canonical hyperbolic metric of constant scalar curvature $-1$. We denote by $Z_{(\overline{M}, D_M)}(s), s \in \comp$ the Selberg zeta-function, which is given for $\Re(s) > 1$ by the absolutely converging product:
	\begin{equation}\label{defn_zeta_sel}
		Z_{(\overline{M}, D_M)}(s) = \prod_{\gamma} \prod_{k=0}^{\infty} (1 - e^{-(s + k)l(\gamma)}),
	\end{equation}
	where $\gamma$ runs over the set of all simple closed geodesics on $M$ with respect to $g^{TM}_{\rm{hyp}}$, and $l(\gamma)$ is the length of $\gamma$.
	The function $Z_{(\overline{M}, D_M)}(s)$ admits a meromorphic extension to the whole complex $s$-plane with a simple zero at $s = 1$ (see for example \cite[(5.3)]{PhongHook}). We denote by $\norm{\cdot}_{M}^{\rm{hyp}}$ the norm induced by $g^{TM}_{\rm{hyp}}$ on $\omega_M(D)$ over $M$. 
	\par In this situation Takhtajan-Zograf in \cite[(6)]{TakZog} defined the analytic torsion by
	\begin{equation}\label{eqn_sel_norm}
		T_{TZ}(g^{TM}_{\rm{hyp}}, (\, \norm{\cdot}_{M}^{\rm{hyp}})^{2n}) := 
		\begin{cases}
			\exp(- c_{0} \chi(M)/2) \cdot Z_{(\overline{M}, D_M)}'(1), &\text{for } n=0, \\
			\exp(- c_{-n} \chi(M)/2) \cdot Z_{(\overline{M}, D_M)}(-n + 1), &\text{for } n<0,
		\end{cases}
	\end{equation}
	where for $k \in \nat^*$, we put
	\begin{equation}\label{defn_c_k_coeff}
	\begin{aligned}
		& \textstyle c_0 = 4 \zeta'(-1) - \frac{1}{2} + \ln(2 \pi),
		\\
		&
		\textstyle c_k = \sum_{l = 0}^{k - 1} (2k - 2l - 1) \big( \ln (2k + 2kl - l^2 - l) - \ln (2) \big) + 
		(\frac{1}{3} + k + k^2 ) \ln(2) 
		\\		
		& \qquad \qquad \qquad
		\textstyle + (2k + 1)\ln (2 \pi) + 4 \zeta'(-1) 
		- 2(k + \frac{1}{2})^2 
		- 4 \sum_{l = 1}^{k - 1} \ln(l!) - 2 \ln (k!).
	\end{aligned}
	\end{equation}
	\begin{rem}
		To explain the values $c_k$, $k \in \nat$, it was shown by Phong-D'Hoker \cite[(7.30)]{PhongHook}, \cite[(3.6)]{PhongHook2nd} (see also \cite{sarnakDet}, \cite[(50)]{BolStei} and \cite[(9)]{Oshima}), that the definition (\ref{eqn_sel_norm}) coincides with (\ref{defn_an_t_st})\footnote{It's easy to see that $T(g^{TM}_{\rm{hyp}}, (\, \norm{\cdot}_{M}^{\rm{hyp}})^{2n})$ corresponds to $\det' (\Delta_n^{-})$ in the notation of \cite[(1.1)]{PhongHook2nd} and to $\det' (\frac{1}{2} \Delta_n^{-})$ in the notation of \cite[(3)]{BolStei}. Since for $c > 0$, by \cite[\S 3]{PhongHook2nd}, we have $\det' (c \Delta_n^{-}) = \det' (c \Delta_{-n}^{+})$, coefficients (\ref{defn_c_k_coeff}) for $k \in \nat^*$ can be read of from \cite[(50)]{BolStei} for $c = 1/2$ and for $k = 0$ from \cite[(7.23), (7.30)]{PhongHook} or \cite[Corollary 1]{sarnakDet}.}. In other words, the two definitions are compatible for $M$ stable, $m=0$, $g^{TM} = g^{TM}_{\rm{hyp}}$ and  $n \leq 0$.
	\end{rem}
	The advantage of the definition (\ref{eqn_sel_norm}) is an explicit formula in terms of “simple" geometric objects and, thus, suitability for the variational-type arguments (see \cite{TakZog}, \cite{Fedosova}). However, it only works for the hyperbolic metrics $g^{TM}_{\rm{hyp}},\, \norm{\cdot}_{M}^{\rm{hyp}}$ on $M$ and trivial Hermitian vector bundle $(\xi, h^{\xi})$. 
	\par Our first goal of this article is to give a definition of the analytic torsion $T(g^{TM}, h^{\xi} \otimes \norm{\cdot}_{M}^{2n})$ for $n \leq 0$,\footnote{By Serre duality, if one prefers to work with positive line bundles, we can interpret it as the analytic torsion of the vector bundle $\xi^* \otimes \omega_M^{-n+1}(D_M)^{-n}$ associated to $(g^{TM}, (h^{\xi})^* \otimes \norm{\cdot}_M^{-2n} \otimes (\, \norm{\cdot}^{\omega}_{M})^2)$, for $n \leq 0$.} which generalizes both (\ref{defn_an_t_st}) and (\ref{eqn_sel_norm}), see Definition \ref{defn_rel_tor}. 
	Our definition is done in the spirit of (\ref{defn_an_t_st}), and in \cite{FinII3} we show that it actually coincides with (\ref{eqn_sel_norm}) for hyperbolic surfaces and $(\xi, h^{\xi})$ trivial (thus, extending the results of Phong-D'Hoker \cite[(7.30)]{PhongHook}, \cite[(3.6)]{PhongHook2nd}).
	We also give two results for computing the newly defined analytic torsion.
	The first one, Theorem \ref{thm_comp_appr}, which we also call the \textit{compact perturbation theorem}, expresses the quotient of two Quillen norms associated with surfaces with the same number of cusps through a quotient of two Quillen norms associated with surfaces without cusps. 
	The second one, Theorem \ref{thm_anomaly_cusp}, which we also call the \textit{anomaly formula}, explains how the Quillen norm changes under the change of the metrics $g^{TM}$, $h^{\xi}$.
	The study of the heat kernel associated to $h^{\xi} \otimes \norm{\cdot}_M^{2n}$ on a surface with cusps $(\overline{M}, D_M, g^{TM})$ plays the foremost role in our approach.
	\par 
	We also note that our definition is related to the definition of the relative analytic torsion due to Lundelius and Jorgenson-Lundelius, which was given for $(\xi, h^{\xi})$ trivial and $n = 0$ in \cite{Lun93}, \cite{JorLund}, \cite{JorLundMain}, see Remarks \ref{rem_anomaly_cusp}e), \ref{rem_def_an_tor}c), and the definition of Albin-Rochon, see Remark \ref{rem_def_an_tor}d), which was given for $(\xi, h^{\xi})$ trivial and $n = 0$ in \cite[\S 7.1]{AlbRoch}. 
	The $b$-trace of Melrose \cite{MelroseAPS}, used in the definition of Albin-Rochon, should also give the definition of the analytic torsion in our case, however we work in a relative setting, and $b$-trace does not appear explicitly. This gives us more flexibility to establish some estimates on the heat kernel which are used extensively in the proof of Theorem \ref{thm_comp_appr}.
	\par Now let's describe our results more precisely. For $n \leq 0$, in the end of Section \ref{sect_spec_gap}, we explain how to endow the complex line 
	\begin{multline}\label{defn_det_line}
		\big(\det H^{\bullet}(\overline{M}, \xi \otimes \omega_M(D)^n) \big)^{-1} \\
		:= \big( \Lambda^{\max} H^{0}(\overline{M}, \xi \otimes \omega_M(D)^n) \big)^{-1} \otimes  \Lambda^{\max} H^{1}(\overline{M}, \xi \otimes \omega_M(D)^n),
	\end{multline}
	with the $L^2$-norm $\norm{\cdot}_{L^2}(g^{TM}, h^{\xi} \otimes \norm{\cdot}_M^{2n})$. In the compact case it coincides with the $L^2$-norm induced on the harmonic forms.
	Then we define the Quillen norm on the complex line (\ref{defn_det_line}) by
	\begin{equation}\label{defn_quil}
		 \norm{\cdot}_{Q}(g^{TM}, h^{\xi} \otimes \, \norm{\cdot}_{M}^{2n}) 
		=
		 T(g^{TM}, h^{\xi} \otimes \, \norm{\cdot}_{M}^{2n})^{1/2}
		 \cdot 
		 \norm{\cdot}_{L^2}(g^{TM}, h^{\xi} \otimes \, \norm{\cdot}_{M}^{2n}).
	\end{equation}
	To motivate, when $m = 0$, this coincides with the usual definition of the Quillen norm from \cite[1.64]{BGS1} and \cite[Definition 1.5]{BGS3}.
	\noindent
	\begin{align}\label{data_rel_tors}
		\begin{split}
			& \qquad \text{The data for the \textit{compact perturbation theorem}  is: \textit{Riemann surfaces with cusps} }
			\\
			&\text{$(\overline{M}, D_M, g^{TM})$, $(\overline{N}, D_N, g^{TN})$ with $\# (D_M) = \# (D_N) =: m$; a Hermitian vector }\quad \, 
			\\
			&\text{bundle $(\xi, h^{\xi})$ over $\overline{M}$ of rank $\rk{\xi}$; the norms $\norm{\cdot}_{M}, \norm{\cdot}_{N}$ induced  by $g^{TM}$, $g^{TN}$}
			\\
			&\text{as in (\ref{eqn_norms_local}) on $\omega_M(D)$ and $\omega_N(D)$ over $M$ and $N$, and a number $n \in \integ, n \leq 0$.}
		\end{split}
	\end{align}
	\begin{defn}[Flattening of a metric]\label{defn_flat_metr}
		We say that a (smooth) metric $g^{TM}_{\rm{f}}$ over $\overline{M}$ is a \textit{flattening} of $g^{TM}$ if there is $\nu > 0$ such that $g^{TM}$ is induced by (\ref{reqr_poincare}) over $V_i^{M}(\nu)$, and 
	\begin{equation}\label{fl_exterior}
		g^{TM}_{\rm{f}}|_{M \setminus (\cup_i V_i^{M}(\nu))} = g^{TM}|_{M \setminus (\cup_i V_i^{M}(\nu))}.
	\end{equation}
	The supremum of all $\nu > 0$, satisfying (\ref{fl_exterior}) is called the \textit{tightness} of the flattening.
	\end{defn}
	\begin{figure}[h]
		\includegraphics[width=\textwidth]{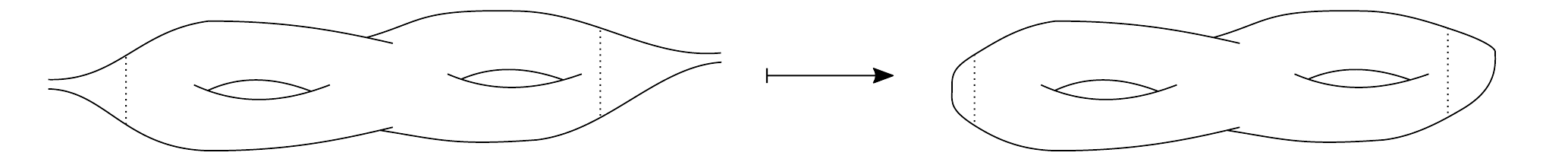}	
		\caption{An example of a flattening. The regions between the dashed lines are isometric.}
	\end{figure}
	\par Let $(\overline{N}, D_N, g^{TN})$  be another surface with cusps and let $g^{TN}_{\rm{f}}$ be a \textit{flattening} of $g^{TN}$. We say that the flattenings $g^{TM}_{\rm{f}}$ and $g^{TN}_{\rm{f}}$ are \textit{compatible}, if for any $i  = 1, \ldots, m$, we have
	\begin{equation}\label{eq_gtm_compat_cusp}
		((z_i^{N})^{-1} \circ z_i^{M})^* (g^{TM}_{\rm{f}}|_{V_i^{M}(\nu)}) = g^{TN}_{\rm{f}}|_{V_i^{N}(\nu)},
	\end{equation}
	for some $\nu > 0$, satisfying (\ref{fl_exterior}) and 
	\begin{equation}\label{eq_comp_gtn}
	g^{TN}_{\rm{f}}|_{N \setminus (\cup_i V_i^{N}(\nu))} = g^{TN}|_{N \setminus (\cup_i V_i^{N}(\nu))}.
	\end{equation} 
	Similarly, we define the notion of \textit{flattenings} $\norm{\cdot}_{M}^{\rm{f}}, \norm{\cdot}_{N}^{\rm{f}}$ for Hermitian norms $\norm{\cdot}_{M}, \norm{\cdot}_{N}$. We say that the flattenings $\norm{\cdot}_{M}^{\rm{f}}, \norm{\cdot}_{N}^{\rm{f}}$ are \textit{compatible} if they satisfy similar conditions to (\ref{fl_exterior}), (\ref{eq_comp_gtn}), and for any $i = 1, \ldots, m$, we have
	\begin{equation}\label{eq_normm_compat_cusp}
		((z_i^{N})^{-1} \circ z_i^{M})^* \big( \norm{\cdot}_{M} / \norm{\cdot}_{M}^{\rm{f}} \big)|_{V_i^{M}(\nu)} 
		= 
		\big( \norm{\cdot}_{N} / \norm{\cdot}_{N}^{\rm{f}} \big)|_{V_i^{N}(\nu)}.
	\end{equation}
	\begin{rem}
		The definitions of flattenings $g^{TM}_{\rm{f}}$ of $g^{TM}$ and $\norm{\cdot}_{M}^{\rm{f}}$ of $\norm{\cdot}_{M}$ are independent, and there is no relation between them as in (\ref{eqn_norms_local}).
	\end{rem}
	\begin{figure}[h]
		\includegraphics[width=\textwidth]{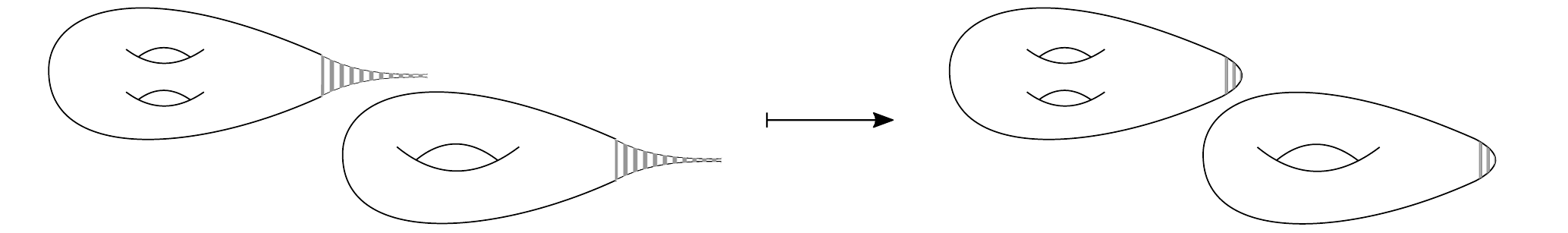}	
		\caption{An example of compatible flattenings. The striped regions are isometric.}
	\end{figure}
	\begin{sloppypar}
	\begingroup
	\setcounter{tmp}{\value{thm}}
	\setcounter{thm}{0} 
	\renewcommand\thethm{\Alph{thm}}
	\begin{thm}[Compact perturbation]\label{thm_comp_appr}
		Let $g^{TM}_{\rm{f}}, g^{TN}_{\rm{f}}, \norm{\cdot}_{M}^{\rm{f}}, \norm{\cdot}_{N}^{\rm{f}}$ be compatible flattenings of $g^{TM}, g^{TN}, \norm{\cdot}_{M}\norm{\cdot}_{N}$ respectively. Then
		\begin{multline}\label{eqn_of_quil_norms}
		2 \ln \Big( 
			\norm{\cdot}_{Q} \big(g^{TM}, h^{\xi} \otimes \, \norm{\cdot}_{M}^{2n}\big) 
			\big / 
			\norm{\cdot}_{Q} \big(g^{TM}_{\rm{f}}, h^{\xi} \otimes (\, \norm{\cdot}_{M}^{\rm{f}})^{2n})
			\Big)
			\\
			- 
		2 \rk{\xi} \ln \Big( 
			\norm{\cdot}_{Q} \big(g^{TN}, \norm{\cdot}_{N}^{2n} \big)
			\big /			
			\norm{\cdot}_{Q} \big(g^{TN}_{\rm{f}}, (\, \norm{\cdot}_{N}^{\rm{f}})^{2n})
			\Big)
			\\	
			 = 
			\int_M c_1(\xi, h^{\xi}) \Big(2n \ln (\, \norm{\cdot}_{M}^{\rm{f}}/ \norm{\cdot}_{M}) +  \ln (g^{TM}_{\rm{f}} / g^{TM}) \Big).
		\end{multline}
			In other words, the relative Quillen norm can be computed through a compact perturbation.
	\end{thm}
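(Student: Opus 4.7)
The approach is to localize the difference of Quillen norms near the cusps and to exploit a universal scalar cusp model, without invoking Theorem \ref{thm_anomaly_cusp}, which in this paper should be regarded as a consequence of Theorem A rather than an input to its proof. Since the cusped metrics and their flattenings coincide outside the neighborhoods $V_i^M(\nu), V_i^N(\nu)$ of the cusps, and the flattenings satisfy the compatibility conditions (\ref{eq_gtm_compat_cusp}) and (\ref{eq_normm_compat_cusp}), the heart of the matter is to show that the ``cusp contribution'' to $\ln \norm{\cdot}_Q(g^{TM}, h^{\xi} \otimes \norm{\cdot}_M^{2n}) - \ln \norm{\cdot}_Q(g^{TM}_{\rm{f}}, h^{\xi} \otimes (\norm{\cdot}_M^{\rm{f}})^{2n})$ depends on $(\xi, h^{\xi})$ only through $\rk{\xi}$ and the curvature form $c_1(\xi, h^{\xi})$.

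My plan is to interpolate by a smooth family $(g^{TM}_s, h^{\xi} \otimes \norm{\cdot}_{M,s}^{2n})_{s \in [0,1]}$ with $s = 0$ giving the cusped data, $s = 1$ giving the flattened data, and the $s$-variation supported in $\bigcup_i V_i^M(\nu)$; similarly for $N$. For $s \in (0, 1]$ the setting is classical, so the variation of the Quillen norm is computed by a local-index-theoretic formula expressing $\partial_s \ln \norm{\cdot}_Q$ as a local integral against traces of the heat kernel. For $s$ approaching $0$ one uses the cusped regularization from the definition of $T(g^{TM}, h^{\xi} \otimes \norm{\cdot}_M^{2n})$, together with the heat kernel estimates proved earlier in the paper, to justify the limit.

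In each cusp neighborhood $V_i^M(\nu)$ one trivializes $\xi$ by a holomorphic frame that is unitary at $P_i^M$, writing $h^{\xi} = \exp(\phi_i)$ with $\phi_i(P_i^M) = 0$. The heat kernel trace appearing in $\partial_s \ln \norm{\cdot}_Q$ then splits, to first order in $\phi_i$, into a scalar contribution of multiplicity $\rk{\xi}$ (matching exactly, by the compatibility of the flattenings, the corresponding contribution on $N$) and a correction whose leading term is proportional to $c_1(\xi, h^{\xi})$ times the $s$-variation of $g^{TM}$ and $\norm{\cdot}_M^{2n}$. Integrating from $s = 0$ to $s = 1$, the scalar parts of the $M$- and $N$-sides combine into the difference of ratios on the left-hand side of (\ref{eqn_of_quil_norms}), while the curvature parts integrate to give precisely the right-hand side.

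The hard part will be controlling the interpolation uniformly near the cusps as $s \to 0$, where the essential spectrum of $\laplcomp^{\xi \otimes \omega_M(D)^n}$ collapses to $[0, \infty)$. One must establish uniform heat kernel parametrix bounds against the Poincaré cusp model for small $t$, spectral-gap estimates for large $t$, and consistency of the zeta-function analytic continuation at $0$ with the renormalization defining the cusped analytic torsion. Matching this renormalization at $s = 0$ on the $M$-side with the universal scalar model governing the $N$-side to the precision needed to absorb the $\rk{\xi}$-multiple of the $N$-contribution, rather than the local heat kernel comparison itself, is the real crux.
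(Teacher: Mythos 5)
Your outline shares the paper's skeleton --- approximate the cusped data by compact flattenings, apply the Bismut--Gillet--Soul\'e anomaly formula in the compact regime, and pass to the limit via the regularized torsion --- but it omits the step that makes the scheme work, and the part you defer as ``the real crux'' is not a technicality but the bulk of the argument. The missing step is the preliminary reduction to the case where $(\xi, h^{\xi})$ is trivial near the cusps: the paper first deforms $h^{\xi}$ to metrics $h^{\xi}_{\eta}$ that are flat in shrinking neighbourhoods of $D_M$ and proves the convergence (\ref{eq_spec_case_thm_b}) of the corresponding Quillen norms, which already requires Duhamel's formula and the sharp off-diagonal bounds (\ref{thm_est_exp2}). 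Only after this reduction do the cusp neighbourhoods of $(M, g^{TM}_{{\rm f},\theta})$ and $(N, g^{TN}_{{\rm f},\theta})$, together with the operators, literally coincide, so that finite propagation speed yields the exact matching (\ref{fin_prop_luh_A}) underlying Theorem \ref{thm_A_bound_cups}. Your proposed splitting of the cusp heat trace into ``$\rk{\xi}$ times the scalar model plus a term proportional to $c_1(\xi,h^{\xi})$, to first order in $\phi_i$'' is precisely what fails to be exact for a general $h^{\xi}$: the error is a genuinely nonlocal Duhamel term, and controlling it uniformly in $t$ and in the distance to the cusp is the content of (\ref{hk_infty})--(\ref{hk_infty2}); one cannot simply discard the higher-order terms in $\phi_i$.

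Second, even granting that reduction, the limit $s\to 0$ of the relative torsion does not follow from ``heat kernel estimates proved earlier'': it needs a uniform spectral gap for the flattened Laplacians (Theorem \ref{spec_gap_thm_A}), uniform weighted elliptic estimates encoded in the notion of $n$-tight flattenings (property (\ref{lem_ell_est_eqn_A}), established in the appendix), and, deep inside the cusp where the flattened metric is far from the hyperbolic model, a maximum-principle argument combined with the sup-characterization of the Bergman kernel (proof of Theorem \ref{thm_A_bound_cups}). None of this appears in your plan. Note also that the right-hand side of (\ref{eqn_of_quil_norms}) is not extracted from a local expansion of the heat trace: in the paper it arises algebraically from the exact compact anomaly formula (Theorem \ref{thm_anomaly_BGS}) applied to the flattened surfaces, together with the observation that, once $(\xi,h^{\xi})$ is trivial near the cusps, the flattened ratio (\ref{eq_thma_flat_theta}) is independent of $\theta$; reading it off from a first-order heat-kernel computation near the cusp risks missing exactly the kind of boundary contributions that, in the companion anomaly statement, produce the Wolpert-type terms.
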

	\endgroup
	\setcounter{thm}{\thetmp}
	\end{sloppypar}
	\begin{rem}
	a) The term on the right-hand side of (\ref{eqn_of_quil_norms}) can be motivated by the “anomaly formula", Theorem \ref{thm_anomaly_cusp}. However, we stress out that we cannot apply Theorem \ref{thm_anomaly_cusp} to get Theorem \ref{thm_comp_appr}, since Theorem \ref{thm_comp_appr} “mixes" the compact metrics $g^{TM}_{\rm{f}}$, $g^{TN}_{\rm{f}}$ and the non-compact ones $g^{TM}$, $g^{TN}$. Moreover our proof of Theorem \ref{thm_anomaly_cusp} uses Theorem \ref{thm_comp_appr}.
	Nevertheless, philosophically Theorem \ref{thm_comp_appr}, should be interpreted as the anomaly formula, which permits “erasing" the cusps.
	\par
	To make this analogy even more apparent, we rewrite (\ref{eqn_of_quil_norms}) in the following form (cf. (\ref{ch_bc_0}))
			\begin{multline}\label{eq_thma_rewrite}
			2 \ln  \Big(  
				\norm{\cdot}_{Q} \big(g^{TM}, h^{\xi} \otimes \, \norm{\cdot}_{M}^{2n} \big)
				/
				\norm{\cdot}_{Q} \big(g^{TM}_{\rm{f}}, h^{\xi} \otimes (\, \norm{\cdot}_{M}^{\rm{f}})^{2n} \big)
			\Big)
			\\
			-
			2 \rk{\xi} \ln  \Big(  
				\norm{\cdot}_{Q} \big(g^{TN}, \norm{\cdot}_{N}^{2n} \big)
				/
				\norm{\cdot}_{Q} \big(g^{TN}_{\rm{f}}, (\, \norm{\cdot}_{N}^{\rm{f}})^{2n} \big)
			\Big)
				\\
				=
				\int_M \widetilde{\td} \big(\omega_M^{-1}, g^{TM}_{\rm{f}}, g^{TM} \big) c_1 \big(\xi, h^{\xi} \big)
				+ 
				\int_M 
				c_1 \big(\xi, h^{\xi} \big)
				\widetilde{\ch} \big(\omega_M(D)^n, (\, \norm{\cdot}_M^{\rm{f}})^{2n}, \norm{\cdot}_M^{2n} \big).
		\end{multline}	
	\par b) Suppose that $(\xi, h^{\xi})$ is trivial in the $\nu$-neighbourhood of the cusps, where $\nu > 0$ is the tightness of the flattenings  $g^{TM}_{\rm{f}}$ and $\norm{\cdot}_{M}^{\rm{f}}$.
	Then we simplify Theorem \ref{thm_comp_appr} to
	\begin{equation}\label{eqn_of_quil_norms_trivial_cups}
		\frac{\norm{\cdot}_{Q} \big(g^{TM}, h^{\xi} \otimes \, \norm{\cdot}_{M}^{2n}\big)}{\norm{\cdot}_{Q} \big(g^{TM}_{\rm{f}}, h^{\xi} \otimes (\, \norm{\cdot}_{M}^{\rm{f}})^{2n}\big)}
		=
		\frac{\norm{\cdot}_{Q} \big(g^{TN}, \norm{\cdot}_{N}^{2n} \big)^{\rk{\xi}}}{\norm{\cdot}_{Q} \big(g^{TN}_{\rm{f}}, (\, \norm{\cdot}_{N}^{\rm{f}})^{2n} \big)^{\rk{\xi}}}.
	\end{equation}
	In fact, in our proof of Theorem \ref{thm_comp_appr}, we reduce the main statement to (\ref{eqn_of_quil_norms_trivial_cups}).
	\par
	c) Generally, if we have a family of compatible flattenings $g^{TM}_{\rm{f}, \theta}$, $g^{TN}_{\rm{f}, \theta}$ and $\norm{\cdot}_{M}^{\rm{f}, \theta}$, $\norm{\cdot}_{N}^{\rm{f}, \theta}$, $\theta > 0$ such that they have tightness $\theta > 0$, and $\ln (g^{TM} / g^{TM}_{\rm{f}, \theta})$, $\ln (\, \norm{\cdot}_{M}/ \norm{\cdot}_{M}^{\rm{f}, \theta})$ are uniformly bounded by an integrable function (for an example of such flattenings, see Section \ref{sect_tight} and Appendix \ref{app_2}), then by (\ref{eqn_of_quil_norms}) and Lebesgue dominated convergence theorem, we have
	\begin{equation}\label{eqn_of_quil_norms_limit}
		\frac{\norm{\cdot}_{Q} \big(g^{TM}, h^{\xi} \otimes \, \norm{\cdot}_{M}^{2n}\big)}{\norm{\cdot}_{Q} \big(g^{TN}, \norm{\cdot}_{N}^{2n} \big)^{\rk{\xi}}}
		=			 
		\lim_{\theta \to 0} \, 
		\frac{\norm{\cdot}_{Q} \big(g^{TM}_{\rm{f}, \theta}, h^{\xi} \otimes (\, \norm{\cdot}_{M}^{\rm{f}, \theta})^{2n}\big)}{ \norm{\cdot}_{Q} \big(g^{TN}_{\rm{f}, \theta}, (\, \norm{\cdot}_{N}^{\rm{f}, \theta})^{2n} \big)^{\rk{\xi}}}.
	\end{equation}
	\par 
	d) It is possible to restate Theorem \ref{thm_comp_appr} in the way, which doesn't use the language of compatible flattenings. It says that the quantity
	\begin{multline}\label{eqn_of_quil_norms_no_flat}
		2 \rk{\xi}^{-1} \ln \Big( 
			\norm{\cdot}_{Q} \big(g^{TM}, h^{\xi} \otimes \, \norm{\cdot}_{M}^{2n}\big) 
			\big / 
			\norm{\cdot}_{Q} \big(g^{TM}_{\rm{f}}, h^{\xi} \otimes (\, \norm{\cdot}_{M}^{\rm{f}})^{2n})
			\Big)
			\\	
			-
			\rk{\xi}^{-1} \int_M c_1(\xi, h^{\xi}) \Big(2n \ln (\, \norm{\cdot}_{M}^{\rm{f}}/ \norm{\cdot}_{M}) +  \ln (g^{TM}_{\rm{f}} / g^{TM}) \Big)
	\end{multline}
	depends only on the integer $n \in \integ$, $n \leq 0$, the functions $( g^{TM}_{\rm{f}} / g^{TM} )|_{V_i^{M}(1)} \circ (z_i^{M})^{-1} : \dd^* \to \real $ and $(\, \norm{\cdot}_{M}^{\rm{f}}/ \norm{\cdot}_{M} )|_{V_i^{M}(1)}  \circ (z_i^{M})^{-1} : \dd^* \to \real$, for $i = 1, \ldots, m$. This reformulation is particularly useful when one studies the variations of Quillen norm in a family setting.
	\par
	e) For $n = 0$ and $(\xi, h^{\xi})$ trivial, Theorem \ref{thm_comp_appr} was proved by Jorgenson-Lundelius in \cite[Theorem 7.3]{JorLundMain}, where authors use substantially that the geometry near the cusps of $(M, g^{TM})$ and $(N, g^{TN})$ coincides. This doesn't hold in our case due to the presence of $(\xi, h^{\xi})$, and the techniques we use are different even in the case when $(\xi, h^{\xi})$ is trivial. 
	\par
	The main feature of our techniques is that they are implicit, and unlike \cite{JorLundMain}, we avoid studying the precise contribution of the continuous spectrum to the heat kernel - a problem, which seems to be open for $n < 0$ (see Müller \cite{MullerCusp} for $n = 0$).
	We treat large-time asymptotics of the heat kernel by functional analysis and spectral gap theorem, and small-time asymptotics by analytic localization techniques of Bismut-Lebeau \cite[\S 11]{BisLeb91} and by the parametrix construction. Along the way we obtain some estimates (e.g. Moser-type estimates from Theorem \ref{thm_hk_est}) on the heat kernel associated with $(M, g^{TM})$, $(\xi, h^{\xi})$ and $n \in \integ$, which are of independent interest.
	Particularly, those estimates should be helpful if one wishes to extend the sup-estimates of the Bergman kernel on the Riemann surface with hyperbolic cusps due to Auvray-Ma-Marinescu \cite[Corollary 1.3]{Auvr}, \cite{AuvrMaArx} (cf. also \cite{FJK}) to the case when $(\xi, h^{\xi})$ is not necessarily trivial around the cusps.
	\end{rem}
	\par Our next result explains how the Quillen norm changes under the conformal change of the metric with cusps.
	Let's recall that by \cite[Theorem 1.27]{BGS1}, the Bott-Chern forms of a vector bundle $\xi$ with Hermitian metrics $h^{\xi}_{1}$,  $h^{\xi}_{2}$ are natural differential forms (strictly speaking, those are classes of differential forms, see Remark \ref{rem_anomaly_cusp}b)) defined so that they satisfy
		\begin{equation}\label{eq_der_tilde_tdch}
		\begin{aligned}
			& \frac{\partial \dbar}{2 \pi \imun} \widetilde{\td} (\xi, h^{\xi}_{1}, h^{\xi}_{2}) 
			&& = 
			\td (\xi, h^{\xi}_{1}) - 
			\td (\xi, h^{\xi}_{2}), \\
			& \frac{\partial \dbar}{2 \pi \imun} \widetilde{\ch} (\xi, h^{\xi}_{1}, h^{\xi}_{2}) 
			&& = 
			\ch (\xi, h^{\xi}_{1})  -
			\ch (\xi, h^{\xi}_{2}),
		\end{aligned}
		\end{equation}
		where $\td$, $\ch$ are Todd and Chern forms.
		By \cite[Theorem 1.27]{BGS1}, we have the following identities
		\begin{equation}
			\widetilde{\ch}(\xi, h^{\xi}_{1},  h^{\xi}_{2})^{[0]} = 2 \widetilde{\td}(\xi, h^{\xi}_{1},  h^{\xi}_{2})^{[0]} = \ln \big( \det( h_1^{\xi} /h_2^{\xi}) \big). \label{ch_bc_0}
		\end{equation}
		If, moreover, $\xi := L$ is a line bundle, we have
		\begin{equation}
			\widetilde{\ch}(L, h^L_{1}, h^L_{2})^{[2]} = 6 \widetilde{\td}(L, h^L_{1},  h^L_{2})^{[2]} 
			 =  \ln ( h^L_{1}/h^L_{2} ) \Big(c_1(L, h^L_{1})  + c_1(L, h^L_{2}) \Big) / 2, \label{ch_bc_2}
		\end{equation}
		where $c_1$ is the first Chern form.
	\par 
	\begin{defn}\label{defn_wolpert_norm}
		For a surface with cusps $(\overline{M}, D_M, g^{TM})$, the \textit{Wolpert norms} $\norm{\cdot}^W_{i}$ on the complex lines $\omega_{\overline{M}}|_{P_i^{M}}$, $i=1, \ldots, m$, is defined by $\| dz_i^{M} \|^W_{i} = 1$. It induces the Wolpert norm $\norm{\cdot}^W$ on the complex line $\otimes_i \omega_{\overline{M}}|_{P_i^{M}}$. 
	\end{defn}
	\begin{rem}
		The norms $\norm{\cdot}^W_{i}$ are well-defined since the Poincaré-compatible coordinates are well-defined up to a multiplication by a unitary constant.
		The norm $\norm{\cdot}^W$ was defined by Wolpert for hyperbolic surfaces in \cite[Definition 1]{Wol07}.
	\end{rem}
	\begin{sloppypar}
	\begingroup
	\setcounter{tmp}{\value{thm}}
	\setcounter{thm}{1} 
	\renewcommand\thethm{\Alph{thm}}
	\begin{thm}[Anomaly formula for metrics with cusps]\label{thm_anomaly_cusp}
		Let $\phi : M \to \real$ be a smooth function such that for the metric
		\begin{equation}\label{anomaly_rel_metrics}
			g^{TM}_{0} = e^{2 \phi} g^{TM},
		\end{equation}
		the triple $(\overline{M}, D_M, g^{TM}_{0})$ is a surface with cusps.
		We denote by $\norm{\cdot}_M, \norm{\cdot}_{M}^{0}$ the norms induced by $g^{TM}, g^{TM}_{0}$ on $\omega_M(D)$, and by $\norm{\cdot}^W$, $\norm{\cdot}^W_{0}$ the associated Wolpert norms. 
		Let $ h^{\xi}_0$ be a Hermitian metric on $\xi$ over $\overline{M}$.
		Then the right-hand side of the following equation is finite, and
		\begin{equation}\label{eq_anomaly_cusp}
			\begin{aligned}
				2 \ln & \Big(  
				\norm{\cdot}_{Q}  \big(g^{TM}_{0},  h^{\xi}_{0} \otimes (\, \norm{\cdot}_{M}^{0})^{2n} \big) 
				\big/				 
				 \norm{\cdot}_{Q} \big(g^{TM}, h^{\xi} \otimes \, \norm{\cdot}_{M}^{2n} \big) 
				 \Big) 
				\\
				&  = 
		 			  \int_{M} 
		 			\Big[ 
	 					\widetilde{\td} \big(\omega_M(D)^{-1}, \, \norm{\cdot}^{-2}_{M}, (\, \norm{\cdot}^{0}_{M})^{-2} \big) \ch \big(\xi, h^{\xi} \big)  \ch \big(\omega_M(D)^n, \norm{\cdot}_M^{2n} \big)  \\
						&  \phantom{= \int_{M} 
		 			\Big[ } +	
		 			\td \big(\omega_M(D)^{-1}, (\, \norm{\cdot}^{0}_{M})^{-2} \big) \widetilde{\ch} \big(\xi, h^{\xi}, h^{\xi}_{0} \big)  \ch \big(\omega_M(D)^n, \norm{\cdot}_M^{2n} \big)  \\
						&  \phantom{= \int_{M} 
		 			\Big[ } +			 		 					
			 			\td \big(\omega_M(D)^{-1}, (\, \norm{\cdot}^{0}_{M})^{-2} \big) \ch \big(\xi, h^{\xi}_{0} \big) \widetilde{\ch} \big(\omega_M(D)^n, \norm{\cdot}_{M}^{2n}, (\, \norm{\cdot}_{M}^{0})^{2n} \big) 									 				
			 		\Big]^{[2]} \\
			 		& \phantom{ = }  - \frac{\rk{\xi}}{6} \ln \Big( \norm{\cdot}^W /  \norm{\cdot}^W_{0} \Big)	+ \frac{1}{2} \sum \ln \Big(\det (h^{\xi} / h^{\xi}_{0})|_{P_i^{M}} \Big).
			 \end{aligned}
		\end{equation}
	\end{thm}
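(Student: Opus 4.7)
\bigskip
\noindent \textbf{Proof proposal.} The strategy is to reduce to the compact (flattened) anomaly formula via Theorem \ref{thm_comp_appr}, and then to analyze the contribution coming from the cusps in the limit of vanishing tightness. Concretely, I would fix compatible flattenings $g^{TM}_{\rm f}, \norm{\cdot}_M^{\rm f}$ and $g^{TM}_{0, \rm f}, \norm{\cdot}_M^{0, \rm f}$ of $g^{TM}, \norm{\cdot}_M$ and $g^{TM}_0, \norm{\cdot}_M^0$ respectively, together with an auxiliary surface with cusps $(\overline{N}, D_N, g^{TN})$ (same number of cusps as $M$) equipped with two flattenings $g^{TN}_{\rm f}, \norm{\cdot}_N^{\rm f}$ and $g^{TN}_{0, \rm f}, \norm{\cdot}_N^{0, \rm f}$ which are compatible with the two flattenings on the $M$ side. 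In addition, I would choose everything so that $g^{TM}_{\rm f}$ and $g^{TM}_{0, \rm f}$ (and the two norms) coincide outside a small neighbourhood of the cusps; the pair $(g^{TN}_{\rm f}, g^{TN}_{0, \rm f})$ can be arranged symmetrically.

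Write the left-hand side of (\ref{eq_anomaly_cusp}) as
\[
2 \ln \bigl( \norm{\cdot}_Q(g^{TM}_0, h^{\xi}_0 \otimes (\, \norm{\cdot}_M^{0})^{2n}) / \norm{\cdot}_Q(g^{TM}_{0, \rm f}, h^{\xi}_0 \otimes (\, \norm{\cdot}_M^{0, \rm f})^{2n}) \bigr)
\]
\[
\qquad + 2 \ln \bigl( \norm{\cdot}_Q(g^{TM}_{0, \rm f}, h^{\xi}_0 \otimes (\, \norm{\cdot}_M^{0, \rm f})^{2n}) / \norm{\cdot}_Q(g^{TM}_{\rm f}, h^{\xi} \otimes (\, \norm{\cdot}_M^{\rm f})^{2n}) \bigr)
\]
\[
\qquad - 2 \ln \bigl( \norm{\cdot}_Q(g^{TM}, h^{\xi} \otimes \, \norm{\cdot}_M^{2n}) / \norm{\cdot}_Q(g^{TM}_{\rm f}, h^{\xi} \otimes (\, \norm{\cdot}_M^{\rm f})^{2n}) \bigr).
\]
Two of these three pieces are of the type handled by Theorem \ref{thm_comp_appr}; applying that theorem with $(\overline{N}, D_N, g^{TN})$ as auxiliary surface rewrites them in terms of ratios of Quillen norms on $N$ (for the trivial bundle) and integrals over $M$ involving $c_1(\xi, h^{\xi})$ or $c_1(\xi, h^{\xi}_0)$ together with the conformal factors. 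The middle piece is an anomaly between two \emph{smooth} metrics on the compact Riemann surface $\overline{M}$ and can therefore be evaluated by the classical Bismut--Gillet--Soulé anomaly formula. The $N$-side Quillen norms also combine into a classical anomaly on the compact surface $\overline{N}$ (for the trivial line bundle), and by construction these $N$-terms depend only on the local behaviour near the cusps.

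The next step is to let the tightness $\nu$ of the flattenings tend to zero along a family, as in Remark c) following Theorem \ref{thm_comp_appr}. Away from the cusps, dominated convergence identifies the sum of the compact BGS integrands with the integrand appearing in (\ref{eq_anomaly_cusp}) over $M \setminus \cup_i V_i^M(\nu)$. The integrals inside the cusps blow up logarithmically as $\nu \to 0$, but the compatibility between the flattenings on $M$ and on $N$ guarantees that the divergent parts from the $M$- and $N$-sides cancel exactly, so that the limit exists and gives a finite expression. What remains after this cancellation are two finite correction terms: one coming from the change of the conformal factor on $\omega_M(D)^{-1}$ near the puncture, which, using the Poincaré normalisation $\| dz_i^M \|^W_i = 1$ and the explicit form (\ref{reqr_poincare}), yields exactly $-\frac{\rk{\xi}}{6} \ln(\norm{\cdot}^W / \norm{\cdot}^W_0)$ via the coefficient $1/6$ in (\ref{ch_bc_2}); and the other coming from the degree-zero Bott-Chern class of $\widetilde{\ch}(\xi, h^{\xi}, h^{\xi}_0)$ in (\ref{ch_bc_0}), evaluated at the cusp points, producing $\tfrac{1}{2} \sum \ln \det(h^{\xi}/h^{\xi}_0)|_{P_i^M}$.

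The main technical obstacle is the last step: verifying that the $M$- and $N$-side cusp divergences cancel \emph{exactly} and computing the finite remainder. The point is that near a cusp the Poincaré metric and the norm on $\omega_M(D)$ involve $|\ln |z||$, which interacts non-trivially with the Bott-Chern secondary classes $\widetilde{\td}$ and $\widetilde{\ch}$ evaluated on pairs (flattened metric, Poincaré metric). One must expand each BGS integrand explicitly in a cusp neighbourhood in terms of the conformal factor $\phi$, identify the logarithmically divergent pieces (which involve only the Poincaré data and cancel between $M$ and $N$ by the compatibility of flattenings), and isolate the finite residue. The coefficient $1/6$ traces back to the normalisation in (\ref{ch_bc_2}) applied to $\omega_M(D)^{-1}$, while the factor $1/2$ in the endpoint term traces back to (\ref{ch_bc_0}); obtaining the precise coefficients is where most of the book-keeping lives.
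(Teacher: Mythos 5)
Your overall strategy is the one the paper follows in Section \ref{sect_compact_pert}--4: use flattenings and Theorem \ref{thm_comp_appr} to trade the cusped Quillen ratios for compact ones, apply the Bismut--Gillet--Soulé anomaly formula, and let the tightness go to zero. Your telescoping with an external auxiliary surface $N$ is a mild reorganization of the paper's argument (the paper instead first treats the $h^{\xi}$-variation by flattening $h^{\xi}$ itself, and then, for the metric variation with $\xi$ trivial near the cusps, eliminates $N$ altogether by observing via Remark d) after Theorem \ref{thm_comp_appr} that the relative Quillen quantity depends only on the local ratio functions, which yields the exact identity (\ref{eq_quil_0_id}) for every $\theta$). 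Your version is workable provided you use the \emph{same} cusped metric $g^{TN}$ in both applications of Theorem \ref{thm_comp_appr}, with two different flattenings of it compatible with $g^{TM}_{\rm f}$ and $g^{TM}_{0,\rm f}$ respectively; you do set this up correctly.

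The genuine gap is the last step, which is where the content of the theorem lives. First, a factual correction: the cusp integrals do not blow up logarithmically. For the flattenings (\ref{defn_g_A_an})--(\ref{defn_norm_A_an}) one has $c_1(\omega_M,(\, \norm{\cdot}^{\omega}_{\rm f,\theta,M})^2)\sim \psi'(\ln|z|/\ln\theta)\,|z|^{-2}(\ln\theta)^{-1}\,\imun dz\,d\overline z$ plus better terms, whose total mass over the flattening annulus is $O(1)$; the real phenomenon is that the curvature of the flattened metric \emph{concentrates} at the cusp (Gauss--Bonnet forces a unit of mass per puncture to appear there), so the limit of the compact BGS integral is not the naive pointwise limit. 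Pairing this concentrating mass with $\widetilde{\ch}(\xi,h^{\xi},h^{\xi}_0)^{[0]}$ does produce the term $\frac12\sum\ln\det(h^{\xi}/h^{\xi}_0)|_{P_i^M}$, as you say. But your derivation of the Wolpert term is not correct as stated: the coefficient $1/6$ does \emph{not} come from the normalisation in (\ref{ch_bc_2}). In the paper it is the outcome of the explicit computation (\ref{est_an_1})--(\ref{int_cal_1}): the cusp contribution reduces to $-\frac{2\rk{\xi}}{3}\ln|(h_i^{\phi})'(0)|$ times an integral of a quadratic expression in the cut-off $\psi$ and its derivatives over $[1/2,1]$, which after several integrations by parts collapses to $1/4$, giving $-\frac{\rk{\xi}}{6}\ln|(h_i^{\phi})'(0)|$. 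Without carrying out this computation you cannot confirm the coefficient, nor even that the limit is independent of the choice of cut-off; and the mechanism is not a cancellation of divergences between the $M$- and $N$-sides but the mismatch, inside bounded integrals, between the two flattenings on $M$ being centred on the two different Poincaré coordinates $z_i^M$ and $z_i^{0,M}=h_i^{\phi}(z_i^M)$. This explicit cusp computation is the essential missing step of your proposal.
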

	\endgroup
	\setcounter{thm}{\thetmp}
	\end{sloppypar}
	\begin{rem}\label{rem_anomaly_cusp}
	\par a) The anomaly formula was firstly proved by Polyakov in \cite{PolyakBos} for $m = 0$, $n = 0$ and $(\xi, h^{\xi})$ trivial, who used it to compute some integrals over random surfaces which arise in mathematical physics.
	It was generalized by Bismut-Gillet-Soulé \cite[Theorem 1.23]{BGS3} for $m = 0$, but in any arbitrary dimension.
	For $m = 0$, in \cite{Fay1992}, Fay gave an alternative proof of (\ref{eq_anomaly_cusp}), which doesn't use the formalism of heat kernels. Our proof relies on the anomaly formula for $m = 0$.
	\par b) Strictly speaking, the integral in (\ref{eq_anomaly_cusp}) is not well-defined, since $\widetilde{\ch}$, $\widetilde{\td}$ as \textit{classes} are only well-defined up to an element of the form $\partial \alpha + \dbar \beta$. Since a priori nothing is known about the growth of $\alpha$, $\beta$ near $D_M$, the integrals of $\partial \alpha$ and $\dbar \beta$ over $M$ might not converge (leave alone being equal to $0$ by “Stokes" theorem). For the purposes of this article, however, it is enough to think of $\widetilde{\ch}$, $\widetilde{\td}$ as \textit{forms}, defined by (\ref{ch_bc_0}) and (\ref{ch_bc_2}). An alternative way to interpret those classes is through the Bott-Chern theory for pre-log-log Hermitian vector bundles, introduced by Burgos Gil-Kramer-K\"uhn in \cite{BurKrKun} (cf. \cite{FinII2}).
		\begin{sloppypar}
		 c) Experts will notice the difference between the terms under the integral in the right-hand side of (\ref{eq_anomaly_cusp}) and the terms, which appear in the right-hand side of the anomaly formula of Bismut-Gillet-Soulé \cite[Theorem 1.23]{BGS3} (see (\ref{eq_anomaly})), where in the arguments of Todd class and secondary Todd class we have $\omega_M$ in place of $\omega_M(D)$. 
		However, this difference is not a real issue, since for the current of integration $\delta_{D_M}$ along $D_M$, we have the following identities over $\overline{M}$:
		\begin{equation}
		\begin{aligned}\label{eq_ch_similarity}
			& \widetilde{\td} \big(\omega_M^{-1}, (\, \norm{\cdot}^{\omega}_{M})^{-2}, (\, \norm{\cdot}^{\omega, 0}_{M})^{-2} \big)
			= \widetilde{\td} \big(\omega_M(D)^{-1}, \, \norm{\cdot}^{-2}_{M}, (\, \norm{\cdot}^{0}_{M})^{-2} \big),
			\\
			& \big[ \td \big(\omega_M^{-1}, (\, \norm{\cdot}^{\omega, 0}_{M})^{-2} \big)  \big]^{[2]}
			= \big[ \td \big(\omega_M(D)^{-1}, (\, \norm{\cdot}^{0}_{M})^{-2} \big) \big]^{[2]} + \frac{1}{2} \delta_{D_M},
			\\
			& \big[ \widetilde{\ch} \big(\omega_M(D)^n, \norm{\cdot}_{M}^{2n}, (\, \norm{\cdot}_{M}^{0})^{2n} \big) \big]^{[0]}|_{D_M} = 0,
		\end{aligned}
		\end{equation}
		where $[0]$, $[2]$ stand for the part of degree $0$ and $2$, and in the second identity we used Poincaré–Lelong equation.
		Nevertheless, we prefer to state Theorem \ref{thm_anomaly_cusp} in the given form, since in the sequel we will use that the Hermitian line bundles $(\omega_M(D), \norm{\cdot}_M)$, $(\omega_M(D), \norm{\cdot}_M^{0})$ are pre-log-log with singularities along $D_M$ in the terminology of Burgos Gil-Kramer-K\"uhn \cite{BurKrKun}, and the Hermitian line bundles $(\omega_M, \norm{\cdot}_M^{\omega})$, $(\omega_M, \norm{\cdot}_M^{\omega, 0})$ do not satisfy those properties.
		\end{sloppypar}
		\par d) In the case when $\phi$ has compact support in $M$, Theorem \ref{thm_anomaly_cusp} follows from the anomaly formula of Bismut-Gillet-Soulé (see Theorem \ref{thm_anomaly_BGS}), Theorem \ref{thm_comp_appr} and (\ref{eq_ch_similarity}). \par 
		The difference between Theorem \ref{thm_anomaly_cusp} and Theorem \ref{thm_anomaly_BGS} is in the last two terms of (\ref{eq_anomaly_cusp}):
		\begin{equation}\label{eq_anomal_terms_suppl}
			- \frac{\rk{\xi}}{6} \ln \Big( \norm{\cdot}^W /  \norm{\cdot}^W_{0} \Big) + \frac{1}{2} \sum \ln \Big(\det (h^{\xi} / h^{\xi}_{0})|_{P_i^{M}} \Big).
		\end{equation}
		In our applications we use extensively Theorem \ref{thm_anomaly_cusp} for $\phi$ of non-compact support.
		Thus, the appearance of terms (\ref{eq_anomal_terms_suppl}) is of fundamental importance in what follows.
		\par
		e) Similar theorem appeared in the paper of Lundelius \cite[Theorem 1.1]{Lun93}. However, we disagree with his result, as it differs from ours in the last two terms of (\ref{eq_anomaly_cusp}). From \cite[p. 226, line 4]{Lun93}, his proof should only work for $\phi$ of compact support in $M$, as it is cited in \cite[Proposition 7.2]{JorLundMain}.
	\end{rem}
	To motivate this paper, we discuss several applications of Theorems \ref{thm_comp_appr}, \ref{thm_anomaly_cusp}, which will be proved in the sequel \cite{FinII2}, \cite{FinII3}. All those results are done in a family setting, i.e. we fix a holomorphic, proper map $\pi: X \to S$ of complex analytic manifolds such that for every $t \in S$, the space $X_t := \pi^{-1}(t)$ is a complex curve, which singularities are at most ordinary double points. We also fix disjoint sections $\sigma_1, \ldots, \sigma_m : S \to X$, which avoid double points of the fibres, and we denote by $D_{X/S}$ the divisor, given by $\Im(\sigma_1) + \ldots + \Im(\sigma_m)$.
	\begin{sloppypar}
		\textit{1. Regularity and asymptotics of the Quillen norm in a degenerating family of surfaces, \cite[Theorem C]{FinII2}.}
	We consider the determinant line bundle $\lambda(j^*( \xi \otimes \omega_{X/S}(D)^n )) := (\det R^{\bullet} \pi_* (\xi \otimes \omega_{X/S}(D)^n))^{-1}$, $n \leq 0$, where $\xi$ is a holomorphic vector bundle over $X$ and $\omega_{X/S}(D) := \omega_{X/S} \otimes \mathscr{O}_X(D_{X/S})$ is the twisted relative canonical line bundle. We endow the vector bundles $\xi$, $\omega_{X/S}(D)$ with Hermitian metrics $h^{\xi}, \norm{\cdot}_{X/S}$ satisfying some mild hypothesises. 
	Let $|\Delta|$ be the locus of singular curves of $\pi$.	
	We define the Quillen norm $\norm{\cdot}_{Q}$ on $\lambda(j^*(\xi \otimes \omega_{X/S}(D)^n))$ over $S \setminus |\Delta|$, as a family version of (\ref{defn_quil}). Then we study its regularity and singularities near $| \Delta |$. We also explicit some conditions under which the renormalized Quillen norm becomes continuous at the singular fibers. 
	\par The hypotheses, which we put on $\norm{\cdot}_{X/S}$ are mild enough to include the case of hyperbolic surfaces. In this particular case, the asymptotics of the associated analytic torsion was studied before by Wolpert \cite{Wol87}, Lundelius \cite{Lun93}, Jorgenson-Lundelius \cite{JorLund97}, and many others.
	\end{sloppypar}
	 \textit{2. Curvature theorem for surfaces with cusps, \cite[Theorem D]{FinII2}.}
	We will show that the metric $\norm{\cdot}_Q$ from previous paragraph is good enough, so that one can define its Chern form as a current. Then we give an explicit formula for this current. 
	\par In particular, if we consider the family of hyperbolic surfaces, this generalizes the curvature theorem of Takhtajan-Zograf \cite[Theorem 1]{TakZog}. If we consider the case when there is no cusps, we get a generalization of Bismut-Bost \cite[Théorème 2.1]{BisBost} to the case of degenerating metrics. 
	\\ \hspace*{0.5cm} \textit{3. Immersion and compatibility theorems, \cite{FinII3}.}
	We will relate the restriction of the renormalized Quillen norm $\norm{\cdot}_Q$ at the locus of singular fibers $|\Delta|$ with the Quillen norm of the normalization of singular fibers. By combination of this result with the analogical statement for Takhtajan-Zograf analytic torsion (see (\ref{eqn_sel_norm})), we deduce the compatibility between our definition and the one of Takhtajan-Zograf. This generalizes the result of Jorgenson-Lundelius \cite[Corollary 4.3]{JorLund97}, where authors did it for $(\xi, h^{\xi})$ trivial, and $n=0$.
	\par 
	Finally, due to recent interest in orbifold Riemann surfaces (see \cite{JorgGarb}, \cite{FreixPip}, \cite{TakhZogrOrbi}), let's discuss how the theory developed here can be adapted to the orbifold setting.
	By combining the definition of the analytic torsion here and of the orbifold analytic torsion due to Ma \cite{MaOrbif2005}, for an orbisurface $(M, g^{TM})$ with cusps $D_M \subset \overline{M}$ and orbifold singularities $D_M' \subset \overline{M}$, we may define the analytic torsion $T(g^{TM}, h^{\xi} \otimes \norm{\cdot}_M^{2n})$, where $n \leq 0$ and $\norm{\cdot}_M$ is the induced norm on the orbifold twisted line bundle $\omega_{\overline{M}} \otimes \mathscr{O}_{\overline{M}}(D_M) \otimes ( \otimes_{P_i' \in D_M'} \mathscr{O}_{\overline{M}}((1-1/m_i) P_i'))$. Here $m_i$ is the order of elliptic fixed point $P_i'$. This definition should generalize both definitions of the analytic torsion due to Takhtajan-Zograf \cite{TakhZogrOrbi}, which is done for stable hyperbolic orbisurfaces and $(\xi, h^{\xi})$ trivial, and of Freixas-von Pippich \cite{FreixPip}, which is done for stable hyperbolic orbisurfaces, $(\xi, h^{\xi})$ trivial and $n = 0$. 	
	Since our methods in the proof of Theorem \ref{thm_comp_appr} are purely local, the analogue of Theorem \ref{thm_comp_appr} would still hold. Since we got Theorem \ref{thm_anomaly_cusp} by combining Theorem \ref{thm_comp_appr} and the anomaly formula of Bismut-Gillet-Soulé \cite[Theorem 1.23]{BGS1}, by replacing the last reference by its orbifold analogue of Ma \cite[Theorem 0.1]{MaOrbif2005}, it is possible to get an analogue of Theorem \ref{thm_anomaly_cusp}. Then by combining the Deligne-Mumford isomorphism in the orbifold setting due to Freixas-von Pippich \cite{FreixPip} and the anomaly formula, in the same way as we proceed in \cite{FinII2}, it should be possible to get the orbifold analogue of Deligne-Mumford isometry for any orbisurface with metric with cusps and a Hermitian vector bundle over it. We hope to return to this question very soon.
	\par Now, let's describe the structure of this paper. 
	In Section 2, we develop spectral theory for surfaces with cusps. We introduce the notion of the analytic torsion and Quillen norm, which are used throughout the article.
	In Section 3, we prove Theorem \ref{thm_comp_appr}. For this, we study the families of metrics which “converge" to the metric with cusps in such a way that certain spectral properties are preserved.
	In Section 4, we prove Theorem \ref{thm_anomaly_cusp}. The main idea is to use Theorem \ref{thm_comp_appr} and to obtain Theorem \ref{thm_anomaly_cusp} as a limit of the anomaly formula of Bismut-Gillet-Soulé \cite[Theorem 1.23]{BGS1}.
	Finally, in Appendix we prove elliptic estimates for surfaces with cusps. Also we prove the existence of the families of metrics described in Section \ref{sect_compact_pert}.
	\par \textbf{Notation.} For $\epsilon > 0$ and $(\overline{M}, D_M), (\overline{N}, D_N), \xi$ as in (\ref{data_rel_tors}), we denote
	\begin{equation}
	\begin{aligned}
		& D(\epsilon) = \{ u \in \comp : |u| < \epsilon \},  \quad D^*(\epsilon) = \{ u \in \comp : 0 < |u| < \epsilon \}, \\
		& \dd := D(1), \qquad \qquad \qquad \quad \dd^* = D^*(1), \\
		& \omega_M(D) :=  \omega_{\overline{M}} \otimes  \mathscr{O}_{\overline{M}}(D_M),  \\
		& E_M^{\xi, n} := \xi \otimes \omega_M(D)^n,  \qquad   \, \, E_N^{n} := \omega_N(D)^n. \\
	\end{aligned}
	\end{equation}
	By $g^{T \dd^*}$ we denote the metric on $\dd^*$, induced by (\ref{reqr_poincare}), and by $d v_{\dd^*}$ the associated Riemannian volume form. By $\spec(A)$ we denote the spectrum of a self-adjoint operator $A$, acting on some Hilbert space.
	We denote by $B^M(x, r)$ the geodesic ball of radius $r > 0$ around $x \in M$ in a Riemannian surface $M$ with Riemannian metric $g^{TM}$.
	\par {\bf{Acknowledgements.}} This work is part of our PhD thesis, which was done at Université Paris Diderot. We would like to express our deep gratitude to our PhD advisor Xiaonan Ma for his teaching, overall guidance, constant support and invaluable comments on the preliminary version of this article.

	\section{Spectral theory of surfaces with cusps}\label{sect_spec_th}
	In this section we study spectral properties of surfaces with cusps and define the analytic torsion. 
	\par More precisely, in Section 2.1 we set up the notation and state the spectral gap theorem.
	In Section 2.2 we state several estimations of the heat kernel associated with a hyperbolic surface, we define the regularized heat trace and the analytic torsion.
	Section 2.3 is the most technical one. Here we prove the estimations on the heat kernel of the hyperbolic punctured disc.
	Finally, in Section 2.4 we prove the statements from Sections 2.1, 2.2.
	
	\subsection{The setting of the problem and the spectral gap theorem}\label{sect_spec_gap}
	\par Let $(\overline{M}, D_M, g^{TM})$ be a Riemann surface with cusps and let $(\xi, h^{\xi})$ be a Hermitian vector bundle over $\overline{M}$. We denote by $\norm{\cdot}_{M}$ the Hermitian norm induced by $g^{TM}$ on $\omega_M(D)$ (see (\ref{eq_om_md_def})) over $M$. 
	\par Let $\alpha, \alpha' \in \ccal^{\infty}_{c}(M, E_M^{\xi, n})$. The $L^2$-scalar product is defined by
	\begin{equation}\label{defn_L_2}
		\scal{\alpha}{\alpha'}_{L^2} := \tinyint_{M} \scal{\alpha(x)}{\alpha'(x)}_h d \vol_M(x), 
	\end{equation}
	where $d \vol_M$ is the Riemannian volume form on $(M, g^{TM})$, and $\langle \cdot, \cdot \rangle_h$ is the pointwise Hermitian product induced by $h^{\xi}$, $\norm{\cdot}_M$. By (\ref{reqr_poincare}), the right-hand side of (\ref{defn_L_2}) is finite for $n \leq 0$.
	\begin{sloppypar}
	We define the Hilbert space $(L^2(E_M^{\xi, n}), \scal{\cdot}{\cdot}_{L^2})$, as the $L^2$-completion of the space $\ccal^{\infty}_{c}(M, E_M^{\xi, n})$ with respect to $\langle \cdot, \cdot \rangle_{L^2}$.
	Sometimes when we want to insist on the choice of $g^{TM}$, $h^{\xi}$ and $\norm{\cdot}_M$, we denote this space by  $L^2(g^{TM}, h^{\xi} \otimes (\, \norm{\cdot}_M)^{2n})$.
	\end{sloppypar}
	\par
	We denote by $\laplcomp^{E_M^{\xi, n}}$ the Kodaira Laplacian on $\ccal^{\infty}_{c}(M, E_M^{\xi, n})$, given by
	\begin{equation}\label{eq_kodaira_laplacian}
		\laplcomp^{E_M^{\xi, n}} := (\dbar^{E_M^{\xi, n}})^*  \dbar^{E_M^{\xi, n}},
	\end{equation}
	where $(\dbar^{E_M^{\xi, n}})^*$ is the formal adjoint of $\dbar^{E_M^{\xi, n}}$ with respect to $\langle \cdot, \cdot \rangle_{L^2}$.
	 Since $(M, g^{TM})$ is complete, the operator $\laplcomp^{E_M^{\xi, n}}$ is essentially self-adjoint on $L^2(E_M^{\xi, n})$ (cf. \cite[Corollary 3.3.4]{MaHol}). We denote its closure by the same symbol.
	 \par In this article we are mostly interested in the heat operator $\exp(-t \laplcomp^{E_M^{\xi, n}}), t > 0$. 
	We denote
	\begin{equation}\label{p_perp_defn}
		\exp^{\perp}(-t \laplcomp^{E_M^{\xi, n}}) := \exp(-t \laplcomp^{E_M^{\xi, n}}) - P_{M},
	\end{equation}
	where $P_{M}$ is the orthogonal projection onto $\ker ( \laplcomp^{E_M^{\xi, n}} )$. We denote by
	\begin{equation}\label{defn_hk_notat}
		\exp(-t \laplcomp^{E_M^{\xi, n}})(x, y),
		\exp^{\perp}(-t \laplcomp^{E_M^{\xi, n}})(x, y) 
		\in (E_M^{\xi, n})_{x} \boxtimes (E_M^{\xi, n})^{*}_{y},
		 \quad \text{for} \quad x,y \in M,
	\end{equation}
	the smooth kernels of  $\exp(-t \laplcomp^{E_M^{\xi, n}}), \exp^{\perp}(-t \laplcomp^{E_M^{\xi, n}})$ with respect to the volume form $d \vol_M$.
	In particular, we see that
	\begin{equation}
		\exp(-t \laplcomp^{E_M^{\xi, n}})(x, x),
		\exp^{\perp}(-t \laplcomp^{E_M^{\xi, n}})(x, x) 
		\in \enmr{\xi}_x,
		 \quad \text{for} \quad x \in M.
	\end{equation}
	In Section \ref{sect_spec_th}, we fix $g^{TM}$, $h^{\xi}$, $\norm{\cdot}_M$ and remove them from some notation: by $| \cdot |_{h \times h}$ we mean the pointwise norm on $(\omega_{\overline{M}}^{k} \otimes E_M^{\xi, n})^* \boxtimes ( \omega_{\overline{M}}^{l} \otimes E_M^{\xi, n} )$, $k,l \in \integ$ induced by $h^{\xi}$, $\norm{\cdot}_M$, $g^{TM}$; by $| \cdot |$ we mean either the modulus of a complex number, or the pointwise norm on the vector bundle ${\rm{End}}(\xi)$ induced by $h^{\xi}$. We defer the proof of the next theorem until Section \ref{sect_aux}.
	\begin{thm}\label{spec_gap_thm}
		For $n \leq 0$, the operator $\laplcomp^{E_M^{\xi, n}}$ has a spectral gap near $0$. More precisely, we have
		\begin{equation}\label{eqn_ker_lapl}
			H^{0}(\overline{M}, E_M^{\xi, n}) = \ker (\laplcomp^{E_M^{\xi, n}}),
		\end{equation}
		and there is  $\mu > 0$ such that
		\begin{equation}\label{eqn_spec_gap}
			\spec \big( \laplcomp^{E_M^{\xi, n}} \big) \cap \, ]0, \mu] = \emptyset.
		\end{equation}
	\end{thm}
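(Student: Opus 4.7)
The plan is to handle the two assertions of the theorem separately, with both reduced to local analysis on the model hyperbolic punctured disc $\dd^*$.

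For \eqref{eqn_ker_lapl}, completeness of $(M, g^{TM})$ and the standard Andreotti-Vesentini integration-by-parts argument (cited through \cite[Corollary 3.3.4]{MaHol}) yield $\ker(\laplcomp^{E_M^{\xi,n}}) = \ker(\dbar^{E_M^{\xi,n}})\cap L^2$ on sections of degree $0$, so it suffices to identify $L^2$ holomorphic sections on $M$ with $H^0(\overline{M}, E_M^{\xi,n})$. In the local frame $e = (dz_i^M\otimes s_{D_M}/z_i^M)^{\otimes n}$ near $P_i^M$, formula \eqref{eqn_norms_local} gives $|e|_h = |\ln|z_i^M||^n$, bounded for $n\leq 0$; since the hyperbolic volume of a punctured disc is finite and $h^\xi$ is smooth on $\overline{M}$, every section in $H^0(\overline{M}, E_M^{\xi,n})$ is $L^2$ on $M$. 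Conversely, writing an $L^2$ holomorphic section in the form $g(z)\cdot e$ with $g$ a priori meromorphic at $z=0$ and Laurent-expanding, a Parseval computation on $\int|g|^2|\ln|z||^{2n}\frac{dr\,d\theta}{r(\ln r)^2}$ forces the negative-index coefficients of $g$ to vanish, so the section extends to $\overline{M}$.

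For \eqref{eqn_spec_gap}, I would use the Persson-type variational formula
\[
\inf\spec_{\rm{ess}}(\laplcomp^{E_M^{\xi,n}}) \;=\; \lim_{\epsilon\to 0^+}\inf\Big\{ \scal{\laplcomp u}{u}_{L^2}\big/\|u\|^2_{L^2} \,:\, u\in\ccal^\infty_c\bigl(\textstyle\bigcup_i V_i^M(\epsilon)\bigr),\ u\neq 0\Big\}.
\]
By locality of $\laplcomp$ and the Poincaré compatibility \eqref{reqr_poincare}, the right-hand side is governed by the model Kodaira Laplacian on $\dd^*$ acting on the trivial line bundle with metric $|\ln|z||^{2n}$; the smooth bundle data of $(\xi, h^\xi)$ deviates from a flat trivialization on $V_i^M(\epsilon)$ only by $O(\epsilon)$ and is absorbed into the remainder. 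Substituting $t = -\ln r$ flattens the metric to $t^{-2}(dt^2 + d\theta^2)$ on $(T,\infty)\times S^1$; Fourier expansion $u = \sum_{k\in\integ} u_k(t)e^{ik\theta}$ then diagonalizes the quadratic form into a family of half-line Sturm-Liouville problems. Each admits an explicit positive lower bound $c_n > 0$ via a Hardy-type inequality — the classical $1/4$ for $n=0$ and an $n$-dependent constant in general — while the $k\neq 0$ sectors carry an additional positive $k^2$-type contribution. This yields $\inf\spec_{\rm{ess}}(\laplcomp^{E_M^{\xi,n}})\geq c_n > 0$, so the spectrum in $[0, c_n)$ is discrete with finite multiplicity by minimax; since $0$ is a discrete eigenvalue by \eqref{eqn_ker_lapl}, it is isolated, and \eqref{eqn_spec_gap} follows by taking $\mu$ strictly smaller than both $c_n$ and the smallest positive eigenvalue in $[0, c_n)$ (if any).

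The main obstacle I anticipate lies in the uniform Sturm-Liouville / Hardy lower bound on the cusp model: one must control the radial ODE across all Fourier modes simultaneously and verify that the perturbation introduced by $(\xi, h^\xi)$, together with the discrepancy between the actual metric and the model Poincaré metric on $V_i^M(\epsilon)$, can be absorbed uniformly as $\epsilon\to 0$. The fact that all of Section 2.3 of the paper is devoted to large-time heat-kernel estimates on the hyperbolic punctured disc strongly suggests that the actual proof in Section 2.4 instead derives \eqref{eqn_spec_gap} from exponential decay of the kernel of $\exp^\perp(-t\laplcomp^{E_M^{\xi,n}})$ for large $t$, which is logically equivalent to the spectral gap. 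Either route reduces to the same analytic core: the spectral structure of the model cusp operator acting on $\omega_{\dd}(0)^n$ over $\dd^*$.
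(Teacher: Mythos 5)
Your proposal is correct in outline, but for the spectral gap (\ref{eqn_spec_gap}) it takes a genuinely different route from the paper. For the kernel identification (\ref{eqn_ker_lapl}) the two arguments coincide in substance: the paper compares the cuspidal $L^2$-norm with one induced by smooth data on $\overline{M}$ (this is where $n\leq 0$ enters, via (\ref{ineq_n_leq_0})) and then invokes the classical $L^2$-extension theorem, which is exactly your Laurent/Parseval computation packaged as a citation; the converse inclusion is finite volume in both cases. For (\ref{eqn_spec_gap}) the paper does not use Persson's formula or separation of variables. It splits into two cases: for $n=0$ it quotes M\"uller's scattering-theoretic analysis of the von Neumann problem in the cusp, valid when $(\xi,h^{\xi})$ is trivial near the cusps, and removes that triviality assumption by flattening $h^{\xi}$ near the punctures and controlling the difference of the two Kodaira Laplacians --- a first-order operator with coefficients $O(|z|\cdot|z\ln|z||^{2})$, see (\ref{eq_comp_e_lapl}) --- through the Cauchy--Schwarz estimate on quadratic forms (\ref{eq_lapl_comp}) and the min--max principle on a subspace of finite codimension. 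For $n<0$ it bypasses Fourier analysis entirely: Nakano's inequality (\ref{est_lapl_gap_45}) combined with $[\imun R^{\omega_M(D)},\Lambda^{TM}]=-1/2$ near the cusps gives the lower bound $d=-n/2-1/4>0$ on the cusp piece, and an IMS-type cutoff plus discreteness of the Dirichlet spectrum on the compact core handles the rest. Your unified Persson/Hardy approach is viable for all $n\leq 0$ and would in fact yield directly the sharper discreteness statement (\ref{eqn_spec_gap_strong}) of Remark \ref{rem_spec_gap_mul}; the price is that you must treat two points with care that your sketch glosses over: (i) the perturbation coming from $(\xi,h^{\xi})$ is a first-order operator, so it is not bounded by $\epsilon\|u\|^{2}$ but only by $\epsilon\langle\laplcomp^{E_M^{\xi,n}} u,u\rangle+C\epsilon\|u\|^{2}$ --- precisely the estimate the paper carries out --- and (ii) since the quadratic form is $\|\dbar u\|^{2}$ rather than the full gradient, the nonzero Fourier modes contain a cross term $f_k'\overline{f_k}/r$ that must be integrated by parts before the $k^{2}$-contribution can be isolated. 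Finally, your closing guess about the paper's logic is backwards: the spectral gap is proved first, by the arguments above, and is then used to obtain the large-time decay (\ref{thm_est_exp_perp2}) of $\exp^{\perp}(-t\laplcomp^{E_M^{\xi,n}})$, not the other way around.
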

	\begin{rem}\label{rem_spec_gap_mul}
		As it would follow from our proof, there are $c_1, c_1 > 0$ such that the set
		\begin{equation}\label{eqn_spec_gap_strong}
			\spec \big( \laplcomp^{E_M^{\xi, n}} \big) \cap \, [0, -c_1n + c_2] \qquad \text{is discrete}
		\end{equation}
		for any $(M, g^{TM})$, $(\xi, h^{\xi})$, $\norm{\cdot}_M$ and $n \leq 0$.  We leave the verification of the details to the interested reader. 
		For $n = 0$, $(\xi, h^{\xi})$ trivial, and $c_2 = 1/4$, this was proved by Müller in \cite[\S 6]{MullerCusp}.
	\end{rem}
	Our proof relies on the result of Müller \cite[\S 6, Proposition 6.9]{MullerCusp}, who proves Theorem \ref{spec_gap_thm} for $n = 0$ and $(\xi, h^{\xi})$ trivial. 
	To motivate, let's prove one special case of Theorem \ref{spec_gap_thm} : \textit{let $(\xi, h^{\xi})$ be trivial, $n \leq -1$, the genus $g(\overline{M})$ satisfies (\ref{cond_stable}) and $g^{TM} = g^{TM}_{\rm{hyp}}$ be the canonical hyperbolic metric}. Then from Nakano's inequality (cf. \cite[Theorem 1.4.14]{MaHol}), we get  $\ker (\laplcomp^{E_M^{\xi, n}}) = \{ 0 \}$ and (\ref{eqn_spec_gap}). By Kodaira vanishing theorem (cf. \cite[Theorem 1.5.4]{MaHol}), we have $H^0(\overline{M}, E_M^{\xi, n}) = \{ 0 \}$. Thus, Theorem \ref{spec_gap_thm} holds for $(\overline{M}, D_M, g^{TM}_{\rm{hyp}})$, $n \leq -1$ and $(\xi, h^{\xi})$ trivial.
	\par Finally, let's discuss the construction of the $L^2$-norm $\norm{\cdot}_{L^2}(g^{TM}, h^{\xi} \otimes \norm{\cdot}_M^{2n})$ on the line bundle (\ref{defn_det_line}). By the isomorphism (\ref{eqn_ker_lapl}), we may endow $H^{0}(\overline{M}, E_M^{\xi, n})$ with the $L^2$-scalar product induced by (\ref{defn_L_2}). 
	Similarly to the analysis in the proof of (\ref{eqn_ker_lapl}), we have a natural isomorphism
	\begin{equation}\label{eqn_ker_lapl_h1}
		\ker (\laplcomp^{E_M^{\xi, n}}_{1}) = 
		\begin{cases} 
      			\hfill H^{1}(\overline{M}, E_M^{\xi, n}), & \text{ for } n=0, \\
      			\hfill H^{1}(\overline{M}, E_M^{\xi, n} \otimes \mathscr{O}_{\overline{M}}(D_M)),  & \text{ for }  n \leq -1,
 			\end{cases}
	\end{equation}
	where $\laplcomp^{E_M^{\xi, n}}_{1} = \dbar^{E_M^{\xi, n}} (\dbar^{E_M^{\xi, n}})^*$ is the Kodaira Laplacian associated with $1$-forms with values in $E_M^{\xi, n}$. We induce the $L^2$-scalar product on $H^{1}(\overline{M}, E_M^{\xi, n})$ by the natural inclusion
	\begin{equation}
		H^{1}(\overline{M}, E_M^{\xi, n}) 
		\xhookrightarrow{} 
		H^{1}(\overline{M}, E_M^{\xi, n} \otimes \mathscr{O}_{\overline{M}}(D_M)), \qquad \alpha \mapsto \alpha \otimes s_{D_M},
	\end{equation}
	where $s_{D_M}$ is the canonical holomorphic section of $\mathscr{O}_{\overline{M}}(D_M)$.
	Those scalar products induce the natural $L^2$-norm $\norm{\cdot}_{L^2}(g^{TM}, h^{\xi} \otimes \norm{\cdot}_M^{2n})$ on the line bundle (\ref{defn_det_line}).

\subsection{Relative spectral theory for surfaces with cusps}
	\begin{sloppypar}
		The main goal of this section is to define the \textit{analytic torsion} for any $(\xi, h^{\xi})$, $n \leq 0$, $m \in \nat$.
		This extends the relative definition due to Jorgenson-Lundelius \cite[Definition 1.9]{JorLundMain}, which they gave in the case $n = 0$ and $(\xi, h^{\xi})$ trivial. 
		The challenge here is that unlike in \cite{JorLundMain}, the precise contribution of the continuous spectrum to the heat kernel is unknown, moreover the local geometry near the cusp depends on $(\xi, h^{\xi})$. 
		We circumvent this difficulty by the analytic localization techniques of Bismut-Lebeau \cite[\S 11]{BisLeb91} and by the parametrix construction for the heat kernel (cf.  \cite[\S 2.4, 2.5]{BGV}).
		The parametrix construction will be particularly useful when we would estimate the effect of non-triviality of $(\xi, h^{\xi})$ (see Theorem \ref{thm_rel_hk_infty} and (\ref{eq_small_time_coeff_rel})).
	\end{sloppypar}
	\par We fix $n \in \integ$. Let the function $\rho_M : M \to [1, + \infty[$ be given by
		\begin{equation}\label{defn_rho}
			\rho_M(x) = 
			\begin{cases} 
      			\hfill 1 & \text{ for } x \in  M \setminus (\cup_i V_i^{M}(1/2)), \\
      			\hfill \sqrt{| \ln |z_i(x)||}  & \text{ for }  x \in  V_i^{M}(1/2), i = 1, \ldots, m. \\
 			\end{cases}
		\end{equation}	
	\begin{rem}\label{rem_inj_rad}
		The function $(\rho_M(x))^{-2}$ is proportional to the injectivity radius at point $x$ of $(M, g^{TM})$.
	\end{rem}
	We denote by $\dist(\cdot, \cdot)$ the distance function on $(M, g^{TM})$.
	Now we can state the main theorems of this section. Their proofs are delayed until Section \ref{sect_aux}.
	\begin{thm}\label{thm_hk_est}
		For any $l,l' \in \nat$, there are $c, c', C > 0$ such that for any $t > 0$, $x, x' \in M$, we have
		\begin{multline}\label{thm_est_exp2} 
			\textstyle \big| (\nabla_x)^{l} (\nabla_{x'})^{l'} \exp(-t \laplcomp^{E_M^{\xi, n}})(x, x') \big|_{h \times h} \textstyle \leq C \rho_M(x) \rho_M(x') t^{-1 - (l + l')/2} 
			\cdot \\
			\cdot  \exp(ct - c' \cdot \dist(x, x')^2/t) ,
		\end{multline}
		where $\nabla$ is induced by the Levi-Civita connection and the Chern connections of $(\xi, h^{\xi})$ and $(\omega_{M}(D), \norm{\cdot}_M)$.
		Also, if $n \leq 0$, then there are $c, C > 0$ such that for any $t > 0$, we have
		\begin{equation}\label{thm_est_exp_perp2}
			\textstyle \big| (\nabla_x)^{l} (\nabla_{x'})^{l'} \exp^{\perp}(-t \laplcomp^{E_M^{\xi, n}})(x, x') \big|_{h \times h}  \textstyle\leq C  \rho_M(x) \rho_M(x') t^{-4 -l - l'} \exp (-ct ). 
		\end{equation}
	\end{thm}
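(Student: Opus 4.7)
The plan is to derive (\ref{thm_est_exp2}) by a parametrix construction that patches the hyperbolic cusp model — whose pointwise heat kernel estimates are the content of Section 2.3 — to a standard parametrix on a compact piece of $M$, and to derive (\ref{thm_est_exp_perp2}) by combining (\ref{thm_est_exp2}) with the spectral gap of Theorem \ref{spec_gap_thm} via a semigroup sandwich. The weight $\rho_M(x)\rho_M(x')$ originates in the cusp model: by Remark \ref{rem_inj_rad}, $\rho_M^{-2}$ is comparable to the injectivity radius on $(M, g^{TM})$, and the model heat kernel on $(\dd^*, g^{T\dd^*})$ carries exactly this factor on each side.

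\textbf{Parametrix for (\ref{thm_est_exp2}).} Choose a smooth partition of unity $\{\phi_U, \phi_1, \ldots, \phi_m\}$ subordinate to a cover $\{U, V_1^{M}(\eta), \ldots, V_m^{M}(\eta)\}$ of $M$, with $U \ssubset M$ disjoint from $\bigcup_i V_i^{M}(\eta/2)$, and auxiliary cut-offs $\psi_U, \psi_i$ equal to $1$ on $\operatorname{supp}(\phi_\bullet)$. On $U$ the geometry and the bundle are bounded, so standard off-diagonal Gaussian bounds from \cite{BGV} apply. On each $V_i^{M}(\eta)$, pulling back by $z_i^{M}$ identifies $\laplcomp^{E_M^{\xi,n}}$ with the model cusp Laplacian $\laplcomp_{\dd^*}^n$ on $(\omega_{\dd^*}(0))^n \otimes \comp^{\rk{\xi}}$ plus a smooth bounded zeroth-order perturbation from the Chern connection of $(\xi, h^{\xi})$; Section 2.3 will supply pointwise bounds of the form (\ref{thm_est_exp2}) for $\laplcomp_{\dd^*}^n$. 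The glued approximate kernel
\begin{equation*}
	K_t(x, x') = \phi_U(x) \exp(-t \laplcomp_U)(x, x') \psi_U(x') + \sum_i \phi_i(x) \exp(-t \laplcomp_{\dd^*}^n)(x, x') \psi_i(x')
\end{equation*}
has error $R_t := (\partial_t + \laplcomp^{E_M^{\xi,n}}) K_t$ supported in the overlaps $\nabla \psi_\bullet \neq 0$, hence pointwise dominated by the right-hand side of (\ref{thm_est_exp2}) times $\exp(-c/t)$ for a positive $c$. Duhamel
\begin{equation*}
	\exp(-t \laplcomp^{E_M^{\xi,n}}) = K_t - \tinyint_0^t \exp(-(t-s) \laplcomp^{E_M^{\xi,n}}) R_s\, ds
\end{equation*}
then yields a convergent Volterra series preserving the bound (\ref{thm_est_exp2}); derivatives are handled by differentiating $K_t$ and the series and reusing the same Section 2.3 estimates.

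\textbf{Spectral sandwich for (\ref{thm_est_exp_perp2}).} Assume $n \leq 0$; by Theorem \ref{spec_gap_thm} and the spectral theorem, $\| \exp^{\perp}(-t \laplcomp^{E_M^{\xi,n}}) \|_{L^2 \to L^2} \leq e^{-\mu t}$. For $t \geq 3$, decompose
\begin{equation*}
	\exp^{\perp}(-t \laplcomp^{E_M^{\xi,n}}) = \exp^{\perp}(-\laplcomp^{E_M^{\xi,n}}) \circ \exp^{\perp}(-(t-2)\laplcomp^{E_M^{\xi,n}}) \circ \exp^{\perp}(-\laplcomp^{E_M^{\xi,n}}),
\end{equation*}
and apply Cauchy--Schwarz:
\begin{equation*}
	\bigl| \exp^{\perp}(-t \laplcomp^{E_M^{\xi,n}})(x, x') \bigr|_{h \times h}
	\leq \bigl\| \exp^{\perp}(-\laplcomp^{E_M^{\xi,n}})(x, \cdot) \bigr\|_{L^2} \, e^{-\mu(t-2)} \, \bigl\| \exp^{\perp}(-\laplcomp^{E_M^{\xi,n}})(\cdot, x') \bigr\|_{L^2}.
\end{equation*}
By (\ref{thm_est_exp2}) at $t = 1$, the first norm is $\leq C \rho_M(x) \bigl( \tinyint_M \rho_M(y)^2 \exp(-2c' \dist(x, y)^2)\, d\vol_M(y) \bigr)^{1/2}$; using (\ref{reqr_poincare}) to unwind the cusp volume form, the inner integral is bounded uniformly in $x$, because the Gaussian decay dominates the logarithmic growth of $\rho_M^2$. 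Hence each outer norm is $\leq C \rho_M(x)$, resp.~$\leq C \rho_M(x')$, and the factor $e^{-\mu(t-2)}$ trivially majorises $C t^{-4-l-l'} e^{-ct}$ for any $c < \mu$ and $t \geq 1$. The range $t < 3$ is absorbed into (\ref{thm_est_exp2}) directly. Derivatives $l, l' \geq 1$ are obtained by pushing $(\nabla_x)^l, (\nabla_{x'})^{l'}$ onto the outer factors and reapplying (\ref{thm_est_exp2}) with derivatives.

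\textbf{Main obstacle.} The delicate step underpinning everything is the cusp-model estimate: a pointwise off-diagonal heat-kernel bound on $(\dd^*, g^{T\dd^*})$ twisted by $(\omega_{\dd^*}(0))^n$ that carries precisely the weight $\rho_M(x)\rho_M(x')$ and survives the twist by a non-trivial $(\xi, h^{\xi})$. This will require off-diagonal analysis via explicit formulae on the universal cover $\hh$, combined with the analytic localization of Bismut--Lebeau \cite[\S 11]{BisLeb91}, and is the technical core of Section 2.3. Once it is in place, the partition-of-unity gluing, the Volterra iteration, and the semigroup sandwich are routine.
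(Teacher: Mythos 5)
Your architecture is reasonable and you have correctly identified that the technical core is the weighted off-diagonal bound for the cusp model, which the paper indeed isolates in Section \ref{sect_pametr} (Theorem \ref{thm_local_summary}). Your proof of (\ref{thm_est_exp_perp2}) by a Cauchy--Schwarz semigroup sandwich is a legitimate alternative to the paper's route, which instead applies the weighted elliptic estimate of Lemma \ref{lem_ell_est} directly to the kernel and then invokes the spectral gap to bound the operator norms $\|(\laplcomp^{E_M^{\xi,n}})^i\exp^{\perp}(-t\laplcomp^{E_M^{\xi,n}})(\laplcomp^{E_M^{\xi,n}})^j\|^{0,0}$; the paper's version needs no pointwise input at a fixed time, while yours needs (\ref{thm_est_exp2}) at $t=1$ together with the (correct, but worth writing out) computation that $\int_M\rho_M(y)^2\exp(-c\,\dist(x,y)^2)\,dv_M(y)$ is bounded uniformly in $x$ because $\rho_M(y)^2\,dv_M(y)$ is comparable to Lebesgue measure in the coordinate $s=\ln|\ln|z||$ while $\dist(x,y)\geq|s-s_0|$.

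The genuine gap is the large-time case of (\ref{thm_est_exp2}). Your parametrix-plus-Volterra construction only controls $t\in\,]0,t_0]$, and you never address $t>t_0$. This cannot be dismissed: the crude spectral/elliptic bound for large $t$ yields $C\rho_M(x)\rho_M(x')t^{-N}$ with no off-diagonal decay, which does not imply the right-hand side of (\ref{thm_est_exp2}) when $\dist(x,x')^2/t\gg t$; and the Gaussian factor at all times is precisely what is used later inside Duhamel's formula in the proof of Theorem \ref{thm_rel_hk_infty}. The paper devotes Lemma \ref{lem_from_t0_to_infty} to this step: an induction over dyadic time scales via the semigroup identity, whose closing requires the weighted Gaussian convolution estimate $\int_M\rho_M(z)^2\exp(-c'(\dist(x,z)^2+\dist(z,x')^2)/t)\,dv_M(z)\lesssim\sqrt{t}\,\rho_M(x)\rho_M(x')\exp(-c'\dist(x,x')^2/2t)$ in the cusp, obtained by splitting the annulus according to $|z|<|u|$, $|u|<|z|<|v|$, etc., and using the explicit hyperbolic distance formula and the $S^1$-symmetry. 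You need to supply this (or an equivalent) to get all $t>0$. Two smaller slips: in your glued parametrix the heat operator differentiates $\phi_\bullet(x)$, and $\{\nabla\phi_\bullet\neq0\}$ is \emph{not} separated from $\operatorname{supp}\psi_\bullet$, so the error term does not gain the factor $\exp(-c/t)$ as written --- you must put the partition of unity on the $x'$ variable and the wider cut-offs (equal to $1$ near the supports of the $\phi_\bullet$) on the $x$ variable. Also, the perturbation of the model Laplacian coming from a non-trivial $h^{\xi}$ is a first-order operator (cf. (\ref{eq_comp_e_lapl})), not zeroth-order, though its coefficient is $O(|z|(\ln|z|)^2)$ near the cusp; the paper avoids this issue by building $\xi$ into the parametrix on $\hh$ and proving uniform bounds on its coefficients (Theorem \ref{thm_bound_phi}).
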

	\begin{rem}
		a) By Remark \ref{rem_inj_rad}, we see that for $n=0$, $(\xi, h^{\xi})$ trivial and $k,l=0$, (\ref{thm_est_exp2}) is exactly the Moser's estimate \cite[p. 115-117]{MoserHarn} (cf. \cite[Theorem VIII.8]{Chav}) for a hyperbolic surface. The proof of (\ref{thm_est_exp2}) is different from \cite[p. 115-117]{MoserHarn} and it uses an explicit construction of the parametrix of the heat kernel. 
		\par b) By using the same techniques as in the proof of (\ref{thm_est_exp_perp2}), we may deduce that for any $l, l' \in \nat$, there is $C>0$ such that
			\begin{equation}\label{thm_est_exp_est_old}
				\textstyle \big| (\nabla_x)^{l} (\nabla_{x'})^{l'} \exp(-t \laplcomp^{E_M^{\xi, n}})(x, x') \big|_{h \times h}  \textstyle \leq C  \rho_M(x) \rho_M(x') t^{-4 -l - l'}. 
			\end{equation}
			The estimate (\ref{thm_est_exp_est_old}) is unfortunately not enough for our needs, since we use (\ref{thm_est_exp2}) in the proof of (\ref{hk_infty}) inside Duhamel's formula, thus, the precise power of $t$ is important. However, by Remark \ref{hk_infty_sharp}, we note that if one considers only $(\xi, h^{\xi})$ which are trivial around the cusps, then the estimate (\ref{thm_est_exp_est_old}) is enough to prove (\ref{hk_infty_perp}), and all the analysis associated with the parametrix construction is not necessary. 
	\end{rem}
	Now, let $M, N$ and all related notions be as in (\ref{data_rel_tors}).
	\begin{thm}\label{thm_rel_hk_infty}
		For any $k \in \nat$, there are $\epsilon, c, c', C > 0$ such that for any $t > 0$, $u \in \comp, |u| \leq \epsilon$:
		\begin{align}
			 \textstyle
			\Big|
				\exp(-t \laplcomp^{E_M^{\xi, n}}) \big( (z_i^{M})^{-1}(u), (z_i^{M})^{-1}(u) \big)    - 
				{\rm{Id}_{\xi}} \cdot \exp(-t \laplcomp^{E_N^{n}}) \big( (z_i^{N})^{-1}(u), (z_i^{N})^{-1}(u) \big)
			\Big|  \nonumber \\  
				\leq
				C | \ln |u| | \exp(ct) \cdot \min \Big\{
				| \ln |u| |^{-k} + \exp(-c' (\ln |\ln |u||)^2/t );  \label{hk_infty}   \\
      			 |u|^{1/3} + \exp(-c'/t) \label{hk_infty2}
 			\Big\}.
		\end{align}
		Moreover, if $n \leq 0$, then there are $\varsigma  < 1$ and $c, C > 0$ such that
		\begin{multline}\label{hk_infty_perp}
			\textstyle
			\Big|
				\exp^{\perp}(-t \laplcomp^{E_M^{\xi, n}}) \big( (z_i^{M})^{-1}(u), (z_i^{M})^{-1}(u) \big)    	
				- 
				{\rm{Id}_{\xi}} \cdot \exp^{\perp}(-t \laplcomp^{E_N^{n}}) \big( (z_i^{N})^{-1}(u), (z_i^{N})^{-1}(u) \big)
			\Big|  \\    
				\textstyle \leq C |\ln |u||^{\varsigma} t^{-4}  \exp (-ct). 
		\end{multline}
	\end{thm}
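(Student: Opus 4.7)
The core idea is that in the Poincaré-compatible coordinates $z_i^M$ and $z_i^N$, small neighborhoods of $P_i^M$ in $M$ and of $P_i^N$ in $N$ are isometric, and under this identification the Hermitian line bundle $(\omega_M(D), \norm{\cdot}_M)$ corresponds to $(\omega_N(D), \norm{\cdot}_N)$. Near $P_i^M$, choose a local holomorphic frame of $\xi$ that is orthonormal at $P_i^M$; then $h^\xi - \operatorname{Id}_\xi$ vanishes at $P_i^M$, and on the cusp chart we may decompose
\[
\laplcomp^{E_M^{\xi,n}} = \operatorname{Id}_\xi \otimes \laplcomp^{E_N^{n}} + \mathscr{A},
\]
where $\mathscr A$ is a differential operator of order $\leq 1$ on $\xi$-valued sections whose coefficients vanish at the cusp, being of order $O(|z_i^M|)$.

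The plan is to compare the two heat kernels by a Duhamel argument combined with a cutoff. Fix a smooth cutoff $\chi$ supported in the cusp chart and equal to $1$ in a smaller neighborhood of $P_i^M$, and form the parametrix
\[
Q(t) = \chi \cdot \bigl(\operatorname{Id}_\xi \otimes \exp(-t \laplcomp^{E_N^{n}})\bigr) \cdot \chi,
\]
where the line-bundle heat kernel is transported from $N$ to $M$ by the coordinate identification. Then $(\partial_t + \laplcomp^{E_M^{\xi,n}}) Q(t)$ splits into a commutator term $[\laplcomp^{E_M^{\xi,n}}, \chi]$ supported where $\chi$ is non-constant, and a term $\chi \mathscr A \chi$ with coefficient $O(|z_i^M|)$. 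Duhamel's formula then expresses the difference of the two heat kernels at a cusp point as a time-integral against these two remainders. The Gaussian bound (\ref{thm_est_exp2}) of Theorem \ref{thm_hk_est} controls the commutator contribution: one heat-kernel evaluation in the resulting integrand sits at hyperbolic distance $\sim \ln|\ln|u||$ from $(z_i^M)^{-1}(u)$, yielding the exponential factor $\exp(-c'(\ln|\ln|u||)^2 / t)$ of (\ref{hk_infty}), or $\exp(-c'/t)$ of (\ref{hk_infty2}) when the cutoff is instead supported in a ball of geodesic radius $O(1)$.

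To obtain the polynomial factors, one uses that $\mathscr A$ has coefficients of size $O(|z_i^M|)$; within a ball of geodesic radius $O(1)$, this together with Theorem \ref{thm_hk_est} and a short interpolation yields the factor $|u|^{1/3}$ of (\ref{hk_infty2}), while for (\ref{hk_infty}) one iterates Duhamel $k$ times, each iteration bringing down a fresh power of $|z_i^M|$ whose integral over the larger ball (of geodesic radius $\sim \ln|\ln|u||$) decays like $|\ln|u||^{-1}$. For the large-time estimate (\ref{hk_infty_perp}), decompose $\exp^\perp(-t\laplcomp^{E_M^{\xi,n}}) = \exp^\perp(-(t - T)\laplcomp^{E_M^{\xi,n}}) \cdot \exp^\perp(-T \laplcomp^{E_M^{\xi,n}})$ for a fixed small $T$; the spectral gap from Theorem \ref{spec_gap_thm} yields operator-norm decay $e^{-\mu(t - T)/2}$ on the complement of the kernel, while the small-time comparison above controls the remaining factor. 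The improvement of the polynomial weight from $\rho_M(x)\rho_N(x) \sim |\ln|u||$ to $|\ln|u||^\varsigma$ with $\varsigma < 1$ stems from the partial cancellation of the leading $\rho_M$-factors when one takes the difference of the two heat kernels. The main obstacle is the careful bookkeeping of the iterated Duhamel terms against the Gaussian bounds of Theorem \ref{thm_hk_est}, and in particular ensuring that the constants $c,c'$ can be chosen uniformly as $u \to 0$; here the explicit parametrix construction on the hyperbolic punctured disc from Section 2.3 is essential.
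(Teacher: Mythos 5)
Your treatment of (\ref{hk_infty}) and (\ref{hk_infty2}) is, up to repackaging, the paper's argument: the paper also isolates the bundle-metric perturbation as a first-order operator with coefficients $O(|z_i^M|)$ (see (\ref{eq_comp_e_lapl}), (\ref{eq_bound_cusped_der}), which require the \emph{normal} trivialization (\ref{defn_normal_triv}), i.e.\ $h^{\xi}=\operatorname{Id}+O(|u|^2)$, not merely orthonormality at the point), runs Duhamel against the Gaussian bounds of Theorem \ref{thm_hk_est}, and splits the spatial integral into a hyperbolic ball around $u$ and its complement. Two cosmetic differences: the paper's Duhamel is taken in a deformation parameter $\epsilon$ interpolating $h^{\xi}$ with a metric trivial near the cusp (the comparison of the trivialized problem with $N$ then being exact by finite propagation speed, cf.\ (\ref{hk_infty_sharp})), rather than against a cutoff parametrix; and the factor $|\ln|u||^{-k}$ is obtained in a \emph{single} Duhamel application by taking the ball to reach out exactly to $\{|v|=|\ln|u||^{-k}\}$ — which still has radius $\sim\ln|\ln|u||$ because the hyperbolic distance is doubly logarithmic in $|v|$ — so that the perturbation coefficient is $\le|\ln|u||^{-k}$ throughout the ball. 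Your $k$-fold iteration can be made to work, but only with the same care: the gain of $|\ln|u||^{-1}$ per step holds because the ball reaching $\{|v|=|\ln|u||^{-1}\}$ has radius $\sim\ln|\ln|u||$, not because the integral of $|v|$ over an arbitrary ball of that radius decays; as stated this step is under-justified.

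The genuine gap is in (\ref{hk_infty_perp}). First, you never pass from $\exp$ to $\exp^{\perp}$: one must control the difference of the projections onto $\ker\laplcomp$, i.e.\ the Bergman-type kernels $\sum s_i(x)s_i(x)^*$ near the cusp, using (\ref{eqn_ker_lapl}) (this is (\ref{eqn_exp_perp_exp_rel_norm}) in the paper). Second, and more importantly, your explanation of the exponent $\varsigma<1$ — ``partial cancellation of the leading $\rho_M$-factors in the difference'' — is not a mechanism that produces the result: each diagonal kernel separately carries the weight $\rho_M(x)^2\sim|\ln|u||$ uniformly in $t$ (cf.\ (\ref{thm_est_exp_perp2})), and your semigroup splitting plus spectral gap only reproduces that bound, with weight $|\ln|u||$, not $|\ln|u||^{\varsigma}$. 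The actual mechanism is an interpolation: one has the bound $C\exp(ct)\bigl(|\ln|u||\exp(-c'(\ln|\ln|u||)^2/t)+1\bigr)$ for the difference valid for \emph{all} $t$ (from (\ref{hk_infty})), and the bound $C|\ln|u||\,t^{-4}\exp(-ct)$ from (\ref{thm_est_exp_perp2}); raising these to complementary powers and using
\begin{equation*}
\exp\bigl(-ct-c'(\ln|\ln|u||)^2/t\bigr)\le|\ln|u||^{-2\sqrt{cc'}}
\end{equation*}
(the paper's (\ref{eq_cauchy_exp})) trades the Gaussian-in-$1/t$ factor for a negative power of $|\ln|u||$, which is what beats the prefactor down to $|\ln|u||^{\varsigma}$ with $\varsigma<1$ while retaining $\exp(-ct)$. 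Without this step your argument yields only the weight $|\ln|u||$, which is insufficient for the integrability (\ref{eq_int_mu_fin}) needed to define the regularized heat trace.
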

	\begin{rem}
		As we explain in the course of the proof of Theorem \ref{thm_rel_hk_infty}, if $(\xi, h^{\xi})$ is trivial around the cusps, then. since the geometry around the cusps of $(M, g^{TM})$, $(N, g^{TN})$ coincides, the estimates (\ref{hk_infty}), (\ref{hk_infty2}) could be easily improved. In this case, we have
		\begin{align}
			 \textstyle
			\Big|
				\exp(-t \laplcomp^{E_M^{\xi, n}}) \big( (z_i^{M})^{-1}(u), (z_i^{M})^{-1}(u) \big)    - 
				{\rm{Id}_{\xi}} \cdot \exp(-t \laplcomp^{E_N^{n}}) \big( (z_i^{N})^{-1}(u), (z_i^{N})^{-1}(u) \big)
			\Big|  \nonumber \\  
				\leq
				C | \ln |u| |  \exp(-c' (\ln |\ln |u||)^2/t ). \label{hk_infty_sharp} 
		\end{align}
		To prove (\ref{hk_infty}), (\ref{hk_infty2}) in full generality, we use Duhamel's formula and estimates from (\ref{thm_est_exp2}).
	\end{rem}
	\begin{thm}\label{thm_small_time_exp}
		There are smooth bounded functions $a_{\xi, j}^{M, n} : M \to \enmr{\xi}$, $j \geq -1$ such that for any $x \in M$, $t_0 > 0$, $k \in \nat$, there is $C > 0$ such that for any $t \in ]0, t_0]$, we have
		\begin{equation}\label{eq_small_time_exp_111}
			\Big| \exp(-t \laplcomp^{E_M^{\xi, n}}) \big( x, x \big) - \sum_{j = - 1}^{k} a_{\xi, j}^{M, n}(x) t^j \Big| \leq C t^k.
		\end{equation}
		Moreover, if $x \in M \setminus (\cup_i V_i^{M}(e^{-t^{-1/3}}))$, then $C$ can be chosen independently of $t \in ]0, t_0]$ and $x$.
		\par 
		Also, there is $\epsilon > 0$, such that for any $l \in \nat$, $j \geq -1$, there is $C > 0$ such that for any $u \in \comp$, $0 < |u| \leq \epsilon$, $i = 1, \ldots, m$, we have
		\begin{equation}\label{eq_small_time_coeff_rel}
			\Big| (\nabla_u)^l \Big( a_{\xi, j}^{M, n} \big( (z_i^{M})^{-1}(u) \big) - {\rm{Id}_{\xi}} a_{j}^{N, n} \big( (z_i^{N})^{-1}(u) \big) \Big) \Big|_{h} \leq C |u|^{1/3},
		\end{equation}
		where $\nabla$ is induced by the Levi-Civita connection and Chern connections associated with $(\xi, h^{\xi})$ and $(\omega_{\dd}(0), \norm{\cdot}_{\dd})$.
	\end{thm}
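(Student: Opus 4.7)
The plan is to prove Theorem \ref{thm_small_time_exp} via a parametrix construction for the heat kernel $\exp(-t \laplcomp^{E_M^{\xi, n}})$ that is global on $M$ but modeled differently in the two regions of $\overline{M}$: in the interior and in the cuspidal charts. More precisely, I would cover $\overline{M}$ by an open set $U_0 \ssubset M \setminus (\cup_i V_i^{M}(\epsilon/2))$ on which the classical Minakshisundaram--Pleijel construction (see \cite[\S 2.4]{BGV}) produces a local parametrix with smooth coefficients, together with cuspidal charts $V_i^{M}(\epsilon)$ on which, after fixing a local holomorphic frame of $\xi$ with $h^{\xi}|_{P_i^M} = {\rm{Id}}$, the operator $\laplcomp^{E_M^{\xi, n}}$ is a smooth perturbation of the model Laplacian $\laplcomp^{E_N^{n}}$ tensored with the rank-$\rk{\xi}$ trivial bundle. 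Gluing the resulting local coefficients by a partition of unity produces globally defined sections $a_{\xi, j}^{M, n} \in \ccal^{\infty}(M, \enmr{\xi})$, bounded because after the above trivialization they extend smoothly across $D_M$.

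To prove (\ref{eq_small_time_exp_111}) I would apply Duhamel's formula to compare $\exp(-t \laplcomp^{E_M^{\xi, n}})$ with the truncated parametrix $Q_k(t)$ built from the $a_{\xi, j}^{M, n}$ up to order $j = k + 1$. The residual operator $(\partial_t + \laplcomp^{E_M^{\xi, n}}) Q_k(t)$ has a smooth kernel which is $O(t^{k+1})$ pointwise, and convolving against $\exp(-t\laplcomp^{E_M^{\xi, n}})$ using the bound (\ref{thm_est_exp2}) of Theorem \ref{thm_hk_est} yields the pointwise bound $O(t^k)$ with constant depending on $\rho_M(x)$. For the uniform statement outside $\cup_i V_i^M(e^{-t^{-1/3}})$, I would exploit that in this region $\rho_M(x) \leq t^{-1/6}$ by (\ref{defn_rho}); combined with (\ref{thm_est_exp2}) and the fact that taking $k$ parametrix steps higher only costs a uniformly bounded amount, this absorbs the $\rho_M$ factor into the remainder and gives a constant independent of both $x$ and $t \in ]0, t_0]$.

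For the comparison (\ref{eq_small_time_coeff_rel}), the essential point is that the coefficients $a_{\xi, j}^{M, n}$ are \emph{local}: each is a universal polynomial expression in the finite jet at $x$ of the Chern curvatures of $(\xi \otimes \omega_M(D)^n, h^{\xi} \otimes \norm{\cdot}_M^{2n})$ and of $g^{TM}$ (see e.g. \cite[Theorem 2.26]{BGV}). On $V_i^{M}(\epsilon)$, after identifying the coordinate $z_i^M$ with $z_i^N$, the metric $g^{TM}$ agrees with $g^{T\dd^*}$ and $\norm{\cdot}_M$ agrees with $\norm{\cdot}_N$, so the only contribution to the difference $a_{\xi, j}^{M, n}((z_i^M)^{-1}(u)) - {\rm{Id}}_{\xi} \cdot a_{j}^{N, n}((z_i^N)^{-1}(u))$ is, in the chosen trivialization of $\xi$, a polynomial expression in $h^{\xi}(u) - {\rm{Id}}$, $R^{(\xi, h^{\xi})}$ and its derivatives. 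Since $(\xi, h^{\xi})$ is smooth on $\overline{M}$, its Chern curvature $R^{(\xi, h^{\xi})}$ is a smooth Euclidean $(1,1)$-form, and when measured against $g^{TM}$ each such factor produces a weight $|z \ln|z||^2$; taking $l$ covariant derivatives $\nabla_u$ in the Chern connection of $(\omega_\dd(0), \norm{\cdot}_\dd)$ only introduces polynomial factors in $|\ln|u||$, which are dominated by $|u|^{1/3}$ for $|u|$ small.

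The main obstacle is the third step: one must carefully track how the Poincaré-weighted norm $|\cdot|_h$ and the degenerate connection $\nabla$ of $(\omega_\dd(0), \norm{\cdot}_\dd)$ interact with arbitrarily many derivatives of $R^{(\xi, h^{\xi})}$ in the universal expression for $a_{\xi, j}^{M, n}$, and verify that each such occurrence dominates any accumulated logarithm, giving the claimed rate $|u|^{1/3}$ for all $l \in \nat$. A secondary technical point is choosing the parametrix on $V_i^M(\epsilon)$ so that its coefficients depend only on the germs of $(g^{TM}, h^{\xi}, \norm{\cdot}_M)$ at the corresponding point, ensuring the cleanest possible comparison with the model coefficients $a_{j}^{N, n}$.
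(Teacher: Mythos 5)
Your overall strategy (a parametrix whose coefficients are local, compared term by term with the model coefficients on $N$ using the decay of the curvature of $(\xi,h^{\xi})$ toward the cusp) is the same as the paper's, which builds the parametrix on the covering $\rho:\hh\to\dd^*$ (Section \ref{sect_pametr}), pushes it down via (\ref{eq_rel_hk_dd_hh}), and localizes with finite propagation speed; your comparison step (\ref{eq_small_time_coeff_rel}) is in substance the paper's Theorem \ref{thm_bound_phi} and (\ref{eq_small_time_coeff_rel_local}), where the normal trivialization $h^{\xi}=\mathrm{Id}+O(|u|^2)$ of (\ref{defn_normal_triv}) lifted to $\hh$ gives the exponential decay $e^{-\Im z/3}$, i.e.\ the rate $|u|^{1/3}$.

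The genuine gap is in your error analysis near the cusp. You assert that the Duhamel residual of the truncated parametrix is $O(t^{k+1})$ pointwise with the non-uniformity carried entirely by $\rho_M(x)$, and that the uniform statement then follows because $\rho_M(x)\le t^{-1/6}$ on $M\setminus(\cup_i V_i^M(e^{-t^{-1/3}}))$. This misses the contribution of the short geodesic loops around the cusp: the injectivity radius at $x$ is comparable to $|\ln|z_i^M(x)||^{-1}$, so a parametrix built from a single Gaussian centered on the diagonal (with a cutoff at fixed distance) either fails to reproduce the delta initial condition or must be compared with the sum $\sum_{i\in\integ}\exp(-t\laplcomp^{\xi\otimes\omega_{\hh}^n})(\tilde z,U^i\tilde z)$ over deck transformations, whose $i\neq 0$ terms contribute an error of size $\tfrac{1}{t}\exp(-c'/(t|\ln|u||^2))$ (this is the second error term in (\ref{eq_local_sm_time}), obtained in the paper from Lemma \ref{lem_dist_hyp_exp} and the orbit bound (\ref{eq_boud_orbit})). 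That term is not of the form $\rho_M(x)\,t^k$; it is what forces the constant in (\ref{eq_small_time_exp_111}) to depend on $x$, and it is controlled on $M\setminus(\cup_i V_i^M(e^{-t^{-1/3}}))$ precisely because there $|\ln|u||\le t^{-1/3}$, so $\exp(-c'/(t|\ln|u||^2))\le\exp(-c't^{-1/3})$. Without introducing and bounding this wrapping term your Duhamel estimate is not valid uniformly near $D_M$, and the justification of the restricted uniform statement is incomplete. A secondary imprecision: you only impose $h^{\xi}|_{P_i^M}=\mathrm{Id}$, whereas the clean derivation of the $|u|^{1/3}$ rate (and of the uniform $\ccal^{\infty}$ bounds on the coefficients) uses the normal trivialization to second order together with the family of isometries $g_{z_0}$ reducing everything to uniform estimates on $B^{\hh}(\imun,1)$; since the heat coefficients depend only on the curvature of $(\xi,h^{\xi})$, this is repairable, but it is exactly the kind of uniformity you flag as the ``main obstacle'' and leave unresolved.
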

	\begin{rem}\label{rem_integ_small_time}
		Since the functions $a_j^{E_M^{\xi, n}}$, $j \geq -1$ are bounded, the functions ${\rm{Tr}}^{\xi}\big[ a_j^{E_M^{\xi, n}} \big]$ are integrable over $M$, which is already suggested by an analogue of \cite[Theorem 4.1.7]{MaHol} and the fact that the scalar curvature is constant.
	\end{rem}
	From now on till the end of this section, we denote by 
	\begin{equation}
		P: = \cp \setminus \{0, 1, \infty\},
	\end{equation}
	 and by $g^{TP}$ the unique hyperbolic metric of constant scalar curvature $-1$ over $P$ with cusps at $D_P = \{0, 1, \infty \}$. We use the notations $\norm{\cdot}_P$, $V_i^{P}(\epsilon)$, $E_P^{n}$, $\ldots$, and denote by $z^P$ the Poincaré-compatible coordinate of $0 \in \cp$.
	 \begin{defn}\label{defn_rel_HT}
	 	We define the \textit{regularized heat trace} by
	 	\begin{align}
	 	{\rm{Tr}}^{\reg} \big[ \exp^{\perp} (& -t  \laplcomp^{E_M^{\xi, n}}) \big] := 
		\nonumber
	 	  \int_{M \setminus (\cup_i V_i^{M}(\eta))} \tr{\exp^{\perp}(-t  \laplcomp^{E_M^{\xi, n}})(x, x)} d \vol_M(x) 	 	
	 	\\ \label{eqn_rel_HT}
	 	- \frac{m \cdot \rk{\xi}}{3}
	 	&  
	 	\int_{P \setminus ( \cup_i V_i^{P}(\eta))} \tr{\exp^{\perp}(-t  \laplcomp^{E_P^{n}})(x, x)} d \vol_P(x) 
	 	\\ \nonumber
	 	+ 
	 	& 
	 	 \sum_i \int_{D^*(\eta)} \Big( {\rm{Tr}} \Big[ \exp^{\perp}(-t  \laplcomp^{E_M^{\xi, n}}) \big((z_{i}^{M})^{-1}(u), (z_{i}^{M})^{-1}(u)\big) \Big] 
	 	\\ \nonumber
	 	& 
	 	\qquad \qquad
	 	- \rk{\xi} {\rm{Tr}} \Big[ \exp^{\perp}(-t  \laplcomp^{E_P^{n}})\big((z^P)^{-1}(u), (z^P)^{-1}(u)\big) \Big] \Big) 
	 	d \vol_{\dd^*}(u),
	 	\end{align}
	 	where $\eta > 0$ is such that Theorem \ref{thm_rel_hk_infty} and (\ref{reqr_poincare}) hold.
	 \end{defn}
	 \begin{rem}\label{rem_ht}
	 	a) From the fact that there is a holomorphic automorphism of $\cp$ inducing the isometry on $(P, g^{TP})$, which permutes $D_P$ as we wish, the coordinate $z^P$ in the definition might be changed to a Poincaré-compatible coordinate associated with $1$ or $\infty$, and this would result in the same definition.
	 	\par 
	 	b) Essentially, in our definition of the regularized heat trace, we take out the diverging part of the usual heat trace. This idea is very similar to the famous $b$-trace, defined by Melrose in \cite[Lemma 4.62]{MelroseAPS}, which was used in the context of Riemann surfaces with cusps by Albin-Rochon \cite{AlbRoch}. It has also appeared in the paper \cite{JorLund97} of Jorgenson-Lundelius, where they defined the regularized heat trace of the hyperbolic surface with cusps for $n=0$ and $(\xi, h^{\xi})$ trivial.
	 \end{rem}
	 \begin{prop}
	 	The Definition \ref{defn_rel_HT} makes sense and it is independent of $\epsilon > 0$. We also have
	 	\begin{multline}\label{eqn_rel_HT_alt}
	 	{\rm{Tr}}^{\reg}  \big[ \exp^{\perp} ( -t  \laplcomp^{E_M^{\xi, n}}) \big] :=  	
		\lim_{r \to 0}	
		\bigg( 	
	 	\int_{M \setminus (\cup_i V_i^{M}(r))} \tr{\exp^{\perp}(-t  \laplcomp^{E_M^{\xi, n}})(x, x)} d \vol_M(x) 	 	
	 	\\ 
	 	- \frac{\rk{\xi}}{3}
	 	\int_{P \setminus ( \cup_i V_i^{P}(r))} \tr{\exp^{\perp}(-t  \laplcomp^{E_P^{n}})(x, x)} d \vol_P(x) \bigg).
	 	\end{multline}
	 \end{prop}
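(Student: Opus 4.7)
The plan is to verify in turn: (i) convergence of the third integral in (\ref{eqn_rel_HT}), (ii) independence of $\eta$, and (iii) the alternative formula (\ref{eqn_rel_HT_alt}). The two main ingredients I would draw on are the heat kernel comparison estimate (\ref{hk_infty_perp}) of Theorem~\ref{thm_rel_hk_infty} and the equidistribution of the three cusps of $P$ recorded in Remark~\ref{rem_ht}a).

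For (i), I would use (\ref{hk_infty_perp}) to dominate the integrand by $C|\ln|u||^{\varsigma}t^{-4}e^{-ct}$ for some $\varsigma<1$. Writing $u=re^{i\theta}$, the Poincaré volume form equals $\frac{dr\,d\theta}{r(\ln r)^{2}}$, so the substitution $s=-\ln r$ converts the radial integral near $r=0$ into $\int^{\infty}s^{\varsigma-2}\,ds$, which converges precisely because $\varsigma<1$.

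For (ii), I would compare the three summands of (\ref{eqn_rel_HT}) at two cutoffs $\eta_1<\eta_2$ and pull all of the resulting annular integrals back to $\{\eta_1<|u|<\eta_2\}\subset\dd^*$ via the Poincaré-compatible coordinates. The three cusps of $P$ contribute equally by Remark~\ref{rem_ht}a), so the variation of the second summand becomes $m\,\rk{\xi}$ times a single annular integral of $\tr{\exp^{\perp}(-t\laplcomp^{E_P^{n}})(x,x)}$; a short algebraic check then shows that the variations of the three summands cancel identically.

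For (iii), given $r\in(0,\eta)$, I would split $M\setminus\cup_i V_i^M(r)$ into the outer region $M\setminus\cup_i V_i^M(\eta)$ and the annuli $V_i^M(\eta)\setminus V_i^M(r)$, and similarly for $P$. Transporting the annular pieces to $\{r<|u|<\eta\}\subset\dd^*$ and regrouping with the appropriate numerical weights (once again invoking the symmetry on $P$) reproduces exactly the integrand $\tr{\exp^{\perp}(-t\laplcomp^{E_M^{\xi,n}})(x,x)}-\rk{\xi}\,\tr{\exp^{\perp}(-t\laplcomp^{E_P^{n}})(x,x)}$ appearing in the third summand of (\ref{eqn_rel_HT}). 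The integrable majorant from step (i) then allows passing to the limit $r\to 0$ by dominated convergence, producing (\ref{eqn_rel_HT}). The main technical point is the sharp exponent $\varsigma<1$ in Theorem~\ref{thm_rel_hk_infty}, without which the annular integrand would fail to be integrable near the cusp and both (i) and the dominated convergence argument in (iii) would break down.
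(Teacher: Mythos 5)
Your argument is correct and follows essentially the same route as the paper: convergence of the cusp contribution via the estimate (\ref{hk_infty_perp}) with $\varsigma<1$ combined with the integrability (\ref{eq_int_mu_fin}), the symmetry of the three cusps of $P$ from Remark \ref{rem_ht}a) for the bookkeeping, and dominated convergence for (\ref{eqn_rel_HT_alt}); you are merely spelling out the steps the paper dismisses as trivial. (Your regrouping in step (iii) would also surface that the weight in front of the $P$-integral in (\ref{eqn_rel_HT_alt}) should read $m\,\rk{\xi}/3$, consistent with (\ref{eqn_rel_HT}).)
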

	 \begin{proof}
	 	The first two integrals in the right-hand side of (\ref{eqn_rel_HT}) are bounded by (\ref{thm_est_exp_perp2}). The last one is bounded by (\ref{hk_infty_perp}) and the fact that for any $\varsigma < 1$, we have
		\begin{equation}\label{eq_int_mu_fin}
			\int_{D(\epsilon)} \frac{\imun du d \overline{u}}{|u|^2 |\ln |u| |^{2 - \varsigma}} \leq + \infty.
		\end{equation}
		The independence on $\epsilon > 0$ is trivial.
		The formula (\ref{eqn_rel_HT_alt}) follows trivially from (\ref{hk_infty_perp}).
	 \end{proof}
	 A similar quantity ${\rm{Tr}}^{\reg}  \big[ \exp(-t  \laplcomp^{E_M^{\xi, n}}) \big]$ (see also \cite[Definition 1.1]{JorLundMain} for the relative version) is defined by the same formulas as in (\ref{eqn_rel_HT}), where we put $\exp$ in place of $\exp^{\perp}$. It is well-defined due to (\ref{hk_infty}), (\ref{eq_int_mu_fin}) and the fact that for any $c' > 0$ and $\epsilon > 0$ small enough, there is $C > 0$ such that for any $t > 0$, we have
		\begin{equation}\label{eqn_small_t_as25}
			\int_{D(\epsilon)} \exp \big(-c' (\ln |\ln |u||)^2/t \big) \frac{ \imun du d \overline{u} }{|u|^2 |\ln |u||}
			\leq C t^{1/2} \exp \big(-(c'/2) (\ln |\ln \epsilon| )^2/t \big).
		\end{equation}
	 By (\ref{eqn_ker_lapl}), the relation between Definition \ref{defn_rel_HT} and ${\rm{Tr}} \big[ \exp(-t  \laplcomp^{E_M^{\xi, n}}) \big]$ is given by
	\begin{multline}\label{eq_rel_nonperp}
		{\rm{Tr}}^{\reg}  \big[ \exp^{\perp}(-t  \laplcomp^{E_M^{\xi, n}}) \big] 
		\\
		= 
		{\rm{Tr}}^{\reg}  \big[ \exp(-t  \laplcomp^{E_M^{\xi, n}}) \big] 
		-  \dim H^0(\overline{M}, E_M^{\xi, n})
		+ \frac{\rk{\xi}}{3}  \dim H^0(\overline{P}, E_P^{n}).
	\end{multline}
	\begin{rem}
		In \cite[\S 3]{JorLundMain}, Jorgenson-Lundelius defined the relative heat trace 
		\begin{equation}
			{\rm{Tr}}^{{\rm{rel}}} \big[ \exp^{\perp}(-t  \laplcomp^{E_M^{\xi, n}}); \exp^{\perp}(-t  \laplcomp^{E_N^{n}}) \big] 
		\end{equation}				
		for $(\xi, h^{\xi})$ trivial and $n=0$. Directly from the definition, in this case we have
		\begin{align}
			& {\rm{Tr}}^{{\rm{rel}}} \big[ \exp^{\perp}(-t  \laplcomp^{E_M^{\xi, n}}); \exp^{\perp}(-t  \laplcomp^{E_N^{n}}) \big]  = 
			{\rm{Tr}}^{\reg}  \big[ \exp^{\perp}(-t  \laplcomp^{E_M^{\xi, n}}) \big]  - 
			\rk{\xi} {\rm{Tr}}^{\reg}  \big[ \exp^{\perp}(-t  \laplcomp^{E_N^{n}}) \big],
			\nonumber
			\\
			& {\rm{Tr}}^{\reg}  \big[ \exp^{\perp}(-t  \laplcomp^{E_M^{\xi, n}}) \big] = 
			\frac{1}{3}
			{\rm{Tr}}^{{\rm{rel}}} \big[ 3 \exp^{\perp}(-t  \laplcomp^{E_M^{\xi, n}}); m \exp^{\perp}(-t  \laplcomp^{E_P^{n}}) \big],
		\end{align}
		where $3 \exp^{\perp}(-t  \laplcomp^{E_M^{\xi, n}})$ (resp. $m \exp^{\perp}(-t  \laplcomp^{E_P^{n}})$) means the heat operator on $M \sqcup M \sqcup M$ (resp. on $P \sqcup \cdots \sqcup P$) with the induced geometry.
	\end{rem}
	 For $j \geq -1$, we denote
		\begin{equation}\label{defn_ajMN}
		\begin{aligned}
			& A_{\xi, j, 0}^{M, n} := \int_M {\rm{Tr}} \big[ a_{\xi, j}^{M, n}(x) \big] dv_M(x) - \frac{\rk{\xi}}{3}  \int_P  a_{j}^{P, n}(x) dv_P(x),
			\\
			& A_{\xi, j}^{M, n} = A_{\xi, j, 0}^{M, n} 
			-  \dim H^0(\overline{M}, E_M^{\xi, n}) + \frac{\rk{\xi}}{3}  \dim H^0(\overline{P}, E_P^{n}).
		\end{aligned}
		\end{equation}
	This makes sense due to Theorem \ref{thm_small_time_exp} or Remark \ref{rem_integ_small_time}.
	\begin{prop}\label{rel_tr_small_time}
		For any $t_0 > 0$, $k \in \nat$, there is $C > 0$ such that for any $t \in ]0, t_0]$, we have
		\begin{equation}
			\Big| 
			{\rm{Tr}}^{\reg}  \big[ \exp^{\perp}(-t  \laplcomp^{E_M^{\xi, n}})\big] 
			- 
			\sum_{j=-1}^{k} A_{\xi, j}^{M, n} t^j
			\Big|  \leq C t^k.
		\end{equation}
	\end{prop}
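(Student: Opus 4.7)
The plan is first to apply \eqref{eq_rel_nonperp} to reduce the claim to the analogous expansion for ${\rm Tr}^{\reg}[\exp(-t\laplcomp^{E_M^{\xi,n}})]$ with coefficients $A_{\xi,j,0}^{M,n}$: the terms $-\dim H^0(\overline{M},E_M^{\xi,n}) + \frac{\rk{\xi}}{3}\dim H^0(\overline{P},E_P^n)$ that distinguish $A_{\xi,j}^{M,n}$ from $A_{\xi,j,0}^{M,n}$ lie entirely in the $t^0$-coefficient and precisely absorb the change of projector between $\exp$ and $\exp^{\perp}$. After this reduction, I decompose the right-hand side of \eqref{eqn_rel_HT} (with $\exp$ in place of $\exp^{\perp}$) as $I_1(t) + I_2(t) + I_3(t)$, where $I_1$ is the bulk $M$-integral over $M\setminus\cup_i V_i^M(\eta)$, $I_2$ is the $P$-integral (weighted by $-\tfrac{m\rk{\xi}}{3}$) over $P\setminus\cup_i V_i^P(\eta)$, and $I_3$ is the relative near-cusp integral $\sum_i\int_{D^*(\eta)}$.

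For $I_1$ and $I_2$, I shrink $t_0$ so that $e^{-t^{-1/3}} < \eta$; Theorem \ref{thm_small_time_exp} then gives the pointwise expansion $\exp(-t\laplcomp^{E_M^{\xi,n}})(x,x) = \sum_{j=-1}^{k} a_{\xi,j}^{M,n}(x)\, t^j + O(t^k)$ uniformly for $x\in M\setminus\cup_i V_i^M(\eta)$, and analogously on the $P$-side. Termwise integration against the volume forms (each region has finite measure since $g^{TM}$ and $g^{TP}$ have finite total volume) yields the expansions of $I_1$ and $I_2$ with coefficients $\int_{M\setminus\cup_i V_i^M(\eta)} \tr{a_{\xi,j}^{M,n}}\, dv_M$ and $-\tfrac{m\rk{\xi}}{3}\int_{P\setminus\cup_i V_i^P(\eta)} a_j^{P,n}\, dv_P$ respectively, with errors $O(t^k)$.

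The core of the argument is the near-cusp term $I_3$. For each cusp index $i$, I split $\int_{D^*(\eta)}$ into an outer part $\int_{D^*(\eta)\setminus D(e^{-t^{-1/3}})}$ and an inner part $\int_{D(e^{-t^{-1/3}})}$. On the outer part the uniform expansion from Theorem \ref{thm_small_time_exp} applies simultaneously to both heat kernels and yields the expansion with coefficients $\int_{D^*(\eta)\setminus D(e^{-t^{-1/3}})} \bigl(\tr{a_{\xi,j}^{M,n}((z_i^M)^{-1}(u))} - \rk{\xi}\, a_j^{P,n}((z^P)^{-1}(u))\bigr)\, dv_{\dd^*}(u)$. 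The missing tails $\int_{D(e^{-t^{-1/3}})}$ of these coefficient integrals are absorbed into $O(t^k)$ via the relative estimate \eqref{eq_small_time_coeff_rel}: the integrand is dominated by $C|u|^{1/3}/(|u\ln|u||)^2$, and after the substitution $s=|\ln r|$ one obtains $\int_{t^{-1/3}}^{\infty} e^{-s/3} s^{-2}\, ds$, which decays faster than any power of $t$. On the inner part, where no pointwise expansion is available, I invoke Theorem \ref{thm_rel_hk_infty}: applying \eqref{hk_infty} with parameter $3k+1$, the heat-kernel-difference integrand is bounded by $C|\ln|u||^{-3k}$ plus an exponentially small remainder, and the same substitution yields $\int_{t^{-1/3}}^{\infty} s^{-1-3k}\, ds = O(t^k)$.

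Assembling the three pieces, the $t^j$-coefficient becomes $\int_M \tr{a_{\xi,j}^{M,n}}\, dv_M - \tfrac{m\rk{\xi}}{3}\int_P a_j^{P,n}\, dv_P$, where I use Remark \ref{rem_ht}a) to identify each of the $m$ single-cusp contributions on $P$ with one-third of the full $P$-integral; this recovers $A_{\xi,j,0}^{M,n}$ as required. The principal obstacle is the inner disk $D(e^{-t^{-1/3}})$: neither the uniform heat-kernel expansion nor the naive integrability of the coefficient functions is available there, so the splitting radius must be tuned so precisely that the logarithmic gain $|\ln|u||^{-1} \leq t^{1/3}$ arising in both \eqref{hk_infty} and \eqref{eq_small_time_coeff_rel} dominates the $|u|^{-2}|\ln|u||^{-2}$ singularity of the hyperbolic volume form.
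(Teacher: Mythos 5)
Your proposal is correct and follows essentially the same route as the paper: reduce to the non-projected trace via (\ref{eq_rel_nonperp}), cut each cusp at radius $e^{-t^{-1/3}}$, apply the uniform expansion of Theorem \ref{thm_small_time_exp} outside, control the inner heat-kernel difference with (\ref{hk_infty}) and (\ref{eqn_small_t_as25}), and absorb the coefficient tails with (\ref{eq_small_time_coeff_rel}); your extra intermediate splitting at the fixed radius $\eta$ is only a cosmetic reorganization. Your final coefficient $\int_M \tr{a_{\xi,j}^{M,n}}\,dv_M - \tfrac{m\rk{\xi}}{3}\int_P a_j^{P,n}\,dv_P$ is the one consistent with (\ref{eqn_rel_HT}) (the displayed formula (\ref{defn_ajMN}) appears to drop the factor $m$), so no gap there.
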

	\begin{proof}
		First of all, by (\ref{eq_rel_nonperp}), it is enough to prove that for any $t_0 > 0$, $k \in \nat$, there is $C > 0$ such that for any $t \in ]0, t_0]$, we have
		\begin{equation}\label{eq_tr_rel_small_asy}
			\Big| 
			{\rm{Tr}}^{\reg}  \big[ \exp(-t  \laplcomp^{E_M^{\xi, n}}) \big] 
			- 
			\sum_{j=-1}^{k} A_{\xi, j, 0}^{M, n} t^j
			\Big|  \leq C t^k.
		\end{equation}
		By Theorem \ref{thm_small_time_exp}, for any $t_0 > 0$, $k \in \nat$, there is $C > 0$ such that for any $t \in ]0, t_0]$, we have
		\begin{equation}\label{eqn_small_t_as1}
		\begin{aligned}
			& \bigg| \int_{M \setminus (\cup_i V_i^{M}(e^{-t^{-1/3}}))} \Big[ \tr{\exp(-t  \laplcomp^{E_M^{\xi, n}})(x, x)} - \sum_{j=-1}^{k}  {\rm{Tr}} \big[ a_{\xi, j}^{M, n}(x) \big] t^j \Big] dv_M(x)  \bigg| \leq C t^k, \\
			& \bigg| \int_{P \setminus (\cup_i V_i^{P}(e^{-t^{-1/3}}))} \Big[  \exp(-t  \laplcomp^{E_P^{n}})(x, x) - \sum_{j=-1}^{k}   a_{j}^{P, n}(x) t^j \Big] dv_P(x)  \bigg| \leq C t^k.
		\end{aligned}
		\end{equation}
		Since for $u \in \comp$, $0 < |u| \leq e^{-t^{-1/3}}$, we have $t^{-1/3} \leq |\ln |u||$, by (\ref{hk_infty}), (\ref{eq_int_mu_fin}) and (\ref{eqn_small_t_as25}), for any $k \in \nat$ there are $c, C > 0$ such that for any $t \in ]0, t_0]$, $i = 1,\ldots, m$, we have
		\begin{multline}\label{eqn_small_t_as2}
			 \int_{D(e^{-t^{-1/3}})}
			\bigg|
				{\rm{Tr}} \Big[ \exp(-t \laplcomp^{E_M^{\xi, n}}) \big( (z_i^{M})^{-1}(u), (z_i^{M})^{-1}(u) \big) \Big]    
				\\
				- 
				\rk{\xi} \exp(-t \laplcomp^{E_P^{n}}) \big( (z^P)^{-1}(u), (z^P)^{-1}(u) \big)
			\bigg| d v_{\dd^*}(u)  \leq C t^k + C \exp(-ct^{-1/2}).
		\end{multline}
		Also, by (\ref{eq_small_time_coeff_rel}), for any $j \in \nat$, $i = 1,\ldots, m$ there are $c, C > 0$, such that we have
		\begin{multline}\label{eqn_small_t_as3}
			 \int_{D(e^{-t^{-1/3}})}
			\Big|
				{\rm{Tr}} \Big[ a_{\xi, j}^{M, n}\big( (z_i^{M})^{-1}(u) \big) \Big]    
				\\
				- 
				 \rk{\xi} a_{j}^{P, n} \big( (z^P)^{-1}(u) \big)
			\Big|  d v_{\dd^*}(u)  \leq C \exp(-ct^{-1/3}).
		\end{multline}
		We see that (\ref{eq_tr_rel_small_asy}) holds by (\ref{eqn_small_t_as1}), (\ref{eqn_small_t_as2}) and  (\ref{eqn_small_t_as3}).
	\end{proof}
	\begin{prop}\label{rel_tr_large_time}
		For any $t_0 > 0$, there are $c, C> 0$ such that for any $t \geq t_0$, we have
		\begin{equation}\label{eqn_rel_tr_large_time}
			\Big| {\rm{Tr}}^{\reg}   \big[ \exp^{\perp}(-t  \laplcomp^{E_M^{\xi, n}}) \big] \Big| \leq C \exp(-ct). 
		\end{equation}
	\end{prop}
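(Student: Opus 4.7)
The plan is to decompose ${\rm{Tr}}^{\reg}\big[\exp^{\perp}(-t\laplcomp^{E_M^{\xi,n}})\big]$ along the three pieces of its definition (\ref{eqn_rel_HT}) --- the integral of the pointwise trace over the compact region $M \setminus (\cup_i V_i^M(\eta))$, the corresponding integral over $P \setminus (\cup_i V_i^P(\eta))$ weighted by $-m\rk{\xi}/3$, and the sum of difference integrals over the punctured discs $D^*(\eta)$ in the cusp charts --- and to establish an $O(\exp(-ct))$ bound on each piece separately, uniformly in $t \geq t_0$.

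For the two compact-region pieces, I would apply the Moser-type estimate (\ref{thm_est_exp_perp2}) of Theorem \ref{thm_hk_est}. Since $\rho_M$ is bounded on $M \setminus (\cup_i V_i^M(\eta))$ (bounded above by $\max(1,\sqrt{|\ln \eta|})$ by (\ref{defn_rho})) and the Poincaré metric gives this region finite Riemannian volume (the only potential issue is near the cusps, which have been removed), (\ref{thm_est_exp_perp2}) with $l=l'=0$ integrates to give an upper bound of the form $C't^{-4}\exp(-ct)$. For $t \geq t_0$ the prefactor $t^{-4}$ is controlled by $t_0^{-4}$, which gets absorbed into the constant and leaves the required exponential bound. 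The second piece on $P$ is handled identically.

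For the third piece, the key input is the comparison estimate (\ref{hk_infty_perp}) of Theorem \ref{thm_rel_hk_infty}, applied with $N = P$, which is legitimate in view of Remark \ref{rem_ht}a). In each cusp chart the integrand is then pointwise bounded by $C|\ln|u||^{\varsigma} t^{-4}\exp(-ct)$ with $\varsigma < 1$, while the hyperbolic volume form is $dv_{\dd^*} = \frac{\imun du \wedge d\bar u}{|u|^2|\ln|u||^2}$, so the spatial integral becomes $\int_{D^*(\eta)}\frac{\imun du \wedge d\bar u}{|u|^2|\ln|u||^{2-\varsigma}}$, which is finite by (\ref{eq_int_mu_fin}) precisely because $\varsigma < 1$. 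Absorbing the $t^{-4}$ prefactor as in the previous paragraph yields the desired bound for this piece, and summing the three contributions yields (\ref{eqn_rel_tr_large_time}).

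No substantive obstacle arises: the hard analytic content has been front-loaded into Theorems \ref{thm_hk_est} and \ref{thm_rel_hk_infty}. Conceptually, the subtraction in Definition \ref{defn_rel_HT} is tailored exactly so that the cusp-divergent tails of the two heat traces cancel and leave only the comparison kernel, whose allowed logarithmic blow-up $|\ln|u||^{\varsigma}$ with $\varsigma < 1$ is just integrable against the Poincaré volume form. The one calibration worth flagging is the choice of the cutoff $\eta$: it must be small enough for (\ref{reqr_poincare}) and Theorem \ref{thm_rel_hk_infty} to apply in the cusp charts, but independence of $\eta$ of the final quantity has already been noted after Definition \ref{defn_rel_HT}.
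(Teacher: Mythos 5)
Your proposal is correct and follows essentially the same route as the paper: both decompose the regularized trace according to Definition \ref{defn_rel_HT}, bound the two compact-region integrals via (\ref{thm_est_exp_perp2}) together with the boundedness of $\rho_M$ away from the cusps, and bound the cusp-chart difference via (\ref{hk_infty_perp}) combined with the integrability statement (\ref{eq_int_mu_fin}). The absorption of the $t^{-4}$ prefactor into the constant for $t \geq t_0$, which you make explicit, is done implicitly in the paper.
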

	\begin{proof}
		By Theorem \ref{thm_hk_est} since $\rho_M$ is bounded over $M \setminus (\cup_i V_i^{M}(\eta))$, $\eta > 0$ for some $c, C > 0$ and for any $t \geq t_0$, we get
		\begin{equation}\label{tr_rel_perp_bound1}
		\begin{aligned}
	 	& 
	 	\bigg|  
	 		\int_{M \setminus (\cup_i V_i^{M}(\eta))} \tr{\exp^{\perp}(-t  \laplcomp^{E_M^{\xi, n}})(x, x)} d \vol_M(x)  
	 	\bigg| && \leq C \exp(- c t),
	 	\\ 
	 	& 
	 	\bigg|  
	 		\int_{P \setminus (\cup_i V_i^{P}(\eta))} \tr{\exp^{\perp}(-t  \laplcomp^{E_P^{n}})(x, x)} d \vol_P(x) 
	 	\bigg| && \leq C \exp(- c t).
	 	\end{aligned}
		\end{equation}
		By (\ref{reqr_poincare}), (\ref{hk_infty_perp}) and (\ref{eq_int_mu_fin}), we deduce that there are $c, C > 0$ such that for any $t \geq t_0$, we have
		\begin{multline}\label{tr_rel_perp_bound12}
			\Big| \int_{D(\eta)} \Big( {\rm{Tr}} \Big[ \exp^{\perp}(-t  \laplcomp^{E_M^{\xi, n}}) \big((z_{i}^{M})^{-1}(u), (z_{i}^{M})^{-1}(u)\big) \Big]
	 		\\
	 		- \rk{\xi} {\rm{Tr}} \Big[ \exp^{\perp}(-t  \laplcomp^{E_P^{n}})\big((z^P)^{-1}(u), (z^P)^{-1}(u)\big) \Big] \Big) 
	 		d \vol_{\dd^*}(u) \Big| \leq C \exp(-c t).
		\end{multline}
		We conclude by (\ref{tr_rel_perp_bound1}) and (\ref{tr_rel_perp_bound12}).
	\end{proof}
	\begin{defn}\label{defn_rel_zeta}
		We define the \textit{regularized spectral zeta function} $\zeta_{M}(s)$ for $s \in \comp$, $\Re(s) > 1$ by
		\begin{equation}\label{eq_defn_rel_zeta}
			\zeta_{M}(s) = \frac{1}{\Gamma(s)} \int_{0}^{+ \infty}
			{\rm{Tr}}^{\reg}  \big[ \exp^{\perp}(-t  \laplcomp^{E_M^{\xi, n}}) \big] t^{s} \frac{dt}{t}.
		\end{equation}
	\end{defn}
	By Propositions \ref{rel_tr_small_time} and \ref{rel_tr_large_time}, the function $\zeta_{M}(s)$ is holomorphic for $\Re(s) > 1$ and has a meromorphic extension to the entire $s$-plane. Classically, this extension, which we also denote by $\zeta_{M}(s)$, is holomorphic at $s = 0$.
	\begin{defn}\label{defn_rel_tor}
		We define the \textit{analytic torsion} by
		\begin{equation}\label{eq_defn_rel_tor}
			T (g^{TM}, h^{\xi} \otimes \norm{\cdot}_{M}^{2n}) := \exp(- \zeta_{M}'(0)/2) \cdot T_{TZ}(g^{TP}, \norm{\cdot}_{P}^{2n})^{m \cdot \rk{\xi} / 3}.
		\end{equation}
	\end{defn}
	\begin{rem}\label{rem_def_an_tor}
		a) In the forthcoming paper we show that under the conditions (\ref{cond_stable}), we have
		\begin{equation}\label{eqn_compat_thmg}
			T (g^{TM}_{{\rm{hyp}}}, (\, \norm{\cdot}_{M}^{{\rm{hyp}}})^{2n}) = T_{TZ}(g^{TM}_{{\rm{hyp}}}, (\, \norm{\cdot}_{M}^{{\rm{hyp}}})^{2n}).
		\end{equation}
		For the moment we note that (\ref{eqn_compat_thmg}) holds for $M = P$ by the choice of the last multiplicand in (\ref{eq_defn_rel_tor}).
		\par b) Explicitly, we have the following identity (see Proposition \ref{rel_tr_small_time} for the definition of $a^{M}_{-1}$):
		\begin{multline}\label{eq_der_zeta}
			\zeta_{M}'(0)  = 
			\int_{0}^{1}
			\Big(
			{\rm{Tr}}^{\reg}  \big[ \exp^{\perp}(-t  \laplcomp^{E_M^{\xi, n}}) \big]  - \frac{A_{\xi, -1}^{M, n}}{t} - A_{\xi, 0}^{M, n} \Big) \frac{dt}{t}
			\\
			+
			\int_{1}^{+\infty}
			{\rm{Tr}}^{\reg}  \big[ \exp^{\perp}(-t  \laplcomp^{E_M^{\xi, n}}) \big] \frac{dt}{t}
			+ A_{\xi, -1}^{M, n}	
			- \Gamma'(-1) A_{\xi, 0}^{M, n}.
		\end{multline}
		\par c) By (\ref{eq_defn_rel_zeta}), the relation between the relative analytic torsion, defined by Jorgenson-Lundelius \cite{JorLundMain} for $(\xi, h^{\xi})$ trivial and $n=0$, and our definition is
		\begin{equation}
			T^{{\rm{rel}}}(g^{TM}, h^{\xi} \otimes \norm{\cdot}_{M}^{2n}; g^{TN}, \norm{\cdot}_{N}^{2n}) = \frac{T (g^{TM}, h^{\xi} \otimes \norm{\cdot}_{M}^{2n})}{T (g^{TN}, \norm{\cdot}_{N}^{2n})^{\rk{\xi}}}.
		\end{equation}
		\par d) In the article of Albin-Rochon \cite{AlbRoch}, authors gave an alternative definition of the analytic torsion $T_{AR}(g^{TM}, (\, \norm{\cdot}_{M}^{\rm{hyp}})^{2n})$, $n = 0$.
		By \cite[(1.24)]{AlbRoch_gen}, \cite[\S 7]{AlbRoch} and (\ref{hk_infty}), the relation between their definition and ours is given by
		\begin{equation}
			\frac{T (g^{TM}, (\, \norm{\cdot}_{M})^{2n})}{T (g^{TN}, (\, \norm{\cdot}_{N})^{2n})} 
			=
			\frac{T_{AR} (g^{TM}, (\, \norm{\cdot}_{M})^{2n})}{T_{AR} (g^{TN}, (\, \norm{\cdot}_{N})^{2n})},
		\end{equation}
		for $M, N$ as in (\ref{data_rel_tors}). Their definition is based on $b$-trace of Melrose \cite{MelroseAPS}, see Remark \ref{rem_ht}b).
		\par e) In his thesis \cite[Corollary 8.2.2]{FreixTh}, Freixas explicitly evaluated (see (\ref{defn_zeta_sel}))
		\begin{equation}\label{eq_value_zeta_explicit}
			\log Z'_{(\overline{P}, D_P)}(1) = 4 \zeta'(-1) + \log 2 \pi + \frac{10}{9} \log 2.
		\end{equation}
		By combining (\ref{eqn_sel_norm}), (\ref{eq_value_zeta_explicit}), we may give an explicit formula for $T_{TZ}(g^{TP}, \norm{\cdot}_{P}^{2n})$, $n = 0$ in (\ref{eq_defn_rel_tor}).
	\end{rem} 
	
\subsection{The parametrix for the heat kernel on the punctured hyperbolic disc}\label{sect_pametr}
	In this section we recall the well-known construction \cite[\S 2.4, 2.5]{BGV} of the parametrix, applied for the heat kernel on the punctured hyperbolic disc, endowed with a Hermitian vector bundle. The results of this section will be applied in Section \ref{sect_aux} to prove (\ref{thm_est_exp2}) and Theorem \ref{thm_small_time_exp}.
	\par
	Let's explain the setting of the problem. Let the holomorphic line bundle $\omega_{\dd}(0)$ over $\dd$ be defined as in (\ref{eq_om_md_def}), and let $\norm{\cdot}_{\dd}$ be the norm on $\omega_{\dd}(0)$ over $\dd^*$, induced by $g^{T \dd^*}$ as in (\ref{eqn_norms_local}). Let $(\xi, h^{\xi})$ be Hermitian vector bundle over $\dd$. We denote by the same symbol its restriction to $\dd^*$. By Cartan's Theorem A, we may and we do fix a holomorphic trivialization $e_1, \ldots, e_{\rk{\xi}}$ of $\xi$ over $\dd$.
	We suppose that 	it is a normal trivialization (cf. \cite[Proposition V.12.10]{DemCompl}), i.e. we have 
	\begin{equation}\label{defn_normal_triv}
		h^{\xi}(e_i, e_j)(u) = \delta_{ij} + O(|u|^2).
	\end{equation}
	We denote by $\laplcomp^{\xi \otimes \omega_{\dd}(0)^{n}}$, $n \in \integ$ the Kodaira Laplacian associated with $h^{\xi}$ and $\norm{\cdot}_{\dd}$ on $(\dd^*, g^{T \dd^*})$. The smooth kernel
	\begin{equation}\label{eq_hk_dd}
		\exp(-t \laplcomp^{\xi \otimes \omega_{\dd}(0)^{n}})(z_1, z_1)
		\in (\xi \otimes \omega_{\dd}(0)^{n})^{*}_{z_1} \boxtimes (\xi \otimes \omega_{\dd}(0)^{n})_{z_2},
		 \quad \text{for} \quad z_1, z_1 \in \dd^{*},
	\end{equation}
	of the heat operator $\exp(-t \laplcomp^{\xi \otimes \omega_{\dd}(0)^{n}})$ with respect to the volume form $d \vol_{\dd^*}$ on $\dd^{*}$ is the main object of study in this section.
	\par 
	We consider the covering $\rho : \hh \to \dd^*$, $z \mapsto e^{\imun z}$. 
	The metric $g^{T \hh} := \rho^* (g^{T \dd^*})$ is equal to the standard hyperbolic metric on the upper half-plane.
	The Deck transformations of $\rho$ are generated by the isometry 
	\begin{equation}
		U : \hh \to \hh, \qquad z \mapsto z+ 2 \pi.
	\end{equation}
	Let $\norm{\cdot}_{\hh}$ be the norm on $\omega_{\hh}$, given by $\rho^* (\, \norm{\cdot}_{\dd})$. 
	For $z = (x, y) := x + \imun y$, we have
	\begin{equation}
		g^{T \hh}_z = \frac{dx^2 + dy^2}{y^2}, \qquad \norm{dz}_{\hh}(z) = y.
	\end{equation}
	Let $\laplcomp^{\xi \otimes \omega_{\hh}^{n}}$ be the Kodaira Laplacian associated with $g^{T \hh}$, $\rho^* ( h^{\xi})$, $\norm{\cdot}_{\hh}$ on $(\hh, g^{T \hh})$, and let 
	\begin{equation}\label{eq_hk_hh}
		\exp(-t \laplcomp^{\xi \otimes \omega_{\hh}^{n}})(z_1, z_2)
		\in (\rho^*(\xi) \otimes \omega_{\hh}^{n})^{*}_{z_1} \boxtimes (\rho^*(\xi) \otimes \omega_{\hh}^{n})_{z_2},
		 \quad \text{for} \quad z_1, z_2 \in \hh,
	\end{equation}
	be the smooth kernel of the heat operator $\exp(-t \laplcomp^{\xi \otimes \omega_{\hh}^{n}})$ with respect to the Riemannian volume form $d \vol_{\hh}$ on $\hh$, induced by $g^{T \hh}$. For $z_1, z_2 \in \dd$, the relation between (\ref{eq_hk_dd}) and (\ref{eq_hk_hh}) is given by
	\begin{equation}\label{eq_rel_hk_dd_hh}
		\exp(-t \laplcomp^{\xi \otimes \omega_{\dd}(0)^{n}})(z_1, z_2) = \sum_{i \in \integ} \exp(-t \laplcomp^{\xi \otimes \omega_{\hh}^{n}})(\tilde{z}_1, U^i \tilde{z}_2),
	\end{equation}
	where $\tilde{z}_i \in \hh$, $\rho(\tilde{z}_i) = z_i$ for $i = 1,2$.
	\par Since  $(\hh, g^{T \hh})$ is a compete manifold, we may use the framework of \cite[\S 2.4, 2.5]{BGV} to construct the parametrix of $\exp(-t \laplcomp^{\xi \otimes \omega_{\hh}^{n}})$. Let us briefly recall the main steps of this construction. By doing so, we also provide some uniform estimates on the heat kernels.
	\par We denote by $\dist(z_1,z_2)$, $z_1, z_2 \in \hh$ the Riemannian distance associated with $g^{T \hh}$, we have
	\begin{equation}\label{eq_dist_hyper}
		\dist \big((x_1, y_1), (x_2, y_2) \big) 
		= 2 \ln \bigg(
			\frac{\sqrt{(x_1 - x_2)^2 + (y_1 - y_2)^2}
			+
			\sqrt{(x_1 - x_2)^2 + (y_1 + y_2)^2}}{2 \sqrt{y_1 y_2}}
		 \bigg).
	\end{equation}
	\par Let $\psi : \real \to [0,1]$ be a smooth even function such that
	\begin{equation}\label{eq_defn_psi}
		\psi(u) = 
		\begin{cases} 
      		\hfill 1 & \text{ for } |u| < 1/2, \\
      		\hfill 0  & \text{ for } |u| > 1. \\
 		\end{cases}
	\end{equation}
	For $k \in \nat$, $z_1, z_2 \in \hh$, $t > 0$, let $k^{\xi \otimes \omega_{\hh}^n}_{t,k} \in \ccal^{\infty}(\hh \times \hh, (\rho^*(\xi) \otimes \omega_{\hh}^{n}) \boxtimes (\rho^*(\xi) \otimes \omega_{\hh}^{n})^{*})$ be given by (cf. \cite[(2.7)]{BGV})
	\begin{equation}\label{eq_par_defn_k}
		k^{\xi \otimes \omega_{\hh}^n}_{t,k}(z_1, z_2) := \frac{\psi(\dist(z_1,z_2)^2)}{t}		 \exp \Big(- \frac{\dist(z_1,z_2)^2}{4t} \Big) 
		\Big( 
			\sum_{i=0}^{k} t^i \Phi_{i, n}^{\xi}(z_1,z_2)		
		\Big),
	\end{equation}
	where $\Phi_{i, n}^{\xi} \in \ccal^{\infty}(\hh \times \hh, (\rho^*(\xi) \otimes \omega_{\hh}^{n}) \boxtimes (\rho^*(\xi) \otimes \omega_{\hh}^{n})^{*})$, $i \geq 0$ are symmetric (i.e. $\Phi_{i, n}^{\xi}(z_1, z_2) = (\Phi_{i, n}^{\xi}(z_2, z_1) )^*$) and given by the procedure, described in \cite[Theorem 2.26]{BGV}.
	We denote by $\Phi_{i, n}$, $i \geq 0$ the sections associated to $(\xi, h^{\xi})$ trivial.
	Now let's state the main result of this section.
	\begin{thm}\label{thm_bound_phi}
		The sections $\Phi_{i, n}^{\xi}$ are uniformly $\ccal^{\infty}$-bounded in the following sense: for any $i, l, l' \in \nat$, there is $C > 0$ such that for any $z_1, z_2 \in \hh$, we have
		\begin{equation}\label{eq_bound_phi1}
			\big| (\nabla_{z_1})^l (\nabla_{z_2})^{l'} \Phi_{i, n}^{\xi}(z_1,z_2) \big|_{h \times h} \leq C,
		\end{equation}
		where $\nabla$ is induced by the Levi-Civita connection and Chern connections associated with $(\xi, h^{\xi})$, $(\omega_{\hh}(0), \norm{\cdot}_{\hh})$, and $|\cdot|_{h \times h}$ is the associated pointwise norm.
		\par 
		Moreover, for any $i, l, l' \in \nat$, there is $C > 0$ such that for any $z_1, z_2 \in \hh$, we have
		\begin{equation}\label{eq_bound_phi2}
			\Big| (\nabla_{z_1})^l (\nabla_{z_2})^{l'} \big( \Phi_{i, n}^{\xi} - {\rm{Id}}_{\xi} \cdot \Phi_{i, n} \big)(z_1,z_2)  \Big|_{h \times h} \leq C \exp(-(\Im{z_1}+\Im{z_2})/6).
		\end{equation}
	\end{thm}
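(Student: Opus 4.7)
The plan is to invoke the recursive Berline--Getzler--Vergne construction \cite[Theorem 2.26]{BGV}, which expresses $\Phi_{i, n}^{\xi}$ via parallel transports along geodesics, the Jacobian $j(z_1, z_2)$ of the exponential map, and iterated action of the Kodaira Laplacian. Explicitly, along the geodesic from $z_2$ to $z_1$ one has $\Phi_{0, n}^{\xi}(z_1, z_2) = j(z_1, z_2)^{-1/2} \tau^{\rho^*\xi \otimes \omega_{\hh}^{n}}(z_1, z_2)$, and each $\Phi_{i, n}^{\xi}$, $i \geq 1$, is produced from $\Phi_{i-1, n}^{\xi}$ by a weighted integral of $\laplcomp^{\rho^*\xi \otimes \omega_{\hh}^{n}} \Phi_{i-1, n}^{\xi}$ along the same geodesics.

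For the uniform bound \eqref{eq_bound_phi1}, I would first treat the case when $(\xi, h^{\xi})$ is trivial. Then $(\hh, g^{T\hh}, \norm{\cdot}_{\hh})$ is a Hermitian homogeneous space for $PSL(2, \real)$, and the Kodaira Laplacian $\laplcomp^{\omega_{\hh}^{n}}$, together with parallel transport and the Jacobian $j$, is equivariant under this action. Consequently $|\Phi_{i, n}(z_1, z_2)|_{h \times h}$ is $PSL(2, \real)$-invariant and depends only on $\dist(z_1, z_2)$; smoothness together with the cutoff $\psi$ in \eqref{eq_par_defn_k} that restricts to $\dist(z_1, z_2) \leq 1$ then yields the bound. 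For general $(\xi, h^{\xi})$, the normal holomorphic frame \eqref{defn_normal_triv} gives $h^{\xi} = \mathrm{Id} + O(|u|^2)$, so the Chern connection $1$-form $A^{\xi} = (h^{\xi})^{-1} \partial h^{\xi}$ on $\dd$ is of order $O(|u|)\, du$ together with all its derivatives. Pulling back via $\rho(z) = e^{\imun z}$, one checks directly that $\rho^*(A^{\xi})$ is uniformly bounded in the $g^{T\hh}$-norm (and in fact exponentially small in $\Im z$). Plugging into the recursive formulas then yields \eqref{eq_bound_phi1}.

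For \eqref{eq_bound_phi2}, I would proceed by induction on $i$. The cutoff $\psi$ forces $\dist(z_1, z_2) \leq 1$, hence $\Im z_1$ and $\Im z_2$ are comparable and any point $w$ on the connecting geodesic satisfies $\Im w \geq c(\Im z_1 + \Im z_2)$ for a uniform $c > 0$. By the computation above, $\rho^*(A^{\xi})$, its covariant derivatives, and the curvature $F^{\xi}$ have $g^{T\hh}$-pointwise norms of order $e^{-\Im w}$ along such geodesics. For $i = 0$, decomposing $\tau^{\rho^*\xi \otimes \omega_{\hh}^{n}} = \tau^{\rho^*\xi} \otimes \tau^{\omega_{\hh}^{n}}$ and using that $\tau^{\rho^*\xi} - \mathrm{Id}_{\xi}$ satisfies the ODE $(d/dr)\tau^{\rho^*\xi} + A^{\xi}(\dot\gamma) \tau^{\rho^*\xi} = 0$ with $\tau^{\rho^*\xi}(0) = \mathrm{Id}_{\xi}$, Gronwall's inequality gives the required exponential decay. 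For the inductive step, the operator $\laplcomp^{\rho^*\xi \otimes \omega_{\hh}^{n}} - \mathrm{Id}_{\xi} \laplcomp^{\omega_{\hh}^{n}}$ is of order $\leq 1$ with coefficients involving $A^{\xi}$ and $F^{\xi}$; applying it to $\mathrm{Id}_{\xi} \Phi_{i-1, n}$ produces an exponentially small term, and applying $\laplcomp^{\rho^*\xi \otimes \omega_{\hh}^{n}}$ to the inductively small difference $\Phi_{i-1, n}^{\xi} - \mathrm{Id}_{\xi} \Phi_{i-1, n}$ preserves the exponential decay thanks to \eqref{eq_bound_phi1} and the inductive estimates on derivatives.

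The main obstacle is the bookkeeping of covariant derivatives through the recursion: each differentiation in $z_1$ or $z_2$ can introduce polynomial factors in $\Im z$ (coming from Christoffel symbols of $g^{T\hh}$ and from the $y$-dependence of the hyperbolic frame on $\omega_{\hh}$), and one must verify that these are absorbed by the exponential decay in the connection terms. The exponent $1/6$ in the statement is deliberately conservative, leaving ample slack to dominate all such polynomial factors at every step of the induction; any rate strictly smaller than $1$ would in fact work by the same argument.
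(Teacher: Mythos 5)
Your proposal is correct and rests on the same two pillars as the paper's proof: the homogeneity of $(\hh, g^{T\hh})$ and the fact that the normal trivialization (\ref{defn_normal_triv}) combined with $|u| = e^{-\Im z}$ makes the deviation of $\rho^* h^{\xi}$ from the trivial metric exponentially small in $\Im z$. The difference is organizational, and it matters for the step you flag as your main obstacle. You run the BGV recursion directly on $\hh$ and must then absorb the polynomial factors in $\Im z$ coming from Christoffel symbols and from the $y$-dependence of the frame of $\omega_{\hh}$ into the exponential decay of the connection terms; this works, but it is exactly the bookkeeping you worry about. The paper instead conjugates by the isometry $g_{z_0} : (x,y) \mapsto ((x-x_0)/y_0, y/y_0)$, using the locality of the construction to pull the whole problem on $B^{\hh}(z_0,1)$ back to the \emph{fixed} ball $B^{\hh}(\imun, 1)$. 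There the metric data becomes a family $h^{\xi}_{z_0} = ((g_{z_0}^{-1}\rho)^* h^{\xi})|_{B^{\hh}(\imun,1)}$ of Hermitian metrics on a fixed compact set, uniformly $\ccal^{\infty}$-bounded and within $O(e^{-\Im z_0/3})$ of the trivial one (since $\Im \geq 1/6$ on $B^{\hh}(\imun,1)$), and one only needs that the BGV recursion -- Laplacians and integrals over geodesics of length $\leq 1$ -- depends smoothly on its input at a controlled rate. This eliminates all polynomial factors at once and explains the exponent $1/6$ in (\ref{eq_bound_phi2}), which in your version appears only as "conservative slack." Your explicit treatment of $\Phi_{0,n}^{\xi}$ via the parallel-transport ODE and Gronwall, and of the inductive step via the first-order operator $\laplcomp^{\rho^*\xi\otimes\omega_{\hh}^n} - {\rm{Id}}_{\xi}\laplcomp^{\omega_{\hh}^n}$, is a sound substitute for the paper's appeal to smooth dependence, at the cost of having to verify the derivative estimates by hand.
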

	\begin{proof}
		Let's fix $z_0 \in \hh$, $z_0 = (x_0, y_0)$. For $z \in \hh$, $r > 0$ we denote by $B^{\hh}(z, r) \subset \hh$ the hyperbolic disc of radius $r$ around $z$. We consider the isometry $g_{z_0} : (\hh, g^{T \hh}) \to (\hh, g^{T \hh})$, $(x, y) \mapsto ((x - x_0)/y_0, y/y_0 )$. Then since $g_{z_0}(z_0) = (0, 1) := \imun$, we have $g_{z_0}(B^{\hh}(z_0, 1)) = B^{\hh}(\imun, 1)$. We recall that by the procedure, described in \cite[Theorem 2.26]{BGV}, the sections $\Phi_{i, n}^{\xi}(z,\cdot)$ are defined locally, i.e. they depend only on the restriction of $(\hh, g^{T \hh})$, $(\xi, h^{\xi})$ over $B^{\hh}(z, 1)$, and if $\dist(z, z_2) > 1$, then $\Phi_{i, n}^{\xi}(z,z_2) = 0$. Moreover, if one changes “smoothly" the parameters $g^{T \hh}$, $h^{\xi}$, then the sections $\Phi_{i, n}^{\xi}(z,\cdot)$ also change smoothly “at the same rate". Let's make the last point more precise, and adapt it for our situation.
		\par Let $h^{\xi}_{z}$, $z \in \hh$ be a family of Hermitian metrics on $\xi$ over $B^{\hh}(\imun, 1)$, and let $\Phi_{i, n, z}^{\xi}(\imun, \cdot)$ be the corresponding sections from (\ref{eq_par_defn_k}). Suppose that there is $f: \real_+ \to \real_+$ and a Hermitian metric $h^{\xi}_{0}$ on $\xi$ over $B^{\hh}(\imun, 1)$ such that for any $l \in \nat$, there is $C > 0$ such that for any $z_2 \in B^{\hh}(\imun, 1)$:
		\begin{equation}\label{eq_bound_hxi}
			\begin{aligned}
				& \big| \nabla^{l} (h^{\xi}_{z})(z_2) \big|_{h} && \leq C,
				\\
				& \big| \nabla^{l} (h^{\xi}_{z} - h^{\xi}_{0})(z_2) \big|_{h} && \leq C f(\Im z).
			\end{aligned}
		\end{equation}
		\par From the procedure, described in \cite[Theorem 2.26]{BGV}, the sections $\Phi_{i, n, z}^{\xi}(\imun,\cdot)$ are obtained by applying the associated Laplacian and integration over the geodesics of length $\leq 1$, emanating from $\imun$. Thus, for any $l \in \nat$ there is $C > 0$ such that for any $z_2 \in B^{\hh}(\imun, 1)$, we have
		\begin{equation}\label{eq_bound_phiepsilon}
			\begin{aligned}
				& \big| (\nabla_{z_2})^{l} \Phi_{i, n, z}^{\xi}(\imun, z_2) \big|_{h} &&\leq C, \\
				& \big| (\nabla_{z_2})^{l} \big( \Phi_{i, n, z}^{\xi} - \Phi_{i, n, 0}^{\xi} \big)(\imun, z_2)  \big|_{h} &&\leq C f(\Im z),
			\end{aligned}
		\end{equation}
		or, as we stated before the sections $\Phi_{i, n, z}^{\xi}(\imun,\cdot)$, $i \geq 0$ change “at the same rate" as $h^{\xi}_{z}$.
		\begin{sloppypar}		 
		Now, let $h^{\xi}_{z}$, $z \in \hh$ be defined by 
		\begin{equation}\label{defn_z_metr}
			h^{\xi}_{z} := ((g_z^{-1} \rho)^* h^{\xi})|_{B^{\hh}(\imun, 1)}.
		\end{equation}
		Let $\Phi_{i, n, z}^{\xi}(\imun,\cdot)$ be the sections from (\ref{eq_par_defn_k}), associated with $h^{\xi}_{z}$, $\norm{\cdot}_{\hh}|_{B^{\hh}(\imun, 1)}$ and $g^{T \hh}|_{B^{\hh}(\imun, 1)}$. Then by the locality of $\Phi_{i, n, z}^{\xi}(\imun,\cdot)$, for any $z_2 \in B^{\hh}(\imun, 1)$, we have 
		\begin{equation}
			\Phi_{i, n, z}^{\xi}(\imun, z_2) = \Phi_{i, n}^{\xi}(z, g_{z}^{-1} (z_2)).
		\end{equation}
		By symmetry of $\Phi_{i, n, z}^{\xi}$ and (\ref{eq_bound_phiepsilon}), to complete the proof of Theorem \ref{thm_bound_phi}, it is enough to prove the analogue of (\ref{eq_bound_hxi}), i.e. for some $C > 0$, we have for any $z_2 = (x,y) \in B^{\hh}(\imun, 1)$:
		\begin{equation}\label{eq_h_xi_appr_1}
			\begin{aligned}
				& \big| (\partial_x)^l (\partial_y)^{l'} (h^{\xi}_{z})(z_2) \big|_{h} && \leq C, \\
				& \big| (\partial_x)^l (\partial_y)^{l'} (h^{\xi}_{z} - {\rm{Id}}_{\rk{\xi}})(z_2) \big|_{h} && \leq C e^{- \Im z / 3}.
			\end{aligned}
		\end{equation}
		Let the frame $e_1, \ldots, e_{\rk{\xi}}$ be chosen as (\ref{defn_normal_triv}). Then for $z_2 = (x,y) \in B^{\hh}(\imun, 1)$:
		\begin{equation}\label{eq_h_xi_appr_2}
			h^{\xi}_{z}((g_{z}^{-1} \rho)^* e_i, (g_{z}^{-1} \rho)^* e_j)(z_2) = h^{\xi}(e_i, e_j)(e^{ - y y_0 + \imun (x y_0 + x_0)}).
		\end{equation}
		Now, by the formula (\ref{eq_dist_hyper}), we have 
		\end{sloppypar}
		\begin{equation}
		\min \{ \Im z : z \in B^{\hh}(\imun, 1) \} \geq 1/6, 
		\end{equation}
		by (\ref{defn_normal_triv}) and (\ref{eq_h_xi_appr_2}), we have (\ref{eq_h_xi_appr_1}), which finishes the proof.
	\end{proof}
	To compare $k^{\xi \otimes \omega_{\hh}^n}_{t,k}(x, y)$ with the heat kernel, we recall the definition of the “defect":
	\begin{equation}\label{eq_par_defn_r}
		r^{\xi \otimes \omega_{\hh}^n}_{t,k}(z_1,z_2) := \big( \partial_t + \laplcomp_{\dd, x}^{\xi \otimes \omega_{\dd}(0)^{n}} \big) k^{\xi \otimes \omega_{\hh}^n}_{t,k}(z_1,z_2).
	\end{equation}
	The following theorem says, in particular, that as one increases $k \in \nat$, the kernel $k^{\xi \otimes \omega_{\hh}^n}_{t,k}(z_1,z_2)$ more and more accurately “satisfies" the properties defined by the heat kernel.
	\begin{thm}\label{thm_bound_r}
		For any $t_0 > 0$, the family of kernels $k^{\xi \otimes \omega_{\hh}^n}_{t,k}(z_1,z_2)$, $t \in ]0, t_0]$, $z_1,z_2 \in \hh$ defines a uniformly bounded family of operators $K^{\xi \otimes \omega_{\hh}^n}_{t,k}$ on $\ccal^{\infty}_c(\hh, \rho^*(\xi) \otimes \omega_{\hh}^{n})$ such that for any $s \in \ccal^{\infty}_c(\hh, \rho^*(\xi) \otimes \omega_{\hh}^{n})$, the sections $K^{\xi \otimes \omega_{\hh}^n}_{t,k} (s)$ converge, as $t \to 0$, to $s$ over any compact subset of $\hh$ with all it's derivatives.
		\par Moreover, for any $l, l', l'' \in \nat$, there are $c', C > 0$ such that for any $t \in ]0, t_0]$, $z_1,z_2 \in \hh$:
		\begin{multline}\label{eq_bound_k}
			\big| (\nabla_{z_1})^l (\nabla_{z_2})^{l'} (\partial_t)^{l''} k^{\xi \otimes \omega_{\hh}^n}_{t,k}(z_1,z_2) \big|_{h \times h} 
			\leq C t^{- 1 - (l + l')/2 - l''} \cdot \psi(\dist(z_1,z_2)^2 / 2) 
			\\
			\cdot			
			\exp(-c' \cdot \dist(z_1,z_2)^2/t),
		\end{multline}
		\vspace*{-30pt}
		\begin{multline}\label{eq_bound_r}
			\big| (\nabla_{z_1})^l (\nabla_{z_2})^{l'} (\partial_t)^{l''} r^{\xi \otimes \omega_{\hh}^n}_{t,k}(z_1,z_2) \big|_{h \times h} 
			\leq C  t^{k - (l + l')/2 - l''} \cdot \psi(\dist(z_1,z_2)^2 / 2)
			\\
			\cdot \exp(-c' \cdot \dist(z_1,z_2)^2/t).
		\end{multline}
	\end{thm}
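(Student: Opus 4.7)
The plan is to treat the three assertions in order, reducing everything to the pointwise bounds on the kernels $k^{\xi \otimes \omega_{\hh}^n}_{t,k}$ and $r^{\xi \otimes \omega_{\hh}^n}_{t,k}$, which in turn rest on Theorem \ref{thm_bound_phi} and on the explicit BGV recurrence that defines the sections $\Phi_{i,n}^{\xi}$.

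First I would establish (\ref{eq_bound_k}). Writing $d := \dist(z_1,z_2)$, the kernel is a product of $t^{-1}\psi(d^2)$, the Gaussian $e^{-d^2/(4t)}$, and $\sum_{i=0}^{k} t^i \Phi_{i,n}^{\xi}(z_1,z_2)$. Derivatives of $\psi(d^2)$ are supported where $d^2 \in [1/2,1]$ (hence contribute an exponentially small term that is absorbed into $e^{-c'd^2/t}$ after choosing $c' < 1/4$), and derivatives of $\Phi_{i,n}^{\xi}$ are uniformly bounded by Theorem \ref{thm_bound_phi}. Derivatives in $z_1,z_2$ of the Gaussian produce polynomials in $d/t$ multiplied by derivatives of the square of the distance function; since we are within the injectivity radius of $\hh$ on the support of $\psi(d^2/2)$, those derivatives of $d^2$ are uniformly bounded. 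The resulting factors $(d/\sqrt{t})^N t^{-N/2}$ are controlled by splitting $e^{-d^2/(4t)}=e^{-d^2/(5t)}e^{-d^2/(20t)}$ and absorbing the polynomial into the first factor, which yields (\ref{eq_bound_k}) with the factor $t^{-1-(l+l')/2-l''}$ coming from the original $t^{-1}$ factor together with the $N/2$ count from the derivatives. The $\psi(d^2/2)$ factor on the right records that the kernel vanishes for $d^2>1$.

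Next I would prove (\ref{eq_bound_r}), which is the reason for the construction. Away from the cutoff (i.e.\ on $\{d^2<1/2\}$ where $\psi\equiv 1$), a direct computation with the Euclidean-style heat operator conjugated by $t^{-1}e^{-d^2/(4t)}$ produces the standard transport operators $\nabla_{\partial_d}+\tfrac{i}{d}\cdot$; by the BGV recurrence \cite[Theorem 2.26]{BGV}, the sections $\Phi_{i,n}^{\xi}$ are chosen precisely so that all coefficients of $t^{-1},t^0,\ldots,t^{k-1}$ inside the bracket vanish, leaving only $t^k e^{-d^2/(4t)}\laplcomp \Phi_{k,n}^{\xi}$, which after taking further derivatives and using Theorem \ref{thm_bound_phi} fits into the bound (\ref{eq_bound_r}). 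On $\{d^2\in[1/2,1]\}$, terms involving $\psi'(d^2),\psi''(d^2)$ appear; these are compactly supported away from the diagonal and carry a full Gaussian factor $e^{-d^2/(4t)}\le e^{-1/(8t)}$, which beats any negative power of $t$ and is therefore absorbed.

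Finally, the pointwise bound (\ref{eq_bound_k}) combined with the fact that the hyperbolic volume of a ball of radius $1$ in $\hh$ is a universal constant gives the uniform $L^1$-bound in $z_2$ of the kernel, hence the uniform boundedness of the operators $K^{\xi \otimes \omega_{\hh}^n}_{t,k}$ on $\ccal^{\infty}_c(\hh,\rho^*\xi\otimes\omega_{\hh}^n)$; the same argument after differentiation in $z_1$ gives the analogous statement for all derivatives. For convergence to the identity as $t\to 0$, one rescales the variable by $z_2 = \exp_{z_1}(\sqrt{t}\,u)$, observes that $t^{-1}e^{-d^2/(4t)}d\vol_{\hh}(z_2)$ concentrates to the Gaussian measure $(4\pi)^{-1}e^{-|u|^2/4}du$ on $T_{z_1}\hh$ as $t\to 0$, and uses that the BGV normalization imposes $\Phi_{0,n}^{\xi}(z_1,z_1)=\mathrm{Id}$ so that the leading term reproduces $s(z_1)$, with higher terms vanishing in the limit; derivatives are handled identically after commuting $\nabla_{z_1}$ under the integral.

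The main obstacle I anticipate is the bookkeeping in the $r$-estimate: one must check that the transport equations defining $\Phi_{i,n}^{\xi}$ really do kill the lower-order powers of $t$ in the presence of the twist by $\omega_{\hh}^n$ (whose curvature is nontrivial), and that the cutoff-induced remainder can be pushed into the $\exp(-c'd^2/t)$ decay uniformly for $t\in {]0,t_0]}$ and all $l,l',l''$. Once this bookkeeping is in place, the uniform $\ccal^\infty$-control of $\Phi_{i,n}^{\xi}$ from Theorem \ref{thm_bound_phi} makes the rest of the argument routine.
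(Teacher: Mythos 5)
Your proposal is correct and follows essentially the same route as the paper, which simply invokes the parametrix machinery of \cite[Theorems 2.26, 2.29]{BGV} together with the uniform bounds (\ref{eq_bound_phi1}) on the $\Phi_{i,n}^{\xi}$ in place of the compactness used there. The details you spell out (absorbing polynomial factors of $d/\sqrt{t}$ into the Gaussian, the exponential smallness of the cutoff terms on $\{d^2\in[1/2,1]\}$, and the cancellation of the powers $t^{-1},\ldots,t^{k-1}$ via the transport recurrence) are exactly the steps the paper delegates to \cite{BGV}.
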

	\begin{proof}
		The first statement is done as in \cite[Theorem 2.29]{BGV}. The estimate (\ref{eq_bound_k}) follows directly from (\ref{eq_par_defn_k}) and (\ref{eq_bound_phi1}).  The proof of (\ref{eq_bound_r}) uses (\ref{eq_bound_phi1}), but otherwise it is done in the same way as \cite[Theorem 2.29]{BGV}.
	\end{proof}
	This theorem means that $k^{\xi \otimes \omega_{\hh}^n}_{t,k}(z_1,z_2)$ is the parametrix of the heat equation in the sense of \cite[p.77]{BGV}. Thus, we may construct the heat kernel by the procedure, which follows. Let $k, k' \in \nat$, $z,z' \in \hh$, we denote
	\begin{multline}\label{eq_defn_q_appr}
		q^{\xi \otimes \omega_{\hh}^n}_{t,k,k'}(z,z') := \int_{t \Delta_{k'}} \int_{\hh^{k'}} k^{\xi \otimes \omega_{\hh}^n}_{t-t_{k'},k}(z, z_{k'}) r^{\xi \otimes \omega_{\hh}^n}_{t_{k'} - t_{k' - 1},k}(z_{k'},z_{k' - 1}) \cdots
		\\
		\cdots
		 r^{\xi \otimes \omega_{\hh}^n}_{t_{1},k}(z_{1}, z') \, dv_{\hh}(z_{k'}) \otimes \cdots \otimes dv_{\hh}(z_{1}) dv_{t \Delta_{k'}}(t_1, \ldots, t_{k'}),
	\end{multline}
	where $\Delta_{k'}$ is the standard $k'$-simplex, and $dv_{t \Delta_{k'}}(t_1, \ldots, t_{k'})$ is the standard volume form over $t \Delta_{k'}$. 
	Now let's explain why (\ref{eq_defn_q_appr}) is well-defined.	
	The integration over $\hh^{k'}$ in (\ref{eq_defn_q_appr}) is well-defined since by (\ref{eq_defn_psi}), (\ref{eq_par_defn_k}) and (\ref{eq_par_defn_r}), the functions under the integral vanish if the arguments are too distant, so all the integrations are done in a compact subset. The integration over $t \Delta_{k'}$ is well-defined for $k \geq 1$ by (\ref{eq_par_defn_k}) and (\ref{eq_bound_r}). By the same reasons, it is easy to see that if $k \geq (l + l')/2 + l'' + 1$, then the partial derivatives $(\partial_{z_1})^l (\partial_{z_2})^{l'} (\partial_t)^{l''} q^{\xi \otimes \omega_{\hh}^n}_{t,k, k'}(z_1,z_2)$ exists.
	\begin{thm}
		For any $t_0 > 0$, $k \in \nat^*$ and $l, l', l'' \in \nat$, there are $c', C > 0$ such that for any $t \in ]0, t_0]$, $z_1, z_2 \in \hh$ and $k' \in \nat$ satisfying $k \geq (l + l')/2 + l'' + 1$, we have
		\begin{equation}\label{eq_bound_q}
			\Big| (\nabla_{z_1})^l (\nabla_{z_2})^{l'} (\partial_t)^{l''} q^{\xi \otimes \omega_{\hh}^n}_{t,k, k'}(z_1,z_2) \Big|_{h \times h} \leq  \frac{C^{k'} t^{k k' - l - l' - 2l''}}{(k'-1)!} \exp(-c' \cdot \dist(z_1,z_2)^2/t).
		\end{equation}
		\par Moreover, for any $t \in ]0, t_0]$, $z_1, z_2 \in \hh$, $k \in \nat^*$, the series 
		\begin{equation}
			\sum_{k' = 0}^{\infty} (-1)^{k'} q^{\xi \otimes \omega_{\hh}^n}_{t,k, k'}(z_1, z_2),
		\end{equation}		 
		converges to $\exp(-t \laplcomp_{\hh}^{\xi \otimes \omega_{\hh}^{n}})(z_1, z_2)$ in $\ccal^{2k - 2}(\hh \times \hh)$, and for any $l, l', l'' \in \nat$, satisfying $k \geq (l + l')/2 + l'' + 1$, there is $C > 0$ such for any $t \in ]0, t_0]$, $z_1, z_2 \in \hh$, we have
		\begin{multline}\label{eq_exp_appr_k}
			\Big| (\nabla_{z_1})^l (\nabla_{z_2})^{l'} (\partial_t)^{l''} \Big( \exp(-t \laplcomp^{\xi \otimes \omega_{\hh}^{n}}) - k^{\xi \otimes \omega_{\hh}^n}_{t,k}\Big)(z_1,z_2)  \Big|_{h \times h} 
			\\
			\leq C t^{k - l - l' - 2l''}  \exp(-c' \cdot \dist(z_1,z_2)^2/t).
		\end{multline}
	\end{thm}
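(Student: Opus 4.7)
The plan is to prove both statements by a classical Volterra-series / Duhamel construction, adapted from \cite[Theorem 2.30]{BGV}, using Theorem \ref{thm_bound_r} as the principal input, together with the uniform $\ccal^\infty$-bounds of Theorem \ref{thm_bound_phi} to handle the non-compactness of $\hh$.

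\textbf{Step 1: the bound on $q^{\xi \otimes \omega_{\hh}^n}_{t,k,k'}$.} I would establish (\ref{eq_bound_q}) by induction on $k'$, relying on the following Gaussian convolution lemma on $\hh$: for any $c > 0$ there exist $c' , C > 0$ such that for all $0 < s_1, s_2 \leq t_0$ and $z_1, z_2 \in \hh$,
\begin{equation*}
\int_{\hh} \psi \big(\dist(z_1,w)^2\big) \psi \big(\dist(w,z_2)^2 \big) e^{-c \dist(z_1,w)^2/s_1} e^{-c \dist(w,z_2)^2/s_2} dv_{\hh}(w) \leq C \frac{s_1 s_2}{s_1 + s_2} e^{-c' \dist(z_1,z_2)^2/(s_1+s_2)} .
\end{equation*}
The cutoff $\psi$ confines the integration to a ball of unit hyperbolic radius, where the volume form is uniformly comparable to the Euclidean one, so this reduces to a standard Gaussian convolution in $\real^2$. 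Iterating the lemma $k'$ times along the chain $z_1 \to w_{k'} \to \cdots \to w_1 \to z_2$ and using $|k^{\xi \otimes \omega_{\hh}^n}_{s,k}| \lesssim s^{-1} e^{-c \dist^2 / s}$ and $|r^{\xi \otimes \omega_{\hh}^n}_{s,k}| \lesssim s^{k} e^{-c \dist^2 / s}$ from (\ref{eq_bound_k}) and (\ref{eq_bound_r}), the spatial integral in (\ref{eq_defn_q_appr}) is bounded (with $s_i = t_i - t_{i-1}$, $s_{k'+1} = t-t_{k'}$, $\sum s_i = t$) by $C^{k'} t^{-k'} s_1^k \cdots s_{k'}^k \cdot \prod s_i \cdot e^{-c' \dist(z_1,z_2)^2 / t}$. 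Integrating over $t\Delta_{k'}$ via Dirichlet's integral formula yields a factor $t^{kk' + k'}/\Gamma(kk'+2k'+1)$, which after absorbing combinatorial constants into $C^{k'}$ gives the required $C^{k'} t^{kk'}/(k'-1)!$. The derivative versions follow by differentiating under the integral, each spatial derivative producing $t^{-1/2}$ from the Gaussian and each time derivative producing $t^{-1}$, which accounts for the power $-l-l'-2l''$ and the condition $k \geq (l+l')/2 + l'' + 1$ (ensuring time-integrability at $t_i = t_{i-1}$).

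\textbf{Step 2: convergence to $\exp(-t\laplcomp^{\xi \otimes \omega_{\hh}^n})$.} Set $E_t := \sum_{k' \geq 1} (-1)^{k'} Q^{\xi \otimes \omega_{\hh}^n}_{t,k,k'}$. By Step 1, the series for $E_t$ and its derivatives up to order $2k-2$ converge absolutely and uniformly on $\hh \times \hh \times ]0, t_0]$. Differentiating (\ref{eq_defn_q_appr}) in $t$ at the boundary term $t_{k'} = t$ and using (\ref{eq_par_defn_r}) produces the telescoping identity
\begin{equation*}
\big( \partial_t + \laplcomp^{\xi \otimes \omega_{\hh}^{n}}_{z_1} \big) \Big( k^{\xi \otimes \omega_{\hh}^n}_{t,k}(z_1,z_2) + \sum_{k' \geq 1} (-1)^{k'} q^{\xi \otimes \omega_{\hh}^n}_{t,k,k'}(z_1,z_2) \Big) = 0,
\end{equation*}
so that $K^{\xi \otimes \omega_{\hh}^n}_{t,k} + E_t$ solves the heat equation. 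The first part of Theorem \ref{thm_bound_r} gives the initial condition $\mathrm{Id}$ at $t = 0$ (the $E_t$ contribution vanishes as $t \to 0$ by Step 1 applied with $l=l'=l''=0$, since $t^{kk'} \to 0$ for $k \geq 1$). Since $(\hh, g^{T\hh})$ is complete, $\laplcomp^{\xi \otimes \omega_{\hh}^n}$ is essentially self-adjoint on compactly supported smooth sections (cf.\ the discussion preceding (\ref{p_perp_defn})), so the heat kernel is uniquely determined in the class of kernels with Gaussian decay, and the sum must agree with $\exp(-t \laplcomp^{\xi \otimes \omega_{\hh}^n})(z_1, z_2)$.

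\textbf{Step 3: the error estimate.} The bound (\ref{eq_exp_appr_k}) follows from Step 2 and Step 1 by summing the series
\begin{equation*}
\big| \big( \exp(-t\laplcomp^{\xi \otimes \omega_{\hh}^n}) - k^{\xi \otimes \omega_{\hh}^n}_{t,k} \big) (z_1,z_2) \big| \leq \sum_{k' \geq 1} \frac{C^{k'} t^{kk' - l - l' - 2 l''}}{(k'-1)!} e^{-c' \dist(z_1,z_2)^2/t} \leq C' t^{k - l - l' - 2l''} e^{C t^{k}} e^{-c'\dist^2/t},
\end{equation*}
which is $O(t^{k - l - l' - 2l''})$ uniformly on $t \in ]0, t_0]$. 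The \emph{main obstacle} is obtaining the bound in Step 1 with the correct exponential-factorial $C^{k'}/(k'-1)!$ behaviour, with constants independent of $z_1, z_2 \in \hh$. This is where Theorem \ref{thm_bound_phi} and the $\psi$-cutoff are essential: they respectively provide uniform $\ccal^{\infty}$-bounds on $\Phi^\xi_{i,n}$ independent of the base point on the non-compact manifold $\hh$, and localize all convolutions to unit hyperbolic balls where Euclidean Gaussian computations apply. Once this uniform convolution estimate is in hand, the remainder of the proof is a direct adaptation of the compact-manifold argument of \cite[\S 2.5]{BGV}.
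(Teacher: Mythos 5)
Your overall architecture coincides with the paper's: the paper proves (\ref{eq_bound_q}) by a direct adaptation of \cite[Lemma 2.22, Theorem 2.23]{BGV} with (\ref{eq_bound_k}), (\ref{eq_bound_r}) as input, and identifies the sum of the series with the heat kernel by the same uniqueness argument you give in Step 2. The one substantive difference is in Step 1. The paper does not iterate a two-factor Gaussian convolution lemma; it applies the weighted-mean/triangle inequality (\ref{eq_triangle_exp}) \emph{once} to the whole chain of $k'+1$ Gaussians (half of each exponent produces $\exp(-c'\dist(z_1,z_2)^2/t)$ in one shot, the other half is kept for integrability), bounds each spatial integration by the constant volume of a unit hyperbolic ball, and extracts the $1/(k'-1)!$ from the volume of $t\Delta_{k'}$, exactly as in BGV. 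Two remarks on your variant. First, as literally stated, ``iterating the lemma $k'$ times'' is dangerous: if the convolution lemma degrades the exponent constant from $c$ to some $c'<c$ — which is what a bi-Lipschitz reduction to a ``standard Gaussian convolution in $\real^2$'' produces — then after $k'$ iterations the constant depends on $k'$ and (\ref{eq_bound_q}) is lost. You must either prove the lemma with no loss in the exponent constant (possible, since the inequality $\dist(z_1,w)^2/s_1+\dist(w,z_2)^2/s_2\geq \dist(z_1,z_2)^2/(s_1+s_2)$ is exact in any metric space, but this is more delicate than you suggest), or peel the output Gaussian off the entire chain at once via (\ref{eq_triangle_exp}) as the paper does. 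Second, your convolution route does buy something real: the telescoping prefactor $\prod_i s_i/t$ absorbs the $(t-t_{k'})^{-1}$ singularity of the $k^{\xi\otimes\omega_{\hh}^n}$-factor, whereas bounding that spatial integral by $\sup|k^{\xi\otimes\omega_{\hh}^n}_{t-t_{k'},k}|$ times a unit-ball volume would leave a non-integrable $(t-t_{k'})^{-1}$ on the simplex; the paper instead handles that factor through the first assertion of Theorem \ref{thm_bound_r} (uniform boundedness of the operators $K^{\xi\otimes\omega_{\hh}^n}_{t,k}$, i.e. the $L^1$-bound of the Gaussian), as in BGV. With either fix, the rest of your argument — the Dirichlet integral over the simplex for the factorial decay, the summation in Step 3, and the uniqueness of the heat kernel on the complete manifold $\hh$ in Step 2 — matches the paper.
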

	\begin{proof}
		First of all, we note that by the weighted mean inequality and the  triangle inequality, for $k' \in \nat$, $t > t_{k'} > \ldots > t_{1} > 0$, and $z, z', z_1, \ldots, z_{k'} \in \hh$, we have
		\begin{multline}\label{eq_triangle_exp}
			\exp \Big(-c' \frac{\dist(z, z_{k'})^2}{t-t_{k'}} \Big) \exp \Big(-c' \frac{\dist(z_{k'}, z_{k'-1})^2}{t_{k'} - t_{k' - 1}} \Big) \cdots \\
			\cdots \exp \Big(-c' \frac{\dist(z_1, z')^2}{t_1} \Big) \leq \exp \Big(- \frac{c'}{t} \dist(z, z')^2 \Big).
		\end{multline}
		We also note that the integration over each variable $z_1, \ldots, z_{k'}$ is done over a hyperbolic ball of radius $1$, which has a constant volume, independently of the choice of its center.
		From now on, the proof remains verbatim with \cite[Lemma 2.22, Theorem 2.23]{BGV}, where one has to replace the appropriate estimates by (\ref{eq_bound_k}), (\ref{eq_bound_r}) and use (\ref{eq_triangle_exp}) to bound the exponentials.
	\end{proof}
	Now let's apply all this theory to the study of the heat kernel on the hyperbolic disc. We summarize all the important results, which will be used in Section \ref{sect_aux}, in the following theorem, which is a local analogue of (\ref{thm_est_exp2}) and Theorem \ref{thm_small_time_exp}.
	\begin{thm}\label{thm_local_summary}
		For any $l, l', l'' \in \nat$, there are $t_0 > 0$ $c, c', C > 0$ such that for any $t \in ]0, t_0]$, $u, v \in \dd^*$, we have
		\begin{multline}\label{eq_bound_exp_sm_t}
			\Big| (\nabla_{u})^l (\nabla_{v})^{l'} (\partial_t)^{l''} \exp(-t \laplcomp^{\xi \otimes \omega_{\dd}(0)^{n}})(u, v) \Big|_{h \times h} \leq C t^{- 1 - (l + l')/2 - l''}  
			\\
			\cdot  \big( 1 + |\ln |u|| \big)^{1/2} \big(1 + |\ln |v|| \big)^{1/2} \exp \big(- c' \cdot \dist(u, v)^2/t \big).
		\end{multline}
		Moreover, there are bounded sections $a_{\xi, j}^{\dd^*, n} \in \ccal^{\infty}(\dd^*, \enmr{\xi})$, $j \geq -1$ such that there are $c', C > 0$ such that for any $u \in \dd^*$, $k \in \nat$ and $t \in ]0, t_0]$, we have
		\begin{multline}\label{eq_local_sm_time}
			\Big| \exp(-t \laplcomp^{\xi \otimes \omega_{\dd}(0)^{n}})(u, u) - \sum_{j = - 1}^{k} a_{\xi, j}^{\dd^*, n} (u) t^j \Big| 
			\\
			\leq  \Big(1 + |\ln |u|| \Big) \Big( C t^k  + \frac{C}{t} \exp \Big(- \frac{c'}{t |\ln |u||^2} \Big) \Big).
		\end{multline}
		\par 
		Moreover, for any $j \geq -1$, there is $C > 0$ such that for any $u \in \dd^*$, we have
		\begin{equation}\label{eq_small_time_coeff_rel_local}
			\Big| (\nabla_{u})^l \Big(  a_{\xi, j}^{\dd^*, n} - {\rm{Id}}_{\rk{\xi}} a_{j}^{\dd^*, n}  \Big)(u)  \Big|_{h \times h} \leq C  |u|^{1/3},
		\end{equation}
		where we trivialized $\xi$ as in the beginning of this section.
	\end{thm}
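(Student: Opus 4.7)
The plan is to deduce Theorem \ref{thm_local_summary} from the parametrix analysis on the upper half-plane $\hh$ (Theorems \ref{thm_bound_phi}, \ref{thm_bound_r} and the approximation (\ref{eq_exp_appr_k})) combined with the covering-space relation (\ref{eq_rel_hk_dd_hh}). Concretely, for $u,v \in \dd^*$ I would fix lifts $\tilde u = (x_1,y_1)$, $\tilde v = (x_2,y_2) \in \hh$ with $y_1 = -\ln|u|$, $y_2 = -\ln|v|$, write
\[
\exp(-t\laplcomp^{\xi\otimes\omega_\dd(0)^n})(u,v) = \sum_{i\in\integ}\exp(-t\laplcomp^{\xi\otimes\omega_\hh^n})(\tilde u, U^i\tilde v),
\]
and reduce every claim to a term-by-term estimate on $\hh$. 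The geometric point that drives the log-factors is the identity $\cosh(\dist(\tilde u, U^i\tilde v)) - 1 = ((x_1-x_2-2\pi i)^2 + (y_1-y_2)^2)/(2 y_1 y_2)$ coming from (\ref{eq_dist_hyper}).

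For (\ref{eq_bound_exp_sm_t}), I would apply (\ref{eq_exp_appr_k}) with $k$ large to replace the heat kernel on $\hh$ by the parametrix $k_{t,k}^{\xi\otimes\omega_\hh^n}$ up to a negligible error, then invoke (\ref{eq_bound_k}). Writing $\dist_i := \dist(\tilde u, U^i\tilde v)$, the bound $\dist_i \geq \dist(u,v)$ lets one factor out $\exp(-c'\dist(u,v)^2/t)$. Only those $i$ with $\dist_i \leq 1$ survive the cutoff $\psi$ in (\ref{eq_par_defn_k}), and by the distance identity above their number is bounded by $C\sqrt{y_1y_2} \leq C(1+|\ln|u||)^{1/2}(1+|\ln|v||)^{1/2}$. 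Multiplying the count by the per-term bound $C t^{-1-(l+l')/2-l''}$ from (\ref{eq_bound_k}) gives (\ref{eq_bound_exp_sm_t}).

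For (\ref{eq_local_sm_time}), I would define $a^{\dd^*,n}_{\xi,j}(u) := \Phi^\xi_{j+1,n}(\tilde u,\tilde u)$; boundedness is (\ref{eq_bound_phi1}). On the diagonal the $i=0$ term of the parametrix is exactly $\sum_{j=-1}^{k-1} a^{\dd^*,n}_{\xi,j}(u)\, t^j$, with error $C t^k$ from (\ref{eq_exp_appr_k}). For $i\neq 0$ the lower bound $\cosh(\dist_i)-1 = 2\pi^2 i^2/y^2$ yields $\dist_i \geq c|i|/y$ with $y=-\ln|u|$; summing Gaussians gives
\[
\sum_{i\neq 0} \exp\!\bigl(-c'\dist_i^2/t\bigr) \leq C(1+y\sqrt t)\exp\!\bigl(-c''/(y^2 t)\bigr),
\]
which, after multiplication by $t^{-1}$, is absorbed into the $(1+|\ln|u||)(C/t)\exp(-c'/(t|\ln|u||^2))$ remainder in (\ref{eq_local_sm_time}). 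Claim (\ref{eq_small_time_coeff_rel_local}) is then immediate from (\ref{eq_bound_phi2}) applied on the diagonal: $|\Phi^\xi_{j+1,n}-{\rm Id}_\xi\cdot\Phi_{j+1,n}|(\tilde u,\tilde u) \leq C\exp(-\Im\tilde u/3) = C|u|^{1/3}$, with derivative bounds handled identically.

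The main technical obstacle is the bookkeeping in the second paragraph: one must interpolate between the two regimes $y_1y_2 t \gg 1$ and $y_1y_2 t \ll 1$ so that the Gaussian decay in $\dist_i^2/t$ matches the lattice-counting bound, producing exactly the power $1/2$ of the log-factor (rather than a crude power $1$), and one must track the derivatives $(\nabla_u)^l(\nabla_v)^{l'}(\partial_t)^{l''}$ through both the parametrix bound (\ref{eq_bound_k}) and the approximation error (\ref{eq_exp_appr_k}) without losing the exponential factor $\exp(-c'\dist(u,v)^2/t)$.
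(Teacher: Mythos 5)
Your proposal is correct and follows essentially the same route as the paper: the covering relation (\ref{eq_rel_hk_dd_hh}), the parametrix approximation (\ref{eq_exp_appr_k}) with the per-term bound (\ref{eq_bound_k}), a lattice count of the translates $U^i\tilde v$ within the cutoff radius to produce the square-root logarithmic factors (the paper's (\ref{eq_bound_ball})), the orbit-separation lower bound for the off-diagonal $i\neq 0$ terms, the definition $a_{\xi,j}^{\dd^*,n}(u)=\Phi_{j+1,n}^{\xi}(\tilde u,\tilde u)$, and (\ref{eq_bound_phi2}) for (\ref{eq_small_time_coeff_rel_local}). The "interpolation between regimes" you flag as the main obstacle is precisely the content of the paper's Lemma \ref{lem_dist_hyp_exp}, proved by splitting the sum at $i^2\sim\Im(z_1)\Im(z_2)$, so no new idea is missing.
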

	Before proving this theorem, let's prove the following
	\begin{lem}\label{lem_dist_hyp_exp}
		There is $t_0 > 0$ such that for any $z_1, z_2 \in \hh$, $t \in ]0,t_0]$, satisfying $\dist(z_1, U^i z_2) \leq \dist(z_1, z_2)$ for any $i \in \integ$, we have
		\begin{equation}\label{eq_lem_dist_hyp_exp}
			\sum \exp \big(- \dist(z_1, U^i z_2)^2 / t \big) \leq C \Big( \big(\Im(z_1) + 1\big) \big(\Im(z_2) + 1 \big)\Big)^{1/2} \exp\Big(- \dist(z_1, z_2)^2 / (2t)\Big).
		\end{equation}
	\end{lem}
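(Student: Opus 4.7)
The plan is to rewrite the sum as a Gaussian-weighted lattice count in one variable, estimate the relevant level-set counting function, and then close via a case split on the size of $d_0$. Writing $z_j = x_j + \imun y_j$ and using the identity $\cosh \dist(z_1, z_2) = 1 + |z_1 - z_2|^2/(2 y_1 y_2)$ equivalent to (\ref{eq_dist_hyper}), one obtains
\[
\cosh d_i = 1 + c_1 (i - \alpha)^2 + c_2, \quad c_1 := \tfrac{2\pi^2}{y_1 y_2}, \quad c_2 := \tfrac{(y_1-y_2)^2}{2 y_1 y_2}, \quad \alpha := \tfrac{x_1-x_2}{2\pi},
\]
where $d_i := \dist(z_1, U^i z_2)$. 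The hypothesis that $d_0$ is minimal in $\{d_i\}_i$ amounts to $|\alpha| \leq 1/2$. Setting $s_i := d_i - d_0 \geq 0$, the elementary inequality $(d_0+s_i)^2 \geq d_0^2 + s_i^2$ reduces (\ref{eq_lem_dist_hyp_exp}) to proving
\begin{equation*}
\sum_{i \in \integ} \exp(-s_i^2/t) \leq C \sqrt{(y_1+1)(y_2+1)} \exp(d_0^2/(2t)).
\end{equation*}

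Next comes the level-set count. From $\cosh(d_0+R) - \cosh d_0 = 2 \sinh(d_0+R/2) \sinh(R/2)$ together with $\sqrt{A+B} \leq \sqrt A + \sqrt B$, one gets
\[
N(R) := \#\{i \in \integ : s_i \leq R\} \leq 2 + \tfrac{2}{\pi}\sqrt{y_1 y_2\,\sinh(d_0+R/2)\sinh(R/2)}.
\]
The layer-cake formula yields $\sum_i \exp(-s_i^2/t) \leq N(0) + \int_0^\infty (2R/t) \exp(-R^2/t) N(R)\,dR$. Using $\sinh x \leq x e^x$ for $x \geq 0$ gives $\sqrt{\sinh(d_0+R/2)\sinh(R/2)} \leq (\sqrt{d_0 R/2} + R/2) \exp((d_0+R)/2)$, and after the substitution $u = R/\sqrt t$ the integrals $\int_0^\infty (R^k/t) \exp(-R^2/t + R/2)\,dR$ reduce to Gaussian moments of order $O(t^{(k-1)/2})$, bounded uniformly for $t \in ]0, t_0]$. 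This produces the intermediate bound
\[
\sum_{i \in \integ} \exp(-s_i^2/t) \leq C + C' \sqrt{y_1 y_2}\, e^{d_0/2} (\sqrt{d_0} + 1).
\]

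It remains to absorb the $d_0$-dependence into $\exp(d_0^2/(2t))$. For $d_0 \leq 2 t_0$, the factor $e^{d_0/2}(\sqrt{d_0}+1)$ is controlled by a constant depending only on $t_0$; for $d_0 \geq 2 t_0$ and $t \leq t_0$, the logarithmic comparison $d_0/2 + \ln(\sqrt{d_0}+1) \leq d_0 \leq d_0^2/(2t)$ gives $e^{d_0/2}(\sqrt{d_0}+1) \leq \exp(d_0^2/(2t))$. The main obstacle is precisely the interplay between the exponential growth $\sinh(d_0 + R/2) \sim e^{d_0}$ appearing in the counting function and the Gaussian decay $e^{-R^2/t}$: it is the smallness $t \leq t_0$ together with the above case split on $d_0$ that lets the estimate close at exactly the advertised $\sqrt{(y_1+1)(y_2+1)}$ rate, rather than picking up a spurious exponential in $d_0$.
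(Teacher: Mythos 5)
Your proof is correct and reaches the stated bound, but by a genuinely different and more quantitative route than the paper's. The paper simply splits the sum at $i^2 = 4\Im(z_1)\Im(z_2)$: the near terms are counted crudely (there are at most $O(\sqrt{\Im(z_1)\Im(z_2)}+1)$ of them, each bounded by $\exp(-\dist(z_1,z_2)^2/t)$, which already yields the factor $\exp(-\dist(z_1,z_2)^2/(2t))$ with half the exponent to spare), while the far terms are killed using only the logarithmic lower bound $\dist(z_1,U^iz_2)\geq\ln\big(i^2/(\Im(z_1)\Im(z_2))\big)$ together with $(\ln X)^2/t\geq\ln X$ for $t\leq t_0$, so that the tail is dominated by $\Im(z_1)\Im(z_2)\sum_{i^2>4\Im(z_1)\Im(z_2)}i^{-2}\lesssim(\Im(z_1)\Im(z_2))^{1/2}$. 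Your argument instead exploits the exact identity for $\cosh\dist(z_1,U^iz_2)$, converts the minimality hypothesis into $|\alpha|\leq 1/2$ and the clean splitting $(d_0+s_i)^2\geq d_0^2+s_i^2$, and then runs a genuine level-set count plus layer-cake integration; this is longer but sharper, since it tracks the precise dependence on $d_0=\dist(z_1,z_2)$ through the factor $e^{d_0/2}(\sqrt{d_0}+1)$, information the paper's crude split discards. One cosmetic fix: in your final absorption step the case split should be made at an absolute threshold (say $d_0\geq 2$, taking $t_0\leq 1$) rather than at $d_0\geq 2t_0$, because the inequality $\ln(\sqrt{d_0}+1)\leq d_0/2$ fails for $d_0$ below roughly $1.7$, so for small $t_0$ your second case does not cover $d_0\in[2t_0,\,1.7]$; on the range $d_0\leq 2$ the factor $e^{d_0/2}(\sqrt{d_0}+1)$ is bounded by an absolute constant and $\exp(d_0^2/(2t))\geq 1$, so nothing is lost and the argument closes.
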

	\begin{proof}
		We decompose the sum in (\ref{eq_lem_dist_hyp_exp}) into two parts: for $i^2 \leq 4 \Im(z_1) \Im(z_2) $ and the complementary.
		Trivially, by the assumption, the first part is bounded by
		\begin{equation}\label{eq_bound_frstprt}
			4 \Big( \big(\Im(z_1) + 1 \big) \big(\Im(z_2) + 1 \big)\Big)^{1/2} \exp\Big(- \dist(z_1, z_2)^2 / (2t)\Big).
		\end{equation}
		Now, by (\ref{eq_dist_hyper}), for any $i \neq 0$, we have 
		\begin{equation}\label{eq_lem_dist_hyp_exp_1}
			\dist(z_1, U^i z_2) \geq \ln \big(i^2 / ( \Im(z_1) \Im(z_2) )\big).
		\end{equation}
		Now, by choosing $t_0$ small enough, we see that 
		\begin{equation}\label{eq_lem_dist_hyp_exp_1111}
			\exp \Big(- \Big( \ln \frac{i^2}{\Im(z_1) \Im(z_2)} \Big)^2/t \Big) \leq \frac{\Im(z_1) \Im(z_2)}{i^2}.
		\end{equation}
		By (\ref{eq_lem_dist_hyp_exp_1}) and (\ref{eq_lem_dist_hyp_exp_1111}), we see that 
		\begin{multline}\label{eq_lem_dist_hyp_exp_2}
			\sum_{i^2 > 4 \Im(z_1) \Im(z_2) } \exp(- \dist(z_1, U^i z_2)^2 / t) 
			\\			
			\leq \Im(z_1) \Im(z_2) \exp(- \dist(z_1, z_2)^2 / (2t)) \sum_{i^2 > 4 \Im(z_1) \Im(z_2) } i^{-2} 
			\\
			\leq (\Im(z_1) \Im(z_2) )^{1/2}  \exp(- \dist(z_1, z_2)^2 / (2t)).
		\end{multline}
		Thus, we conclude by (\ref{eq_bound_frstprt}) and (\ref{eq_lem_dist_hyp_exp_2}).
	\end{proof}
	\begin{proof}[Proof of Theorem \ref{thm_local_summary}]
		By (\ref{eq_rel_hk_dd_hh}), (\ref{eq_exp_appr_k}) and Lemma \ref{lem_dist_hyp_exp}, we have
		\begin{multline}\label{eq_local_summ_1111}
		\Big| (\nabla_{u})^l (\nabla_{v})^{l'} (\partial_t)^{l''} \Big( \exp(-t \laplcomp^{\xi \otimes \omega_{\dd}(0)^{n}})(u, v) - \sum_i k^{\xi \otimes \omega_{\hh}^n}_{t,k} (\tilde{u},U^i \tilde{v}) \Big)   \Big|_{h \times h}
		\\
		 \leq C t^{k - (l + l')/2 - l''}  \big(1 + |\ln |u|| \big)^{1/2} \big(1 + |\ln |v|| \big)^{1/2} \exp \big(-c' \cdot \dist(u, v)^2/t \big).
		\end{multline}
		Now, by (\ref{eq_lem_dist_hyp_exp_1}), there is $C > 0$ such that for any $z_1, z_2 \in \hh$, we have 
		\begin{equation}\label{eq_bound_ball}
			\# \Big\{ i \in \integ : \dist(z_1, U^i z_2) < 2 \Big\} \leq C \big( (\Im(z_1) + 1)(\Im(z_2) + 1) \big)^{1/2}.
		\end{equation}
		Thus, the number of non-zero terms in the sum under the module in (\ref{eq_local_summ_1111}) is bounded by the right-hand side of (\ref{eq_bound_ball}).
		So, by (\ref{eq_bound_k}) and (\ref{eq_local_summ_1111}), we get (\ref{eq_bound_exp_sm_t}).
		\par Now, by (\ref{eq_dist_hyper}), there is $C > 0$ such that for any $z \in \dd^*$ and $i \in \integ^*$, we have
		\begin{equation}\label{eq_boud_orbit}
			\dist(z, U^i z) \geq \frac{C}{ | \ln |z| |}.
		\end{equation}
		Thus, from Lemma \ref{lem_dist_hyp_exp}, (\ref{eq_local_summ_1111}) and (\ref{eq_boud_orbit}), we get (\ref{eq_local_sm_time}) for 
		\begin{equation}\label{def_a_j_local}
			a_{\xi, j}^{\dd^*, n} (z) := \Phi_{j + 1}^{\xi \otimes \omega_{\hh}^n}(\tilde{z}, \tilde{z}), \quad \text{where} \quad \tilde{z} \in \hh, \rho(\tilde{z}) = z, \quad \text{and} \quad j \geq -1.
		\end{equation}
		Now, (\ref{eq_small_time_coeff_rel_local}) follows from (\ref{eq_bound_phi2}) and (\ref{def_a_j_local}).
	\end{proof}
	\begin{rem}\label{rem_xiesp}
		Instead of $h^{\xi}$ we choose a family of Hermitian metrics $h^{\xi}_{\eta}$, $\eta \in ]0,1]$:
	\begin{equation}\label{eq_hxi_flat_disc}
			h^{\xi}_{\eta}(e_i, e_j)(u) := \big( 1 - \psi(|u|^2/\eta) \big) h^{\xi}(e_i, e_j)(u) +\psi(|u|^2/\eta) \delta_{ij},
	\end{equation}
	where $\psi$ is defined in (\ref{eq_defn_psi}), $e_i$, $i = 1,\ldots, \rk{\xi}$ is as in (\ref{defn_normal_triv}), and $\delta_{ij}$ is the Kronecker delta symbol.
	Then all the estimates of this chapter would continue to hold uniformly over $\eta \in ]0,1]$. This is due to the fact that for the Hermitian metrics $h^{\xi}_{z, \eta}$, defined in the notation of Theorem \ref{thm_bound_phi} by (compare with (\ref{defn_z_metr}))
	\begin{equation}
			h^{\xi}_{z, \eta} := ((g_z^{-1} \rho)^* h^{\xi}_{\eta})|_{B^{\hh}(\imun, 1)},
	\end{equation}
	we have (compare with (\ref{eq_h_xi_appr_2}))
	\begin{equation}\label{iden_exponen}
			h^{\xi}_{z, \eta}((g_{z}^{-1} \rho)^* e_i, (g_{z}^{-1} \rho)^* e_j)(z_2) = h^{\xi}_{\eta}(e_i, e_j)(e^{ - y y_0 + \imun (x y_0 + x_0)}).
	\end{equation}
	But now, if a function $f(y)$, $y \in \real$ has bounded derivative, then the function $f(e^{-y}/\eta)$ has bounded derivatives uniformly on $\eta > 0$. 
	From this observation, (\ref{iden_exponen}) implies (compare with (\ref{eq_h_xi_appr_1}))
	\begin{equation}\label{eq_boud_hxiepsi}
			\begin{aligned}
				& \big| (\partial_x)^l (\partial_y)^{l'} (h^{\xi}_{z, \eta})(z_2) \big|_{h} && \leq C, \\
				& \big| (\partial_x)^l (\partial_y)^{l'} (h^{\xi}_{z, \eta} - {\rm{Id}}_{\rk{\xi}})(z_2) \big|_{h} && \leq C e^{- \Im z / 3}.
			\end{aligned}
	\end{equation}
	We will use this in Section \ref{sect_spec_case}.
	\end{rem}

\subsection{Proofs of Theorems \ref{spec_gap_thm}, \ref{thm_hk_est}, \ref{thm_rel_hk_infty}, \ref{thm_small_time_exp}}\label{sect_aux}
	In this section we finally present the proofs of Theorems \ref{spec_gap_thm}, \ref{thm_hk_est}, \ref{thm_rel_hk_infty} and \ref{thm_small_time_exp}.
	\begin{proof}[Proof of Theorem \ref{spec_gap_thm}.]
		First of all, for $n \leq 0$, there is $C > 0$ such that for any $z \in D^*(1/2)$, we have
		\begin{equation}\label{ineq_n_leq_0}
			C < \big( |z|^2 (\ln |z|)^{2 - 2n} \big)^{-1}.
		\end{equation}
		Let $g^{TM}_{\rm{sm}}$ be some smooth metric over $\overline{M}$, and let $\norm{\cdot}_{M}^{\rm{sm}}$ be some smooth Hermitian norm on $\omega_M(D)$ over $\overline{M}$. 
		By (\ref{ineq_n_leq_0}), we conclude that there is $C > 0$ such that $g^{TM}_{\rm{sm}} \otimes (\, \norm{\cdot}_M^{\rm{sm}})^{2n} \leq C g^{TM} \otimes \norm{\cdot}_M^{2n}$, thus
		\begin{equation}\label{ker_subset}
			\ker (\laplcomp^{E_M^{\xi, n}}) \subset L^2 \big( g^{TM}_{\rm{sm}}, h^{\xi} \otimes ( \, \norm{\cdot}_{M}^{\rm{sm}})^{2n} \big).
		\end{equation}
		Let $s \in \ker (\laplcomp^{E_M^{\xi, n}})$. By (\ref{ker_subset}) and the classical $L^2$-extension theorem (cf. \cite[Lemma 2.3.22]{MaHol}), $s$ extends holomorphically to $V_i^{M}(\epsilon)$. In other words
		\begin{equation}\label{fin_sp_gp25_1}
			\ker (\laplcomp^{E_M^{\xi, n}}) \subset H^0 (\overline{M}, E_M^{\xi, n}).
		\end{equation}
		By the finiteness of the volume of $(M, g^{TM})$, see (\ref{eq_int_mu_fin}), we see that that each holomorphic section lies in $L^2(g^{TM}, h^{\xi} \otimes \, \norm{\cdot}_{M}^{2n})$, i.e.
		\begin{equation}\label{fin_sp_gp25_2}
			H^0 (\overline{M}, E_M^{\xi, n}) \subset \ker (\laplcomp^{E_M^{\xi, n}}) .
		\end{equation}
		We deduce (\ref{eqn_ker_lapl}) by (\ref{fin_sp_gp25_1}) and (\ref{fin_sp_gp25_2}).
		\par In \cite[\S 6]{MullerCusp}, Müller proved (\ref{eqn_spec_gap_strong}) for $(\xi, h^{\xi})$ trivial, $n = 0$ and $c_2 = 1/4$, see Remark \ref{rem_spec_gap_mul}. 
		This implies, in particular, that (\ref{eqn_spec_gap}) holds for $(\xi, h^{\xi})$ trivial and $n = 0$ (see \cite[Proposition 6.9]{MullerCusp}).
		He proved (\ref{eqn_spec_gap_strong}) in this case by studying explicitly the spectrum of Kodaira Laplacian of the von Neumann problem in the cusp and using the scattering matrix to relate the continuous spectrum of the manifolds.
		If the vector bundle $(\xi, h^{\xi})$ is trivial around the cusps, the presence of it doesn't change the Hermitian structure around $D_M$. Thus, the result of Müller extends line by line to the case $n = 0$ and $(\xi, h^{\xi})$ trivial around the cusps, which we summarize in
		\begin{equation}\label{eq_precise_gap}
			\spec (\laplcomp^{E_M^{\xi, n}}) \cap [0, 1/4] \qquad \text{is discrete.}
		\end{equation}
		\par Now, let $h^{\xi}$ be any Hermitian metric on $\xi$. 
		We will prove that there is $k \in \nat$ and $F \subset L^2(E_M^{\xi, n})$, ${\rm{codim}} F = k$, such that we have
		\begin{equation}\label{lem_comp_23}
			 \inf_{s \in F}
			 \bigg\{
				\frac{\langle \laplcomp^{E_M^{\xi, n}} s, \laplcomp^{E_M^{\xi, n}} s \rangle_{L^2}}{\langle s, s \rangle_{L^2}}
			\bigg\} > 0.
		\end{equation}
		Then, by the min-max theorem (cf. \cite[(C.3.3)]{MaHol}), (\ref{eqn_ker_lapl}) and (\ref{lem_comp_23}), we get (\ref{eqn_spec_gap}).
	\par
		We choose $\eta \in ]0,1/2]$ small enough, so that (\ref{reqr_poincare}) is satisfied for any $i = 1,\ldots, m$. For each $i = 1, \ldots, m$, we fix a normal trivialization of $\xi$ over $V_i^{M}(\eta)$, i.e. a local holomorphic frame $e_1, \ldots, e_{\rk{\xi}}$ of $\xi$ over $V_i^{M}(\eta)$ as in (\ref{defn_normal_triv}).
		Let $h^{\xi}_{\eta}$ be a Hermitian metric on $\xi$ such that it coincides with $h^{\xi}$ over $M \setminus (\cup_i V_i^{M}(\eta))$ and over $V_i^{M}(\eta)$ it is given by (compare with (\ref{eq_hxi_flat_disc}))
		\begin{equation}\label{eq_hxi_flat}
			h^{\xi}_{\eta}((z_i^{M})^{-1}(u))(e_i, e_j) = (1 - \psi(|u|^2/\eta)) h^{\xi}((z_i^{M})^{-1}(u))(e_i, e_j) +\psi(|u|^2/\eta) \delta_{ij},
		\end{equation}
		where $\psi$ is defined in (\ref{eq_defn_psi}), $e_i$, $i = 1,\ldots, \rk{\xi}$ is as in (\ref{defn_normal_triv}), and $\delta_{ij}$ is the Kronecker delta symbol.
		Then $(\xi, h^{\xi}_{\eta})$ is trivial around the cusps, and there is $C > 0$ such that for any $\eta \in ]0,1/2]$, we have
		\begin{equation}\label{eq_bound_cusped_der}
			(h^{\xi}_{\eta})^{-1}  \frac{\partial h^{\xi}_{\eta}}{\partial z_i^{M}}\Big((z_i^M)^{-1}(u)\Big) \leq C |u|.
		\end{equation}
		We denote by $\laplcomp^{E_M^{\xi, n}}_{\eta}$ the Kodaira Laplacian on $(M, g^{TM})$, associated with $h^{\xi}_{\eta}$ and $\norm{\cdot}_M$. Then over $V_i^{M}(\eta)$, we have
		\begin{equation}\label{eq_dbar_dual}
			(\dbar^{\xi})^* = \Big( \|dz_i^{M}\|_{M}^{\omega} \Big)^2 \Big( \frac{\partial}{\partial z_i^{M}}  +  (h^{\xi}_{\eta})^{-1} \frac{\partial h^{\xi}_{\eta}}{\partial z_i^{M}} \Big)
			\cdot
			\iota_{\partial/ \partial \overline{z}_i^{M}},
		\end{equation}
		where $\iota$ is the contraction and $*$ is the adjoint with respect to the $L^2$ product induced by $h^{\xi}_{\eta}$. By (\ref{eq_kodaira_laplacian}) and (\ref{eq_dbar_dual}), we deduce
		\begin{equation}\label{eq_comp_e_lapl}
			\laplcomp^{E_M^{\xi, n}}_{\eta} - \laplcomp^{E_M^{\xi, n}} 
			=
			\sum_i |z_i^{M}|^{2} (\ln |z_i^{M}|)^2 
			\Big( 
				(h^{\xi}_{\eta})^{-1}  \frac{\partial h^{\xi}_{\eta}}{\partial z_i^{M}} 
				-
				(h^{\xi})^{-1}  \frac{\partial h^{\xi}}{\partial z_i^{M}}
			\Big) 
			\frac{\partial}{\partial \overline{z}_i^{M}}.
		\end{equation}
		\par 
		We denote by $\langle \cdot, \cdot \rangle_{L^2_{\eta}}$ the $L^2$-scalar product induced by $g^{TM}$, $\norm{\cdot}$ and $h^{\xi}_{\eta}$. We fix $\eta > 0$ small enough so that $2 h^{\xi} > h^{\xi}_{\eta} > h^{\xi}/2$. Then we have $2 \langle \cdot, \cdot \rangle_{L^2} > \langle \cdot, \cdot \rangle_{L^2_{\eta}} > \langle \cdot, \cdot \rangle_{L^2}/2$.
		Now, by (\ref{eq_comp_e_lapl}) and Cauchy inequality, for $s \in \ccal^{\infty}_{c}(M, E_M^{\xi, n})$, as the support of (\ref{eq_comp_e_lapl}) lies in $\cup V_i^M(\eta)$, by (\ref{eq_bound_cusped_der}):
		\begin{equation}\label{eq_lapl_comp}
			\textstyle			
			\langle \laplcomp^{E_M^{\xi, n}} s, s \rangle_{L^2}
			\geq 
			\frac{1}{2}
			\langle \laplcomp^{E_M^{\xi, n}}_{\eta} s, s \rangle_{L^2_{\eta}}
			-
			2 m |\eta^2 \ln |\eta| | \big( \langle s, s \rangle_{L^2_{\eta}} \cdot \langle \laplcomp^{E_M^{\xi, n}}_{\eta} s, s \rangle_{L^2_{\eta}} \big)^{1/2}.
		\end{equation}
		We fix $\eta > 0$ small enough so that $4 m | \eta^2 \ln |\eta|| \leq 1/16$, and put 
		\begin{equation}\label{eq_lapl_comp222}
			F := \Big\langle \big\{ s \in {\rm{dom}} \big( \laplcomp^{E_M^{\xi, n}}_{\eta} \big) : \laplcomp^{E_M^{\xi, n}}_{\eta} s = \lambda s, \quad \text{for} \quad \lambda < 1/4 \big\} \Big\rangle^{\perp},
		\end{equation}
		where the orthogonal complement is taken with respect to $\langle \cdot, \cdot \rangle_{L^2_{\eta}}$. 
		Since $(\xi, h^{\xi}_{\eta})$ is trivial around the cusps, by (\ref{eq_precise_gap}), the space $F$ is of finite codimension. By (\ref{eq_lapl_comp}) and (\ref{eq_lapl_comp222}), for $s \in F$:
		\begin{equation}	\label{eq_lapl_comp3}
			\frac{\langle \laplcomp^{E_M^{\xi, n}} s, s \rangle_{L^2}}{\langle s, s \rangle_{L^2}}	
			\geq 
			\frac{1}{4} \bigg( \frac{\langle \laplcomp^{E_M^{\xi, n}}_{\eta} s, s \rangle_{L^2_{\eta}}}{\langle s, s \rangle_{L^2_{\eta}}}	\bigg)^{1/2}
			\bigg(
				\bigg( \frac{\langle \laplcomp^{E_M^{\xi, n}}_{\eta} s, s \rangle_{L^2_{\eta}}}{\langle s, s \rangle_{L^2_{\eta}}}	\bigg)^{1/2} - 
			\frac{1}{4}
			\bigg)
			\geq
			\frac{1}{32}.
		\end{equation}
		Also, by Cauchy inequality, we have
		\begin{equation}\label{equal_cauch}
			\bigg( \frac{\langle \laplcomp^{E_M^{\xi, n}} s, \laplcomp^{E_M^{\xi, n}} s \rangle_{L^2}}{\langle s, s \rangle_{L^2}} \bigg)^{1/2} \geq
			\frac{\langle \laplcomp^{E_M^{\xi, n}} s, s \rangle_{L^2}}{\langle s, s \rangle_{L^2}}	
		\end{equation}				
		Then (\ref{eq_lapl_comp3}) and (\ref{equal_cauch}) imply (\ref{lem_comp_23}), and thus (\ref{eqn_spec_gap}) holds for $n = 0$ and any $(\xi, h^{\xi})$.
		\par 
		We remark that similarly to (\ref{eq_lapl_comp}), we have
		\begin{equation}\label{eq_lapl_comp_xi_inter}
			\textstyle			
			\langle \laplcomp^{E_M^{\xi, n}}_{\eta} s, s \rangle_{L^2_{\eta}}
			\geq 
			\frac{1}{2}
			\langle \laplcomp^{E_M^{\xi, n}} s, s \rangle_{L^2}
			-
			2 m |\eta^2 \ln |\eta| | \big( \langle s, s \rangle_{L^2} \cdot \langle \laplcomp^{E_M^{\xi, n}} s, s \rangle_{L^2} \big)^{1/2}.
		\end{equation}
		From (\ref{lem_comp_23}) and (\ref{eq_lapl_comp_xi_inter}), in a similar fashion as we got (\ref{lem_comp_23}), we deduce that there exists $\mu > 0$ such that for any $\eta$ small enough, we have
		\begin{equation}\label{spec_gap_epsilon_unif}
			 \spec \Big(\laplcomp^{E_M^{\xi, n}}_{\eta} \cap ]0, \mu]\Big) = \emptyset.
		\end{equation}
		\par \textbf{Now let's show that (\ref{eqn_spec_gap}) holds for $n < 0$ and any $(\xi, h^{\xi})$.} 
		Similarly, we will prove that there are $k \in \nat$, $F \subset L^2(E_M^{\xi, n})$, ${\rm{codim}} \overline F = k$ satisfying (\ref{lem_comp_23}).
		Then, as before, we would get (\ref{eqn_spec_gap}).
		\par Let $\eta_0 > 0$ be chosen such that $g^{TM}$ is induced by (\ref{reqr_poincare}) over $\cup_i V_i^{M}(\eta_0)$, and
		\begin{equation}\label{est_lapl_gap_452}
			\big| [ \imun R^{\xi}, \Lambda^{TM}] \big| \leq 1/4, \qquad \text{over $\cup_i V_i^{M}(\epsilon_0)$,}
		\end{equation}
		where $R^{\xi}$ is the curvature of the Chern connection on $(\xi, h^{\xi})$, and $\Lambda^{TM}$ is the contraction with the Hermitian norm induced by $g^{TM}$.
		Such $\epsilon_0$ exists since as $(\xi, h^{\xi})$ is a Hermitian vector bundle over $\overline{M}$ and $\Lambda^{TM}$ multiplies by a term $O(|z_i^{M} \ln|z_i^{M}| |^2)$ over $V_i^{M}(\epsilon_0)$, which can be made arbitrarily small by replacing $\epsilon_0$ by a smaller number.
		\par Let $\rho : \overline{M} \to [0,1]$ be a smooth cut-off function satisfying 
		\begin{equation}\label{defn_rho_gap}
			\rho(x) = 
			\begin{cases} 
      			\hfill 1 & \text{ for } x \in \cup_i V_i^{M}(\epsilon_0/2), \\
      			\hfill 0  & \text{ for } x \in M \setminus (\cup_i V_i^{M}(\epsilon_0)). \\
 			\end{cases}
		\end{equation}	
		For $s \in \ccal^{\infty}_{c}(M, E_M^{\xi, n})$, we have
		\begin{multline}\label{est_lapl_gap_1}
			\scal{\laplcomp^{E_M^{\xi, n}} s}{s}_{L^2} = 
			\scal{\laplcomp^{E_M^{\xi, n}} (\rho s)}{ \rho s}_{L^2}
			\\
			+ \scal{\laplcomp^{E_M^{\xi, n}} ((1-\rho) s)}{ (1-\rho) s}_{L^2} 
			+ 2 \scal{\laplcomp^{E_M^{\xi, n}} (\rho s)}{ (1-\rho) s}_{L^2}.
		\end{multline}
		By Cauchy inequality, we see that there is $c_1 > 0$ such that for any $\epsilon > 0$, we have
		\begin{equation}\label{est_lapl_gap_2}
			\big| \scal{\laplcomp^{E_M^{\xi, n}} (\rho s)}{ (1-\rho) s}_{L^2} \big| \leq
			\big| \scal{\rho (\laplcomp^{E_M^{\xi, n}} s)}{ (1-\rho) s}_{L^2} \big| 
			\\
			+
			\big| \scal{[\laplcomp^{E_M^{\xi, n}}, \rho]  s}{ (1-\rho) s}_{L^2} \big|,
		\end{equation}
		Since $[\laplcomp^{E_M^{\xi, n}}, \rho]$ is a differential operator of order $1$ with support in a compact subspace of $M$, there is $C > 0$ such that for any $s \in \ccal^{\infty}_{c}(M, E_M^{\xi, n})$, we have
		\begin{equation}\label{eq_bound_order1_sp_gap}
			\big\|[\laplcomp^{E_M^{\xi, n}}, \rho]  s\big\|_{L^2}^{2} \leq C \Big( \big\| \laplcomp^{E_M^{\xi, n}}  s \big\|_{L^2}^{2} + \norm{s}_{L^2}^{2} \Big).
		\end{equation}
		By (\ref{eq_bound_order1_sp_gap}) and Cauchy inequality, there is $c_2 > 0$ such that for any $\epsilon > 0$, we have		
		\begin{equation}\label{est_lapl_gap_23}
		\begin{aligned}
			& \big| \scal{[\laplcomp^{E_M^{\xi, n}}, \rho]  s}{ (1-\rho) s}_{L^2} \big| \leq 
			\epsilon 
			\Big( 
				\big\| \laplcomp^{E_M^{\xi, n}}  s \big\|_{L^2}^{2} + \norm{s}_{L^2}^{2} 
			\Big)
			+ (c_2/\epsilon) \lVert (1-\rho) s \rVert_{L^2}^{2},
			\\
			& \big| \scal{\rho (\laplcomp^{E_M^{\xi, n}} s)}{ (1-\rho) s}_{L^2} \big| \leq  \lVert \laplcomp^{E_M^{\xi, n}} s \rVert_{L^2}^{2} + \lVert (1-\rho) s \rVert_{L^2}^{2}.
		\end{aligned}
		\end{equation}
		Thus, by (\ref{est_lapl_gap_1}), (\ref{est_lapl_gap_2}) and (\ref{est_lapl_gap_23}), we see that
		\begin{multline}\label{est_lapl_gap_3}
			\scal{\laplcomp^{E_M^{\xi, n}} s}{s}_{L^2}  
			+ (3 + 2c_1 / \epsilon) \lVert \laplcomp^{E_M^{\xi, n}} s \rVert_{L^2}^{2} 
			\geq
			 \scal{\laplcomp^{E_M^{\xi, n}} (\rho s)}{ \rho s}_{L^2}
			 \\
			 + 
			 \scal{\laplcomp^{E_M^{\xi, n}} ((1-\rho) s)}{ (1-\rho) s}_{L^2}
			 -
			 4 \epsilon \norm{s}_{L^2}^{2} - (2 + 2 c_2 / \epsilon) \lVert {(1-\rho) s} \rVert_{L^2}^{2}.
		\end{multline}
		Recall that by Nakano's inequality (cf. \cite[Theorem 1.4.14]{MaHol}), we have
		\begin{equation}\label{est_lapl_gap_45}
			 \scal{\laplcomp^{E_M^{\xi, n}} (\rho s)}{ \rho s}_{L^2} \geq
			 \scal{[ \imun R^{E_M^{\xi, n}}, \Lambda^{TM}] (\rho s)}{ \rho s}_{L^2},
		\end{equation}
		where $R^{E_M^{\xi, n}}$ is the curvature of the Chern connection on $E_M^{\xi, n}$. 
		We decompose 
		\begin{equation}\label{est_lapl_gap_45dec}
			R^{E_M^{\xi, n}} = R^{\xi} + n {\rm{Id}}_{\xi} \cdot R^{\omega_M(D)},
		\end{equation}
		where $R^{\omega_M(D)}$ is the curvature of the Chern connection on $(\omega_M(D), \norm{\cdot}_M)$. Now, by (\ref{reqr_poincare}), over $V_i^{M}(\eta_0)$, we have
		\begin{equation}\label{est_lapl_gap_451}
			[ \imun R^{{\omega_M(D)}}, \Lambda^{TM}] = -1/2.
		\end{equation}
		We conclude by (\ref{est_lapl_gap_452}), (\ref{est_lapl_gap_45}), (\ref{est_lapl_gap_45dec}) and (\ref{est_lapl_gap_451}) that for $d := -n/2 - 1/4 > 0$, we have
		\begin{equation}\label{est_lapl_gap_4}
			 \scal{\laplcomp^{E_M^{\xi, n}} (\rho s)}{ \rho s}_{L^2} \geq
			 d \norm{\rho s}_{L^2}^{2}.
		\end{equation}
		\begin{sloppypar}
		As the closure of $M \setminus ( \cup_i V_i^{M}(\epsilon) )$ is a compact manifold with boundary, the Dirichlet problem for $\laplcomp^{E_M^{\xi, n}}$ on $M \setminus ( \cup_i V_i^{M}(\epsilon) )$ has a discrete set of eigenvalues.
		Let $\phi_1, \phi_2, \ldots$ be the eigenvectors  corresponding to the eigenvalues in the increasing order.
		There exists $k \in \nat$ such that for any $s$, satisfying $s \perp (1-\rho) \phi_i$, $i = 1, \ldots, k$, we have
		\end{sloppypar}
		\begin{equation}\label{est_lapl_gap_5}
			\scal{\laplcomp^{E_M^{\xi, n}} ((1-\rho) s)}{ (1-\rho) s}_{L^2} \geq  (2 + d + 2 c_2/\epsilon) \norm{(1-\rho) s}_{L^2}^{2}.
		\end{equation}
		Thus, we conclude from (\ref{est_lapl_gap_3}), (\ref{est_lapl_gap_4}) and (\ref{est_lapl_gap_5}) that for some $c_1, c_2 > 0, k \in \nat$ and for any $\epsilon > 0$ and $s$ satisfying $s \perp (1-\rho) \phi_i$, $i = 1, \ldots, k$, we have
		\begin{equation}\label{est_lapl_gap_6}
			\scal{\laplcomp^{E_M^{\xi, n}} s}{s}_{L^2}  
			+ (2 + 2 c_1 / \epsilon) \lVert \laplcomp^{E_M^{\xi, n}} s \rVert_{L^2}^{2} 
			\geq
			(d/2 - 4 \epsilon) \norm{s}_{L^2}^{2}.
		\end{equation}
		We set $F = \langle (1-\rho) \phi_1, \ldots, (1-\rho) \phi_k \rangle^{\perp}$, where the orthogonal complement is taken with respect to the $L^2$-scalar product. Then we take $\epsilon = d/16$ and deduce (\ref{lem_comp_23}) from (\ref{equal_cauch}) and (\ref{est_lapl_gap_6}).
	\end{proof}
	\par We recall that the function $\rho_M : M \to [1, + \infty[$ was defined in (\ref{defn_rho}).
	To prove Theorems \ref{thm_hk_est} and \ref{thm_rel_hk_infty}, we need the following technical lemma, the proof of which is given in Appendix \ref{app_1}.
	\begin{lem}\label{lem_ell_est}
			For any $\alpha > 0$, $k \in \nat$, there is $C >0$, such that for any $n \in \integ$, $\sigma \in \ccal^{\infty}(M, E_M^{\xi, n})$, $x \in M$, we have
		\begin{equation}\label{lem_ell_est_eqn}
			\textstyle \big| \nabla^k \sigma(x) \big|_{h} \leq C \rho_M(x)
			\sum_{i=0}^{2 + k} (n^{4(2 + 2k - i)} + 1) \norm{(\laplcomp^{E_M^{\xi, n}})^i \sigma}_{L^2(B^M(x, \alpha))}.
		\end{equation}
	\end{lem}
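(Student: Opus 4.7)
The plan is to reduce to standard interior elliptic estimates by uniformizing the cusp geometry on the universal cover, then trade derivatives for powers of $\laplcomp^{E_M^{\xi,n}}$ via iterated Bochner--Kodaira, keeping careful track of the $n$-dependent curvature entering each trade.

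I would first handle the compact region $M \setminus \cup_i V_i^M(\eta)$ for a fixed small $\eta > 0$: there the geometry of $(M, g^{TM})$, $(\xi, h^\xi)$ and $(\omega_M(D), \norm{\cdot}_M)$ is uniformly bounded, $\rho_M$ is bounded above and below, and the $n$-dependence of $E_M^{\xi,n}$ enters only through the zeroth-order curvature $R^{E_M^{\xi,n}} = R^\xi + n R^{\omega_M(D)}$ (see (\ref{est_lapl_gap_45dec})). A standard Sobolev embedding on a geodesic ball of fixed radius, combined with the iterated Bochner--Kodaira identity, yields (\ref{lem_ell_est_eqn}) in this region.

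For $x \in V_i^M(\eta)$ with $\eta$ small, I would pass to the universal cover $\rho : \hh \to \dd^*$ from Section \ref{sect_pametr}, whose pulled-back metric is the standard hyperbolic metric on $\hh$ and whose Hermitian data on $\rho^*\xi$ are uniformly $\ccal^\infty$-bounded at every point, by the normal trivialization (\ref{defn_normal_triv}) and the estimate (\ref{eq_h_xi_appr_1}). Lifting $\sigma$ to $\tilde\sigma$ on $\hh$ and $x$ to some $\tilde x \in \hh$, the uniform interior Sobolev estimate on $\hh$ gives
\begin{equation*}
\big| \nabla^k \tilde\sigma(\tilde x) \big|_h^2 \leq C \sum_{j=0}^{k+2} \big\| \nabla^j \tilde\sigma \big\|_{L^2(B^{\hh}(\tilde x, \alpha/2))}^2,
\end{equation*}
with $C$ independent of $\tilde x$ and of $n$. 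To transfer back to $M$ I would use (\ref{eq_dist_hyper}) together with the counting argument leading to (\ref{eq_bound_ball}): the number of translates $U^i \tilde x$ within hyperbolic distance $\alpha$ of $\tilde x$ is $O(\Im \tilde x + 1) = O(\rho_M(x)^2)$, so $\rho$ is at most $C \rho_M(x)^2$-to-one on $B^{\hh}(\tilde x, \alpha/2)$, which yields
\begin{equation*}
\big\| \nabla^j \tilde\sigma \big\|_{L^2(B^{\hh}(\tilde x, \alpha/2))}^2 \leq C \rho_M(x)^2 \big\| \nabla^j \sigma \big\|_{L^2(B^M(x, \alpha))}^2,
\end{equation*}
producing exactly the $\rho_M(x)$ prefactor of (\ref{lem_ell_est_eqn}).

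Finally, each $\| \nabla^j \sigma \|_{L^2(B^M(x,\alpha))}$ for $j \leq k+2$ must be converted into a combination of $\| (\laplcomp^{E_M^{\xi,n}})^i \sigma \|_{L^2(B^M(x,\alpha))}$ with $0 \leq i \leq k+2$. By the Bochner--Kodaira--Nakano formula $\laplcomp^{E_M^{\xi,n}}$ differs from $\tfrac12 \nabla^*\nabla$ by a zeroth-order term bounded by $C(|n|+1)$, using that $R^{\omega_M(D)}$ is uniformly bounded near the cusps by (\ref{est_lapl_gap_451}). Applying this identity with smooth cut-offs supported in $B^M(x,\alpha)$, then using Cauchy--Schwarz and commuting $\nabla$ past $\laplcomp^{E_M^{\xi,n}}$ repeatedly, each trade of two derivatives for one power of $\laplcomp^{E_M^{\xi,n}}$ costs a factor of $(|n|+1)^c$; the exponent $4(2+2k-i)$ in the statement is a crude bound for the cumulative cost of the $k+2-i$ such trades needed to reduce $\nabla^{2(k+2)}$ to $(\laplcomp^{E_M^{\xi,n}})^i$. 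The main obstacle is this last step: bookkeeping the cut-offs, the commutators $[\nabla^p, (\laplcomp^{E_M^{\xi,n}})^q]$, and the boundary terms uniformly in $n$ and in $x$ near the cusp, where the Chern connection of $(\omega_M(D), \norm{\cdot}_M)$ is singular in the coordinate $z_i^M$ but smooth and uniformly bounded on the cover $\hh$, so that the whole analysis must be carried out upstairs and pushed down at the very end.
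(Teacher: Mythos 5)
Your argument is essentially correct, but it takes a genuinely different route from the paper's. The paper stays downstairs on $M$: it writes $\sigma = f\cdot(dz_i^{M}\otimes s_{D_M}/z_i^{M})^n$, reduces the bundle-valued problem to a scalar one for the weighted function $|\ln|z|^2|^{n}f$ via the explicit identity (\ref{lapl_nontriv}) and the integration-by-parts estimates (\ref{eq_app_111})--(\ref{eqn_comm_log}) (this is where the powers $n^{4(\cdot)}$ are generated), and then invokes the weighted Sobolev spaces $\textbf{L}^{k,q}_{\rm wtd}$ of Biquard--Auvray together with a logarithmic annular cut-off $\psi_\theta$ supported in $V_i^M(\theta^4,\theta)$ with $\theta=\sqrt{|z_i^M(x)|}$; the weight $\rho_M(x)$ then comes out of the H\"older step (\ref{ell_eqn_3}) via the volume computation (\ref{bound_hold}). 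You instead lift to the universal cover $\hh$, where the geometry is homogeneous, use uniform interior Sobolev/elliptic estimates there, and recover the weight from the covering multiplicity $O(\Im\tilde x+1)=O(\rho_M(x)^2)$ of $\rho$ on a unit ball --- the same counting the paper uses for the heat kernel in (\ref{eq_bound_ball}), so this is consistent with its toolbox and arguably more transparent. Two points deserve care in your version. First, the claim that the Sobolev constant on $\hh$ is independent of $n$ is not free: the Chern connection form of $\omega_{\hh}^{n}$ has pointwise norm of order $|n|$, so a frame-based Sobolev embedding picks up polynomial factors in $n$; this is harmless given the slack in the exponents $4(2+2k-i)$, but it should be said (a unitary frame, or a Kato/Moser subsolution estimate for $|\nabla^k\sigma|^2$, gives the polynomial bound cleanly). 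Second, the derivative-to-Laplacian trade needs strictly nested balls, so the radii $\alpha/2$ and $\alpha$ must be kept distinct throughout; and the commutator bookkeeping you flag as the main obstacle is genuinely the bulk of the work --- it is exactly the content of (\ref{eqn_ell_2_1})--(\ref{ell_est_3}) in the paper, done there by hand in the coordinate $z$. With those caveats your outline leads to a complete proof.
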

	\begin{rem}
	In the present article we fix $n \in \integ$, so the precise dependence on $n$ of the right-hand side of (\ref{lem_ell_est_eqn}) is not important. However, if one wishes to study the asymptotics of the analytic torsion for non-compact hyperbolic surfaces (similarly to the compact case, considered in \cite{FinI1}) or the asymptotics of the associated Bergman kernel (as in \cite{Auvr}), then the polynomial dependence in $n$ of the coefficients in the right-hand side of (\ref{lem_ell_est_eqn}) is vitally important. Similar estimates were obtained in \cite[Proposition 4.2]{Auvr}.
	\end{rem}
	To prove Theorem \ref{thm_hk_est}, we will need the following
	\begin{lem}\label{lem_from_t0_to_infty}
		Let $f(t)$, $t > 0$ be a semigroup of operators acting on $L^2(E_M^{\xi, n})$ with smooth kernels $f(t,x,y)$, $x, y \in M$ associated with $dv_M(y)$. 
		Suppose that for any $l, l', l'' \in \nat$, there are some $t_0 > 0$, $c', C_1 > 0$, such that for any $t \in ]0, t_0]$, $x, y \in M$, we have
		\begin{equation}\label{eq_lem_t_1}
			\Big| (\nabla_{x})^l (\nabla_{y})^{l'} (\partial_t)^{l''} f(t, x, y) \Big|_{h \times h} 
			\leq 
			C_1 t^{- 1 - (l + l')/2 - l''}   
			 \rho_M(x) \rho_M(y) \exp(- c' \dist(x, y)^2/t).
		\end{equation}
		Then there are $c, C > 0$ such that for any $t > 0$, $x, y \in M$, we have
		\begin{multline}\label{eq_lem_t_2}
			\Big| (\nabla_{x})^l (\nabla_{y})^{l'} (\partial_t)^{l''} f(t, x, y) \Big|_{h \times h} \leq C t^{- 1 - (l + l')/2 - l''}
			\\
			\cdot  \rho_M(x) \rho_M(y) \exp(ct - c' \dist(x, y)^2/t).
		\end{multline}
	\end{lem}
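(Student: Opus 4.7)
The strategy is to upgrade the small-time bound (\ref{eq_lem_t_1}) to all times by iterating the semigroup identity $f(t+s)=f(t)\circ f(s)$, with the exponential factor $e^{ct}$ arising as the cost of iteration and the Gaussian factor $e^{-c'\dist(x,y)^2/t}$ preserved via a chain decomposition.

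First, I would establish an $L^2$--$L^2$ operator bound for $f(t_0/2)$. Applying (\ref{eq_lem_t_1}) at $t=t_0/2$ with $l=l'=l''=0$ and Schur's test, this reduces to the claim that
\begin{equation*}
\sup_{x\in M}\int_M \rho_M(x)\rho_M(y)\exp\bigl(-c'\dist(x,y)^2/t_0\bigr)\,dv_M(y) < \infty.
\end{equation*}
This follows from splitting $M$ into the compact part, where $\rho_M$ is bounded, and the cusp neighbourhoods $V_i^M(\eta)$, where in the uniformising coordinates (\ref{reqr_poincare}) we have $\rho_M\sim\sqrt{|\ln|z||}$ and $dv_M$ is the hyperbolic volume form — the Gaussian in $\dist$ produces rapid decay against the mild logarithmic weights. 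Hence $\|f(t_0/2)\|_{L^2\to L^2}\le C_0$, and by the semigroup property $\|f(t)\|_{L^2\to L^2}\le C_0^{\lceil 2t/t_0\rceil}\le Ce^{ct}$ for all $t\ge 0$.

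Next, for $t>t_0$, I would decompose
\begin{equation*}
f(t)=f(t_0/4)\circ f(t-t_0/2)\circ f(t_0/4),
\end{equation*}
noting that space derivatives $\nabla_x^l$ and $\nabla_y^{l'}$ act only on the outer factors, and that time derivatives $\partial_t^{l''}$ can be moved onto the middle factor via the generator $A$ of $f$: writing $\partial_t^{l''}f(t)=A^{l''}f(t-t_0/2)\circ f(t_0/4)\circ f(t_0/4)$, the operator $A^{l''}f(t-t_0/2)$ is again bounded on $L^2$ (with norm $\lesssim(t-t_0/2)^{-l''}e^{c(t-t_0/2)}$, derived by the same semigroup iteration applied to the fixed-time operator $A^{l''}f(t_0/4)\circ f(s)$). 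Applying Cauchy--Schwarz in the pair of integrals, one obtains
\begin{equation*}
\bigl|(\nabla_x^l\nabla_y^{l'}\partial_t^{l''}f)(t,x,y)\bigr|_{h\times h}\le C\rho_M(x)\rho_M(y)\,t^{-1-(l+l')/2-l''}e^{ct},
\end{equation*}
using the small-time estimate (\ref{eq_lem_t_1}) to bound $\|\nabla_x^l f(t_0/4)(x,\cdot)\|_{L^2}\le C\rho_M(x)$ and similarly in $y$.

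To recover the Gaussian factor $\exp(-c'\dist(x,y)^2/t)$ when $\dist(x,y)^2\gtrsim t$, I would replace the three-step decomposition by a chain: for $t=Nt_0/2+r$ with $r\in[0,t_0/2)$,
\begin{equation*}
f(t,x,y)=\int_{M^{N}} f(t_0/2,x,z_1)\,f(t_0/2,z_1,z_2)\cdots f(t_0/2,z_{N-1},z_N)\,f(r,z_N,y)\,dv_M(z_1)\cdots dv_M(z_N),
\end{equation*}
and bound each factor by (\ref{eq_lem_t_1}). The elementary inequality $\sum_{i=1}^{N+1}\dist(z_{i-1},z_i)^2\ge(N+1)^{-1}\dist(x,y)^2$ (with $z_0=x,z_{N+1}=y$) combined with the triangle-inequality trick of (\ref{eq_triangle_exp}) extracts an overall factor $\exp(-c'''\dist(x,y)^2/t)$, while the remaining integrals each contribute a uniformly bounded factor $\int\rho_M(z)^2\exp(-c''\dist(z,z')^2)dv_M(z)\le C$ (the same estimate used in Step 1). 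This yields $N$ constants which combine to $C^N\le e^{ct}$ since $N\sim t/t_0$. Handling space and time derivatives requires letting them act only on the first and last links of the chain, as in Step 3. The main obstacle is the uniform control of the $\rho_M$-weighted integrals against the Gaussian, which is why the behaviour of $\rho_M$ in the cusp (Remark \ref{rem_inj_rad}) enters crucially; everything else is a routine semigroup manipulation.
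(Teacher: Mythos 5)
Your proposal is correct, but it follows a genuinely different route from the paper's. The paper proves (\ref{eq_lem_t_2}) by a doubling induction, $f(2t)=f(t)\circ f(t)$: at each of the $\sim\log_2(t/t_0)$ steps the single intermediate integral is split over the annuli $|z|<|u|$, $|u|<|z|<|v|$, $|v|<|z|<1/2$ and the compact exterior, and the explicit radial coordinate $\ln|\ln|z||$ together with the Gaussian convolution computation (\ref{eq_lem_t_5}) shows that two \emph{long-time} Gaussians convolve to a long-time Gaussian with only a $\sqrt{t}$ loss, the constant $\exp(c(2^k-k))$ being tracked through the induction. You instead use a single Chapman--Kolmogorov chain of $N\sim t/t_0$ links of fixed short length, split each Gaussian factor between the weighted-mean chaining inequality (which is exactly the paper's (\ref{eq_triangle_exp})) and the integrability of the weights, and absorb the resulting $C^N$ into $e^{ct}$. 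Your route avoids the delicate convolution estimate entirely because every link has time bounded above and below; the price is the two uniform integral bounds you isolate, namely $\sup_{x}\rho_M(x)\int\rho_M(y)e^{-c'\dist(x,y)^2/t_0}dv_M(y)<\infty$ for the Schur test and $\sup_{z'}\int\rho_M(z)^2e^{-c''\dist(z,z')^2/t_0}dv_M(z)<\infty$ for the chain. Both do hold, but for a slightly sharper reason than ``Gaussian beats logarithms'': in the half-plane model $\rho_M\sim\sqrt{\Im}$ and $dv_M=\Im^{-2}\,dx\,dy$, so in the radial variable $s=\ln(\Im z/\Im z')$ one finds $\rho_M(z)^2\,dv_M(z)\lesssim ds$ and $\rho_M(x)\rho_M(y)\,dv_M(y)\lesssim e^{|s|/2}ds$, i.e.\ an \emph{exponential} growth that the Gaussian $e^{-c's^2/t_0}$ must (and does) dominate — this is the same cancellation the paper exploits via (\ref{eq_bound_dist_mhyp})--(\ref{dist_hyp_comp}). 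To make the sketch airtight you should also state explicitly that the exponent of each link is split in two (one half for chaining, one half for the $z_i$-integrations), since the full exponent cannot be used for both purposes; this costs only a harmless factor $2$ in the Gaussian constant, consistent with what the paper itself loses in (\ref{eq_lem_t_5}).
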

	\begin{proof}
		\begin{sloppypar}
		There are essentially three different cases to consider $x, y \in M \setminus (\cup_i V_i^{M}(1/2))$, $x \in V_i^{M}(1/2), y \in V_j^{M}(1/2)$ for some $i \neq j$ and $x, y \in V_i^{M}(1/2)$ for some $i = 1, \ldots, m$. We will only treat the last one, which is the most difficult one, and we leave the rest to the reader.
		\end{sloppypar} 
		\par 
		We denote $u  = z_i^{M}(x)$, $v =  z_i^{M}(y)$.
		Let's prove by induction that there exists $c, C > 0$ such that for any  $k \in \nat$, $t < 2^k t_0$, we have
		\begin{multline}\label{eq_lem_t_3}
			\Big| (\nabla_{u})^l (\nabla_{v})^{l'} (\partial_t)^{l''} f(t, u, v) \Big|_{h \times h} \leq C t^{- 1 - (l + l')/2 - l''}  (1 + |\ln |u||)^{1/2}(1 + |\ln |v||)^{1/2}
			\\
			\cdot  \exp \Big(c (2^n - n)- c' \cdot \dist(u, v)^2/t \Big).
		\end{multline}
		Now, for $k = 0$, (\ref{eq_lem_t_3}) is simply (\ref{eq_lem_t_1}). 
		Once the induction step is done, (\ref{eq_lem_t_3}) would imply (\ref{eq_lem_t_2}).
		For simplicity, we treat the case $l = l' = l'' = 0$, as the generalization is straightforward.
		\par
		Let $k \in \nat$ and $2^{k-1} t_0 \leq t < 2^k t_0$, then by the semigroup property, we have
		\begin{equation}
			\big| f(2 t, u, v) \big|_{h \times h} 
			\leq 
			\int_{M}
			\big|  f(t, u, z) \big|_{h \times h}  \cdot
			\big|   f(t, z, v) \big|_{h \times h} dv_M(z) 
		\end{equation}
		Without losing the generality, suppose $|u| \leq |v|$.
		We decompose the integration over $M$ into four parts: over $V_i^{M}(|u|)$, over $V_i^{M}(|v|) \setminus V_i^{M}(|u|)$, over $V_i^{M}	(1/2) \setminus V_i^{M}(|v|)$ for $i=1, \ldots, m$, and over $M \setminus (\cup V_i^{M}(1/2))$. 	
		We will suppose that $|u|$ is small enough, as if it is not, then the treatment of all those cases reduces to the last one, which is the easiest one.
		Before treating those cases, let's recall some facts about the geometry of $(\dd^*, g^{T\dd^*})$ and the induced $S^1$-action by rotations.
		First of all, by (\ref{reqr_poincare}), for any $u_0 \in V_i^{M}(1/2)$, the length of the $S^1$-orbit of $u_0$ is given by $2 \pi /|\ln(|u_0|)|$. Thus, by triangle inequality and $S^1$-symmetry, for any $u_1 \in V_i^{M}(1/2)$, we have
		\begin{equation}\label{eq_bound_dist_mhyp}
			\dist(|u_0|, |u_1|) \leq \dist(u_0, u_1)  \leq \dist(|u_0|, |u_1|) + \min \bigg\{ \frac{2 \pi}{|\ln |u_0||} ; \frac{2 \pi}{|\ln |u_1||} \bigg\}.
		\end{equation}
		Also, by a trivial calculation (see \ref{reqr_poincare}), we have
		\begin{equation}\label{dist_hyp_comp}
			\dist(|u_0|, |u_1|) = \Big| \ln |\ln |u_0| | -  \ln |\ln |u_1| | \Big|.
		\end{equation}
		Let $z \in V_i^{M}(1/2)$, by abuse of notation, we denote $z := z_i^M(z)$.
		\par \textbf{Let's treat the integration over $|z| < |u|$.}
		By (\ref{eq_bound_dist_mhyp}), we have
		\begin{equation}\label{dist_hyp_comp_bound}
			\dist(z, v) \geq \dist(|u|, |v|) \geq \dist(u, v) - \frac{2 \pi}{|\ln |u||}.
		\end{equation}
		By (\ref{dist_hyp_comp}) and (\ref{dist_hyp_comp_bound}), since $u$ is small enough, we deduce
		\begin{equation}\label{eq_bound_sq_dist}
			\dist(z, v)^2 \geq \dist(|u|, |v|)^2 \geq \dist(u, v)^2 - 4 \pi.
		\end{equation}
		From the induction hypothesis (\ref{eq_lem_t_3}), (\ref{dist_hyp_comp_bound}) and (\ref{eq_bound_sq_dist}), we deduce
		\begin{multline}\label{eq_lem_t_44}
			\int_{|z| < |u|}
			\big|  f(t, u, z) \big|_{h \times h}  \cdot
			\big|   f(t, z, v) \big|_{h \times h} dv_M(z) 
			\leq 
			2 C^2 t^{- 2}  (1 + |\ln |u||)^{1/2}
			\\
			\cdot
			(1 + |\ln |v||)^{1/2}
			\exp \Big(c (2 \cdot 2^{n-1} - 2(n-1)) + \frac{4 \pi c'}{t} - \frac{c'}{t} \dist(u, v)^2 \Big)
			\\
			\cdot
			\int_{|z| < |u|}		
			\exp \Big(- \frac{c'}{t} \dist(|u|, |z|)^2 \Big) \frac{\imun dz d \overline{z}}{|z|^2 \ln|z|}.
		\end{multline}
		Now, there exists $C_2 > 0$ such that for any $t > 0$, we have
		\begin{equation}\label{eq_lem_t_4444}
			\int_{|z| < |u|}		
			\exp \Big(- \frac{c'}{t} \dist(|u|, |z|)^2 \Big) \frac{\imun dz d \overline{z}}{|z|^2 \ln|z|} = 
			4 \pi \int_{0}^{\infty}
			\exp \Big(- \frac{c'}{t} r^2\Big) dr \leq C_2 \sqrt{t}.
		\end{equation}
		From (\ref{eq_lem_t_44}) and (\ref{eq_lem_t_4444}), we deduce
		\begin{multline}\label{eq_lem_t_44fin}
			\int_{|z| < |u|}
			\big|  f(t, u, z) \big|_{h \times h}  \cdot
			\big|   f(t, z, v) \big|_{h \times h} dv_M(z) 
			\\
			\leq 
			2 C^2 C_2 \exp(4 \pi^2 c' / t) t^{- 3/2}  \big(1 + |\ln |u|| \big)^{1/2} \big(1 + |\ln |v|| \big)^{1/2}
			\\
			\cdot
			\exp \Big(c \big(2 \cdot 2^{n-1} - 2(n-1)\big) - \frac{c'}{t}\dist(u, v)^2 \Big)
		\end{multline}
		Thus, by choosing $c, C$ appropriately, we bound the contribution from the integral over $\{ |z| < |u| \}$ by the right-hand side of (\ref{eq_lem_t_3}).
		\par \textbf{Now let's treat the integral over $|u| < |z| < |v|$.}
		From (\ref{eq_bound_dist_mhyp}) and the boundness of the Gaussian integral, for some $C > 0$, we deduce 
		\begin{multline}\label{eq_lem_t_5}
			\int_{|u| < |z| < |v|}		
			\exp \Big(
			- \frac{c'}{t} \big(\dist(u, z)^2 + \dist(v, z)^2\big) \Big) \frac{\imun dz d \overline{z}}{|z|^2 \ln|z|}
			\\
			\leq
			2 \pi
			\int_{\ln | \ln|u| |}^{\ln |\ln |v| |}
			\exp \Big(
			- \frac{c'}{t}  \Big( \big( y - \ln | \ln|u| |\big)^2 + \big(\ln | \ln|v| | - y\big)^2\Big) \Big) dy
			\\
			=			
			2 \pi
			\int_{0}^{\dist(|u|, |v|)}
			\exp \Big(
			- \frac{c' r^2}{t} -  \frac{c'}{t} \big(\dist(|u|, |v|) - r\big)^2 \Big) dr
			=
			2 \pi
			\exp \Big(
			- \frac{c'}{2t} \dist(|u|, |v|)^2 \Big)
			\\
			\cdot
			\int_{-\dist(|u|, |v|)/2}^{\dist(|u|, |v|)/2}
			\exp \Big(
			- \frac{c' r^2}{2t} \Big) dr
			\leq
			C \sqrt{t}
			\exp \Big(
			- \frac{c'}{2t} \dist(|u|, |v|)^2 \Big)
		\end{multline}
		From the induction hypothesis (\ref{eq_lem_t_3}), (\ref{eq_bound_sq_dist}) and the bounds on $t$, we bound the contribution of the integration over $|u| < |z| < |v|$ by the right-hand side of the induction step (\ref{eq_lem_t_3}).
		\par \textbf{The integral over $|v| < |z| < 1/2$} is treated similarly to the integral over $|z| < |u|$.
		\par \textbf{The integral over $z \in M \setminus \cup_i V_i^{M}(1/2)$} follows from (\ref{eq_triangle_exp}).
	\end{proof}
	\begin{proof}[Proof of Theorem \ref{thm_hk_est}.]
		Let's prove (\ref{thm_est_exp_perp2}) first.
		From Lemma \ref{lem_ell_est}, there is $C > 0$, such that for any $x,x' \in M$, we have
		\begin{multline}\label{eqn_2_9_pf}
			 \big| (\nabla_x)^l (\nabla_{x'})^{l'}  \exp^{\perp}(-t \laplcomp^{E_M^{\xi, n}})(x, x')  \big|_{h \times h} 	 
			   \leq
			 C \rho_M(x) \rho_M(x')
			 \cdot
			 \\
			 \cdot \sum_{i = 0}^{2 + l} \sum_{j = 0}^{2 + l'}
			  \big\lVert (\laplcomp^{E_M^{\xi, n}})^i \exp^{\perp}(-t \laplcomp^{E_M^{\xi, n}}) (\laplcomp^{E_M^{\xi, n}})^j  \big\rVert^{0, 0},	
		\end{multline}
			where $\norm{\cdot}^{0, 0}$ is the operator norm between the corresponding $L^2$ spaces.
			For any $l \in \nat$, $c > 0$, there is $C > 0$ such that for any $t > 0$, we have
			\begin{equation}\label{equal_sup_exp}
				\textstyle \sup_{u \geq c} u^l \exp(-tu) \leq C t^{-l} \exp(-ct/2).
			\end{equation}
			By Theorem \ref{spec_gap_thm}, for any $i, j \in \nat$, there are $c, C > 0$ such that for any $t \geq 0$, we have
			\begin{equation}\label{eq_aux_cicj2}
				\norm{(\laplcomp^{E_M^{\xi, n}})^i \exp^{\perp}(-t \laplcomp^{E_M^{\xi, n}}) (\laplcomp^{E_M^{\xi, n}})^j}^{0, 0} \leq C t^{-(i+j)} \exp(-ct).
			\end{equation}
			From (\ref{eqn_2_9_pf}) and (\ref{eq_aux_cicj2}), we get (\ref{thm_est_exp_perp2}).
			\par \textbf{Let's proceed with a proof of (\ref{thm_est_exp2}).} By Lemma \ref{lem_from_t0_to_infty}, it's enough to prove it for $t < t_0$ for some $t_0 > 0$. We fix $\epsilon > 0$ small enough, and consider several cases.
			\par \textbf{Case 1:} $x,x' \in M \setminus (\cup_i V_i^{M}(\epsilon))$. The estimate (\ref{thm_est_exp2}) for small $t$ is classical and it is proved by using finite propagation speed of solutions of hyperbolic equations (cf. \cite[Theorems D.2.1, 4.2.8]{MaHol}) and the parametrix estimates of the heat kernel similar to \cite[\S 2.4, 2.5]{BGV}.
			\par \textbf{Case 2:} $x \in V_i^{M}(\epsilon)$, $x' \notin V_i^{M}(2 \epsilon)$, for some $i = 1, \ldots, m$. In this case, we prove the estimate (\ref{thm_est_exp2}) for $t$ small enough by using finite propagation speed of solutions of hyperbolic equations.
			\par More precisely, for $r > 0$, we introduce smooth even functions (cf. \cite[(4.2.11)]{MaHol})
		\begin{equation}\label{def_kth_lth}
		\begin{aligned}
			& K_{t, r}(a) = \int_{- \infty}^{+ \infty} \exp(\imun v \sqrt{2t} a) \exp \Big( -\frac{v^2}{2} \Big) \Big(1 - \psi\Big( \frac{\sqrt{2t} v}{r} \Big)\Big) \frac{d v}{\sqrt{2 \pi}}, \\
			& G_{t, r}(a)  = \int_{- \infty}^{+ \infty} \exp(\imun v \sqrt{2t} a) \exp \Big( -\frac{v^2}{2} \Big) \psi\Big( \frac{\sqrt{2t} v}{r} \Big) \frac{d v}{\sqrt{2 \pi}},
		\end{aligned}
		\end{equation}
		where $\psi: \real \to [0,1]$ was defined in (\ref{eq_defn_psi}).
		Let $\widetilde{K}_{t,r}, \widetilde{G}_{t,r} : \real_+ \to \real$ be the smooth functions given by $\widetilde{K}_{t,r}(a^2) = K_{t,r}(a), \widetilde{G}_{t,r}(a^2) = G_{t, r}(a)$. Then the following identities hold
		\begin{equation}\label{exp_scin_kuh}
			\exp(-t \laplcomp^{E_M^{\xi, n}} ) 
			 = 
			\widetilde{G}_{t,r}(\laplcomp^{E_M^{\xi, n}}) + 
			\widetilde{K}_{t,r}(\laplcomp^{E_M^{\xi, n}} ).
		\end{equation}
		\par
		By the finite propagation speed of solutions of hyperbolic equations (cf. \cite[Theorems D.2.1, 4.2.8]{MaHol}), the section $\widetilde{G}_{t,r}(\laplcomp^{E_M^{\xi, n}}) \big(y, \cdot \big)$, $y \in M$, depends only on the restriction of $\laplcomp^{E_M^{\xi, n}}$ onto $B^M(y, r)$, and
		\begin{equation}\label{eq_guh_zero}
			{\rm{supp}}\, \widetilde{G}_{t,r}(\laplcomp^{E_M^{\xi, n}}) \big(y, \cdot) \subset B^M(y, r).
		\end{equation}
		Also, from (\ref{exp_scin_kuh}) and (\ref{eq_guh_zero}), we get
		\begin{equation}\label{eq_exp_kuh_id}
			\exp(-t \laplcomp^{E_M^{\xi, n}} )(y, z) = \widetilde{K}_{t,r}(\laplcomp^{E_N^{n}} )(y, z) \quad \text{if} \quad \dist(y, z) > r.
		\end{equation}
		For any $r_0 > 0$ fixed, from (\ref{def_kth_lth}), there exists $c' > 0$ such that for any $m \in \nat$, there is $C>0$ such that for any $t \in ]0, 1], r > r_0, a\in \real$, the following inequality holds (cf. \cite[(4.2.12)]{MaHol})
		\begin{equation}\label{est_kuh}
			|a|^m | K_{t, r}(a) |  \leq C \exp (- c' r^2/t ).
		\end{equation}
		Thus, by (\ref{est_kuh}), for $t \in ]0,1], r > r_0, a \in \real_+$, we have
		\begin{equation}\label{est_tilde_kuh}
			|a|^m | \widetilde{K}_{t, r}(a) |  \leq C \exp ( -c' r^2/ t ).
		\end{equation}
		Now, by (\ref{est_tilde_kuh}), there exists $c' > 0$ such that for any $k, k' \in \nat$, there is $C > 0$ such that for any $t \in ]0,1]$ and $r > r_0$, we have
		\begin{equation} \label{eqn_kth_norm}
			\norm{(\laplcomp^{E_M^{\xi, n}})^k \widetilde{K}_{t, r}( \laplcomp^{E_M^{\xi, n}} )(\laplcomp^{E_M^{\xi, n}})^{k'} }^{0, 0} 
			\leq
			C \exp ( - c' r^2/ t),
		\end{equation}
		where $\norm{\cdot}^{0, 0}$ is the operator norm between the corresponding $L^2$-spaces. 
		Thus, by Lemma \ref{lem_ell_est}, for any $l, l' \in \nat$, there are $c',C > 0$ such that for any $x, x' \in M$, $r > r_0$, we have
		\begin{equation} \label{eqn_kth_norm_sup}
			\big|  (\nabla_x)^{l} (\nabla_{x'})^{l'} \widetilde{K}_{t, r}( \laplcomp^{E_M^{\xi, n}} ) (x, x') \big|_{h \times h}
			\leq
			C \rho_M(x) \rho_M(x') \exp ( - c' r^2/ t),
		\end{equation}
		We get (\ref{thm_est_exp2}) from (\ref{eq_exp_kuh_id}) and (\ref{eqn_kth_norm_sup}) by taking $r_0 = \frac{1}{4} \dist(V_i^{M}(\epsilon), M \setminus V_i^{M}(2 \epsilon))$ and $r = \frac{1}{2} \dist(x,y)$.
		\par \textbf{Case 3:} $x, x' \in V_i^{M}(2 \epsilon)$ for some $i = 1, \ldots, m$. In this case, we prove the estimate (\ref{thm_est_exp2}) for $t$ small enough by its local analogue  (\ref{eq_bound_exp_sm_t}), and once again by using finite propagation speed of solutions of hyperbolic equations.
		\par 
		We choose a holomorphic trivialization $e_1, \ldots, e_{\rk{\xi}}$ of $\xi$ over $V_i^{M}(\epsilon)$. By the map $\comp^{\rk{\xi}} \to \xi$, given by $(z_1, \ldots, z_{\rk{\xi}}) \mapsto z_1 e_1 + \cdots + z_{\rk{\xi}} e_{\rk{\xi}}$, we induce the Hermitian metric $h^{\xi}_{0}$ on the trivial vector bundle $\xi_0 := \comp^{\rk{\xi}}$ over $D(2 \epsilon)$. 
		By using a bump function, we extend $h^{\xi}_{0}$ to a Hermitian metric on $\comp^{\rk{\xi}}$ over $\dd$, which is trivial away from a compact set, and by abuse of notation denote the resulting Hermitian metric by $h^{\xi}_{0}$. 
		Let $\laplcomp^{E_{\dd}^{n}}$ be the Kodaira Laplacian on $(\dd, g^{T \dd^*})$ associated with $(\xi_0 \otimes \omega_{\dd}(0), h^{\xi}_{0} \otimes (\, \norm{\cdot}_{\dd})^{2n})$.
		\par We denote $u := z_i^{M}(x)$, $u' := z_i^{M}(x')$, and without losing the generality, we put $r := \dist_{\dd^*}(u, 2 \epsilon) < \dist_{\dd^*}(u', 2 \epsilon)$. 
		By (\ref{eq_bound_dist_mhyp}) and (\ref{dist_hyp_comp}), for some $c > 0$, we have
		\begin{equation}\label{eq_dist_cor_comp}
			\dist(u, 2\epsilon) \geq \ln |\ln|u|| - c.
		\end{equation}
		Now, from the fact that the restriction of $\laplcomp^{E_M^{\xi, n}}$ onto $B^M(x, r)$ coincides with the restriction of $\laplcomp^{E_{\dd}^{\xi_0, n}}$ onto $B^{\dd}(u, r)$, and by the finite propagation speed of solutions of hyperbolic equations:
		\begin{equation}\label{eq_guh_equal_disc_rel}
			\widetilde{G}_{t,r}(\laplcomp^{E_M^{\xi, n}}) \big(x, x') = \widetilde{G}_{t,r}(\laplcomp^{E_{\dd}^{\xi_0, n}}) \big(u, u'),
		\end{equation}
		for $E_{\dd}^{n} := \xi_0 \otimes \omega_{\dd}(D)^n$.
		Now, from (\ref{exp_scin_kuh}) and (\ref{eq_guh_equal_disc_rel}), we get
		\begin{equation}\label{exp_scin_kuh_rel}
			\exp(-t \laplcomp^{E_M^{\xi, n}} )(x, x') - \exp(-t \laplcomp^{E_{\dd}^{\xi_0, n}} )(u, u')
			 = 
			\widetilde{K}_{t,r}(\laplcomp^{E_M^{\xi, n}} )(x, x') - \widetilde{K}_{t,r}(\laplcomp^{E_{\dd}^{\xi_0, n}})(u, u').
		\end{equation}
		Now, we conclude by (\ref{eq_bound_exp_sm_t}), (\ref{eqn_kth_norm_sup}), (\ref{eq_dist_cor_comp}) and (\ref{exp_scin_kuh_rel}).
	\end{proof}
	\begin{proof}[Proof of Theorem \ref{thm_rel_hk_infty}.]
		First of all, in the case when $(\xi, h^{\xi})$ is trivial around the cusps, by choosing $\epsilon$ small enough in Case 3 of the proof of Theorem \ref{thm_hk_est}, we see that the Hermitian vector bundle $(\xi_0, h^{\xi}_{0})$ becomes trivial. Thus, (\ref{hk_infty_sharp}) follows from (\ref{eqn_kth_norm_sup}), (\ref{exp_scin_kuh_rel}).
		\par \textbf{Now let's prove the estimates (\ref{hk_infty}), (\ref{hk_infty2}).}
		Consider a family of Hermitian metrics $h^{\xi}_{\epsilon}$, $\epsilon \in [0,1]$ on $\xi$ such that they coincide with $h^{\xi}$ over $M \setminus (\cup_i V_i^{M}(1/2))$ and over $V_i^{M}(1/2)$, we have
		\begin{equation}\label{eq_hxi_flat222}
			h^{\xi}_{\epsilon}((z_i^{M})^{-1}(u))(e_i, e_j) := (1 - \epsilon \psi(4 |u|^2)) h^{\xi}((z_i^{M})^{-1}(u))(e_i, e_j) + \epsilon \psi(4 |u|^2) \delta_{ij},
		\end{equation}
		where $\psi$ is defined in (\ref{eq_defn_psi}), $e_i$, $i = 1,\ldots, \rk{\xi}$ is as in (\ref{defn_normal_triv}), and $\delta_{ij}$ is the Kronecker delta symbol.
		We denote by $\laplcomp^{E_M^{\xi, n}}_{\epsilon}$ the Kodaira Laplacian on $(M, g^{TM})$, associated with $h^{\xi}_{\epsilon}$ and $\norm{\cdot}_M$. Then we have (\ref{eq_comp_e_lapl}) for $\eta := \epsilon$. Moreover, (\ref{eq_bound_cusped_der}) still holds uniformly on $\eta := \epsilon \in [0,1]$.
		By Duhamel's formula (cf. \cite[Theorem 2.48]{BGV}), there exists $\epsilon_0 > 0$ such for any $u \in V_i^M(\epsilon_0)$, we have 
		\begin{multline}\label{eq_duh}
			\partial_{\epsilon} \exp(-t \laplcomp^{E_M^{\xi, n}}_{\epsilon})(u, u) 
			= 
			- 
			\int_0^{t} \int_{v \in M} 
			\exp(-(t-s) \laplcomp^{E_M^{\xi, n}}_{\epsilon})(u, v) \cdot
			\\
			\cdot
			\Big( \partial_{\epsilon} (\laplcomp^{E_M^{\xi, n}}_{\epsilon})_v
			\exp(-s \laplcomp^{E_M^{\xi, n}}_{\epsilon})(v, u) \Big)
			d v_{M}(v) ds.
		\end{multline}
		Now, the operator (\ref{eq_comp_e_lapl}) has support over $V_i^M (1/2)$, thus, the integration in (\ref{eq_duh}) is done only over $V_i^{M}(1/2)$. 
		By coordinate function $z_i^{M}$, we identify $V_i^M (1/2)$ with $D^*(1/2) \subset \dd$.
		Now, since the family of Hermitian metrics (\ref{eq_hxi_flat222}) is smooth, the estimate  (\ref{thm_est_exp2}) holds uniformly in $\epsilon$, and by (\ref{thm_est_exp2}), (\ref{eq_bound_cusped_der}), (\ref{eq_comp_e_lapl}), there is $C > 0$ such that 
		\begin{multline}\label{eq_duh_decomp}
			\Big| \partial_{\epsilon} \exp(-t \laplcomp^{E_M^{\xi, n}}_{\epsilon})(u, u) \Big| 
			\leq
			C (1 + |\ln |u||) \exp(ct)
			\int_0^{t} \int_{v \in D^*(\frac{1}{2})} 
			|v| (1 + |\ln |v||) \frac{1}{t-s} \cdot \\
			\cdot
			\frac{1}{s^{3/2}}
			\cdot \exp\Big(- \frac{\dist(u,v)^2}{4} ( s^{-1} + (t - s)^{-1} ) \Big)
			d v_{\dd^*}(v) ds.
		\end{multline}
		For $r \in \real_+$, we decompose
		\begin{multline}\label{eq_disc_decomp}
			\int_{v \in D^*(\frac{1}{2})} 
			|v| (1 + |\ln |v||) \exp\Big(- \frac{\dist(u,v)^2}{4} ( s^{-1} + (t - s)^{-1} ) \Big) d v_{\dd^*}(v) 
			\\
			=
			\int_{v \in B^{\dd}(u, r) \cap D^*(\frac{1}{2})}  + \int_{v \in D^*(\frac{1}{2}) \setminus B^{\dd}(u, r)}.
		\end{multline}
		By simple geometric considerations, for $\tilde{u} \in \hh$ such that for the covering $\rho$ from Section \ref{sect_pametr}, $\rho(\tilde{u}) = u$, we have 
		\begin{multline}\label{eq_comp_dd_hh_disc}
			\int_{v \in B^{\dd}(u, r)} \exp\Big(- \frac{\dist(u,v)^2}{4} ( s^{-1} + (t - s)^{-1} ) \Big) d v_{\dd^*}(v)  
			\\
			\leq 
			\int_{\tilde{v} \in B^{\hh}(\tilde{u}, r)} \exp\Big(- \frac{\dist(\tilde{u},\tilde{v})^2}{4} ( s^{-1} + (t - s)^{-1} ) \Big) d v_{\hh}(\tilde{v}).
		\end{multline}
		However, since $(\hh, g^{T \hh})$ is isometrically transitive, the right-hand side of (\ref{eq_comp_dd_hh_disc}) doesn't depend on $\tilde{u}$, i.e. it is a function of $r > 0$. Thus, in further estimation of the right-hand side of (\ref{eq_comp_dd_hh_disc}), we may suppose that $\tilde{u} = \imun$.
		\par
		Now let's take $r =1$. Then, since over a compact subset of $\hh$, the metric $g^{T \hh}$ is equivalent to the standard Euclidean metric, by the Gaussian integral on $\comp$, for some $C > 0$, we have
		\begin{equation}\label{eq_comp_dd_hh_disc2}
			\int_{\tilde{v} \in B^{\hh}(\imun, r)} \exp\Big(- \frac{\dist(\imun,\tilde{v})^2}{4} ( s^{-1} + (t - s)^{-1} ) \Big) d v_{\hh}(\tilde{v}) 
			\leq \frac{C}{(s^{-1} + (t - s)^{-1})}.
		\end{equation}
		Now, there is $C > 0$ such that
		\begin{equation}\label{eq_comp_dd_hh_disc3}
		\begin{aligned}
			\int_{v \in D^*(\frac{1}{2}) \setminus B^{\dd}(u, r)} \exp\Big( &- \frac{\dist(u,v)^2}{4} ( s^{-1} + (t - s)^{-1} ) \Big) d v_{\dd^*}(v)  
			\\			
			&
			\leq 
			\int_{v \in D^*(\frac{1}{2})} \exp\Big( -( s^{-1} + (t - s)^{-1} )/4 \Big)  d v_{\dd^*}(v)  
			\\
			& \leq
			C \exp \Big(- ( s^{-1} + (t - s)^{-1} )/4 \Big),
		\end{aligned}
		\end{equation}
		where in the last line we used the fact that the volume of $D^*(1/2)$ is finite.
		By (\ref{eq_disc_decomp}), (\ref{eq_comp_dd_hh_disc}), (\ref{eq_comp_dd_hh_disc2}), (\ref{eq_comp_dd_hh_disc3}), and by the fact that from (\ref{reqr_poincare}) and (\ref{dist_hyp_comp}), for $v \in B^{\dd}(u, 1)$, we have $|v| \leq |u|^{1/e}$, we deduce that there are $c, C > 0$ such that
		\begin{multline}\label{eq_disc_decomp_final}
			\int_{v \in D^*(\frac{1}{2})} 
			|v| (1 + |\ln |v||)  \exp\Big(- \frac{\dist(u,v)^2}{4} ( s^{-1} + (t - s)^{-1} ) \Big) d v_{\dd^*}(v) 
			\\
			\leq
			 \frac{C |u|^{1/e} |\ln |u||}{s^{-1} + (t - s)^{-1}}  + C \exp \Big(- c ( s^{-1} + (t - s)^{-1} ) \Big).
		\end{multline}
		From (\ref{eq_duh_decomp}) and (\ref{eq_disc_decomp_final}), we get (\ref{hk_infty2}).
		\par 
		\textbf{Now let's prove (\ref{hk_infty}).}
		Now let's fix $k \in \nat$ and take $r = \dist(|u|, |\ln |u||^{-k})$.
		By (\ref{dist_hyp_comp}):
		\begin{equation}\label{est_r_discs}
			r = - \int_{|u|}^{|\ln |u||^{-k}} \frac{dr}{r |\ln r|}  \approx \ln | \ln |u| | .
		\end{equation}
		Then by (\ref{eq_comp_dd_hh_disc2}) and (\ref{eq_comp_dd_hh_disc3}),  as $r \geq 1$, for some $c, C > 0$, we have
		\begin{multline}\label{eq_comp_dd_hh_disc2nd}
			\int_{v \in B^{\dd}(u, r) \cap D^*(\frac{1}{2})} \exp\Big(- \frac{\dist(u,v)^2}{4} ( s^{-1} + (t - s)^{-1} ) \Big) d v_{\dd^*}(v)  
			\\
			=
			\int_{v \in B^{\dd}(u, 1) \cap D^*(\frac{1}{2})} +  \int_{v \in B^{\dd}(u, r) \cap D^*(\frac{1}{2}) \setminus B^{\dd}(u, 1)}
			\leq \frac{C}{s^{-1} + (t - s)^{-1}} 
			\\
			+ C \exp \Big(- c ( s^{-1} + (t - s)^{-1} ) \Big).
		\end{multline}
		Also, by (\ref{est_r_discs}) and the fact that the volume of $D^*(\frac{1}{2})$ is finite, there are $c, C  > 0$, such that
		\begin{multline}\label{eq_comp_dd_hh_disc3nd}
			\int_{v \in D^*(\frac{1}{2}) \setminus B^{\dd}(u, r)} \exp\Big(- \frac{\dist(u,v)^2}{4} ( s^{-1} + (t - s)^{-1} ) \Big) d v_{\dd^*}(v)  
			\\
			\leq 
			C \exp \Big(- c (\ln|\ln |u||)^2 ( s^{-1} + (t - s)^{-1} ) \Big).
		\end{multline}
		By (\ref{eq_disc_decomp}), (\ref{eq_comp_dd_hh_disc2nd}) and (\ref{eq_comp_dd_hh_disc3nd}), we have 
		\begin{multline}\label{eq_disc_decomp_final2nd}
			\int_{v \in D^*(\frac{1}{2})} 
			|v| (1 + |\ln |v||) \exp\Big(- \frac{\dist(u,v)^2}{4} ( s^{-1} + (t - s)^{-1} ) \Big) d v_{\dd^*}(v) 
			\\
			\leq
			C \Big( \frac{1}{s^{-1} + (t - s)^{-1}} + \exp \Big(- c ( s^{-1} + (t - s)^{-1} ) \Big) \Big) \Big(1 + \ln |\ln |u||^k \Big)
			\\
			\cdot |\ln |u||^{-k}  
			+ 
			C \exp \Big(- c (\ln|\ln |u||)^2 ( s^{-1} + (t - s)^{-1} ) \Big).
		\end{multline}
		By (\ref{eq_duh_decomp}) and (\ref{eq_disc_decomp_final2nd}), we get (\ref{hk_infty}).
		\par \textbf{Now let's prove the estimate (\ref{hk_infty_perp}).} 
		We have the identity 
		\begin{equation}\label{eqn_exp_perp_exp_rel_norm}
			\exp(-t \laplcomp^{E_M^{\xi, n}}) (x, x') = \exp^{\perp}(-t \laplcomp^{E_M^{\xi, n}}) (x, x') + \sum s_i(x) s_i(x')^*,
		\end{equation}
		where $s_i$ is an orthonormal basis of $H^0(\overline{M}, E_M^{\xi, n})$ with respect to $\langle \cdot, \cdot \rangle_{L^2}$, see (\ref{defn_L_2}).
		From (\ref{eqn_ker_lapl}), (\ref{hk_infty}) and (\ref{eqn_exp_perp_exp_rel_norm}), we conclude that there are $c', C > 0$, such that for any $t > 0$, $u \in D^*(1/2)$:
		\begin{multline}\label{perp_infty_sim}
			\textstyle
			\Big|
				\exp^{\perp}(-t \laplcomp^{E_M^{\xi, n}}) \big( (z_i^{M})^{-1}(u), (z_i^{M})^{-1}(u) \big)
				- {\rm{Id}_{\xi}} \cdot \exp^{\perp}(-t \laplcomp^{E_N^{n}}) \big( (z_i^{N})^{-1}(u), (z_i^{N})^{-1}(u) \big)
			\Big|  \\ 
				\textstyle \leq C \exp(ct) \Big( | \ln |u||  \exp (-  c' (\ln |\ln |u||)^2 /t ) + 1 \Big). 
		\end{multline}
		Also, from (\ref{thm_est_exp_perp2}), there are $c', C > 0$, such that for any $t > 0$, $u \in D^*(1/2)$, we have
		\begin{multline}\label{perp_infty_sim3}
			\Big|
				\exp^{\perp}(-t \laplcomp^{E_M^{\xi, n}}) \big( (z_i^{M})^{-1}(u), (z_i^{M})^{-1}(u) \big)
				- {\rm{Id}_{\xi}} \cdot \exp^{\perp}(-t \laplcomp^{E_N^{n}}) \big( (z_i^{N})^{-1}(u), (z_i^{N})^{-1}(u) \big)
			\Big|  
			\\ 
			\leq C | \ln |u|| t^{-4}  \exp (-  ct). 
		\end{multline}
		By Cauchy inequality, we have 
		\begin{equation}\label{eq_cauchy_exp}
			\exp(-ct - c' (\ln |\ln |u||)^2 /t ) \leq |\ln |u||^{- 2 \sqrt{cc'}}.
		\end{equation}
		We get (\ref{hk_infty_perp}) by multiplying appropriate powers of (\ref{perp_infty_sim}) with (\ref{perp_infty_sim3}) and using (\ref{eq_cauchy_exp}).
	\end{proof}
	
	\begin{proof}[Proof of Theorem \ref{thm_small_time_exp}.]
		By finite propagation speed of solutions of hyperbolic equations and small-time asymptotics of the heat kernel in a compact manifold, we get (\ref{eq_small_time_exp_111}). Moreover, the constant $C$ from (\ref{eq_small_time_exp_111}) could be chosen independently of $x \in M \setminus (\cup_i V_{i}^{M}(\epsilon))$, for some $\epsilon > 0$.
		\par Now let's suppose $x \in V_i^{M}(\epsilon)$, for some $i = 1,\ldots, m$. We note $u = z_i^{M}(x)$, and we use (\ref{exp_scin_kuh_rel}) for $h = \dist_{\dd^*}(u, 2 \epsilon)$. Then by (\ref{eq_local_sm_time}), (\ref{eqn_kth_norm_sup}) and (\ref{exp_scin_kuh_rel}), we see that there are smooth sections $a_{\xi, j}^{M, n}: M \to \enmr{\xi}$, as described, and there is $C > 0$ such that for any $x \in M$, $t \in ]0, t_0]$:
		\begin{multline}\label{eq_small_time_exp_real}
			\Big| \exp(-t \laplcomp^{E_M^{\xi, n}}) \big( x, x \big) - \sum_{j = - 1}^{k} a_{\xi, j}^{M, n}(x) t^j \Big| 
			\leq 
			C \rho_M(x) \bigg( t^k  + \frac{1}{t} \exp \Big(- \frac{c'}{t |\ln |z_i^{M}(x)||^2} \Big)
			\\
			 + \exp\Big(-c (\ln |\ln |z_i^{M}(x)||)^2/t \Big) \bigg),
		\end{multline}
		and for $a_{\xi, j}^{\dd^*, n}$, defined as in Theorem \ref{thm_local_summary}, we have
		\begin{equation}\label{eq_ai_compar}
			a_{\xi, j}^{M, n}(x) = a_{\xi, j}^{\dd^*, n}(z_i^{M}(x)).
		\end{equation}
		From (\ref{eq_small_time_exp_real}), we conclude that if $x \in M \setminus (\cup_i V_i^{M}(e^{-t^{-1/3}}))$, then $C$ in (\ref{eq_local_sm_time}) can be chosen independently of $t \in ]0, t_0]$ and $x$.
		\par The statement (\ref{eq_small_time_coeff_rel}) and the boundness of $a_{\xi, j}^{M, n}(x)$ follows from (\ref{eq_small_time_coeff_rel_local}) and (\ref{eq_ai_compar}).
	\end{proof}

\section{Compact perturbation of the cusp: a proof of Theorem \ref{thm_comp_appr}}\label{sect_compact_pert}
	In this section we will prove Theorem \ref{thm_comp_appr}. 
	The proof consists of two steps. 
	In the first step, Section \ref{sect_spec_case}, we prove that by successive “flattenings" of the Hermitian metric $h^{\xi}$, the associated Quillen norm converges to the Quillen norm associated with $h^{\xi}$. For this, essentially, we use the estimations developed in Section \ref{sect_pametr} along with analytic localization techniques of Bismut-Lebeau \cite[\S 11]{BisLeb91}.
	In the second step, Section \ref{sect_tight}, we restrict ourselves to the case when $(\xi, h^{\xi})$ is trivial near the cusps, and we construct a family of flattenings which “approach" the cusp metric in such a way that the associated analytic torsion converges. 
	In this step we use the analytic localization techniques of Bismut-Lebeau \cite[\S 11]{BisLeb91} along with the maximal principle and some comparison results.
	Finally, as we explain in Section \ref{sect_gen_str_compact}, those two results are enough to give a complete proof of Theorem \ref{thm_comp_appr}.
	Moreover, as we will see along the way, we actually prove Theorem \ref{thm_anomaly_cusp} for $g^{TM}_{0} = g^{TM}$, i.e. for the variation of $h^{\xi}$.
	\subsection{General strategy of a proof of Theorem \ref{thm_comp_appr}}\label{sect_gen_str_compact}
	\begin{sloppypar}
		Let's recall the setting of the problem and describe the main idea of the proof more precisely. We fix surfaces with cusps $(\overline{M}, D_M, g^{TM}), (\overline{N}, D_N, g^{TN})$, a Hermitian vector bundle $(\xi, h^{\xi})$  over  $\overline{M}$ and $n \in \integ$ as in (\ref{data_rel_tors}). 
		We consider a family of Hermitian metrics $h^{\xi}_{\eta}$, $\eta \in ]0, 1/2]$ on $\xi$ constructed in (\ref{eq_hxi_flat}). The main goal of Section \ref{sect_spec_case} is to prove the following formula
		\begin{equation}\label{eq_spec_case_thm_b}
			\lim_{\eta \to 0} \,
			\norm{\cdot}_{Q}  \big(g^{TM}, h^{\xi}_{\eta} \otimes \, \norm{\cdot}_{M}^{2n} \big) 
			=
			\norm{\cdot}_{Q} \big(g^{TM}, h^{\xi} \otimes \, \norm{\cdot}_{M}^{2n}  \big).
		\end{equation}
		As $h^{\xi}_{\eta}|_{D_M} = h^{\xi}|_{D_M}$, we see that (\ref{eq_spec_case_thm_b}) is compatible with Theorem \ref{thm_anomaly_cusp}.
		\par
		In Section \ref{sect_tight} we construct specific families of flattenings $g^{TM}_{\rm{f}, \theta}, \norm{\cdot}_{M}^{\rm{f}, \theta}$, $\theta \in ]0,1]$ such that the corresponding $\nu$ from (\ref{fl_exterior}) tends to $0$, as $\theta \to 0$. We consider the flattenings $g^{TN}_{\rm{f}, \theta}, \norm{\cdot}_{N}^{\rm{f}, \theta}$, which are compatible to $g^{TM}_{\rm{f}, \theta}, \norm{\cdot}_{M}^{\rm{f}, \theta}$, see (\ref{eq_gtm_compat_cusp}), (\ref{eq_normm_compat_cusp}). Then we prove that for any Hermitian metric $h^{\xi}_{2}$ on $\xi$ over $\overline{M}$, for which $(\xi, h^{\xi}_{2})$ is trivial around the cusps we have
		\begin{equation}\label{eq_lim_comp111}
			\lim_{\theta \to 0}
			\frac{\norm{\cdot}_Q (g^{TM}_{\rm{f}, \theta}, h^{\xi}_{2} \otimes (\, \norm{\cdot}_{M}^{\rm{f}, \theta})^{2n})}{\norm{\cdot}_Q (g^{TN}_{\rm{f}, \theta},  (\, \norm{\cdot}_{N}^{\rm{f}, \theta})^{2n})^{\rk{\xi}}}
			=
			\frac{\norm{\cdot}_Q (g^{TM}, h^{\xi}_{2} \otimes \norm{\cdot}_{M}^{2n})}{\norm{\cdot}_Q (g^{TN},  \norm{\cdot}_{N}^{2n})^{\rk{\xi}}}.
		\end{equation}
		This is the most technical and challenging part of this section.
		For $(\xi, h^{\xi})$ trivial, and $n = 0$, this was proved by Jorgenson-Lundelius \cite[Theorem 6.3]{JorLundMain}. Our methods are very different from theirs, in particular we don't study the convergence of small eigenvalues.
		\par
		Now let's explain how (\ref{eq_spec_case_thm_b}) and (\ref{eq_lim_comp111}) imply Theorem \ref{thm_comp_appr}.
		Recall that $\widetilde{\td}$ and $\widetilde{\ch}$ are given by (\ref{ch_bc_0}) and (\ref{ch_bc_2}).
		Let's recall the following theorem of Bismut-Gillet-Soulé \cite[Theorem 1.23]{BGS3}:
	\begin{thm}[Anomaly formula]\label{thm_anomaly_BGS}
		Let $\overline{M}$ be endowed with two (smooth) metrics $g^{T\overline{M}}_{1}, g^{T\overline{M}}_{2}$ over $\overline{M}$. We denote by $\norm{\cdot}_{1}^{\omega}, \norm{\cdot}_{2}^{\omega}$ the Hermitian norms on $\omega_{\overline{M}}$ induced by $g^{T\overline{M}}_{1}, g^{T\overline{M}}_{2}$ over $\overline{M}$.
		Let $\xi$ be a holomorphic vector bundle with Hermitian metrics $h^{\xi}_{1}$, $h^{\xi}_{2}$ over $\overline{M}$.
		We have the following identity
		\begin{multline}\label{eq_anomaly}
			2 \ln \Big( 
			\, \norm{\cdot}_{Q}(g^{T\overline{M}}_2, h^{\xi}_{2}) 
			\big/ 
			\norm{\cdot}_{Q}(g^{T\overline{M}}_1, h^{\xi}_{1}) 
			\Big)
			\\ 
			=
			 	\int_{\overline{M}} 
			 	\Big[ 
			 		\widetilde{\td}(\omega_{\overline{M}}^{-1}, (\, \norm{\cdot}_{1}^{\omega})^{-2}, (\, \norm{\cdot}_{2}^{\omega})^{-2})  \ch(\xi, h^{\xi}_{1})  
					 +			 					 
			 		\td(\omega_{\overline{M}}^{-1}, (\, \norm{\cdot}_{2}^{\omega})^{-2})\widetilde{\ch}(\xi, h^{\xi}_{1}, h^{\xi}_{2})
			 	\Big].
		\end{multline}
	\end{thm}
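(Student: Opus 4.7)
The plan is to prove Theorem \ref{thm_anomaly_BGS} by the classical variational method combined with the local families index theorem, following the strategy of Bismut-Gillet-Soul\'e. Since $\overline{M}$ is compact, the Kodaira Laplacian $\laplcomp^{\xi}$ has discrete spectrum, the spectral zeta function $\zeta_M(s)$ from (\ref{defn_zeta_comp}) admits a meromorphic continuation holomorphic at $0$, and the Quillen norm is simply $\norm{\cdot}_Q^2 = \exp(-\zeta_M'(0)) \norm{\cdot}_{L^2}^2$ by (\ref{defn_quil}).

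First, I would connect the two sets of data by a smooth path $(g^{T\overline{M}}_r, h^{\xi}_r)$, $r \in [0,1]$, with endpoints $(g^{T\overline{M}}_1, h^{\xi}_1)$ and $(g^{T\overline{M}}_2, h^{\xi}_2)$, and compute $\frac{d}{dr} \ln \norm{\cdot}_Q^2$. Setting $\laplcomp_r := \laplcomp^{\xi}_r$ and writing $\dot{\laplcomp}_r$ for the $r$-derivative, Duhamel's principle together with the cyclicity of the trace yields
\begin{equation*}
	\frac{d}{dr} \mathrm{Tr}^{\perp} \big[ \exp(-t \laplcomp_r) \big] = -t \cdot \mathrm{Tr} \big[ \dot{\laplcomp}_r \exp^{\perp}(-t \laplcomp_r) \big] + R_r(t),
\end{equation*}
where $R_r(t)$ collects the contribution from the variation of the kernel projector. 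A Mellin-transform calculation then identifies $\frac{d}{dr}(\zeta_r'(0) + 2 \ln \norm{\cdot}_{L^2}(r))$ with the finite part at $s=0$ of the meromorphic extension of $\int_0^\infty \mathrm{Tr}\big[ \dot{\laplcomp}_r \exp(-t \laplcomp_r)\big] t^s \, dt$.

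Second, I would invoke the local families index theorem in the form of Bismut's superconnection formalism (or, equivalently, Getzler rescaling in complex dimension one). This identifies the constant term in the small-$t$ expansion of $\mathrm{Tr}\big[ \dot{\laplcomp}_r \exp(-t \laplcomp_r)\big]$ with the integral over $\overline{M}$ of the $(1,1)$-component of
\begin{equation*}
	\frac{d}{dr} \Big( \td \big(\omega_{\overline{M}}^{-1}, (\, \norm{\cdot}^{\omega}_r)^{-2}\big) \ch \big(\xi, h^{\xi}_r\big) \Big).
\end{equation*}
Using (\ref{eq_der_tilde_tdch}), this $r$-derivative differs from $\frac{\partial\dbar}{2\pi\imun}$ of a combination of $\widetilde{\td}$ and $\widetilde{\ch}$ associated with the infinitesimal variation, so that integration in $r$ over $[0,1]$ telescopes to the Bott-Chern secondary classes comparing the two endpoints.

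Third, Stokes' theorem on the compact $\overline{M}$ eliminates the exact terms $\partial\alpha + \dbar\beta$, leaving exactly the right-hand side of (\ref{eq_anomaly}). The main obstacle is the heat-kernel computation in Step 2: isolating the $t^0$-coefficient of $\mathrm{Tr}[\dot{\laplcomp}_r \exp(-t \laplcomp_r)]$ and matching it to the variation of the Todd-Chern density requires careful tracking of Clifford supertraces and of the superconnection curvature, so that the Bott-Chern normalizations recorded in (\ref{ch_bc_0}) and (\ref{ch_bc_2}) are reproduced with the correct coefficients and that no spurious curvature or projector terms survive. Once this identification is made, the assembly into (\ref{eq_anomaly}) follows by integration along the path of metrics.
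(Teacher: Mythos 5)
The paper does not prove this statement at all: it is recalled verbatim from Bismut--Gillet--Soul\'e \cite[Theorem 1.23]{BGS3} and used as a black box, so there is no internal proof to compare against. Your sketch is a fair outline of the strategy of the original BGS proof (variation of $\zeta'(0)$ along a path of metrics via Duhamel, identification of the finite part at $s=0$ with the constant term of the small-$t$ expansion, local index theory to compute that term, integration in $r$ to produce the Bott--Chern secondary classes). In that sense you are reconstructing the cited proof rather than replacing it, which is legitimate but leaves all of the genuinely hard content unverified.

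Concretely, three points in your outline carry the real weight and are only named, not carried out. First, since $g^{T\overline{M}}_r$ varies, the operators $\laplcomp_r$ act on varying Hilbert spaces, so $\dot{\laplcomp}_r$ is not well defined as written; one must either conjugate to a fixed $L^2$ space or express the variation through $\mathrm{Tr}\big[(\tfrac{d}{dr}\log(h_r))\,\laplcomp_r\exp(-t\laplcomp_r)\big]$-type quantities, and this bookkeeping is exactly where the $\td$ versus $\ch$ split in (\ref{eq_anomaly}) (with $\widetilde{\td}$ attached to the metric on $\omega_{\overline{M}}^{-1}$ and $\widetilde{\ch}$ attached to $h^{\xi}$, evaluated at the endpoints $1$ and $2$ in the stated order) comes from. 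Second, the variation of the harmonic projector and of the $L^2$ metric on the determinant line (\ref{defn_det_line}) must be shown to combine with the variation of $\exp(-\zeta'(0))$ so that only the local term survives; your $R_r(t)$ is precisely this contribution and it is not computed. Third, the claim that the $t^0$-coefficient equals the $r$-derivative of $[\td\cdot\ch]^{[2]}$ with the normalizations (\ref{ch_bc_0}), (\ref{ch_bc_2}) is the local index theorem input and is the step where a wrong constant would silently propagate into (\ref{eq_anomaly}). As a proposal it is the right road map, but for the purposes of this paper the honest proof is the citation.
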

		Now, by Theorem \ref{thm_anomaly_BGS}, by the fact that the flattenings $g^{TM}_{\rm{f}, \theta}, \norm{\cdot}_{M}^{\rm{f}, \theta}$ and  $g^{TN}_{\rm{f}, \theta}, \norm{\cdot}_{N}^{\rm{f}, \theta}$ are compatible, and by the fact that $(\xi, h^{\xi})$ is trivial around the cusps, we see that the term inside of limit in left-hand side of (\ref{eq_lim_comp111}) doesn't depend on the choice of the flattenings for $\theta$ small enough. Thus, for any $\theta > 0$ such that $(\xi, h^{\xi}_{\eta})$ is trivial over $\cup_i V_i^{M}(\theta)$ (for example, for $\theta^2 < \eta$), by (\ref{eq_lim_comp111}), we have
		\begin{equation}\label{eq_thma_flat_theta}
				\frac{\norm{\cdot}_Q (g^{TM}_{\rm{f}, \theta}, h^{\xi}_{\eta} \otimes (\, \norm{\cdot}_{M}^{\rm{f}, \theta})^{2n})}{\norm{\cdot}_Q (g^{TN}_{\rm{f}, \theta},  (\, \norm{\cdot}_{N}^{\rm{f}, \theta})^{2n})^{\rk{\xi}}}
			=
			\frac{\norm{\cdot}_Q (g^{TM}, h^{\xi}_{\eta} \otimes \norm{\cdot}_{M}^{2n})}{\norm{\cdot}_Q (g^{TN},  \norm{\cdot}_{N}^{2n})^{\rk{\xi}}}.
		\end{equation}
	\end{sloppypar}
	Now, by Theorem \ref{thm_anomaly_BGS}, for any $\theta \in ]0, 1]$, we have
		\begin{multline}\label{eq_anomal_simple}
				2 \ln  \Big(  
				\norm{\cdot}_{Q} \big(g^{TM}_{\rm{f}, \theta}, h^{\xi}_{\eta} \otimes (\, \norm{\cdot}_{M}^{\rm{f}, \theta})^{2n} \big)
				\big/				 
				\norm{\cdot}_{Q} \big(g^{TM}_{\rm{f}, \theta}, h^{\xi}_{2} \otimes (\, \norm{\cdot}_{M}^{\rm{f}, \theta})^{2n} \big)
				 \Big) 
				\\
		 		=	\int_{\overline{M}} \td \big(\omega_M^{-1}, g^{TM}_{\rm{f}, \theta} \big) \widetilde{\ch} \big(\xi, h^{\xi}_{2}, h^{\xi}_{\eta} \big)  \ch \big(\omega_M(D)^n, (\, \norm{\cdot}_{M}^{\rm{f}, \theta})^{2n} \big).
		\end{multline}
		From (\ref{eq_thma_flat_theta}) and (\ref{eq_anomal_simple}), for any $\theta^2 < \eta$, we have
		\begin{multline}\label{eq_final_pf_thma1}
			2 \ln  \Big(  
				\norm{\cdot}_{Q} \big(g^{TM}, h^{\xi}_{\eta} \otimes \, \norm{\cdot}_{M}^{2n} \big)
				/
				\norm{\cdot}_{Q} \big(g^{TM}_{\rm{f}, \theta}, h^{\xi}_{2} \otimes (\, \norm{\cdot}_{M}^{\rm{f}, \theta})^{2n} \big)
			\Big)
			\\
			-
			2 \rk{\xi} \ln  \Big(  
				\norm{\cdot}_{Q} \big(g^{TN}, \norm{\cdot}_{N}^{2n} \big)
				/
				\norm{\cdot}_{Q} \big(g^{TN}_{\rm{f}, \theta}, (\, \norm{\cdot}_{N}^{\rm{f}, \theta})^{2n} \big)
			\Big)
				\\
				=
				\int_{\overline{M}} \td \big(\omega_M^{-1}, g^{TM}_{\rm{f}, \theta} \big) \widetilde{\ch} \big(\xi, h^{\xi}_{2}, h^{\xi}_{\eta} \big)  \ch \big(\omega_M(D)^n, (\, \norm{\cdot}_{M}^{\rm{f}, \theta})^{2n} \big).
		\end{multline}
		Now, by (\ref{eq_der_tilde_tdch}), (\ref{ch_bc_0}) and (\ref{eq_ch_similarity}), we have
		\begin{multline}\label{eq_an_pf_300_aux}
			\int_{\overline{M}}\td \big(\omega_M(D)^{-1}, \norm{\cdot}_M^{-2} \big) \widetilde{\ch} \big(\xi, h^{\xi}_{2}, h^{\xi}_{\eta} \big) 
			\\ 
			=
			\int_{\overline{M}} \td \big(\omega_M^{-1}, g^{TM} \big) \widetilde{\ch} \big(\xi, h^{\xi}_{2}, h^{\xi}_{\eta} \big) 
			- \frac{1}{2} \sum \ln \Big(\det (h^{\xi}_{2} / h^{\xi}_{\eta})|_{P_i^{M}} \Big).
		\end{multline}
		Now, by (\ref{eq_der_tilde_tdch}), we have
		\begin{multline}\label{eq_an_pf_3_aux}
			\int_{\overline{M}} \Big( \td \big(\omega_M^{-1}, g^{TM}_{\rm{f}, \theta} \big) - \td \big(\omega_M^{-1}, g^{TM} \big) \Big) \widetilde{\ch} \big(\xi, h^{\xi}_{2}, h^{\xi}_{\eta} \big) 
			\\ 
			=
			\int_{\overline{M}} \widetilde{\td} \big(\omega_M^{-1}, g^{TM}_{\rm{f}, \theta}, g^{TM} \big) \big( c_1 \big(\xi, h^{\xi}_{2} \big) - c_1 \big(\xi, h^{\xi}_{\eta} \big) \big).
		\end{multline}
		Similarly, by (\ref{eq_der_tilde_tdch}), we have
		\begin{multline}\label{eq_an_pf_4_aux}
			\int_{\overline{M}} \widetilde{\ch} \big(\xi, h^{\xi}_{2}, h^{\xi}_{\eta} \big)  \Big( \ch \big(\omega_M(D)^{n}, (\, \norm{\cdot}_M^{\rm{f}, \theta})^{2n} \big) - \ch \big(\omega_M(D)^{n}, \norm{\cdot}_M^{2n} \big) \Big)
			\\
			=
			\int_{\overline{M}} \big( c_1 \big(\xi, h^{\xi}_{2} \big) - c_1 \big(\xi, h^{\xi}_{\eta} \big) \big)
			\widetilde{\ch} \big(\omega_M(D)^n, (\, \norm{\cdot}_M^{\rm{f}, \theta})^{2n}, \norm{\cdot}_M^{2n} \big).
		\end{multline}
		By (\ref{eq_final_pf_thma1}), (\ref{eq_an_pf_300_aux}), (\ref{eq_an_pf_3_aux}) and (\ref{eq_an_pf_4_aux}), we get
		\begin{equation}\label{eq_final_pf_thma2}
		\begin{aligned}
			2 & \ln \Big(  
				\norm{\cdot}_{Q} \big(g^{TM}, h^{\xi}_{\eta} \otimes \, \norm{\cdot}_{M}^{2n} \big)
				/
				\norm{\cdot}_{Q} \big(g^{TM}_{\rm{f}, \theta}, h^{\xi}_{2} \otimes (\, \norm{\cdot}_{M}^{\rm{f}, \theta})^{2n} \big)
			\Big)
			\\
			& 
			\qquad \qquad \qquad \qquad \qquad \qquad 
			-
			2 \rk{\xi} \ln  \Big(  
				\norm{\cdot}_{Q} \big(g^{TN}, \norm{\cdot}_{N}^{2n} \big)
				/
				\norm{\cdot}_{Q} \big(g^{TN}_{\rm{f}, \theta}, (\, \norm{\cdot}_{N}^{\rm{f}, \theta})^{2n} \big)
			\Big)
				\\
				& =
				\int_{\overline{M}} \Big( \widetilde{\td} \big(\omega_M^{-1}, g^{TM}_{\rm{f}, \theta}, g^{TM} \big) + \widetilde{\ch} \big(\omega_M(D)^n, (\, \norm{\cdot}_M^{\rm{f},  \theta})^{2n}, \norm{\cdot}_M^{2n} \big) \Big) 
				\big( c_1 \big(\xi, h^{\xi}_{2} \big) - c_1 \big(\xi, h^{\xi}_{\eta} \big) \big)
				\\
				& 
				\quad				
				+
				\int_{\overline{M}} \td \big(\omega_M(D)^{-1}, \norm{\cdot}_M^{-2} \big) \widetilde{\ch} \big(\xi, h^{\xi}_{2}, h^{\xi}_{\eta} \big)
				+
				\int_{\overline{M}} \widetilde{\ch} \big(\xi, h^{\xi}_{2}, h^{\xi}_{\eta} \big)  \ch \big(\omega_M(D)^{n}, \norm{\cdot}_M^{2n} \big)
				\\
				& 
				\qquad \qquad \qquad \qquad \qquad \qquad \qquad \qquad \qquad \qquad \qquad 
				+
				\frac{1}{2} \sum \ln \Big(\det (h^{\xi}_{2} / h^{\xi}_{\eta})|_{P_i^{M}} \Big).
		\end{aligned}
		\end{equation}	
		Now we make $\theta \to 0$ in (\ref{eq_final_pf_thma2}). By (\ref{eq_lim_comp111}), the uniform bounds on $g^{TM}_{\rm{f}, \theta}$ and $\norm{\cdot}_M^{\rm{f},  \theta}$ from (\ref{lem_flat_compar}), Lebesgue dominated convergence theorem and the fact that the Todd class appears in the first term of the right hand side of (\ref{eq_final_pf_thma2}) only in degree 0, we deduce
		\begin{equation}\label{eq_final_pf_thma23}
		\begin{aligned}
			2 & \ln \Big(  
				\norm{\cdot}_{Q} \big(g^{TM}, h^{\xi}_{\eta} \otimes \, \norm{\cdot}_{M}^{2n} \big)
				/
				\norm{\cdot}_{Q} \big(g^{TM}, h^{\xi}_{2} \otimes (\, \norm{\cdot}_{M})^{2n} \big)
			\Big)
				\\
				& 
				=
				\int_{\overline{M}} \td \big(\omega_M(D)^{-1}, \norm{\cdot}_M^{-2} \big) \widetilde{\ch} \big(\xi, h^{\xi}_{2}, h^{\xi}_{\eta} \big)
				+
				\int_{\overline{M}} \widetilde{\ch} \big(\xi, h^{\xi}_{2}, h^{\xi}_{\eta} \big)  \ch \big(\omega_M(D)^{n}, \norm{\cdot}_M^{2n} \big)
				\\
				& 
				\qquad \qquad \qquad \qquad \qquad \qquad \qquad \qquad \qquad \qquad \qquad 
				+
				\frac{1}{2} \sum \ln \Big(\det (h^{\xi}_{2} / h^{\xi}_{\eta})|_{P_i^{M}} \Big).
		\end{aligned}
		\end{equation}	
		Now we let $\eta \to 0$. Then by (\ref{eq_spec_case_thm_b}), the fact that the first Chern forms of $(\xi, h^{\xi}_{\eta})$, $\eta \in ]0, 1]$ are uniformly bounded and by Lebesgue dominated convergence theorem, we get Theorem \ref{thm_anomaly_cusp} for $g^{TM}_{0} = g^{TM}$ and $h^{\xi}_{0} := h^{\xi}_{2}$, i.e trivial around the cusps. 
		By applying this result twice for $h^{\xi} := h^{\xi}$, $h^{\xi}_{0} := h^{\xi}_{2}$ and $h^{\xi} := h^{\xi}_{0}$, $h^{\xi}_{0} := h^{\xi}_{2}$, and by taking the difference, we get Theorem \ref{thm_anomaly_cusp} for $g^{TM}_{0} = g^{TM}$ and any $h^{\xi}_{0}$. 
		By this, Theorem \ref{thm_anomaly_BGS}, (\ref{eq_thma_flat_theta}), (\ref{eq_an_pf_3_aux}) and (\ref{eq_an_pf_4_aux}) we deduce Theorem \ref{thm_comp_appr}.
		
	\subsection{Flattening the Hermitian metric: a proof of (\ref{eq_spec_case_thm_b})}\label{sect_spec_case}
		In this section, we reduce Theorem \ref{thm_comp_appr} to the case $(\xi, h^{\xi})$ is trivial near the cusps. For this, we prove (\ref{eq_spec_case_thm_b}). As we explained in Section \ref{sect_compact_pert}, we consider a family of Hermitian metrics $h^{\xi}_{\eta}$, $\eta \in ]0, 1/2]$ on $\xi$ constructed in (\ref{eq_hxi_flat}). We denote by $\laplcomp^{E_M^{\xi, n}}_{\eta}$ the Kodaira Laplacian on $(M, g^{TM})$, associated with $(\xi \otimes \omega_M(D)^n, h^{\xi}_{\eta} \otimes \norm{\cdot}_M^{2n})$. 
		Similarly, for all the geometric objects we considered before, the subscript $\eta$ would mean that instead of $h^{\xi}$, we use $h^{\xi}_{\eta}$.
		\begin{thm}\label{spec_gap_thm_unif_eps}
		For $n \leq 0$, there is $\eta_0 > 0$ such that the operators $\laplcomp^{E_M^{\xi, n}}_{\eta}$, $\eta \in ]0, \eta_0]$ have a uniform spectral gap near $0$, i.e. there is $\mu > 0$ such that for any $\eta \in ]0, \eta_0]$, we have
		\begin{align}
			& H^{0}(\overline{M}, E_M^{\xi, n}) = \ker (\laplcomp^{E_M^{\xi, n}}_{\eta}),
			\\
			& \spec \big( \laplcomp^{E_M^{\xi, n}}_{\eta} \big) \cap \, ]0, \mu] = \emptyset.
		\end{align}
	\end{thm}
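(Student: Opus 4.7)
The plan is to reduce Theorem \ref{spec_gap_thm_unif_eps} to Theorem \ref{spec_gap_thm} via a perturbation argument, exploiting the fact that $h^{\xi}_{\eta}$ is a small $L^\infty$-perturbation of $h^{\xi}$ as $\eta \to 0$. The two key features making this work are (i) the normal trivialization hypothesis (\ref{defn_normal_triv}), which forces $h^\xi$ to be close to the identity exactly where $h^\xi_\eta$ differs from $h^\xi$, and (ii) the invariance of the kernel space $H^{0}(\overline{M}, E_M^{\xi, n})$ under the perturbation.

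\textbf{First step (kernel identification).} By the definition (\ref{eq_hxi_flat}) and (\ref{defn_normal_triv}),
\begin{equation*}
\big| h^{\xi}_{\eta}(u) - h^{\xi}(u) \big|
= \big| \psi(|u|^2/\eta) \big( I - h^{\xi}(u) \big) \big|
= O(|u|^2) = O(\eta)
\end{equation*}
wherever the cut-off is nonzero, and vanishes identically outside $\cup_i V_i^{M}(\sqrt{\eta})$. Hence, for $\eta_0$ small, $h^{\xi}_{\eta}$ and $h^{\xi}$ are uniformly bi-Lipschitz equivalent for all $\eta \in ]0, \eta_0]$. The argument of Theorem \ref{spec_gap_thm} (namely (\ref{ineq_n_leq_0}) together with the inclusions (\ref{fin_sp_gp25_1}), (\ref{fin_sp_gp25_2}) via $L^2$-extension and finite volume) then applies verbatim to yield $\ker(\laplcomp^{E_M^{\xi, n}}_{\eta}) = H^{0}(\overline{M}, E_M^{\xi, n})$, independently of $\eta$.

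\textbf{Second step (uniform Rayleigh quotient bound).} Let $\mu_0 > 0$ be the spectral gap of $\laplcomp^{E_M^{\xi, n}}$ provided by Theorem \ref{spec_gap_thm}, and write $K := H^{0}(\overline{M}, E_M^{\xi, n})$. Given $s \in L^2_\eta$ with $s \perp_{L^2_\eta} K$, decompose $s = s_0 + k$ with $k \in K$ and $s_0 \perp_{L^2} K$. The orthogonality conditions combined with $|h^{\xi}_{\eta} - h^{\xi}|_{L^\infty} = O(\eta)$ give, via Cauchy–Schwarz, an estimate of the form $\|k\|_{L^2} \leq C\eta \|s\|_{L^2}$. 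Since $\dbar k = 0$, we have $\dbar s = \dbar s_0$, and Theorem \ref{spec_gap_thm} yields
\begin{equation*}
\|\dbar s\|_{L^2}^2 = \|\dbar s_0\|_{L^2}^2 \geq \mu_0 \|s_0\|_{L^2}^2 \geq \mu_0 (1 - C\eta)^2 \|s\|_{L^2}^2.
\end{equation*}
Translating back via $\|\cdot\|_{L^2_\eta} = (1+O(\eta)) \|\cdot\|_{L^2}$ yields
\begin{equation*}
\langle \laplcomp^{E_M^{\xi, n}}_{\eta} s, s\rangle_{L^2_\eta} = \|\dbar s\|_{L^2_\eta}^2 \geq (\mu_0 / 2) \|s\|_{L^2_\eta}^2
\end{equation*}
for all $\eta$ sufficiently small, so $\mu := \mu_0 / 2$ serves as a uniform spectral gap by the min–max principle.

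\textbf{Main obstacle.} The technical subtlety lies in making the perturbation argument fully rigorous while respecting that $L^2_\eta$ and $L^2$ share the same underlying vector space but carry different inner products; one must track the norm $\|P^\eta - P^0\|$ of the kernel projectors carefully, and it is crucial that normality (\ref{defn_normal_triv}) upgrades the naive $O(1)$ bound to $O(\eta)$. A conceptually different route—repeating the Nakano inequality argument from the $n<0$ half of the proof of Theorem \ref{spec_gap_thm} uniformly in $\eta$—runs into the difficulty that in the transition annulus $\eta/2 < |u|^2 < \eta$ the curvature $R^{\xi, \eta}$ blows up like $1/\eta$ while $\Lambda^{TM}$ is of order $\eta |\ln \eta|^2$, so the pointwise estimate analogous to (\ref{est_lapl_gap_452}) degenerates; the Rayleigh quotient approach bypasses this difficulty entirely.
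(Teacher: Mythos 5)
Your argument is correct, but it is not the route the paper takes, so let me compare. The paper's proof is two lines: for $n=0$ it simply invokes (\ref{spec_gap_epsilon_unif}), which was already extracted inside the proof of Theorem \ref{spec_gap_thm} from the operator-level comparison (\ref{eq_lapl_comp_xi_inter}) (the explicit first-order difference (\ref{eq_comp_e_lapl}) of the two Laplacians, controlled by (\ref{eq_bound_cusped_der}) and Cauchy--Schwarz, with the finite-codimension subspace $F$ built from the eigenspaces of the flattened operator via M\"uller's discreteness result (\ref{eq_precise_gap})); for $n<0$ it reruns the Nakano-inequality half of the proof of Theorem \ref{spec_gap_thm}, observing that the curvature of $(\xi,h^\xi_\eta)$ is uniformly bounded so that (\ref{est_lapl_gap_452}) holds uniformly in $\eta$. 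Your proof instead treats all $n\le 0$ at once by a quadratic-form perturbation: since $\dbar$ is metric-independent and $\|h^\xi_\eta-h^\xi\|_{L^\infty}=O(\eta)$ by normality, both $\|\cdot\|_{L^2_\eta}$ and the Dirichlet form $\|\dbar s\|^2_{L^2_\eta}$ are $(1+O(\eta))$-equivalent to their $\eta$-independent counterparts, and the projection estimate $\|k\|_{L^2}\le C\eta\|s\|_{L^2}$ transfers the gap of Theorem \ref{spec_gap_thm} across the change of inner product. This is clean and self-contained (it uses only the already-stated Theorem \ref{spec_gap_thm}, not M\"uller's result nor Nakano), at the price of being a black-box argument that does not produce the quantitative information of Remark \ref{rem_spec_gap_mul}.

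One concrete error in your closing paragraph: the curvature $R^{\xi,\eta}$ does \emph{not} blow up like $1/\eta$ on the transition annulus, and the Nakano route does not degenerate. Writing $h^\xi = I + A$ with $A=O(|u|^2)$ by (\ref{defn_normal_triv}), one has $h^\xi_\eta = I + (1-\psi(|u|^2/\eta))A$, and the worst terms in $\dbar\big((h^\xi_\eta)^{-1}\partial h^\xi_\eta\big)$ are of the shape $\psi''(|u|^2/\eta)\,|u|^2\eta^{-2}A$ and $\psi'(|u|^2/\eta)\,\eta^{-1}A$; on ${\rm supp}\,\psi'$ one has $|u|^2\sim\eta$ and hence $A=O(\eta)$, so these are $O(1)$ in the holomorphic frame. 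The same normality that you correctly identify as upgrading the $L^\infty$ bound to $O(\eta)$ also cancels the $\eta^{-1}$ coming from differentiating the cut-off twice. After contraction with $\Lambda^{TM}$, which contributes a factor $O(|z\ln|z||^2)=O(\eta|\ln\eta|^2)$, the estimate (\ref{est_lapl_gap_452}) therefore holds uniformly in $\eta$ --- this is exactly the paper's one-line justification for $n<0$. The mistaken remark does not affect the validity of your own proof, but the alternative you dismiss is in fact the one the paper uses.
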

		\begin{proof}
		For $n = 0$, the statement of Theorem \ref{spec_gap_thm_unif_eps} is exactly (\ref{spec_gap_epsilon_unif}). For $n < 0$, the proof of Theorem \ref{spec_gap_thm} remains unchanged, since the first Chern form of $(\xi, h^{\xi}_{\eta})$ is bounded, and thus the inequality (\ref{est_lapl_gap_452}) continues to hold.
	\end{proof}
	In this section, we denote by $\nabla$ the connection, induced by the Levi-Civita connection and the Chern connections associated with $(\xi, h^{\xi}_{\eta})$ and $(\omega_{M}(D), \norm{\cdot}_{M})$. We denote by $\dist(\cdot, \cdot)$ the distance function on $(M, g^{TM})$.
	\begin{lem}\label{lem_ell_est_unif_eps}
		For any $l, l' \in \nat$, $n \in \integ$, there are $\eta_0, C > 0$, such that for any $\sigma \in \ccal^{\infty} \big(\overline{M} \times \overline{M}, (E_M^{\xi, n}) \boxtimes (E_M^{\xi, n})^* \big)$, $x, x' \in M$ and any $\eta \in ]0, \eta_0]$, we have
		\begin{equation}\label{lem_ell_est_unif_eps_equn}
			 \big| (\nabla_x)^l (\nabla_{x'})^{l'} \sigma(x, x') \big|_{h \times h} 
			 \leq C \rho_M(x) \rho_M(x') \Big\lVert (\laplcomp^{E_M^{\xi, n}}_{\eta, z})^{i}(\laplcomp^{E_M^{\xi, n}}_{\eta, z'})^{j} \sigma(z, z')\Big\rVert_{L^2, \eta}.
		\end{equation}
	\end{lem}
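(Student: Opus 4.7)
The plan is to revisit the proof of the non-uniform version, Lemma \ref{lem_ell_est}, and verify that every constant appearing in its argument can be chosen uniformly in $\eta \in ]0,\eta_0]$. The key input, which is already at our disposal, is that by Remark \ref{rem_xiesp} and (\ref{eq_boud_hxiepsi}), the family $h^{\xi}_{\eta}$ is uniformly $\ccal^{\infty}$-bounded, and the Chern connections and their curvatures associated with $(\xi, h^{\xi}_{\eta})$, pulled back via $\rho: \hh \to \dd^*$ and centred at an arbitrary point by an isometry of $\hh$, satisfy derivative bounds that do not depend on $\eta$ (nor on the centre). Once this observation is in place, the proof is essentially a routine, though book-keeping-heavy, revisit of the argument in the Appendix.

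First, I would treat the region away from the cusps, i.e.\ $x, x' \in M \setminus (\cup_i V_i^M(\epsilon))$ for a fixed $\epsilon > 0$. Here $g^{TM}$ extends to a smooth metric on an open neighbourhood of the closure, and $h^{\xi}_{\eta}$ coincides with $h^{\xi}$ by (\ref{eq_hxi_flat}). Consequently the G\aa rding and Sobolev inequalities on geodesic balls of radius $\alpha$ give (\ref{lem_ell_est_unif_eps_equn}) with constants independent of $\eta$, and with $\rho_M \equiv 1$ on this region.

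Next, I would handle the cusp region, which is the substantial case. Fix $i$ and pass to the universal cover of a small punctured disc around $P_i^M$ via $\rho: \hh \to \dd^*$, $z \mapsto e^{\imun z}$, as in Section \ref{sect_pametr}. For a point $x \in V_i^M(1/2)$ with a lift $\tilde{x} \in \hh$, apply the isometry $g_{\tilde{x}}$ of $\hh$ taking $\tilde{x}$ to $\imun$, and pull back both $(\xi,h^{\xi}_{\eta})$ and $(\omega_{\hh}, \norm{\cdot}_{\hh})$ to obtain a family of Hermitian bundles on a fixed hyperbolic ball $B^{\hh}(\imun, 1)$ whose metrics $h^{\xi}_{z, \eta}$ satisfy the uniform bounds (\ref{eq_boud_hxiepsi}) in all derivatives. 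On this fixed geometric piece, the Kodaira Laplacians $\laplcomp^{E_M^{\xi,n}}_{\eta}$ become a family of second-order elliptic operators whose coefficients are uniformly smooth in $\eta$, whence the standard interior elliptic estimates of order $(l,l')$ yield pointwise bounds at $\imun$ in terms of $L^2$-norms of $(\laplcomp^{E_M^{\xi,n}}_{\eta})^{i}(\laplcomp^{E_M^{\xi,n}}_{\eta})^{j}\sigma$ on $B^{\hh}(\imun, 1)$, with constants depending only on the uniform bounds (\ref{eq_boud_hxiepsi}). Summing the contributions of lifts in a fundamental domain exactly as in the proof of Theorem \ref{thm_local_summary} (using Lemma \ref{lem_dist_hyp_exp}) accounts for the overlap between the ball of radius $\alpha$ in $(M,g^{TM})$ and its preimage in $\hh$; this sum is responsible for the factor $\rho_M(x)\rho_M(x')$ on the right-hand side of (\ref{lem_ell_est_unif_eps_equn}), exactly as in the non-uniform case.

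The main obstacle is verifying that the lower-order terms arising when one commutes $(\nabla)^l$ with powers of the Laplacian remain uniformly controlled. These terms involve the curvature $R^{\xi}_{\eta}$ and its covariant derivatives, together with the Christoffel symbols of $h^{\xi}_{\eta}$ in the normal frame of (\ref{defn_normal_triv}). The bounds (\ref{eq_bound_cusped_der}) and (\ref{eq_boud_hxiepsi}) together show that all such quantities are $\ccal^{\infty}$-bounded uniformly in $\eta$, which closes the commutator estimates. Finally, the intermediate regions $x \in V_i^M(\epsilon)$, $x' \notin V_j^M(2\epsilon)$ are handled by finite propagation speed arguments identical to those of Case 2 in the proof of Theorem \ref{thm_hk_est}, and these are manifestly uniform in $\eta$. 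Combining the three cases yields (\ref{lem_ell_est_unif_eps_equn}) with a single constant $C$ valid for all $\eta \in ]0, \eta_0]$.
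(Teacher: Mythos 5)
Your proposal is correct and follows the same route as the paper, whose entire proof of this lemma reads ``It follows from Lemma \ref{lem_ell_est}, (\ref{eq_boud_hxiepsi}) and (\ref{eq_comp_e_lapl})'' --- i.e.\ exactly the observation you make that the uniform $\ccal^{\infty}$-bounds on $h^{\xi}_{\eta}$ (Remark \ref{rem_xiesp}, (\ref{eq_bound_cusped_der}), (\ref{eq_boud_hxiepsi})) make every constant in the proof of Lemma \ref{lem_ell_est} independent of $\eta$, the difference of Laplacians being the uniformly bounded first-order operator (\ref{eq_comp_e_lapl}). One small inaccuracy worth noting: in the cusp region the weight $\rho_M(x)$ in Lemma \ref{lem_ell_est} actually arises from the weighted Sobolev embedding and the H\"older step (\ref{ell_eqn_3}) in Appendix \ref{app_1}, not from summing lifts over the deck group as in Theorem \ref{thm_local_summary}, but this does not affect the uniformity argument.
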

	\begin{proof}
		It follows from Lemma \ref{lem_ell_est}, (\ref{eq_boud_hxiepsi}) and (\ref{eq_comp_e_lapl}).
	\end{proof}
	\begin{thm}\label{thm_hk_est_unif_eps}
		For any $l,l' \in \nat$, there are $\eta_0, c, c', C > 0$ such that for any $t > 0$, $x, x' \in M$, $\eta \in ]0, \eta_0]$, we have
		\begin{multline}\label{thm_est_exp2_unif_eps} 
			\textstyle \Big| (\nabla_x)^{l} (\nabla_{x'})^{l'} \exp(-t \laplcomp^{E_M^{\xi, n}}_{\eta})(x, x') \Big|_{h \times h} \textstyle \leq C \rho_M(x) \rho_M(x') t^{-1 - (l + l')/2} 
			\cdot \\
			\cdot  \exp \big(ct - c' \cdot \dist(x, x')^2/t\big).
		\end{multline}
		\par Also, if $n \leq 0$, then there are $c, C > 0$ such that for any $t > 0$, $\eta \in ]0, \eta_0]$, we have
		\begin{equation}\label{thm_est_exp_perp2_unif_eps}
			\textstyle \Big| (\nabla_x)^{l} (\nabla_{x'})^{l'} \exp^{\perp}(-t \laplcomp^{E_M^{\xi, n}}_{\eta})(x, x') \Big|_{h \times h}  \textstyle\leq C  \rho_M(x) \rho_M(x') t^{-4 -l - l'} \exp (-ct ). 
		\end{equation}
	\end{thm}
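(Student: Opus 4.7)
The plan is to revisit the proof of Theorem \ref{thm_hk_est} and verify that every constant can be chosen independently of $\eta \in \, ]0, \eta_0]$, using the uniform spectral gap from Theorem \ref{spec_gap_thm_unif_eps}, the uniform elliptic estimate of Lemma \ref{lem_ell_est_unif_eps}, and the uniform parametrix estimates indicated in Remark \ref{rem_xiesp}.

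First I would establish (\ref{thm_est_exp_perp2_unif_eps}). One simply repeats the derivation (\ref{eqn_2_9_pf})--(\ref{eq_aux_cicj2}): Lemma \ref{lem_ell_est_unif_eps} provides the pointwise bound in terms of $L^2$-operator norms of $(\laplcomp^{E_M^{\xi,n}}_{\eta})^i \exp^{\perp}(-t\laplcomp^{E_M^{\xi,n}}_{\eta}) (\laplcomp^{E_M^{\xi,n}}_{\eta})^j$, and the uniform spectral gap $\mu$ of Theorem \ref{spec_gap_thm_unif_eps} together with (\ref{equal_sup_exp}) yields the required exponential decay with constants independent of $\eta$.

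Next I would prove (\ref{thm_est_exp2_unif_eps}) for $t \in \, ]0, t_0]$ by revisiting the three cases of the proof of (\ref{thm_est_exp2}). In Case 1 (both points away from the cusps), the metrics $h^{\xi}_{\eta}$ are uniformly $\mathscr{C}^{\infty}$-bounded on a fixed compact neighborhood of $\overline{M} \setminus (\cup_i V_i^M(\epsilon))$ (they are actually constant outside $\cup_i V_i^M(1/2)$), so the classical small-time parametrix estimates hold uniformly. In Case 2, the finite propagation speed argument based on the functions $\widetilde{K}_{t,r}$, $\widetilde{G}_{t,r}$ from (\ref{def_kth_lth}) goes through verbatim: the operator-norm bound (\ref{eqn_kth_norm}) relies only on functional calculus, and combined with Lemma \ref{lem_ell_est_unif_eps} it yields the pointwise bound (\ref{eqn_kth_norm_sup}) with constants independent of $\eta$. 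Case 3, where both points lie in $V_i^M(2\epsilon)$, is the delicate one: one transports the problem to the punctured disc via $z_i^M$, and applies the local bound (\ref{eq_bound_exp_sm_t}) of Theorem \ref{thm_local_summary} together with (\ref{exp_scin_kuh_rel}). The key point is that Remark \ref{rem_xiesp}, via (\ref{eq_boud_hxiepsi}), guarantees that the parametrix sections $\Phi_{i,n}^{\xi,\eta}$ satisfy the bounds of Theorem \ref{thm_bound_phi} uniformly in $\eta$, which in turn makes Theorem \ref{thm_local_summary} uniform in $\eta$, and hence (\ref{eq_bound_exp_sm_t}) uniform in $\eta$. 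Combining these cases gives (\ref{thm_est_exp2_unif_eps}) on $]0, t_0]$ with constants independent of $\eta$.

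Finally I would extend the small-time estimate to all $t > 0$ by applying a uniform version of Lemma \ref{lem_from_t0_to_infty}. The proof of that lemma uses only the semigroup property, the small-time estimate assumed in (\ref{eq_lem_t_1}), the geometry of $(M, g^{TM})$ (in particular (\ref{eq_bound_dist_mhyp}) and (\ref{dist_hyp_comp})), and standard Gaussian integral estimates; none of this depends on $h^{\xi}_{\eta}$. Since the small-time input is uniform in $\eta$ by the previous step, the output is likewise uniform in $\eta$. The main obstacle I anticipate is ensuring that the parametrix-based small-time bound in Case 3 is genuinely uniform; this is precisely what Remark \ref{rem_xiesp} is designed to provide, so the verification reduces to bookkeeping of constants in the arguments of Section \ref{sect_pametr} and Section \ref{sect_aux}.
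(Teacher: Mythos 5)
Your proposal is correct and follows essentially the same route as the paper, which simply observes that by Remark \ref{rem_xiesp} the proof of (\ref{thm_est_exp2}) works uniformly in $\eta$, and that (\ref{thm_est_exp_perp2_unif_eps}) follows from Theorem \ref{spec_gap_thm_unif_eps} and Lemma \ref{lem_ell_est_unif_eps}. Your write-up merely spells out the case-by-case bookkeeping (including the uniformity of Lemma \ref{lem_from_t0_to_infty}) that the paper leaves implicit.
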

	\begin{proof}
		By Remark \ref{rem_xiesp}, the proof of (\ref{thm_est_exp2}) works uniformly on $\eta$, thus, we get (\ref{thm_est_exp2_unif_eps}).
		Now, (\ref{thm_est_exp_perp2_unif_eps}) follows from Theorem \ref{spec_gap_thm_unif_eps} and Lemma \ref{lem_ell_est_unif_eps}.
	\end{proof}
	\begin{thm}\label{thm_rel_hk_infty_unif_eps}
		For any $k \in \nat$, there are $\eta_0, \epsilon_1, c, c', C > 0$ such that for any $t > 0$, $u \in \comp, |u| \leq \epsilon_1$, $\eta \in ]0, \eta_0]$, $i = 1,\ldots, m$, we have
		\begin{multline}
			 \textstyle
			\Big|
				\Big(
				\exp(-t \laplcomp^{E_M^{\xi, n}}_{\eta})   - 
				\exp(-t \laplcomp^{E_M^{\xi, n}}) 
				\Big)				
				\big( (z_i^{M})^{-1}(u), (z_i^{M})^{-1}(u) \big)
			\Big|   \\  
				\leq
				C | \ln |u| | \exp(ct) \Big(
				| \ln |u| |^{-k} + \exp(-c' (\ln |\ln |u||)^2/t )
				\Big).  \label{hk_infty_unif_eps}
		\end{multline}
		Moreover, if $n \leq 0$, then there are $\varsigma  < 1$ and $c, C > 0$ such that 
		\begin{multline}\label{hk_infty_perp_unif_eps}
			\textstyle
			\Big|
				\Big(
				\exp^{\perp}(-t \laplcomp^{E_M^{\xi, n}}_{\eta}) 	
				- 
				\exp^{\perp}(-t \laplcomp^{E_M^{\xi, n}}) 
				\Big)
				\big( (z_i^{M})^{-1}(u), (z_i^{M})^{-1}(u) \big)   
			\Big|  \\    
				\textstyle \leq C |\ln |u||^{\varsigma} t^{-4}  \exp (-ct). 
		\end{multline}
	\end{thm}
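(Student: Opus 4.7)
The plan is to run the proof in parallel with that of Theorem \ref{thm_rel_hk_infty}, replacing the two-surface comparison by a one-parameter interpolation in the Hermitian metric on $\xi$, and invoking the uniform heat kernel bounds of Theorem \ref{thm_hk_est_unif_eps} in place of those of Theorem \ref{thm_hk_est}. The crucial geometric input is that the difference $h^{\xi}_{\eta} - h^{\xi}$ is supported in $\cup_i V_i^M(\sqrt{\eta})$ and, by the normal-trivialization hypothesis (\ref{defn_normal_triv}), vanishes to order $O(|u|^2)$ at the cusp.

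Concretely, I would introduce a smooth family $h^{\xi}_{\eta, s}$, $s \in [0,1]$, with $h^{\xi}_{\eta, 0} = h^{\xi}$ and $h^{\xi}_{\eta, 1} = h^{\xi}_{\eta}$ (for example the linear interpolation $(1-s)h^{\xi} + s h^{\xi}_{\eta}$), and denote by $\laplcomp^{E_M^{\xi, n}}_{\eta, s}$ the associated Kodaira Laplacian. Arguing as in (\ref{eq_comp_e_lapl}), the operator $\partial_s \laplcomp^{E_M^{\xi, n}}_{\eta, s}$ is of first order in $\partial/\partial \overline{z}_i^M$, supported in $\cup_i V_i^M(\sqrt{\eta})$, with coefficients uniformly bounded in $(s, \eta)$ and enjoying an $O(|u|)$ bound of the type (\ref{eq_bound_cusped_der}) near each cusp (cf.\ also (\ref{eq_boud_hxiepsi}) and Remark \ref{rem_xiesp}). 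Duhamel's formula then expresses the left-hand side of (\ref{hk_infty_unif_eps}), evaluated on the diagonal, as
\[
-\int_0^1\!\!\int_0^t\!\!\int_{v \in M} \exp(-(t-\tau) \laplcomp^{E_M^{\xi, n}}_{\eta, s})(u, v)\, \bigl[\partial_s \laplcomp^{E_M^{\xi, n}}_{\eta, s}\bigr]_v \exp(-\tau \laplcomp^{E_M^{\xi, n}}_{\eta, s})(v, u)\, dv_M(v)\, d\tau\, ds,
\]
and after inserting the uniform Gaussian bound (\ref{thm_est_exp2_unif_eps}) together with the $O(|v|)$ bound on the coefficients of $\partial_s \laplcomp^{E_M^{\xi, n}}_{\eta, s}$, the integrand is controlled by exactly the right-hand side of (\ref{eq_duh_decomp}), uniformly in $(s, \eta)$.

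From here the proof of (\ref{hk_infty_unif_eps}) reduces to the integral manipulations already carried out in the proof of Theorem \ref{thm_rel_hk_infty}: take $r = \dist(|u|, |\ln|u||^{-k})$, split the $v$-integral over $D^*(1/2)$ as in (\ref{eq_disc_decomp}), and apply the bounds (\ref{eq_comp_dd_hh_disc})--(\ref{eq_comp_dd_hh_disc3nd}) to recover the stated estimate. The perpendicular version (\ref{hk_infty_perp_unif_eps}) then follows exactly as in (\ref{eqn_exp_perp_exp_rel_norm})--(\ref{eq_cauchy_exp}): multiply an appropriate power of (\ref{hk_infty_unif_eps}) by an appropriate power of the uniform exponential decay (\ref{thm_est_exp_perp2_unif_eps}) and apply Cauchy's inequality.

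The main obstacle is conceptual rather than computational: every constant produced by the parametrix construction of Section \ref{sect_pametr}, by the spectral-gap argument, and by the Moser-type estimate must be independent of $\eta \in ]0, \eta_0]$. This uniformity is precisely what Theorem \ref{thm_hk_est_unif_eps} provides, thanks to Remark \ref{rem_xiesp} which ensures that the bounds on the metrics $h^{\xi}_{\eta}$ (and on the sections $\Phi_{i,n}^{\xi}$ built from them) are independent of $\eta$. Once this bookkeeping is done, no genuinely new analytic argument is needed beyond a careful transcription of the proof of Theorem \ref{thm_rel_hk_infty}.
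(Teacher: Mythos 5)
Your proposal is correct and is essentially the paper's own argument: the paper proves (\ref{hk_infty_unif_eps}) by observing that the Duhamel/interpolation argument in the proof of Theorem \ref{thm_rel_hk_infty} (with the family (\ref{eq_hxi_flat222}) replaced by the interpolation toward $h^{\xi}_{\eta}$, whose $\epsilon$-derivative is a first-order operator with $O(|u|)$ coefficients supported in $\cup_i V_i^M(\sqrt{\eta})$) runs uniformly in $\eta$ thanks to the uniform Gaussian bound (\ref{thm_est_exp2_unif_eps})/Remark \ref{rem_xiesp}, and obtains (\ref{hk_infty_perp_unif_eps}) exactly as you describe, by substituting (\ref{thm_est_exp_perp2_unif_eps}) into the argument of (\ref{eqn_exp_perp_exp_rel_norm})--(\ref{eq_cauchy_exp}).
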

	\begin{proof}
		As the proof of Theorem \ref{thm_rel_hk_infty} is based on (\ref{thm_est_exp2}), which works uniformly on $\eta \in ]0, 1/2]$, the proof of Theorem \ref{thm_rel_hk_infty} also works uniformly on $\eta$, which implies (\ref{hk_infty_unif_eps}). The proof of (\ref{hk_infty_perp_unif_eps}) remains identical to the proof of (\ref{hk_infty_perp}), one only has to use (\ref{thm_est_exp_perp2_unif_eps}) instead of (\ref{hk_infty_perp}).
	\end{proof}
	\begin{thm}\label{thm_small_time_exp_unif_eps}
		There are smooth bounded functions $a_{\xi, \eta, j}^{M, n} : M \to \enmr{\xi}$, $j \geq -1$ such that for any $x \in M$, $t_0 > 0$, $k \in \nat$ there is $C > 0$ such that for any $t \in ]0, t_0]$, $\eta \in ]0,1/2]$, we have
		\begin{equation}\label{eq_small_time_exp_111_unif_eps}
			\Big| \exp(-t \laplcomp^{E_M^{\xi, n}}_{\eta}) \big( x, x \big) - \sum_{j = - 1}^{k} a_{\xi, \eta, j}^{M, n}(x) t^j \Big| \leq C t^k.
		\end{equation}
		Moreover, if $x \in M \setminus (\cup_i V_i^{M}(e^{-t^{-1/3}}))$, then $C$ can be chosen independently of $t \in ]0, t_0]$ and $x$.
		\par 
		Also, there is $\epsilon_1 > 0$, such that for any $l \in \nat$, $j \geq -1$, there is $C > 0$ such that for any $u \in \comp$, $0 < |u| \leq \epsilon_1$, $i = 1, \ldots, m$, $\eta \in ]0,1/2]$, we have
		\begin{equation}\label{eq_small_time_coeff_rel_unif_eps}
			\Big| (\nabla_u)^l \Big( a_{\xi, \eta, j}^{M, n} - a_{\xi, j}^{M, n}  \Big) \big( (z_i^{M})^{-1}(u) \big) \Big|_{h} \leq C |u|^{1/3},
		\end{equation}
		Moreover, for any $x \in M \setminus (\cup_i V_i^{M}(\eta^{1/2}))$, we have
		\begin{equation}\label{eq_identity_aiepsilon}
			a_{\xi, \eta, j}^{M, n}(x) = a_{\xi, j}^{M, n}(x).
		\end{equation}
	\end{thm}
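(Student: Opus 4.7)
The plan is to transcribe the proof of Theorem \ref{thm_small_time_exp} line by line, upgrading every estimate to be uniform in $\eta \in \,]0,1/2]$. The engine enabling this is Remark \ref{rem_xiesp}: the pulled-back metrics $h^{\xi}_{z,\eta}$, defined as in (\ref{defn_z_metr}) with $h^{\xi}$ replaced by $h^{\xi}_{\eta}$, satisfy the bounds (\ref{eq_boud_hxiepsi}), which are exactly the hypotheses (\ref{eq_bound_hxi}) that drive the proof of Theorem \ref{thm_bound_phi}. Consequently the parametrix coefficients $\Phi^{\xi_{\eta}}_{i,n}$ associated with $h^{\xi}_{\eta}$ satisfy (\ref{eq_bound_phi1}) and (\ref{eq_bound_phi2}) uniformly in $\eta$, and the chain Theorems \ref{thm_bound_phi}, \ref{thm_bound_r}, \ref{thm_local_summary} remains valid with $\eta$-independent constants.

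With this uniform local input, I would define, for $x \in V_i^M(\epsilon)$ and $\tilde x \in \hh$ with $\rho(\tilde x) = z_i^M(x)$,
\[
    a_{\xi, \eta, j}^{M, n}(x) := \Phi_{j+1, n}^{\xi_{\eta}}(\tilde{x}, \tilde{x}),
\]
and use the classical compact small-time expansion of the heat kernel for $(g^{TM}, h^{\xi}_{\eta}, \norm{\cdot}_M)$ on the complement of a fixed cusp neighbourhood, which is $\eta$-uniform because the family $h^{\xi}_{\eta}$ is uniformly $C^{\infty}$-bounded there. Then (\ref{eq_small_time_exp_111_unif_eps}) and its uniform improvement on $M \setminus \cup_i V_i^M(e^{-t^{-1/3}})$ follow from the argument of Theorem \ref{thm_small_time_exp}: one compares the global heat kernel with its local parametrix through the finite-propagation identity (\ref{exp_scin_kuh_rel}), and controls the wave-equation remainder $\widetilde K_{t,r}$ via the uniform Moser-type bound (\ref{thm_est_exp2_unif_eps}) of Theorem \ref{thm_hk_est_unif_eps}. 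Since $g^{TM}$ itself is independent of $\eta$, the propagation radii and cutoff scales chosen in that proof remain admissible verbatim.

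For (\ref{eq_small_time_coeff_rel_unif_eps}) I would combine the $\eta$-uniform analogue of (\ref{eq_small_time_coeff_rel_local}), obtained by applying the uniform form of (\ref{eq_bound_phi2}) to $\Phi^{\xi_\eta}_{i,n} - {\rm{Id}}\cdot \Phi_{i,n}$ together with the exponential-to-polynomial conversion $e^{-\Im z/3} \sim |u|^{1/3}$ used at the end of the proof of Theorem \ref{thm_local_summary}, with the identification analogous to (\ref{eq_ai_compar}) valid through a repetition of the finite propagation argument. The identity (\ref{eq_identity_aiepsilon}) is then a manifestation of the locality of the parametrix construction: the argument preceding (\ref{eq_bound_phiepsilon}) shows that $\Phi^{\xi_{\eta}}_{i,n,z}(\imun, \cdot)$ depends only on the restriction of $h^{\xi}_{\eta,z}$ to $B^{\hh}(\imun, 1)$; for $x$ satisfying $|z_i^M(x)| \geq \eta^{1/2}$, the defining formula (\ref{eq_hxi_flat}) forces $h^{\xi}_{\eta} \equiv h^{\xi}$ on a full neighbourhood of $x$, so the two constructions yield the same jet and hence the same parametrix coefficient.

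The main obstacle is purely bookkeeping: one must check that each constant in the cascade of estimates (\ref{eq_bound_phi1})--(\ref{eq_small_time_coeff_rel_local}) depends on $\eta$ only through the uniform bound (\ref{eq_boud_hxiepsi}), which in turn reduces to the single analytic observation recorded in the proof of Remark \ref{rem_xiesp}, namely that for $f \in \ccal^{\infty}_c(\real)$ the family $y \mapsto f(e^{-y}/\eta)$ has $\eta$-uniformly bounded $\ccal^{\infty}$-norm on $\real$. Beyond this, no new analytic ingredient is required.
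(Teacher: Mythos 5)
Your proposal is correct and takes essentially the same route as the paper: the paper's own proof consists precisely of the observation that, by Remark \ref{rem_xiesp}, the proof of Theorem \ref{thm_small_time_exp} goes through uniformly in $\eta \in ]0,1/2]$, plus the locality argument for (\ref{eq_identity_aiepsilon}) based on $h^{\xi}_{\eta}$ coinciding with $h^{\xi}$ over $M \setminus (\cup_i V_i^{M}(\eta^{1/2}))$. Your write-up merely unpacks these two points (the uniform parametrix bounds (\ref{eq_boud_hxiepsi}) feeding Theorems \ref{thm_bound_phi}--\ref{thm_local_summary}, and the jet-locality of the diagonal coefficients), so there is nothing to add.
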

	\begin{proof}
		By Remark \ref{rem_xiesp}, the proof of Theorem \ref{thm_small_time_exp} works uniformly on $\eta$. 
		Thus, only the last statement (\ref{eq_identity_aiepsilon}) needs to be justified, as we don't have its analogue in Theorem \ref{thm_small_time_exp}. But it simply follows from the fact that the coefficients of the small-time expansion of the heat kernel are local and the fact that $h^{\xi}_{\eta}$ coincides with $h^{\xi}$ over $M \setminus ( \cup_i V_i^{M}(\eta^{1/2}))$.
	\end{proof}
	\begin{thm}\label{thm_comp_unif_eps}
		\begin{sloppypar}
		There are $\eta_0, c', C > 0$ such that for any $t > 0, \eta \in ]0, \eta_0]$ and $x \in M \setminus (\cup_i V_i^{M}(|\ln \eta|^{-1}))$, we have
		\begin{equation}
			\label{eqn_thm_comp_3_unif_eps}				
			\Big|
				\big( \exp(-t \laplcomp^{E_M^{\xi, n}}_{\eta}) 
				- 
				\exp(-t \laplcomp^{E_M^{\xi, n}}) 
				\big)(x, x)
			\Big|
			\leq C \rho_{M}(x)^2 \exp(- c' (\ln |\ln \eta|)^2/t).
		\end{equation}
		\end{sloppypar}
	\end{thm}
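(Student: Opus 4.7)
The plan is to prove this via Duhamel's formula, following the approach already used in the excerpt for the proof of Theorem \ref{thm_rel_hk_infty} (see (\ref{eq_duh})--(\ref{eq_disc_decomp_final2nd})), but exploiting the sharper support property of the perturbation. The key observation is that by (\ref{eq_comp_e_lapl}) and the definition (\ref{eq_hxi_flat}), the operator $V_\eta := \laplcomp^{E_M^{\xi,n}}_\eta - \laplcomp^{E_M^{\xi,n}}$ is a first-order differential operator with support in $\cup_i V_i^M(\eta^{1/2})$, and (\ref{eq_bound_cusped_der}) together with (\ref{reqr_poincare}) yields $|V_\eta f(v)| \leq C|v|^2 |\ln|v|| \cdot |\nabla_v f(v)|$. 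Duhamel's formula gives
\begin{equation*}
\bigl(\exp(-t\laplcomp^{E_M^{\xi,n}}_\eta) - \exp(-t\laplcomp^{E_M^{\xi,n}})\bigr)(x,x) = -\int_0^t \int_{\cup_i V_i^M(\eta^{1/2})} \exp(-(t-s)\laplcomp^{E_M^{\xi,n}}_\eta)(x,v) \, V_\eta \exp(-s\laplcomp^{E_M^{\xi,n}})(v,x) \, dv_M(v)\, ds.
\end{equation*}

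The decisive geometric input is a lower bound on $\dist(x,v)$. For $x \notin \cup_i V_i^M(|\ln\eta|^{-1})$ one has $|\ln|z_i^M(x)|| \leq \ln|\ln\eta|$, and for $v \in V_i^M(\eta^{1/2})$ one has $|\ln|v|| \geq \frac{1}{2}|\ln\eta|$; combining (\ref{eq_bound_dist_mhyp}) and (\ref{dist_hyp_comp}) I would deduce that for $\eta$ sufficiently small,
$\dist(x,v) \geq \ln\!\bigl(\tfrac{1}{2}|\ln\eta|\bigr) - \ln\ln|\ln\eta| - O(1) \geq c \ln|\ln\eta|.$
Applying the uniform estimate (\ref{thm_est_exp2_unif_eps}) to $\exp(-(t-s)\laplcomp^{E_M^{\xi,n}}_\eta)(x,v)$ and its derivative version (with $l'=1$) to $\exp(-s\laplcomp^{E_M^{\xi,n}})(v,x)$, the integrand is bounded by $C\rho_M(x)^2 |v|^2|\ln|v||^2 (t-s)^{-1} s^{-3/2} \exp(ct - c'\dist(x,v)^2(1/s + 1/(t-s)))$. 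As in the proof of (\ref{hk_infty}), I would split the Gaussian exponential: one half absorbs the distance lower bound, producing $\exp(-c''(\ln|\ln\eta|)^2(1/s + 1/(t-s))) \leq \exp(-4c''(\ln|\ln\eta|)^2/t)$, while the other half retains integrability. Observing that $|v|^2(\ln|v|)^2\, dv_M(v)$ is Lebesgue measure on $D^*(\eta^{1/2})$, the $v$-integral is $O(\eta)$, and the $s$-integral is controlled by standard estimates of the form $\int_0^t s^{-3/2}(t-s)^{-1}\exp(-a/s - a/(t-s))\, ds \leq C a^{-5/2}$ with $a = c'\dist^2$.

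The main obstacle is the spurious $\exp(ct)$ factor coming from (\ref{thm_est_exp2_unif_eps}), which is incompatible with the target bound when $t$ is large. To circumvent this, I would split into two regimes. For $t \leq C_0 \ln|\ln\eta|$, we have $\exp(ct) \leq |\ln\eta|^{cC_0}$, and this is absorbed by slightly decreasing the constant $c'$ in the factor $\exp(-c'(\ln|\ln\eta|)^2/t)$; this works because $(\ln|\ln\eta|)^2/t \geq \ln|\ln\eta|/C_0$ on this range. For $t \geq C_0 \ln|\ln\eta|$, I would decompose both kernels via the uniform spectral gap from Theorem \ref{spec_gap_thm_unif_eps}: $\exp(-t\laplcomp^{E_M^{\xi,n}}_\eta) = P_{\eta,M} + \exp^\perp(-t\laplcomp^{E_M^{\xi,n}}_\eta)$, and similarly without the subscript. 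The $\exp^\perp$ contributions are bounded by (\ref{thm_est_exp_perp2_unif_eps}), giving decay $\exp(-ct)$, and for $t \geq C_0\ln|\ln\eta|$ with $C_0$ large enough this is dominated by $\exp(-c'(\ln|\ln\eta|)^2/t)$.

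The remaining piece is the projection difference $P_{\eta,M}(x,x) - P_M(x,x)$. Since both are projectors onto $H^0(\overline{M}, E_M^{\xi,n})$ (by Theorem \ref{spec_gap_thm_unif_eps} and (\ref{eqn_ker_lapl})) but with respect to different $L^2$-products, a Gram matrix argument applied to a fixed basis $\{s_i\}$ of $H^0(\overline{M}, E_M^{\xi,n})$ shows that the Gram matrices differ only by an integral over $\cup_i V_i^M(\eta^{1/2})$, a region whose Poincaré volume is $O(1/|\ln\eta|)$; since the basis sections are bounded in $h^\xi$-norm uniformly on this region, the Gram matrix difference is $O(1/|\ln\eta|)$, yielding $|P_{\eta,M}(x,x) - P_M(x,x)| \leq C\rho_M(x)^2/|\ln\eta|$. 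For $t \geq C_0\ln|\ln\eta|$, this quantity is bounded by $\exp(-c'(\ln|\ln\eta|)^2/t)$ provided $c'/C_0 \leq 1$, completing the estimate. Collecting the two regimes yields the claimed bound.
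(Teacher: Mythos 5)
Your argument is correct in substance, but it takes a genuinely different route from the paper. The paper's proof is much shorter: it sets $r = \dist(V_i^{M}(\eta^{1/2}), M \setminus V_i^{M}(|\ln \eta|^{-1})) \simeq \ln|\ln\eta|$ and splits $\exp(-t\laplcomp) = \widetilde{G}_{t,r}(\laplcomp) + \widetilde{K}_{t,r}(\laplcomp)$ as in (\ref{exp_scin_kuh}). Since $\laplcomp^{E_M^{\xi,n}}_{\eta}$ and $\laplcomp^{E_M^{\xi,n}}$ coincide on $B^M(x,r)$, finite propagation speed makes the two $\widetilde{G}$-terms \emph{literally equal}, so the difference of heat kernels reduces to a difference of $\widetilde{K}_{t,r}$-kernels, each of which is $O(\rho_M(x)^2\exp(-c'r^2/t))$ uniformly in $t>0$ by the operator-norm bound (\ref{eqn_kth_norm_conv}) combined with the elliptic estimate (\ref{lem_ell_est_unif_eps_equn}). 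This gets the bound for all $t$ in one stroke, with no case distinction. Your Duhamel route instead pays for the spurious $\exp(ct)$ in (\ref{thm_est_exp2_unif_eps}) with a two-regime split, and the large-$t$ regime forces you to prove an extra lemma (the Gram-matrix comparison of the two Bergman projectors, giving $|P_{\eta,M}(x,x)-P_M(x,x)| = O(1/|\ln\eta|)$) that the paper never needs. The interlocking of constants you describe ($C_0$ small enough that $\exp(ct)$ is absorbed for $t\leq C_0\ln|\ln\eta|$, then $c'$ chosen $\leq \min(c_1C_0^2, C_0)$ for the large-$t$ regime) does close up, and the projector estimate is sound since the two $L^2$-products differ only on $\cup_i V_i^M(\eta^{1/2})$, whose hyperbolic volume is $O(1/|\ln\eta|)$, while the holomorphic sections are pointwise bounded there for $n\leq 0$. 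The only imprecision is the claimed form $Ca^{-5/2}$ for the $s$-integral $\int_0^t (t-s)^{-1}s^{-3/2}\exp(-a(1/s+1/(t-s)))\,ds$, which actually carries an extra logarithmic and $t$-dependent factor near $s=t$; this is harmless since you only need finiteness and polynomial control, which the remaining half of the Gaussian provides. In exchange for its length, your approach yields a quantitative rate in $\eta$ for the projector difference that the paper's argument does not produce.
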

	\begin{proof}
		\begin{sloppypar}
		We put $r = \dist(V_i^{M}(\eta^{1/2}), M \setminus V_i^{M}(|\ln \eta|^{-1}))$. Then by (\ref{dist_hyp_comp}), we have $r \simeq \ln |\ln \eta|$.
 		Similarly to (\ref{eq_guh_equal_disc_rel}), using (\ref{dist_hyp_comp}) and the fact that $h^{\xi}_{\eta}$ coincides with $h^{\xi}$ over $M \setminus ( \cup_i V_i^{M}(\eta))$, by the finite propagation speed of solutions of hyperbolic equations, there is $c > 0$ such that for any $x \in M \setminus (\cup V_i^{M}(|\ln \eta|^{-1}))$, we have
		\begin{equation}\label{fin_prop_luh_conv}
			\widetilde{G}_{t,r}(\laplcomp^{E_M^{\xi, n}}_{\eta}) ( x, \cdot ) = 
			\widetilde{G}_{t,r}(\laplcomp^{E_M^{\xi, n}}) ( x, \cdot ).
		\end{equation}
		\end{sloppypar}
		\noindent Then, similarly to (\ref{exp_scin_kuh_rel}), we have
		\begin{equation}\label{eqn_inf_kuh_repr_conv}
			\big( \exp(-t \laplcomp^{E_M^{\xi, n}}_{\eta})
			- 
			\exp(-t \laplcomp^{E_M^{\xi, n}})
			\big)			
			(x, x) 
			= 
			\big( \widetilde{K}_{t,r}(\laplcomp^{E_M^{\xi, n}}_{\eta} ) 
			 - 
			\widetilde{K}_{t,r}(\laplcomp^{E_M^{\xi, n}}) 
			\big)
			(x, x).
		\end{equation}
		Now, similarly to (\ref{eqn_kth_norm}), for any $k, k' \in \nat$, there are $c', C > 0$ such that for any $t > 0$, we have
		\begin{equation} \label{eqn_kth_norm_conv}
			\Big\| (\laplcomp^{E_M^{\xi, n}}_{\eta})^k 
			 \big( \widetilde{K}_{t, r}( \laplcomp^{E_M^{\xi, n}}_{\eta} ) \big)
			 (\laplcomp^{E_M^{\xi, n}}_{\eta})^{k'} \Big\|^{(0,0)}
			\leq
			C \exp ( - c' r^2/ t).
		\end{equation}
		From (\ref{lem_ell_est_unif_eps_equn}) and (\ref{eqn_kth_norm_conv}), similarly to (\ref{eqn_kth_norm_sup}), for some $c',C > 0$ and for any $x \in M$, we get
		\begin{equation} \label{eqn_kth_norm_sup_conv}
			\Big|
			\widetilde{K}_{t, r}( \laplcomp^{E_M^{\xi, n}}_{\eta} ) (x, x) 
			\Big|
			\leq
			C \rho_{M}(x)^2 \exp ( - c' r^2/ t).
		\end{equation}
		Now, from (\ref{eqn_kth_norm_sup}), (\ref{eqn_inf_kuh_repr_conv}) and (\ref{eqn_kth_norm_sup_conv}), we get (\ref{eqn_thm_comp_3_unif_eps}).
	\end{proof}
	From all above preparations, we could relate the regularized traces associated with $h^{\xi}_{\eta}$ and $h^{\xi}$.
	\begin{thm}\label{conv_main_thm_unif_eps}
		There are $c, C > 0$, $\varsigma > 0$, $t_0 > 0$, such that for any $t > t_0, \eta \in ]0, e^{-3}]$, we have
		\begin{equation}\label{eq_conv_main_thm_unif_eps}
			\Big|
			{\rm{Tr}}^{\reg} \big[ \exp^{\perp} (-t \laplcomp^{E_M^{\xi, n}}_{\eta}) \big]  - 
			{\rm{Tr}}^{\reg} \big[ \exp^{\perp}(-t \laplcomp^{E_M^{\xi, n}}) \big]
			\Big| 
			\leq C \big( \ln |\ln \eta | \big)^{-\varsigma} \exp ( -ct).
		\end{equation}
	\end{thm}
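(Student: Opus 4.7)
Since the $(\overline{P}, D_P)$-terms in the definition (\ref{eqn_rel_HT}) of the regularized trace are independent of $h^{\xi}$, they cancel in the difference; with $\eta_* > 0$ the fixed cutoff from (\ref{eqn_rel_HT}), I am reduced to estimating
\[
I_{\text{out}} := \int_{M \setminus \cup_i V_i^{M}(\eta_*)} {\rm{Tr}}\bigl[\bigl(\exp^{\perp}(-t\laplcomp^{E_M^{\xi,n}}_{\eta}) - \exp^{\perp}(-t\laplcomp^{E_M^{\xi,n}})\bigr)(x,x)\bigr]\,dv_M(x)
\]
together with the analogous integrals $I_{\text{in}}^i$ over $D^*(\eta_*)$ in $dv_{\dd^*}$ (pulled back by $(z_i^M)^{-1}$). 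I further split $I_{\text{in}}^i = I_{\text{near}}^i + I_{\text{mid}}^i$ according to whether $|u|$ is below or above $|\ln\eta|^{-1}$.

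For $I_{\text{near}}^i$, the bound (\ref{hk_infty_perp_unif_eps}) from Theorem \ref{thm_rel_hk_infty_unif_eps} applies directly; combined with the elementary estimate $\int_{|u| \leq r} |\ln|u||^{\varsigma}\,dv_{\dd^*}(u) \leq C|\ln r|^{\varsigma-1}$ (valid for $\varsigma < 1$), setting $r = |\ln\eta|^{-1}$ yields $|I_{\text{near}}^i| \leq C(\ln|\ln\eta|)^{\varsigma-1} t^{-4}\exp(-ct)$, already of the claimed form since $1 - \varsigma > 0$. For $I_{\text{out}}$ and $I_{\text{mid}}^i$ every integration point lies in $M \setminus \cup_i V_i^{M}(|\ln\eta|^{-1})$ once $\eta$ is small, so I combine two pointwise bounds on the integrand. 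Writing
\[
\exp^{\perp}(-t\laplcomp_{\eta}) - \exp^{\perp}(-t\laplcomp) = \bigl(\exp(-t\laplcomp_{\eta}) - \exp(-t\laplcomp)\bigr) - (P_{M,\eta} - P_M),
\]
Theorem \ref{thm_comp_unif_eps} bounds the heat-kernel part; for the projection, a Gram-matrix estimate -- using that sections of $H^0(\overline{M}, E_M^{\xi,n})$ are pointwise bounded near the cusps for $n \leq 0$ and that $h^{\xi}_{\eta} - h^{\xi}$ is supported in $\cup_i V_i^{M}(\sqrt{\eta})$, which has $g^{TM}$-volume $O(|\ln\eta|^{-1})$ -- gives $|(P_{M,\eta} - P_M)(x,x)| \leq C/|\ln\eta|$, whence
\[
\bigl|\bigl(\exp^{\perp}(-t\laplcomp_{\eta}) - \exp^{\perp}(-t\laplcomp)\bigr)(x,x)\bigr| \leq C\rho_M(x)^2\bigl[\exp(-c'(\ln|\ln\eta|)^2/t) + |\ln\eta|^{-1}\bigr].
\]
On the other hand (\ref{thm_est_exp_perp2_unif_eps}) applied to each summand gives $\leq C\rho_M(x)^2 t^{-4}\exp(-ct)$. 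Taking the geometric mean of these two bounds and applying AM-GM in the form $\tfrac{c'(\ln|\ln\eta|)^2}{2t} + \tfrac{ct}{4} \geq \sqrt{cc'/2}\,\ln|\ln\eta|$ to the combined exponent yields
\[
\bigl|\bigl(\exp^{\perp}(-t\laplcomp_{\eta}) - \exp^{\perp}(-t\laplcomp)\bigr)(x,x)\bigr| \leq C\rho_M(x)^2\,t^{-2}\,(\ln|\ln\eta|)^{-\varsigma'}\exp(-c''t).
\]
Integration, using $\int_{M \setminus \cup V_i^{M}(\eta_*)} \rho_M^2\,dv_M < \infty$ and $\int_{|\ln\eta|^{-1} \leq |u| \leq \eta_*} \rho_M^2\,dv_{\dd^*} \sim 4\pi\ln\ln|\ln\eta|$ (an easy polar-coordinate computation), gives the required bound on $I_{\text{out}}$ and $I_{\text{mid}}^i$; the slowly growing $\ln\ln|\ln\eta|$ factor is absorbed at the cost of a slightly smaller $\varsigma < \varsigma'$.

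The principal difficulty is to reconcile the short-time character of Theorem \ref{thm_comp_unif_eps}, whose exponent $(\ln|\ln\eta|)^2/t$ worsens as $t$ grows, with the $\exp(-ct)$ decay required for large $t$; the geometric-mean interpolation with the spectral-gap bound from (\ref{thm_est_exp_perp2_unif_eps}) is the key device converting $\exp(-c'(\ln|\ln\eta|)^2/t)$ into a clean prefactor $(\ln|\ln\eta|)^{-\varsigma}\exp(-c''t)$. The other technical point -- tracking the projection difference $P_M - P_{M,\eta}$ that accompanies the passage from $\exp$ to $\exp^{\perp}$ -- is resolved by the Gram-matrix estimate based on the pointwise boundedness of $H^0$-sections near the cusps for $n \leq 0$.
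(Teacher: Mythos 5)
Your proposal is correct, and its central device — interpolating (via a geometric mean and the AM--GM inequality (\ref{eq_cauchy_exp})) between an $\eta$-small bound coming from Theorem \ref{thm_comp_unif_eps} away from the cusps and the near-cusp estimates inside, and a pure $\exp(-ct)$ bound coming from the spectral gap — is exactly the paper's mechanism; the radius $|\ln\eta|^{-1}$ at which you split and the $\ln\ln|\ln\eta|$ volume factor (\ref{eq_bound_vol_like}) also match. The one genuine divergence is how the passage from $\exp$ to $\exp^{\perp}$ is handled: the paper invokes (\ref{eq_rel_nonperp}) together with $\ker(\laplcomp^{E_M^{\xi,n}}_{\eta})=\ker(\laplcomp^{E_M^{\xi,n}})=H^0(\overline{M},E_M^{\xi,n})$, so the two constant correction terms cancel and the difference of $\exp^{\perp}$-traces \emph{equals} the difference of $\exp$-traces — no pointwise control of $P_{M,\eta}-P_M$ is ever needed. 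You instead estimate $(P_{M,\eta}-P_M)(x,x)$ directly by comparing Gram matrices, which works (the perturbation $h^{\xi}_{\eta}-h^{\xi}$ is supported in $\cup_i V_i^M(\sqrt{\eta})$, of volume $O(|\ln\eta|^{-1})$, and holomorphic sections of $E_M^{\xi,n}$ are pointwise bounded near the cusps for $n\le 0$), but it is an extra lemma the paper's route renders unnecessary; conversely, your direct use of (\ref{hk_infty_perp_unif_eps}) on the innermost region, extracting the $(\ln|\ln\eta|)^{\varsigma-1}$ gain from the smallness of the hyperbolic measure there, is a mild simplification of the paper's treatment of that region, which goes through the non-perp estimate (\ref{hk_infty_unif_eps}) and the $|\ln|u||^{-k}$ term instead. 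Both routes yield the stated bound.
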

	\begin{sloppypar}
	\begin{proof}
		First of all, by (\ref{thm_est_exp_perp2_unif_eps}) and (\ref{hk_infty_perp_unif_eps}), in the same way as in Proposition \ref{rel_tr_large_time}, we get 
		\begin{equation}\label{eq_conv_main_thm_unif_eps11}
			\Big|
			{\rm{Tr}}^{\reg} \big[ \exp^{\perp}(-t \laplcomp^{E_M^{\xi, n}}_{\eta}) \big] - 
			{\rm{Tr}}^{\reg} \big[ \exp^{\perp}(-t \laplcomp^{E_M^{\xi, n}}) \big]
			\Big| 
			\leq C \exp ( -ct).
		\end{equation}
		Now, by (\ref{eq_rel_nonperp}), we have
		\begin{multline}\label{eq_conv_main_thm_unif_eps22}
			{\rm{Tr}}^{\reg} \big[ \exp^{\perp}(-t \laplcomp^{E_M^{\xi, n}}_{\eta})\big] - 
			{\rm{Tr}}^{\reg} \big[ \exp^{\perp}(-t \laplcomp^{E_M^{\xi, n}}) \big] 
			\\
			= 
			{\rm{Tr}}^{\reg} \big[ \exp(-t \laplcomp^{E_M^{\xi, n}}_{\eta}) \big] - 
			{\rm{Tr}}^{\reg} \big[ \exp(-t \laplcomp^{E_M^{\xi, n}}) \big].
		\end{multline}
		Trivially, there is $C > 0$ such that for any $\eta \in ]0, e^{-3}]$, we have
		\begin{equation}\label{eq_bound_vol_like}
			\int_{D(1/2) \setminus D(|\ln \eta|^{-1})} \frac{\imun dz d \overline{z}}{|z|^2 |\ln |z||} \leq C \ln \ln |\ln \eta|.
		\end{equation}
		We decompose the integration in the definition of $\tr{\exp(-t \laplcomp^{E_M^{\xi, n}}_{\eta})}$, analogical to Definition \ref{defn_rel_HT}, into two parts: over $\cup_i V_i^{M}(|\ln \eta|^{-1})$ and over $M \setminus (\cup_i V_i^{M}(|\ln \eta|^{-1}))$.
		By bounding the first part of the integral corresponding to the right-hand side of (\ref{eq_conv_main_thm_unif_eps22}) by (\ref{eqn_small_t_as25}), (\ref{hk_infty_unif_eps}) and second part by (\ref{eqn_thm_comp_3_unif_eps}) and (\ref{eq_bound_vol_like}), we see that there are $c,c',C > 0$ such that for any $t > 0$, $\eta \in ]0, e^{-3}]$, we have
		\begin{multline}\label{eq_conv_main_thm_unif_eps33}
			{\rm{Tr}}^{\reg} \big[ \exp(-t \laplcomp^{E_M^{\xi, n}}_{\eta}) \big] - {\rm{Tr}}^{\reg} \big[ \exp(-t \laplcomp^{E_M^{\xi, n}}) \big] 
			\leq 
			\frac{C \exp(ct)}{\ln | \ln \eta |} 
			\\
			+ C(1+t) 
			\ln \ln |\ln \eta| 
			\exp \Big(ct -  \frac{c'}{t} (\ln \ln |\ln \eta|)^2 \Big).
		\end{multline}
		By multiplying (\ref{eq_conv_main_thm_unif_eps11}) and (\ref{eq_conv_main_thm_unif_eps33}) with suitable powers, and using (\ref{eq_cauchy_exp}), (\ref{eq_conv_main_thm_unif_eps22}), we get (\ref{eq_conv_main_thm_unif_eps}).
	\end{proof}
	\end{sloppypar}
	Now, for $j \geq -1$, we denote (compare with (\ref{defn_ajMN}))
		\begin{multline}\label{defn_ajMN_unif_eps}
			A_{\xi, \eta, j}^{M, n} = \int_M {\rm{Tr}} \big[ a_{\xi, \eta, j}^{M,n}(x) \big] dv_M(x) - \frac{\rk{\xi}}{3}  \int_P a_{j}^{P,n} (x) dv_N(x) 
			\\
			-  \dim H^0(\overline{M}, E_M^{\xi, n}) + \frac{\rk{\xi}}{3}  \dim H^0(\overline{P}, E_P^{n}).
		\end{multline}
	This makes sense due to Theorem \ref{thm_small_time_exp} or Remark \ref{rem_integ_small_time}.
	\begin{thm}\label{rel_tr_small_time_unif_eps}
		For any $t_0 > 0$, there is $C > 0$, such that for any $t \in ]0, t_0]$, $\eta \in ]0, e^{-3}]$:
		\begin{multline}\label{rel_tr_small_time_unif_eps0}
			\bigg|
			\int_0^{1}
			\bigg(
			\Big(			
			{\rm{Tr}}^{\reg} \big[ \exp^{\perp}(-t  \laplcomp^{E_M^{\xi, n}}_{\eta})\big] 
			- 
			\sum_{j=-1}^{0} A_{\xi, \eta, j}^{M, n} t^j
			\Big)
			\\
			-
			\Big(			
			{\rm{Tr}}^{\reg} \big[ \exp^{\perp}(-t  \laplcomp^{E_M^{\xi, n}})\big] 
			- 
			\sum_{j=-1}^{0} A_{\xi, j}^{M, n} t^j
			\Big)
			\bigg)
			\frac{dt}{t}
			\bigg|  
			\leq 
			C
			(\ln \ln |\ln \eta|)^{- 1/3}.
		\end{multline}
	\end{thm}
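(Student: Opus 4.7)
The plan is to split the integral in (\ref{rel_tr_small_time_unif_eps0}) at a threshold $t_\eta := (\ln\ln|\ln\eta|)^{-1/3}$, and estimate the two pieces by separate mechanisms. Set $L := \ln\ln|\ln\eta|$, and denote $G_\eta(t) := {\rm Tr}^{\reg}[\exp^\perp(-t \laplcomp^{E_M^{\xi,n}}_\eta)] - \sum_{j=-1}^0 A^{M,n}_{\xi,\eta,j} t^j$, with $G(t)$ the analogous object without the $\eta$ subscripts, so that the integrand in (\ref{rel_tr_small_time_unif_eps0}) is $(G_\eta(t) - G(t))/t$.

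The first step is to establish a uniform-in-$\eta$ analogue of Proposition~\ref{rel_tr_small_time}: for any $t_0 > 0$ there is $C > 0$ such that $|G_\eta(t)| \leq Ct$ for all $\eta \in \,]0, 1/2]$ and $t \in \,]0, t_0]$. I would obtain this by repeating verbatim the proof of Proposition~\ref{rel_tr_small_time}, invoking at each stage the uniform-in-$\eta$ ingredients from Section~\ref{sect_spec_case}: Theorem~\ref{thm_small_time_exp_unif_eps} for the coefficients $a^{M,n}_{\xi,\eta,j}$ and the pointwise small-time expansion on $M \setminus \cup_i V_i^M(e^{-t^{-1/3}})$, and estimates (\ref{hk_infty_unif_eps}), (\ref{eq_small_time_coeff_rel_unif_eps}) to control the cusp contribution. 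This yields $|G_\eta(t) - G(t)| \leq 2Ct$ on $]0, 1]$, so that
\[
\Big|\int_0^{t_\eta} \frac{G_\eta(t) - G(t)}{t}\, dt\Big| \leq 2C t_\eta = 2C L^{-1/3}.
\]

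The second step is to quantify the pointwise rate $G_\eta(t) \to G(t)$ on $[t_\eta, 1]$. I would assemble this from two ingredients. First, inequality (\ref{eq_conv_main_thm_unif_eps33}) (extracted from the proof of Theorem~\ref{conv_main_thm_unif_eps}, and valid for every $t > 0$) together with (\ref{eq_rel_nonperp}) gives
\[
\big|{\rm Tr}^{\reg}[\exp^\perp(-t \laplcomp^{E_M^{\xi,n}}_\eta)] - {\rm Tr}^{\reg}[\exp^\perp(-t \laplcomp^{E_M^{\xi,n}})]\big| \leq \frac{Ce^{ct}}{\ln|\ln\eta|} + C(1+t) L\, e^{ct - c'L^2/t}.
\]
Second, combining (\ref{eq_identity_aiepsilon}) (which says $a^{M,n}_{\xi,\eta,j} = a^{M,n}_{\xi,j}$ outside $\cup_i V_i^M(\eta^{1/2})$) with (\ref{eq_small_time_coeff_rel_unif_eps}) and the explicit form of $dv_M$ in cusp coordinates yields $|A^{M,n}_{\xi,\eta,j} - A^{M,n}_{\xi,j}| \leq C \eta^{1/6}$, via the elementary estimate $\int_0^{\eta^{1/2}} r^{-2/3}(\ln r)^{-2}\, dr = O(\eta^{1/6}/(\ln\eta)^2)$. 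Summing these bounds and dividing by $t$ gives four contributions; when integrated over $[t_\eta, 1]$, the first produces $C|\ln t_\eta|/\ln|\ln\eta| \simeq \tfrac{C}{3}(\ln L)\, e^{-L}$, the second is exponentially small (via the substitution $u = L^2/t$, whereby the integral is bounded by $e^{-c'L^2}/L$), and the last two contribute $O(\eta^{1/6}/t_\eta)$. All four are $o(L^{-1/3})$ as $\eta \to 0$, and combined with Step~1 this gives the bound claimed in (\ref{rel_tr_small_time_unif_eps0}).

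The main obstacle is Step~1: unwinding the proof of Proposition~\ref{rel_tr_small_time} to ensure uniformity in $\eta$. The delicate point is that the constants in the small-time expansion of the heat kernel in the cusp region depend on the Hermitian metric on $\xi$, and here this metric $h^\xi_\eta$ varies with $\eta$ while the support of the variation shrinks toward the cusps. The fact that uniform-in-$\eta$ versions of all the key heat-kernel estimates have already been established in Section~\ref{sect_spec_case} (this is precisely the content of Theorems~\ref{thm_small_time_exp_unif_eps} and \ref{thm_rel_hk_infty_unif_eps}, together with Remark~\ref{rem_xiesp}) reduces this obstacle to a careful but essentially routine bookkeeping exercise. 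Once Step~1 is in place, the choice $t_\eta = L^{-1/3}$ balances the two contributions and yields the stated $(\ln\ln|\ln\eta|)^{-1/3}$ rate.
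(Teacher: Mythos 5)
Your proposal is correct and follows essentially the same route as the paper: the key ingredients are identical, namely the uniform-in-$\eta$ bound $|G_\eta(t)-G(t)|\leq Ct$ obtained by rerunning Proposition \ref{rel_tr_small_time} with the uniform estimates of Section \ref{sect_spec_case}, the bound $|A^{M,n}_{\xi,\eta,j}-A^{M,n}_{\xi,j}|\leq C\eta^{1/6}$ from (\ref{eq_small_time_coeff_rel_unif_eps}) and (\ref{eq_identity_aiepsilon}), and the fixed-$t$ convergence rate assembled from the interior and cusp regions (the paper's (\ref{eq_eps_bound_sm_t2})--(\ref{eq_eps_bound_sm_t4})). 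The only cosmetic difference is the final combination step: you split the $t$-integral at the threshold $t_\eta=(\ln\ln|\ln\eta|)^{-1/3}$, whereas the paper interpolates the two pointwise bounds by multiplying them with appropriate powers before integrating; both devices yield the stated rate.
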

	\begin{proof}
		First of all, by Proposition \ref{rel_tr_small_time} and the analogical statement for $\laplcomp^{E_M^{\xi, n}}_{\eta}$ (which is proved by the same way with one modification: instead of using Theorem \ref{thm_small_time_exp}, we use Theorem \ref{thm_small_time_exp_unif_eps}), we get
		\begin{equation}\label{eq_eps_bound_sm_t0}
			\bigg|
			\Big(			
			{\rm{Tr}}^{\reg} \big[ \exp^{\perp}(-t  \laplcomp^{E_M^{\xi, n}}_{\eta})\big] 
			- 
			\sum_{j=-1}^{0} A_{\xi, \eta, j}^{M, n} t^j
			\Big)
			-
			\Big(			
			{\rm{Tr}}^{\reg} \big[ \exp^{\perp}(-t  \laplcomp^{E_M^{\xi, n}})\big] 
			- 
			\sum_{j=-1}^{0} A_{\xi, j}^{M, n} t^j
			\Big)
			\bigg|  
			\leq 
			C t.
		\end{equation}
		Now, by (\ref{eq_small_time_coeff_rel_unif_eps}) and (\ref{eq_identity_aiepsilon}), there are $\eta_0, C > 0$ such that for any $\eta \in ]0, \eta_0]$, $j = -1, 0$, we have
		\begin{equation}\label{eq_eps_bound_sm_t1}
			\big| A_{\xi, \eta, j}^{M, n} - A_{\xi, j}^{M, n} \big|  \leq C \eta^{1/6}.
		\end{equation}
		Also, by Theorem \ref{thm_comp_unif_eps} and (\ref{eq_bound_vol_like}), there are $\eta_0, c', C > 0$ such that for any $t \in ]0,t_0], \eta \in ]0, \eta_0]$:
		\begin{multline}\label{eq_eps_bound_sm_t2}	
			\int_{M \setminus (\cup_i V_i^{M}(|\ln \eta|^{-1}))}	
			\Big|
				{\rm{Tr}}
				\Big[
				\big( \exp(-t \laplcomp^{E_M^{\xi, n}}_{\eta}) 
				- 
				\exp(-t \laplcomp^{E_M^{\xi, n}}) 
				\big)(x, x)
				\Big]
			\Big|
			\\
			\leq C \ln \ln |\ln \eta| \exp \Big(- \frac{c'}{t} (\ln |\ln \eta|)^2 \Big).
		\end{multline}
		Also, by (\ref{eqn_small_t_as25}) and (\ref{hk_infty_unif_eps}), there are $\eta_0, c', C > 0$ such that for any $t \in ]0,t_0], \eta \in ]0, \eta_0]$, we have
		\begin{multline}\label{eq_eps_bound_sm_t3}
			\int_{V_i^{M}(|\ln \eta|^{-1})}	
			\Big|
				{\rm{Tr}}
				\Big[
				\big( \exp(-t \laplcomp^{E_M^{\xi, n}}_{\eta}) 
				- 
				\exp(-t \laplcomp^{E_M^{\xi, n}}) 
				\big)(x, x)
				\Big]
			\Big|
			\\
			\leq \frac{C}{\ln |\ln \eta| } + C \exp \Big(- \frac{c'}{t} (\ln \ln |\ln \eta|)^2 \Big).	
		\end{multline}
		Thus, by (\ref{eq_eps_bound_sm_t1}), (\ref{eq_eps_bound_sm_t2}) and (\ref{eq_eps_bound_sm_t3}), there are $c', C > 0$ such that for any $t \in ]0,t_0]$, we have
		\begin{multline}\label{eq_eps_bound_sm_t4}
			\bigg|
			\Big(			
			{\rm{Tr}}^{\reg} \big[ \exp^{\perp}(-t  \laplcomp^{E_M^{\xi, n}}_{\eta})\big] 
			- 
			\sum_{j=-1}^{0} A_{\xi, \eta, j}^{M, n} t^j
			\Big)
			-
			\Big(			
			{\rm{Tr}}^{\reg} \big[ \exp^{\perp}(-t  \laplcomp^{E_M^{\xi, n}})\big] 
			- 
			\sum_{j=-1}^{0} A_{\xi, j}^{M, n} t^j
			\Big)
			\bigg| 
			\\ 
			\leq 
			 \frac{C \eta^{1/6}}{t} + \frac{C}{\ln |\ln \eta| } + C \ln \ln |\ln \eta| \exp \Big(- \frac{c'}{t} (\ln \ln |\ln \eta|)^2 \Big).	
		\end{multline}
		Now, by multiplying (\ref{eq_eps_bound_sm_t0}) and (\ref{eq_eps_bound_sm_t4}) with appropriate powers, and integrating on $t$ from $0$ to $1$, we conclude Theorem \ref{rel_tr_small_time_unif_eps}.
	\end{proof}
	\begin{proof}[Proof of (\ref{eq_spec_case_thm_b})]
		By Theorems \ref{conv_main_thm_unif_eps}, \ref{rel_tr_small_time_unif_eps}, (\ref{eq_der_zeta}), (\ref{eq_eps_bound_sm_t1}) and Lebesgue dominated convergence theorem, we have
		\begin{equation}\label{eqn_tors_conv_epsi}
			\lim_{\eta \to 0} T (g^{TM}, h^{\xi}_{\eta} \otimes \norm{\cdot}_{M}^{2n})  = T (g^{TM}, h^{\xi} \otimes \norm{\cdot}_{M}^{2n}).
		\end{equation}
		However, trivially from (\ref{defn_L_2}), we have
			\begin{equation}\label{eqn_l_2_conv_epsi}
				\lim_{\eta \to 0} \, \norm{\cdot}_{L^2}(g^{TM}, h^{\xi}_{\eta} \otimes \, \norm{\cdot}_{M}^{2n}) 
				= 
				\norm{\cdot}_{L^2}(g^{TM}, h^{\xi} \otimes \norm{\cdot}_{M}^{2n}).
		\end{equation}
		By (\ref{eqn_tors_conv_epsi}) and (\ref{eqn_l_2_conv_epsi}), we get (\ref{eq_spec_case_thm_b}).
	\end{proof}

	\subsection{Flattening the Riemannian metric: a proof of (\ref{eq_lim_comp111})}\label{sect_tight}
	In this section we introduce the notion of a \textit{tight family of flattenings} and study some of its properties.
	Those families of flattenings are useful for studying the convergence of the spectrum for a family of surfaces “approaching" the cusped surface. 
	We study how the relative heat trace converges as this family of flattenings “converges" to the cusped metric, and from this study we deduce (\ref{eq_lim_comp111}).
	From now and till the end of Section \ref{sect_compact_pert}, we suppose that $(\xi, h^{\xi})$ is trivial near the cusps.
	\begin{defn}\label{defn_tight}
		We say that the flattenings $g^{TM}_{\rm{f}, \theta}$, $\norm{\cdot}_{M}^{\rm{f}, \theta}$, $\theta \in ]0,1]$ (cf. Definition \ref{defn_flat_metr}) of $g^{TM}$, $\norm{\cdot}_{M}$ are $n$-\textit{tight}, $n \in \integ$ if they satisfy the following requirements:
	\par \textbf{1.} We have $g^{TM}_{\rm{f}, \theta}|_{M \setminus (\cup_i V_i^{M}(\theta))} = g^{TM}|_{M \setminus (\cup_i V_i^{M}(\theta))}$ and similarly for $\norm{\cdot}_{M}^{\rm{f}, \theta}$.
	\par \textbf{2.} For all $i = 1, \ldots, m$, the following identity holds over $V_i^{M}(\theta^2)$ : $ \norm{ d z_i^{M} \otimes s_{D_M} / z_i^{M}}_{M}^{\rm{f}, \theta} = |\ln \theta|$, where $s_{D_M}$ is the canonical section of $\mathscr{O}_M(D_M)$.
	\par \textbf{3.} There are flattenings $g^{TM}_{\rm{sm}}$, $\norm{\cdot}_{M}^{\rm{sm}}$  of $g^{TM}$, $\norm{\cdot}_{M}$ such that for any $\theta \in ]0, e^{-3}]$, we have
		\begin{equation}\label{lem_flat_compar}
			\begin{aligned}
			& g^{TM}_{\rm{sm}} \otimes ( \, \norm{\cdot}_{M}^{\rm{sm}})^{2n} \leq g^{TM}_{\rm{f}, \theta} \otimes ( \, \norm{\cdot}_{M}^{\rm{f}, \theta})^{2n} \leq g^{TM} \otimes \, \norm{\cdot}_{M}^{2n}, \\
			& \norm{\cdot}_{M}^{\rm{sm}} \leq \norm{\cdot}_{M}^{\rm{f}, \theta} \leq \norm{\cdot}_M.
		\end{aligned}
		\end{equation}
	\par \textbf{4.} We have the following analogue of Lemma \ref{lem_ell_est}: \textit{there is $C > 0$ such that for any $\sigma \in \ccal^{\infty} \big(\overline{M} \times \overline{M}, E_M^{\xi, n} \boxtimes (E_M^{\xi, n})^* \big)$, $\theta \in ]0, e^{-3}]$, and for any $x, x' \in \overline{M}$ satisfying either $x, x' \in M \setminus (\cup_i V_i^{M}(\theta^3))$, or $x  = x'$, we have 
		\begin{equation}\label{lem_ell_est_eqn_A}
			\big| \sigma(x, x') \big|_{h \times h, \theta} \leq C \rho_{M, \theta}(x) \rho_{M, \theta}(x')
				 \sum_{i,j = 0}^{2} \big\lVert (\laplcomp^{E_M^{\xi, n}}_{\rm{f}, \theta, z})^{i}(\laplcomp^{E_M^{\xi, n}}_{\rm{f}, \theta, z'})^{j} \sigma(z, z')\big\lVert_{L^2, \theta},
		\end{equation}
		where $\laplcomp^{E_M^{\xi, n}}_{\rm{f}, \theta}$ is the Laplacian associated with  $g^{TM}_{\rm{f}, \theta}, h^{\xi}$ and $\norm{\cdot}_{M}^{\rm{f}, \theta}$; $| \cdot |_{h \times h, \theta}$ is the pointwise norm induced by $h^{\xi}$ and $\norm{\cdot}_{M}^{\rm{f}, \theta}$;  $\norm{\cdot}_{L^2, \theta}$ is the $L^2$ norm induced by $g^{TM}_{\rm{f}, \theta}, h^{\xi}$, $\norm{\cdot}_{M}^{\rm{f}, \theta}$; and the function $\rho_{M, \theta} : \overline{M} \to [1, \infty[$ is given by 
		\begin{equation}\label{def_rho_theta}
		\rho_{M, \theta}(x) =
		\begin{cases} 
      		\hfill 1 & \text{ for } x \in M \setminus (\cup_i V_i(1/2)), \\
      		\hfill \sqrt{| \ln |z_i^{M}(x)||}  & \text{ for }  x \in V_i^{M}(1/2) \setminus V_i^{M}(\theta^3), \\
      		\hfill ( \ln \theta )^6  & \text{ for }  x \in  V_i^{M}(\theta^3).
 		\end{cases}
		\end{equation}}
	\end{defn}
	\par In Appendix \ref{app_2} we show that for any $n \in \integ$, $n$-tight families of flattenings exist.
	\par We fix $n \in \integ, n \leq 0$ and $n$-tight families of flattenings $g^{TM}_{\rm{f}, \theta}$, $\norm{\cdot}_{M}^{\rm{f}, \theta}$, $\theta \in ]0,1]$.
	From (\ref{lem_flat_compar}):
	\begin{equation}\label{vol_compar}
		g^{TM}_{\rm{f}, \theta} \leq g^{TM}.
	\end{equation}
	\par Recall that $\laplcomp^{E_M^{\xi, n}}$ is the Kodaira Laplacian associated to $g^{TM}$, $h^{\xi}$ and $\norm{\cdot}_M$.
	We set $\mu > 0$ as in (\ref{eqn_spec_gap}).
	We defer the proof of the following theorem until Section \ref{sect_pf_aux_tight}.
	\begin{thm}\label{spec_gap_thm_A}
		The operator $\laplcomp^{E_M^{\xi, n}}_{\rm{f}, \theta}$ has a uniform spectral gap near $0$, i.e. for any $\theta \in ]0, e^{-3}]$:
		\begin{align}
			\label{eqn_ker_lapl_A}
			& \ker (\laplcomp^{E_M^{\xi, n}}_{\rm{f}, \theta}) \simeq H^{0}(\overline{M}, E_M^{\xi, n}),
			\\
			\label{eqn_spec_gap_A}
			& \spec(\laplcomp^{E_M^{\xi, n}}_{\rm{f}, \theta}) \cap \, ]0, \mu[ = \emptyset.
		\end{align}
	\end{thm}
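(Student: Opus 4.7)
The proof naturally splits into verifying the kernel identification (\ref{eqn_ker_lapl_A}) and the uniform spectral gap (\ref{eqn_spec_gap_A}). My plan is to parallel the proof of Theorem \ref{spec_gap_thm}, exploiting two crucial features of tight flattenings: first, by Property 1 of Definition \ref{defn_tight} all flattened data coincide with the cusped data outside $\cup_i V_i^M(\theta)$, so outside any fixed neighborhood $\cup_i V_i^M(\epsilon_0)$ with $\epsilon_0 > \theta$, the operator $\laplcomp^{E_M^{\xi,n}}_{\rm f,\theta}$ is literally equal to the cusped $\laplcomp^{E_M^{\xi,n}}$; second, each $g^{TM}_{\rm f,\theta}$ extends smoothly to $\overline M$, so the flattened Kodaira Laplacian is a classical operator on a compact Kähler surface.

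For (\ref{eqn_ker_lapl_A}) this second point already suffices. Since $g^{TM}_{\rm f,\theta}$ is smooth on the compact surface $\overline M$ and $\norm{\cdot}^{\rm f,\theta}_M$ is a smooth Hermitian norm on the holomorphic line bundle $\omega_M(D)$ over $\overline M$, Hodge theory on compact Kähler manifolds gives $\ker(\laplcomp^{E_M^{\xi,n}}_{\rm f,\theta}) \simeq H^0(\overline M, E_M^{\xi,n})$ with no further work.

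For (\ref{eqn_spec_gap_A}), I would imitate the min-max argument that produced (\ref{lem_comp_23}) in the proof of Theorem \ref{spec_gap_thm}. Fix $\epsilon_0>0$ (independent of $\theta$) for which the curvature inequality (\ref{est_lapl_gap_452}) holds and fix the cutoff $\rho$ of (\ref{defn_rho_gap}). For $\theta<\epsilon_0/2$ small enough, the restriction of $\laplcomp^{E_M^{\xi,n}}_{\rm f,\theta}$ to $M\setminus(\cup_i V_i^M(\epsilon_0))$ is exactly $\laplcomp^{E_M^{\xi,n}}$, so the Dirichlet eigenfunctions $\phi_1,\ldots,\phi_k$ appearing there are literally the same as in the original proof, and inequality (\ref{est_lapl_gap_5}) for the flattened operator follows from the cusped version. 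Inside the cusps, for $n<0$ I would apply Nakano's inequality as in (\ref{est_lapl_gap_45}), (\ref{est_lapl_gap_451}), using the fact that on $V_i^M(\epsilon_0/2)\setminus V_i^M(\theta)$ the Chern curvature of $(\omega_M(D),\norm{\cdot}_M^{\rm f,\theta})$ equals the Poincaré curvature and contributes $d=-n/2-1/4>0$ through $[\imun R^{\omega_M(D)},\Lambda^{TM}]=-1/2$; the portion of $\rho s$ supported in $V_i^M(\theta)$ then has small $L^2$-norm, bounded by Property 3 inequality (\ref{lem_flat_compar}) and the elliptic control (\ref{lem_ell_est_eqn_A}) of Property 4. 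For $n=0$, our running assumption (from the start of Section \ref{sect_tight}) that $(\xi,h^\xi)$ is trivial near the cusps reduces the argument to Müller's result in the form (\ref{eq_precise_gap}), applied now to the compact operator $\laplcomp^{E_M^{\xi,0}}_{\rm f,\theta}$, combined with a uniform perturbation bound in the spirit of (\ref{eq_lapl_comp_xi_inter}).

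The main obstacle is the uniform control of the Nakano curvature contribution across the $\theta$-dependent transition annulus $V_i^M(\theta)\setminus V_i^M(\theta^2)$, where $g^{TM}_{\rm f,\theta}$ interpolates between Poincaré and the flat model of Property 2 and the curvature of $\omega_M(D)^n$ need not be sign-definite. The remedy I plan is to use the elliptic estimate (\ref{lem_ell_est_eqn_A}) to produce the pointwise bound $|s(x)|\leq C\rho_{M,\theta}(x)\sum_{i=0}^{2}\|(\laplcomp^{E_M^{\xi,n}}_{\rm f,\theta})^i s\|_{L^2,\theta}$ and, after a Cauchy–Schwarz decomposition exactly as in (\ref{est_lapl_gap_2})–(\ref{est_lapl_gap_3}), to show that the transition and core contributions can be absorbed into $\epsilon\|\laplcomp^{E_M^{\xi,n}}_{\rm f,\theta} s\|^2_{L^2,\theta}$ terms with constants uniform in $\theta$, thanks to the bound $\rho_{M,\theta}\leq(\ln\theta)^6$ from (\ref{def_rho_theta}) being dominated by the eigenvalue contribution after Cauchy's inequality (\ref{equal_cauch}). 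Once this is achieved, the rest of the argument leading to (\ref{eqn_spec_gap}) carries over verbatim with $\mu$ unchanged.
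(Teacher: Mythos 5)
Your identification of the kernel \eqref{eqn_ker_lapl_A} via Hodge theory on the compact surface $\overline M$ is exactly what the paper does. For the spectral gap \eqref{eqn_spec_gap_A}, however, your route diverges from the paper's and contains a genuine gap precisely at the point you flag as "the main obstacle." The Nakano argument of Theorem \ref{spec_gap_thm} works on the cusp because there $[\imun R^{\omega_M(D)},\Lambda^{TM}]=-1/2$ exactly (see \eqref{est_lapl_gap_451}), so $n<0$ produces a strictly positive zeroth-order term. For the flattened metric this fails where it matters: on $V_i^M(\theta^2)$ the norm $\norm{\cdot}_M^{\mathrm{f},\theta}$ is flat, so $R^{\omega_M(D)}=0$ and the Nakano bound degenerates to $\scal{\laplcomp^{E_M^{\xi,n}}_{\mathrm{f},\theta}(\rho s)}{\rho s}\geq 0$, while on the transition annulus the curvature need not have a sign. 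The problematic term $\scal{[\imun R,\Lambda](\rho s)}{\rho s}$ is zeroth order in $s$; it cannot be absorbed into $\epsilon\lVert\laplcomp s\rVert^2$ uniformly in $\theta$, and the elliptic estimate \eqref{lem_ell_est_eqn_A} gives sup-norm control of $s$ in terms of $L^2$ norms of $\laplcomp^i s$, which is the wrong direction for bounding a Rayleigh quotient from below. You would need an extra ingredient (e.g.\ a uniform Poincaré inequality on the flattened core) that you do not supply. Likewise, invoking \eqref{eq_precise_gap} for the compact operator at $n=0$ is not enough: discreteness of the spectrum is automatic on a compact surface; the issue is a lower bound on the first nonzero eigenvalue that is uniform in $\theta$, which Müller's result does not address.

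The paper avoids all of this by exploiting Property 3 of tight flattenings, i.e.\ \eqref{lem_flat_compar}, through a monotonicity argument: if $g^{TM}_0\otimes(\norm{\cdot}^0_M)^{2n}\leq g^{TM}\otimes\norm{\cdot}_M^{2n}$ and $\norm{\cdot}^0_M\leq\norm{\cdot}_M$ with $n\leq 0$, then for every smooth section $s$ one has $\scal{\dbar s}{\dbar s}_{L^2_0}\geq\scal{\dbar s}{\dbar s}_{L^2}$ (conformal invariance of the $L^2$ norm of $(0,1)$-forms in complex dimension one reduces this to the comparison of the fibre metrics, where $n\leq 0$ reverses the inequality) and $\scal{s}{s}_{L^2_0}\leq\scal{s}{s}_{L^2}$. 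Hence the Rayleigh quotient for $\laplcomp^{E_M^{\xi,n}}_{\mathrm{f},\theta}$ dominates the one for $\laplcomp^{E_M^{\xi,n}}$ as in \eqref{lem_comp_111}, and the min-max principle \eqref{lem_comp_2} gives $\spec(\laplcomp^{E_M^{\xi,n}}_{\mathrm{f},\theta})\cap\,]0,\mu[\,=\emptyset$ with the \emph{same} $\mu$ as in Theorem \ref{spec_gap_thm}, with no curvature computation at all. I recommend you replace your Nakano-based argument by this comparison; as written, your proof of \eqref{eqn_spec_gap_A} is incomplete.
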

	\begin{sloppypar}
	In what follows, we denote the smooth kernels of $\exp(-t \laplcomp^{E_M^{\xi, n}}_{\rm{f}, \theta}), \exp^{\perp}(-t \laplcomp^{E_M^{\xi, n}}_{\rm{f}, \theta})$ with respect to the Riemannian volume form $d \vol_{M, \theta}$ induced by $g^{TM}_{\rm{f}, \theta}$ by
	\end{sloppypar}
	\begin{equation}
		\exp(-t \laplcomp^{E_M^{\xi, n}}_{\rm{f}, \theta})(x, y),
		\exp^{\perp}(-t \laplcomp^{E_M^{\xi, n}}_{\rm{f}, \theta})(x, y) 
		\in (E_M^{\xi, n})^{*}_{x} \boxtimes (E_M^{\xi, n})_{y},
		 \quad \text{for} \quad x,y \in M.
	\end{equation}
	\begin{thm}\label{thm_hk_estA}
		There are $c, C > 0$ such that for any $t > 0$, $x \in M$, $\theta \in ]0, e^{-3}]$, we have
		\begin{equation}\label{thm_est_exp_perp2A}
			\textstyle 
			\Big|
				\exp^{\perp}(-t \laplcomp^{E_M^{\xi, n}}_{\rm{f}, \theta})(x, x)
			\Big|  \textstyle\leq C  \rho_{M, \theta}(x)^2 t^{-4} \exp (- ct ). 
		\end{equation}
	\end{thm}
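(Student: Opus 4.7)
The approach is to imitate verbatim the proof of the estimate (\ref{thm_est_exp_perp2}) given inside the proof of Theorem \ref{thm_hk_est}, with the pair (Lemma \ref{lem_ell_est}, Theorem \ref{spec_gap_thm}) replaced by its uniform-in-$\theta$ analogue already assembled in this subsection, namely property \textbf{4} of Definition \ref{defn_tight} (the estimate (\ref{lem_ell_est_eqn_A})) and Theorem \ref{spec_gap_thm_A}. The crucial point is that all the constants involved there are uniform in $\theta \in \, ]0, e^{-3}]$.

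Concretely, first I apply (\ref{lem_ell_est_eqn_A}) to $\sigma(z, z') := \exp^{\perp}(-t \laplcomp^{E_M^{\xi, n}}_{\rm{f}, \theta})(z, z')$ at $z = z' = x$; this is legitimate thanks to the clause "or $x = x'$" in the hypotheses of property \textbf{4}. By the same manipulation that takes Lemma \ref{lem_ell_est} into the intermediate bound (\ref{eqn_2_9_pf}) in the proof of Theorem \ref{thm_hk_est}, the right-hand side of (\ref{lem_ell_est_eqn_A}) applied to this particular $\sigma$ is controlled by the $L^2$-operator norms $\norm{\cdot}^{0,0}_\theta$ on $L^2(g^{TM}_{\rm{f},\theta}, h^{\xi} \otimes (\,\norm{\cdot}_{M}^{\rm{f},\theta})^{2n})$ of the operators $(\laplcomp^{E_M^{\xi, n}}_{\rm{f}, \theta})^{i} \exp^{\perp}(-t \laplcomp^{E_M^{\xi, n}}_{\rm{f}, \theta}) (\laplcomp^{E_M^{\xi, n}}_{\rm{f}, \theta})^{j}$ for $0 \leq i,j \leq 2$.

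Then I invoke Theorem \ref{spec_gap_thm_A}: since $\spec(\laplcomp^{E_M^{\xi, n}}_{\rm{f}, \theta}) \setminus \{0\} \subset [\mu, +\infty[$ with $\mu > 0$ independent of $\theta$, the functional calculus combined with the elementary inequality (\ref{equal_sup_exp}) yields
\begin{equation*}
	\Big\lVert (\laplcomp^{E_M^{\xi, n}}_{\rm{f}, \theta})^{i} \exp^{\perp}(-t \laplcomp^{E_M^{\xi, n}}_{\rm{f}, \theta}) (\laplcomp^{E_M^{\xi, n}}_{\rm{f}, \theta})^{j} \Big\rVert^{0,0}_{\theta} \leq C t^{-(i+j)} \exp(-ct),
\end{equation*}
for $c := \mu/2$, uniformly in $\theta \in \, ]0, e^{-3}]$ and $t > 0$. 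Combining with the previous display yields (\ref{thm_est_exp_perp2A}); the power $t^{-4}$ corresponds to the maximal value $i + j = 4$ in the sum.

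I do not expect any serious obstacle: both inputs have been set up exactly so that all implicit constants are uniform in $\theta$, and the $\theta$-independence of the spectral gap in Theorem \ref{spec_gap_thm_A} is precisely what makes the exponential decay factor in the final estimate $\theta$-independent as well. The one mildly delicate point is the passage from the $L^2$-norm of the kernel (on the right-hand side of (\ref{lem_ell_est_eqn_A})) to the operator norm of the associated operator, but this is entirely parallel to the (unchanged) manipulation performed in the proof of Theorem \ref{thm_hk_est} and does not introduce any $\theta$-dependence.
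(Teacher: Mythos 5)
Your proposal is correct and coincides with the paper's own proof, which is stated in one line as: repeat the argument for (\ref{thm_est_exp_perp2}) in Theorem \ref{thm_hk_est}, substituting the uniform elliptic estimate (\ref{lem_ell_est_eqn_A}) for Lemma \ref{lem_ell_est} and the uniform spectral gap of Theorem \ref{spec_gap_thm_A} for Theorem \ref{spec_gap_thm}. You have simply spelled out the two substitutions and the resulting chain (\ref{eqn_2_9_pf})--(\ref{eq_aux_cicj2}), including the correct origin of the factor $t^{-4}$ from $i+j\leq 4$.
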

	\begin{proof}
		The proof is the same as the proof of Theorem \ref{thm_hk_est}, one only has to change the use of Lemma \ref{lem_ell_est} by  (\ref{lem_ell_est_eqn_A}) and of Theorem \ref{spec_gap_thm} by Theorem \ref{spec_gap_thm_A}.
	\end{proof}
	\begin{thm}\label{thm_conv}
	\begin{sloppypar}
		There are $c', C > 0$ such that for any $t > 0, \theta \in ]0, e^{-3}]$ and $x \in M \setminus (\cup_i V_i^{M}(|\ln \theta|^{-1}))$, we have
		\begin{equation}
			\label{eqn_thm_conv_3}				
			\Big|
				\big( \exp(-t \laplcomp^{E_M^{\xi, n}}_{\rm{f}, \theta}) 
				- 
				\exp(-t \laplcomp^{E_M^{\xi, n}}) 
				\big)(x, x)
			\Big|
			\leq C \rho_{M, \theta}(x)^2 \exp(- c' (\ln |\ln \theta|)^2/t).
		\end{equation}
	\end{sloppypar}
	\end{thm}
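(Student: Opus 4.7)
The plan is to adapt the proof of Theorem \ref{thm_comp_unif_eps} to the present setting, where the discrepancy comes from a varying Riemannian metric rather than a varying Hermitian metric on $\xi$. Since $(\xi, h^{\xi})$ is trivial near the cusps, the only source of discrepancy between $\laplcomp^{E_M^{\xi, n}}_{\rm{f}, \theta}$ and $\laplcomp^{E_M^{\xi, n}}$ is confined to $\cup_i V_i^{M}(\theta)$, by Item~1 of Definition \ref{defn_tight}. Moreover, for $\theta \in\,]0, e^{-3}]$ we have $|\ln \theta|^{-1} > \theta$, so the region $M \setminus \cup_i V_i^M(|\ln \theta|^{-1})$ considered in the statement lies inside the region where the flattened and the cusped data agree.

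First I set $r := \dist_{g^{TM}}(V_i^M(\theta),\, M \setminus V_i^M(|\ln \theta|^{-1}))$; by (\ref{dist_hyp_comp}), $r \simeq \ln |\ln \theta|$ as $\theta \to 0$. For any $x \in M \setminus \cup_i V_i^{M}(|\ln \theta|^{-1})$ the open ball of radius $r$ around $x$ is contained in $M \setminus \cup_i V_i^M(\theta)$; in particular it is computed unambiguously in either metric, and on it the two operators $\laplcomp^{E_M^{\xi, n}}_{\rm{f}, \theta}$ and $\laplcomp^{E_M^{\xi, n}}$ agree. Decomposing $\exp(-tL) = \widetilde G_{t,r}(L) + \widetilde K_{t,r}(L)$ as in (\ref{exp_scin_kuh}) for $L \in \{\laplcomp^{E_M^{\xi, n}}_{\rm{f}, \theta}, \laplcomp^{E_M^{\xi, n}}\}$ and invoking finite propagation speed of the wave equation (\cite[Theorems D.2.1, 4.2.8]{MaHol}) exactly as in (\ref{fin_prop_luh_conv})--(\ref{eqn_inf_kuh_repr_conv}) yields
\[
\bigl(\exp(-t\laplcomp^{E_M^{\xi, n}}_{\rm{f}, \theta}) - \exp(-t\laplcomp^{E_M^{\xi, n}})\bigr)(x, x) = \bigl(\widetilde K_{t,r}(\laplcomp^{E_M^{\xi, n}}_{\rm{f}, \theta}) - \widetilde K_{t,r}(\laplcomp^{E_M^{\xi, n}})\bigr)(x, x).
\]

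It remains to bound each of the two kernels on the right at the diagonal point. For the flattened operator I combine the tight-flattening elliptic estimate (\ref{lem_ell_est_eqn_A}), applied at $x = x'$ (where the condition is vacuous), with the spectral bound (\ref{est_tilde_kuh}), arguing exactly as in (\ref{eqn_kth_norm})--(\ref{eqn_kth_norm_sup_conv}); this yields
\[
\bigl|\widetilde K_{t,r}(\laplcomp^{E_M^{\xi, n}}_{\rm{f}, \theta})(x, x)\bigr| \leq C \rho_{M, \theta}(x)^2 \exp(-c' r^2/t).
\]
The analogous bound for $\widetilde K_{t,r}(\laplcomp^{E_M^{\xi, n}})(x, x)$, with $\rho_M(x)^2$ in place of $\rho_{M, \theta}(x)^2$, follows in the same way from Lemma \ref{lem_ell_est}. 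Since $\rho_M(x) \leq \rho_{M, \theta}(x)$ on the relevant region and $r \simeq \ln|\ln \theta|$, adding the two estimates gives (\ref{eqn_thm_conv_3}). There is no substantial obstacle: the only delicate input is the uniformity in $\theta$ of the elliptic bound used in the first term, which is precisely what tightness (Item~4 of Definition \ref{defn_tight}) is designed to guarantee, and once it is in hand the argument of Theorem \ref{thm_comp_unif_eps} transfers verbatim.
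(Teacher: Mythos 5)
Your proposal is correct and follows essentially the same route as the paper: the paper's proof of Theorem \ref{thm_conv} is stated to be identical to that of Theorem \ref{thm_comp_unif_eps}, i.e.\ the finite-propagation-speed splitting $\exp(-tL)=\widetilde G_{t,r}(L)+\widetilde K_{t,r}(L)$ with $r\simeq\ln|\ln\theta|$, cancellation of the $\widetilde G$ terms because the two Laplacians agree on $B^M(x,r)$, and pointwise control of each $\widetilde K_{t,r}$ kernel via the spectral bound (\ref{est_tilde_kuh}) combined with the uniform elliptic estimate — here (\ref{lem_ell_est_eqn_A}) from tightness in place of (\ref{lem_ell_est_unif_eps_equn}). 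Your observations that the ball is unambiguous in either metric and that $\rho_M\leq\rho_{M,\theta}$ on the relevant region are exactly the details the paper leaves to the reader.
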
		
	\begin{proof}
			The proof is the same as the proof of Theorem \ref{thm_comp_unif_eps}, we leave the details to the reader.
	\end{proof}
	We construct the flattenings $g^{TN}_{\rm{f}, \theta}$, $\norm{\cdot}_{N}^{\rm{f}, \theta}$, $\theta \in ]0,1]$ of $g^{TN}$, $\norm{\cdot}_{N}$, which are compatible with $g^{TM}_{\rm{f}, \theta}$, $\norm{\cdot}_{M}^{\rm{f}, \theta}$. 
	Trivially, the flattenings $g^{TN}_{\rm{f}, \theta}$, $\norm{\cdot}_{N}^{\rm{f}, \theta}$, $\theta \in ]0,1]$  are $n$-tight.	
	The following theorem is an analogue of Theorem \ref{thm_rel_hk_infty}, and it forms the core of this section. 
	Its proof is defered to Section \ref{sect_pf_aux_tight}.	
	\begin{thm}\label{thm_A_bound_cups}
		There are $c, c', C > 0$, $\varsigma < 1$ such that for any $t > 0, \theta \in ]0, e^{-3}], i = 1, \ldots, m$, and $u \in \comp, |u| \leq |\ln \theta|^{-1}$, we have
		\begin{multline}\label{eqn_A_bound_cups_perp_1}
			\Big|
				\exp(-t \laplcomp^{E_M^{\xi, n}}_{\rm{f}, \theta}) \big((z_i^{M})^{-1}(u), (z_i^{M})^{-1}(u) \big) 
				- 
				{\rm{Id}}_{\xi} \exp(-t \laplcomp^{E_N^{n}}_{\rm{f}, \theta})\big((z_i^{N})^{-1}(u), (z_i^{N})^{-1}(u)\big)
			\Big| 
			\\
			\leq C | \ln \max(\theta, |u|)| \cdot  \exp( - c' (\ln |\ln  \max(\theta, |u|)|)^2/t).	
		\end{multline}
		\vspace*{-0.9cm}
		\begin{multline}\label{eqn_A_bound_cups_perp_2}
			\Big|
				\exp^{\perp}(-t \laplcomp^{E_M^{\xi, n}}_{\rm{f}, \theta}) \big((z_i^{M})^{-1}(u), (z_i^{M})^{-1}(u) \big) 
				- 
				{\rm{Id}}_{\xi} \exp^{\perp}(-t \laplcomp^{E_N^{n}}_{\rm{f}, \theta})\big((z_i^{N})^{-1}(u), (z_i^{N})^{-1}(u)\big)
			\Big| 
			\\
			\leq C | \ln  \max(\theta, |u|)|^{\varsigma} \cdot t^{-4} \exp( - c t).	
		\end{multline}
	\end{thm}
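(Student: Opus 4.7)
The strategy is to adapt the proof of Theorem~\ref{thm_rel_hk_infty}, specifically the sharp estimate (\ref{hk_infty_sharp}) valid when $(\xi, h^{\xi})$ is trivial around the cusps. The crucial geometric observation is that, by our standing hypothesis of triviality of $(\xi, h^{\xi})$ near the cusps combined with compatibility of the flattenings, there exists $\epsilon_{\xi} > 0$ independent of $\theta$ such that under the identification $(z_i^{N})^{-1} \circ z_i^{M}$ the operator $\laplcomp^{E_M^{\xi, n}}_{\rm{f}, \theta}$ on $V_i^{M}(\epsilon_{\xi})$ coincides with ${\rm{Id}}_{\xi} \cdot \laplcomp^{E_N^{n}}_{\rm{f}, \theta}$ on $V_i^{N}(\epsilon_{\xi})$. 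Indeed, inside $V_i^{M}(\theta)$ this is exactly property 1 of Definition~\ref{defn_tight} together with the compatibility relations (\ref{eq_gtm_compat_cusp}), (\ref{eq_normm_compat_cusp}); outside $V_i^{M}(\theta)$ (and inside $V_i^{M}(\epsilon_{\xi})$), both flattened metrics equal the respective Poincaré cusp metrics, which by (\ref{reqr_poincare}) have identical local expressions in the Poincaré-compatible coordinates.

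The proof then proceeds by finite propagation speed. I decompose $\exp(-t \laplcomp) = \widetilde{G}_{t,r}(\laplcomp) + \widetilde{K}_{t,r}(\laplcomp)$ as in (\ref{def_kth_lth})--(\ref{exp_scin_kuh}). Provided $r$ is at most the distance, measured in $g^{TM}_{\rm{f}, \theta}$, from $(z_i^{M})^{-1}(u)$ to $\partial V_i^{M}(\epsilon_{\xi})$, the identification above and (\ref{eq_guh_zero}) yield
\begin{equation*}
\widetilde{G}_{t,r}(\laplcomp^{E_M^{\xi, n}}_{\rm{f}, \theta})\bigl((z_i^{M})^{-1}(u), (z_i^{M})^{-1}(u)\bigr) = \widetilde{G}_{t,r}({\rm{Id}}_{\xi} \cdot \laplcomp^{E_N^{n}}_{\rm{f}, \theta})\bigl((z_i^{N})^{-1}(u), (z_i^{N})^{-1}(u)\bigr),
\end{equation*}
so the heat kernel difference reduces to the difference of the $\widetilde{K}_{t,r}$-parts. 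Using the uniform elliptic estimate (\ref{lem_ell_est_eqn_A}) to promote $L^2$ operator bounds to pointwise bounds, together with (\ref{est_tilde_kuh}), one obtains a pointwise bound of order $\rho_{M,\theta}(x)^2 \exp(-c' r^2/t)$ on each $\widetilde{K}_{t,r}$-part. The required distance lower bound is $r \gtrsim \ln |\ln \max(\theta, |u|)|$: since outside $V_i^{M}(\theta)$ the flattened metric is the cusp Poincaré metric, (\ref{dist_hyp_comp}) gives for $|u| \geq \theta$ the bound $\dist_{g^{TM}_{\rm{f},\theta}}((z_i^M)^{-1}(u), \partial V_i^{M}(\epsilon_{\xi})) \geq \ln|\ln|u|| - \ln|\ln\epsilon_{\xi}|$, and for $|u| < \theta$ the same bound with $|u|$ replaced by $\theta$ (since the flattened distance inside $V_i^{M}(\theta)$ is nonnegative). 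Choosing $r$ to be a constant multiple of this lower bound yields (\ref{eqn_A_bound_cups_perp_1}).

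For the orthogonal projection estimate (\ref{eqn_A_bound_cups_perp_2}), I combine (\ref{eqn_A_bound_cups_perp_1}) with the uniform decay estimate (\ref{thm_est_exp_perp2A}), which provides the $t^{-4}\exp(-ct)$ factor via the uniform spectral gap of Theorem~\ref{spec_gap_thm_A}, by multiplying appropriate powers and applying Cauchy's inequality, following the passage from (\ref{hk_infty}), (\ref{thm_est_exp_perp2}) to (\ref{hk_infty_perp}) in the proof of Theorem~\ref{thm_rel_hk_infty}. The transition from $\exp$ to $\exp^{\perp}$ is done via (\ref{eqn_exp_perp_exp_rel_norm}) together with the canonical (in $\theta$) identification $\ker(\laplcomp^{E_M^{\xi,n}}_{\rm{f},\theta}) \simeq H^0(\overline{M}, E_M^{\xi,n})$ from (\ref{eqn_ker_lapl_A}), which ensures that the bound on the projection part is uniform in $\theta$.

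The principal technical obstacle is controlling the possibly large prefactor $\rho_{M,\theta}(x)^2$, which can be of order $(\ln\theta)^{12}$ when $|u| \leq \theta^3$, against the desired prefactor $|\ln\max(\theta,|u|)| = |\ln\theta|$ of (\ref{eqn_A_bound_cups_perp_1}). This is handled by absorbing the polynomial excess into the Gaussian decay: since $r \gtrsim \ln|\ln\max(\theta,|u|)|$, the factor $\rho_{M,\theta}^2 \exp(-c' r^2/t)$ is bounded by $\exp(12 \ln|\ln\theta| - c'(\ln|\ln\max(\theta,|u|)|)^2/t)$, which for $\theta$ sufficiently small is dominated by $\exp(-(c'/2)(\ln|\ln\max(\theta,|u|)|)^2/t)$; after renaming constants, the target shape of (\ref{eqn_A_bound_cups_perp_1}) is recovered. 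For $t$ bounded away from $0$, the spectral gap of Theorem~\ref{spec_gap_thm_A} provides the necessary exponential decay in $t$ that complements the Gaussian decay in the spatial variables and allows the statement to hold uniformly in all parameters.
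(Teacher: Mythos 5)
Your reduction via finite propagation speed and the identification of the flattened operators near the cusps is the right start, and it does reproduce the paper's argument in the range $\theta^3\leq|u|\leq|\ln\theta|^{-1}$. The genuine gap is in the deep cusp region $|u|\leq\theta^3$. There the only available elliptic estimate is the diagonal case of (\ref{lem_ell_est_eqn_A}), whose weight is $\rho_{M,\theta}(x)^2=(\ln\theta)^{12}$, and your proposed absorption of this factor into the Gaussian, $(\ln\theta)^{12}\exp(-c'(\ln|\ln\theta|)^2/t)\leq\exp(-(c'/2)(\ln|\ln\theta|)^2/t)$, requires $t\leq (c'/24)\ln|\ln\theta|$; it fails as soon as $t$ is of order $\ln|\ln\theta|$ or larger, where the Gaussian factor is close to $1$. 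Your fallback — "for $t$ bounded away from $0$ the spectral gap provides exponential decay" — does not apply to (\ref{eqn_A_bound_cups_perp_1}): the full heat kernels do not decay in $t$ (they converge to Bergman kernels), and even interpolating $e^{-ct}$ against $e^{-c'(\ln|\ln\theta|)^2/t}$ via (\ref{eq_cauchy_exp}) only trades the excess $(\ln\theta)^{11}$ for a factor $|\ln\theta|^{-2\sqrt{cc'}}$ with constants you do not control. The prefactor $|\ln\max(\theta,|u|)|$ to the first power, together with the Gaussian factor, is exactly what is consumed later in (\ref{bound_appr_51})--(\ref{bound_appr_52}), so the loss is not cosmetic. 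The paper's proof avoids this by a different mechanism: using property 2 of tight families (the bundle $\omega_M(D)$ is trivialized with constant norm on $V_i^M(\theta^2)$) and the triviality of $(\xi,h^{\xi})$ there, the components of the heat-kernel difference $F_{kl}(v,w,t)$ become scalar solutions of the heat equation with zero initial data on $D(\theta^3)\times D(\theta^3)$, and two applications of the maximum principle ((\ref{eqn_max_pr1}), (\ref{eqn_max_pr2})) transport the boundary estimate at $|v|=|w|=\theta^3$ — where $\rho_{M,\theta}^2\approx 3|\ln\theta|$ is of the right size — into the interior with no loss at all. This is the missing idea.

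A secondary point: for (\ref{eqn_A_bound_cups_perp_2}) you assert that the identification $\ker(\laplcomp^{E_M^{\xi,n}}_{\rm{f},\theta})\simeq H^0(\overline{M},E_M^{\xi,n})$ makes the projection term uniform in $\theta$. The kernel as a vector space is indeed $\theta$-independent, but the Bergman kernel $B^{E_M^{\xi,n}}_{\theta}(x,x)$ depends on the $\theta$-dependent $L^2$ products and pointwise norms, and near the cusp it must be bounded uniformly in $\theta$ before the interpolation with (\ref{thm_est_exp_perp2A}) can start. The paper obtains this from the extremal characterization (\ref{eq_berg_CM}) together with the monotonicity (\ref{lem_flat_compar}), yielding $B^{E_M^{\xi,n}}_{\theta}(x,x)\leq B^{E_M^{\xi,n}}_{\rm{sm}}(x,x)$ in (\ref{eq_compar_berg2}); some argument of this kind is needed in your write-up as well.
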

	Now, for brevity, we denote
	\begin{multline}\label{eq_x_theta_defn}
		X_{\theta}(t) :=
		\tr{\exp^{\perp}(-t \laplcomp^{E_M^{\xi, n}}_{\rm{f}, \theta})} 
		- 
		\rk{\xi} \tr{\exp^{\perp}(-t \laplcomp^{E_N^{n}}_{\rm{f}, \theta})}
		\\			
		- 
		{\rm{Tr}}^{\reg} \big[ \exp^{\perp}  (-t  \laplcomp^{E_M^{\xi, n}})\big] 
		+ 
		\rk{\xi}  {\rm{Tr}}^{\reg} \big[ \exp^{\perp}(-t  \laplcomp^{E_N^{n}}) \big]
	\end{multline}
	Let's use those theorems to study the convergence of heat traces. The main theorem here is 
	\begin{thm}\label{conv_main_thm}
		There are $c, c', C > 0$ such that for any $t > 0, \theta \in ]0, e^{-3}]$, we have
		\begin{equation}\label{eq_conv_main_thm}
			\big|
			X_{\theta}(t)
			\big| 
			\leq C \exp ( -ct - c' (\ln |\ln \theta| )^2/t ).
		\end{equation}
	\end{thm}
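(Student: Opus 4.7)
The plan is to split $X_\theta(t)$ by a cutoff $\eta := |\ln\theta|^{-1}$ around the cusps. Using the independence of the regularized trace in Definition \ref{defn_rel_HT} on its cutoff and the cancellation of the $P$-contributions in the $M$-regularized trace minus $\rk{\xi}$ times the $N$-regularized trace, I write $X_\theta(t) = X^{\mathrm{out}}_\theta(t) + X^{\mathrm{in}}_\theta(t)$, where $X^{\mathrm{out}}_\theta(t)$ collects the $M$- and $N$-differences $\exp^\perp_{\rm{f},\theta} - \exp^\perp$ integrated over the non-cusp regions (using $dv_{\rm{f},\theta}=dv$ there by condition 1 of Definition \ref{defn_tight}), and $X^{\mathrm{in}}_\theta(t)$ is the integral over $|u|<\eta$ of $[\exp^\perp_{\rm{f},\theta,M} - \rk{\xi}\exp^\perp_{\rm{f},\theta,N}]\,dv^\theta - [\exp^\perp_M - \rk{\xi}\exp^\perp_N]\,dv_{\dd^*}$, with $dv^\theta = dv_{\dd^*}$ on $\theta<|u|<\eta$ by compatibility of the flattenings.

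For $X^{\mathrm{out}}_\theta(t)$, Theorem \ref{thm_conv} (and its $N$-analogue, proved by the same argument) yields the pointwise bound $C\rho^2\exp(-c'(\ln|\ln\theta|)^2/t)$ for $\exp_{\rm{f},\theta}-\exp$; the passage to $\exp^\perp$ via \eqref{eqn_exp_perp_exp_rel_norm} introduces a correction from the change of $L^2$-Gram matrices on the common finite-dimensional $H^0(\overline{M}, E_M^{\xi,n})$, which is of order $1/|\ln\theta|$ since the Gram matrices differ only by integrals over the small-volume region $V_i^M(\theta)$. Integrating these pointwise bounds against the $dv$-volume of $M\setminus \cup V_i^M(\eta)$, which is of order $\ln\ln|\ln\theta|$, and interpolating with Theorem \ref{thm_hk_estA} to extract the $e^{-ct}$ factor for large $t$, yields the required bound for $X^{\mathrm{out}}_\theta(t)$ after absorbing the slowly growing prefactor into the exponent by slightly decreasing $c'$.

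The main obstacle is $X^{\mathrm{in}}_\theta(t)$: a direct subtraction of the bounds from Theorem \ref{thm_A_bound_cups} and \eqref{hk_infty_perp}, both of the form $\exp(-c'(\ln|\ln\max(\theta,|u|)|)^2/t)$, only yields a decay $\exp(-c'(\ln\ln|\ln\theta|)^2/t)$ upon integration, which is far too weak. The sharp bound requires a cancellation: since $(\xi,h^\xi)$ is trivial around the cusps and the cusps of $M$ and $N$ are isometric via the compatible flattenings, the leading contributions to both the flattened and cusped $M$-vs-$N$ comparisons come from the common cusp model and cancel in the difference. To capture the residual, I apply finite propagation speed in the style of Theorem \ref{thm_conv} separately to $\exp^\perp_{\rm{f},\theta,M}-\exp^\perp_M$ and $\exp^\perp_{\rm{f},\theta,N}-\exp^\perp_N$ on the annulus $\theta<|u|<\eta$, with propagation radius $R(u) := \dist(u, V_i^M(\theta)) = \ln|\ln\theta|-\ln|\ln|u||$ (the same for $M$ and $N$ by compatibility), obtaining a complementary pointwise bound $C\rho^2\exp(-c R(u)^2/t)$ on the inner integrand. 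Combining this with the direct bounds via a geometric mean and using $R(u)+\ln|\ln|u||=\ln|\ln\theta|$ together with the inequality $R^2+(\ln|\ln|u||)^2 \geq (\ln|\ln\theta|)^2/2$ produces a pointwise estimate of the form $C\sqrt{|\ln|u||}\exp(-c''(\ln|\ln\theta|)^2/(4t))$ uniform on the annulus; its integration against $dv_{\dd^*}$ (of bounded total mass on the annulus) yields the target decay. The complementary region $|u|<\theta$ has $dv^\theta$- and $dv_{\dd^*}$-volumes of order $1/|\ln\theta|$, so applying \eqref{eqn_A_bound_cups_perp_2} and \eqref{hk_infty_perp} directly there contributes only a negligible polynomial-in-$1/|\ln\theta|$ factor multiplying $t^{-4}e^{-ct}$.

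The principal technical difficulty is coordinating these bounds uniformly as $\theta\to 0$, in particular extending the finite-propagation parametrix estimates of Section \ref{sect_pametr} and the $\exp$-to-$\exp^\perp$ passage across the family of flattenings; for this the uniform spectral gap of Theorem \ref{spec_gap_thm_A} and the uniform elliptic estimate in condition 4 of Definition \ref{defn_tight} are essential to guarantee that the constants $c, c', C$ in the final bound do not depend on $\theta$.
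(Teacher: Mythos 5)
Your overall architecture coincides with the paper's: the same split of $X_\theta(t)$ at the cutoff $|u|=|\ln\theta|^{-1}$ into an outer part (handled by Theorem \ref{thm_conv} for small $t$ and by Theorem \ref{thm_hk_estA} together with (\ref{thm_est_exp_perp2}) for large $t$) and an inner part (handled by Theorem \ref{thm_A_bound_cups} and (\ref{hk_infty}), (\ref{hk_infty_perp})), followed by a geometric-mean interpolation of the small-$t$ and large-$t$ bounds. Where you genuinely diverge is in the inner region. The paper simply integrates the $M$-versus-$N$ bounds (\ref{eqn_A_bound_cups_perp_1}) and (\ref{hk_infty}) there, which — as you correctly observe — only produces $\exp(-c'(\ln\ln|\ln\theta|)^2/t)$ after integration; consequently the paper's interpolated estimate (\ref{bound_appr_fin_245}) carries $(\ln\ln|\ln\theta|)^2$ rather than $(\ln|\ln\theta|)^2$ in the exponent (still enough for Proposition \ref{zeta_conv}, which only uses that the bound is integrable in $t$ and tends to $0$ with $\theta$). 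Your additional step — a finite-propagation bound for $\exp_{{\rm f},\theta}-\exp$ with the variable radius $R(u)=\ln|\ln\theta|-\ln|\ln|u||$, combined by geometric mean with the direct bound $\exp(-c'(\ln|\ln|u||)^2/t)$ and the identity $R(u)+\ln|\ln|u||=\ln|\ln\theta|$ — is exactly what is needed to recover the exponent $(\ln|\ln\theta|)^2/t$ as literally stated in the theorem, so your proof is in this respect sharper than the paper's. One caveat: finite propagation speed controls $\widetilde K_{t,r}$ and hence the kernels of $\exp(-t\Box)$, not of $\exp^\perp(-t\Box)$; the projector contributions are $t$-independent and do not obey $\exp(-cR(u)^2/t)$. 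You should therefore run the variable-radius estimate on the non-projected kernels (as the paper does via its second decomposition (\ref{bound_appr_3})) and account for the Bergman-kernel differences separately, using that the total projector contributions cancel in $X_\theta(t)$; with that bookkeeping made explicit, your argument is complete.
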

	\begin{proof}
		Let's denote
		\begin{align}
				& \nonumber
				A_M^{\perp}(t) = \int_{M \setminus (\cup_i V_i^{M}(|\ln \theta|^{-1}))} 
				\Big(
				\tr{\exp^{\perp}(-t \laplcomp^{E_M^{\xi, n}}_{\rm{f}, \theta})(x,x)} - 
				\tr{\exp^{\perp}(-t \laplcomp^{E_M^{\xi, n}})(x,x)} 
				\Big) 
				d \vol_{M, \theta}(x), 
				\\
				& \nonumber
				A_N^{\perp}(t) = \int_{N \setminus (\cup_i V_i^{N}(|\ln \theta|^{-1}))} 
				\Big(
				\tr{\exp^{\perp}(-t \laplcomp^{E_N^{n}}_{\rm{f}, \theta})(x,x)} - 
				\tr{\exp^{\perp}(-t \laplcomp^{E_N^{n}})(x,x)} 
				\Big)				
				d \vol_{N, \theta}(x), 
				\\
				& 
				B_{\theta}^{\perp}(t) = \sum_i 
				\int_{D(|\ln \theta|^{-1})}
				\Big(
				{\rm{Tr}} \Big[ \exp^{\perp}(-t \laplcomp^{E_M^{\xi, n}}_{\rm{f}, \theta})\big((z_i^{M})^{-1}(u), (z_i^{M})^{-1}(u)\big) \Big] 
				\label{bound_appr_4}
				\\
				& \nonumber  \qquad  \qquad \qquad \qquad \qquad  - 
				\rk{\xi} 
				{\rm{Tr}} \Big[ \exp^{\perp}(-t \laplcomp^{E_N^{n}}_{\rm{f}, \theta}) \big((z_i^{N})^{-1}(u), (z_i^{N})^{-1}(u)\big) \Big] 
				\Big)				
				d \vol_{\theta}(u),
								\\
				& \nonumber
				B^{\perp}(t) = \sum_i 
				\int_{D(|\ln \theta|^{-1})}
				\Big(
				{\rm{Tr}} \Big[  \exp^{\perp}(-t \laplcomp^{E_M^{\xi, n}})\big((z_i^{M})^{-1}(u), (z_i^{M})^{-1}(u)\big) \Big] 
				\\ \nonumber
				& \qquad  \qquad \qquad \qquad \qquad  - 
				\rk{\xi} 
				{\rm{Tr}} \Big[  \exp^{\perp}(-t \laplcomp^{E_N^{n}}) \big((z_i^{N})^{-1}(u), (z_i^{N})^{-1}(u) \big) \Big]
				\Big)				
				d \vol_{\dd^*}(u),
		\end{align}
		and $d \vol_{N, \theta}, d \vol_{\theta}$ are the Riemannian volume forms induced by $g^{TN}_{\rm{f}, \theta}$ and $((z_i^{M})^{-1})^* g^{TM}_{\rm{f}, \theta}$ correspondingly. 
		Then we have
		\begin{equation}\label{bound_appr_1}
			X_{\theta}(t)
			=
			A_M^{\perp}(t) + A_N^{\perp}(t) + B_{\theta}^{\perp}(t) + B^{\perp}(t).
		\end{equation}
		\par
		By (\ref{reqr_poincare}), (\ref{def_rho_theta}) and (\ref{vol_compar}), there is $C > 0$ such that for any $\theta \in ]0, e^{-3}]$, we have
		\begin{equation}\label{bound_appr_231}
			\int_{M \setminus (\cup_i V_i^{M}(|\ln \theta|^{-1}))} \rho_{M, \theta}(x)^{2} d \vol_{M, \theta}(x) \leq C (\ln \ln |\ln \theta| ).
		\end{equation}
		By Theorem \ref{thm_hk_estA}, (\ref{thm_est_exp_perp2}) and (\ref{bound_appr_231}), there are $c,C  > 0$ such that
		\begin{equation}\label{bound_appr_23}
			| A_M^{\perp}(t) |, | A_N^{\perp}(t) | \leq C ( \ln \ln |\ln \theta| ) t^{- 4} \exp(-ct).
		\end{equation}
		By (\ref{reqr_poincare}), (\ref{eq_int_mu_fin}) and (\ref{vol_compar}), for any $\varsigma < 1$, there is $C>0$ such that for any $\theta \in ]0, e^{-3}]$, we have
		\begin{equation}\label{bound_appr_251}
			\int_{V_i^{M}(|\ln \theta|^{-1})} \big| \ln \max(\theta, |z_i^{M}(x)|) \big|^{\varsigma} d \vol_{M, \theta}(x) \leq C.
		\end{equation}
		By (\ref{eqn_A_bound_cups_perp_2}) and (\ref{bound_appr_251}), there are $c,C  > 0$ such that
		\begin{equation}\label{bound_appr_253}
			| B_{\theta}^{\perp}(t) | \leq C t^{-4} \exp(-ct).
		\end{equation}
		By (\ref{hk_infty_perp}) and (\ref{eq_int_mu_fin}), there are $c,C  > 0$ such that
		\begin{equation}\label{bound_appr_25}
			| B^{\perp}(t) | \leq C t^{- 4} \exp(-ct).
		\end{equation}
		 By (\ref{bound_appr_23}), (\ref{bound_appr_253}) and (\ref{bound_appr_25}), for some $c,C  > 0$, and for any $t > 0$, $\theta \in ]0, e^{-3}]$, we have
		\begin{equation}\label{bound_appr_fin_1}
			\big|
				X_{\theta}(t)
			\big| 
			\\
			\leq C(1 + t^{- 4}) (  \ln \ln |\ln \theta| ) \exp ( -ct ).
		\end{equation}
		\par Now, alternatively, we also write
		\begin{equation}\label{bound_appr_3}
			X_{\theta}(t)
			=
			A_M(t) + A_N(t) + B_{\theta}(t) + B(t),	
		\end{equation}
		where $A_M(t)$, $A_N(t)$, $B_{\theta}(t)$, $B(t)$ are given by the same formulas as in (\ref{bound_appr_4}), where we put $\exp$ in place of $\exp^{\perp}$.
		\par 
		By Theorem \ref{thm_conv} and (\ref{bound_appr_231}), there are $c', C > 0$ such that for any $\theta \in ]0, e^{-3}]$, we have
		\begin{equation}\label{bound_appr_5}
			\begin{aligned}
				& | A_M(t) |, | A_N(t) |  \leq C (\ln \ln |\ln \theta|) \exp ( - c' ( \ln | \ln \theta|)^2/t ).
			\end{aligned}
		\end{equation}
		By (\ref{vol_compar}), we have
		\begin{equation}\label{bound_appr_51}
			\int_{V_i^{M}(|\ln \theta|^{-1})} |\ln \max(\theta, |u|)|  d \vol_{M, \theta}(x) \leq C \ln |\ln \theta|.
		\end{equation}
		By (\ref{eqn_A_bound_cups_perp_1}) and (\ref{bound_appr_51}), for some $c', C > 0$, we have
		\begin{equation}\label{bound_appr_52}
			\begin{aligned}
				& | B_{\theta}(t) |  \leq C \ln |\ln \theta| \exp ( - c' (\ln \ln |\ln \theta|)^2/t ).
			\end{aligned}
		\end{equation}
		By (\ref{hk_infty}) and (\ref{eqn_small_t_as25}), for some $c', C > 0$, we have
		\begin{equation}\label{bound_appr_55}
			| B(t) | \leq C (1 + t)  \exp ( - c' ( \ln \ln |\ln \theta|)^2/t ).
		\end{equation}
		By (\ref{bound_appr_5}), (\ref{bound_appr_52}) and (\ref{bound_appr_55}), we conclude that 
		\begin{equation}\label{bound_appr_fin_2}
			\big|
				X_{\theta}(t)
			\big| 
			\leq 
			C (1 + t) ( \ln |\ln \theta|) \exp ( - c' (\ln \ln | \ln \theta| )^2/t ).
		\end{equation}
		By multiplying (\ref{bound_appr_fin_1}) with power $1 - \mu \in ]1/2, 1]$ and (\ref{bound_appr_fin_2}) with power $\mu$, for some $c, c', C > 0$:
		\begin{equation}\label{bound_appr_fin_245}
			\big|
				X_{\theta}(t)
			\big| 
			\\
			\leq 
			C (1 + t)(1 + t^{-4}) (\ln | \ln \theta|)^{\mu} \big( \ln \ln | \ln \theta| \big) \exp ( -ct - c' \mu ( \ln \ln |\ln \theta |)^2/t ).
		\end{equation}
		By (\ref{eq_cauchy_exp}) and (\ref{bound_appr_fin_245}), we deduce (\ref{eq_conv_main_thm}) by taking $\mu$ small enough.
	\end{proof}
	
	For $s \in \comp$, $\Re(s) > 1$, let's denote the approximated regularized zeta-function by
	\begin{equation}\label{defn_zeta_appr100}
		\zeta_{M}^{\theta}(s) = 
		\frac{1}{\Gamma(s)} \int_{0}^{+ \infty}
			{\rm{Tr}} \Big[ \exp^{\perp}(-t \laplcomp^{E_M^{\xi, n}}_{\rm{f}, \theta}) \Big]
			t^{s} \frac{dt}{t}.
	\end{equation}
	As usually, $\zeta_{M}^{\theta}(s)$ has a meromorphic extension to the entire $s$-plane, and this extension is holomorphic at $0$.
	We recall that the zeta-function $\zeta_{M}$ was defined in Definition \ref{defn_rel_zeta}.
	\begin{prop}\label{zeta_conv}
		For any $\theta \in ]0, e^{-3}]$, the difference $\zeta_{M}^{\theta}(s) - \rk{\xi} \zeta_{N}^{\theta}(s) - \zeta_{M}(s) + \rk{\xi} \zeta_{N}(s)$ is a holomorphic function on $\comp$. Moreover, as $\theta \to 0$, we have
		\begin{equation}\label{eq_zeta_conv}
			\zeta_{M}^{\theta}(s) - \rk{\xi} \zeta_{N}^{\theta}(s) - \zeta_{M}(s) + \rk{\xi} \zeta_{N}(s) \to 0,
		\end{equation}
		uniformly for $s$ varying in a compact subset of $\comp$.
		In particular, we have, as $\theta \to 0$,
		\begin{equation}\label{cor_tors_conv_eqn}
			\frac{T(g^{TM}_{\rm{f}, \theta}, h^{\xi} \otimes (\, \norm{\cdot}_{M}^{\rm{f}, \theta})^{2n})}{T(g^{TN}_{\rm{f}, \theta}, (\, \norm{\cdot}_{N}^{\rm{f}, \theta})^{2n})^{\rk{\xi}}}
			\to			
			\frac{T(g^{TM}, h^{\xi} \otimes \norm{\cdot}_{M}^{2n})}{T(g^{TN}, \norm{\cdot}_{N}^{2n})^{\rk{\xi}}}.
		\end{equation}
	\end{prop}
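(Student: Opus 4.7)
The strategy is to express the combination of zeta functions as a single Mellin transform of the quantity $X_\theta(t)$ from \eqref{eq_x_theta_defn}, and to extract both claims directly from the bound of Theorem \ref{conv_main_thm}.

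For $\Re(s) > 1$, combining \eqref{eq_defn_rel_zeta} with \eqref{defn_zeta_appr100} yields
\begin{equation*}
\zeta_{M}^{\theta}(s) - \rk{\xi}\, \zeta_{N}^{\theta}(s) - \zeta_{M}(s) + \rk{\xi}\, \zeta_{N}(s) \;=\; \frac{1}{\Gamma(s)}\int_{0}^{+\infty} X_\theta(t)\, t^{s}\, \frac{dt}{t}.
\end{equation*}
By Theorem \ref{conv_main_thm} we have $|X_\theta(t)| \leq C\exp(-ct - c'(\ln|\ln\theta|)^2/t)$, which for each fixed $\theta \in\,]0,e^{-3}]$ decays faster than any power of $t$ as $t \to 0^{+}$ and is dominated by $Ce^{-ct}$ as $t \to +\infty$. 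The Mellin integral therefore converges for every $s \in \comp$ and is entire in $s$; multiplication by the entire function $1/\Gamma(s)$ preserves this, giving the holomorphicity claim together with the analytic continuation across the difference.

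For \eqref{eq_zeta_conv}, I would split the Mellin integral at $t = 1$ and control each piece uniformly for $s$ in a given compact $K\subset\comp$. Setting $A_\theta = c'(\ln|\ln\theta|)^2$ and substituting $u = 1/t$, the small-$t$ part satisfies
\begin{equation*}
\int_{0}^{1} |X_\theta(t)|\, t^{\Re(s) - 1}\, dt \;\leq\; C\, A_\theta^{\Re(s)}\int_{A_\theta}^{\infty} e^{-u} u^{-\Re(s) - 1}\, du \;\leq\; C'_K\, e^{-A_\theta/2},
\end{equation*}
while the AM--GM inequality $ct + A_\theta/t \geq 2\sqrt{cc'}\,|\ln|\ln\theta||$ gives the $t$-uniform estimate $|X_\theta(t)| \leq C e^{-ct/2}\, e^{-\sqrt{cc'}\,|\ln|\ln\theta||}$, hence
\begin{equation*}
\int_{1}^{\infty} |X_\theta(t)|\, t^{\Re(s) - 1}\, dt \;\leq\; C_K\, e^{-\sqrt{cc'}\,|\ln|\ln\theta||}.
\end{equation*}
Both bounds tend to $0$ as $\theta \to 0$ uniformly for $s \in K$, and since $|1/\Gamma(s)|$ is bounded on $K$, \eqref{eq_zeta_conv} follows.

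To deduce \eqref{cor_tors_conv_eqn}, I would use that uniform convergence of holomorphic functions on compact subsets of $\comp$ forces convergence of derivatives; specialising at $s = 0$ gives $(\zeta_M^\theta)'(0) - \rk{\xi}\,(\zeta_N^\theta)'(0) \to \zeta_M'(0) - \rk{\xi}\,\zeta_N'(0)$ as $\theta\to 0$. In the cusped ratio $T(g^{TM},\cdot)/T(g^{TN},\cdot)^{\rk{\xi}}$, the Takhtajan--Zograf factor $T_{TZ}(g^{TP},\norm{\cdot}_P^{2n})^{m\,\rk{\xi}/3}$ appearing in Definition \ref{defn_rel_tor} cancels identically between numerator and the $\rk{\xi}$-th power denominator; on the flattened side the smoothness of $g^{TM}_{\rm{f},\theta}, g^{TN}_{\rm{f},\theta}$ on $\overline{M},\overline{N}$ means the analytic torsion reduces to the classical Ray-Singer torsion $\exp(-(\zeta_{\bullet}^\theta)'(0)/2)$ (no cusp correction), so the flattened ratio is $\exp(-((\zeta_M^\theta)'(0) - \rk{\xi}\,(\zeta_N^\theta)'(0))/2)$. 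Exponentiating the convergence of derivatives then yields \eqref{cor_tors_conv_eqn}.

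The main obstacle in this whole passage sits upstream, in Theorem \ref{conv_main_thm}: once the coupled Gaussian-in-$(\ln|\ln\theta|)/\sqrt{t}$ and exponential-in-$t$ decay of $X_\theta(t)$ is at hand, the present proposition reduces to standard Mellin-transform analysis, the only novel ingredient being the AM--GM interpolation, which converts the separate small-$t$ and large-$t$ behaviours into a single $t$-uniform decay rate sufficient to handle the large-$t$ tail.
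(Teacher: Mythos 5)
Your proposal is correct and follows essentially the same route as the paper: both express the combination of zeta functions as the Mellin transform of $X_{\theta}(t)$, deduce holomorphicity from the two-sided decay in Theorem \ref{conv_main_thm}, and obtain \eqref{eq_zeta_conv} from the $\theta$-uniform bound produced by the AM--GM interpolation of the $e^{-ct}$ and $e^{-c'(\ln|\ln\theta|)^2/t}$ factors (the paper phrases this step via dominated convergence where you give explicit tail estimates, a purely cosmetic difference). The passage to \eqref{cor_tors_conv_eqn} via convergence of derivatives at $s=0$ and cancellation of the Takhtajan--Zograf factor is likewise what the paper's appeal to Definitions \ref{defn_rel_zeta}, \ref{defn_rel_tor} and \eqref{defn_zeta_appr100} amounts to.
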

	\begin{proof}
		First of all, by Definition \ref{defn_rel_zeta}, (\ref{eq_x_theta_defn}) and (\ref{defn_zeta_appr100}), we have
		\begin{equation}\label{int_x_theta_through_zeta}
			\frac{1}{\Gamma(s)} \int_{0}^{+ \infty}
			X_{\theta}(t)
			t^{s} \frac{dt}{t}
			=
			\zeta_{M}^{\theta}(s) - \rk{\xi} \zeta_{N}^{\theta}(s) - \zeta_{M}(s) + \rk{\xi} \zeta_{N}(s),
		\end{equation}
		Now, by Theorem \ref{conv_main_thm}, the function $X_{\theta}(t)$ has subexponential growth near 0 and $\infty$, thus, by (\ref{int_x_theta_through_zeta}), the left-hand side of (\ref{eq_zeta_conv}) is a holomorphic function over $\comp$ for any $\theta \in ]0, e^{-3}]$.
		\par Also, by Theorem \ref{conv_main_thm}, there are $c, c', C > 0$, $\varsigma > 0$ such that for any $t > 0$, $\theta \in ]0, e^{-3}]$:
		\begin{equation}\label{eq_x_theta_triv_bound}
			X_{\theta}(t) \leq C | \log \theta|^{-\epsilon} \exp(-ct - c'/t).
		\end{equation}
		In particular, by (\ref{eq_x_theta_triv_bound}), as $\theta \to 0$, we have
		\begin{equation}\label{cor_A_1}
			X_{\theta}(t) \to 0.
		\end{equation}
		By (\ref{int_x_theta_through_zeta}), (\ref{eq_x_theta_triv_bound}),  (\ref{cor_A_1}) and Lebesgue dominated convergence theorem, we deduce (\ref{eq_zeta_conv}).
		Now, (\ref{cor_tors_conv_eqn}) follows from Definitions \ref{defn_rel_zeta}, \ref{defn_rel_tor}, (\ref{defn_zeta_appr100}) and (\ref{eq_zeta_conv}).
	\end{proof}
		We denote by $\norm{\cdot}_{L^2}(g^{TM}_{\rm{f}, \theta}, h^{\xi} \otimes (\, \norm{\cdot}_{M}^{\rm{f}, \theta})^{2n})$ the $L^2$-norm over the line bundle (\ref{defn_det_line}) induced by $g^{TM}_{\rm{f}, \theta}$, $h^{\xi}$, $\norm{\cdot}_{M}^{\rm{f}, \theta}$.
		By \textit{properties 1,3} of tight families and Lebesgue dominated convergence, we have
	\begin{equation}\label{eqn_l_2_conv}
		\begin{aligned}
			& \lim_{\theta \to 0} \, \norm{\cdot}_{L^2} \big( g^{TM}_{\rm{f}, \theta}, h^{\xi} \otimes (\, \norm{\cdot}_{M}^{\rm{f}, \theta})^{2n} \big) 
			= 
			\norm{\cdot}_{L^2} \big(g^{TM}, h^{\xi} \otimes \norm{\cdot}_{M}^{2n} \big), \\
			& \lim_{\theta \to 0} \,  \norm{\cdot}_{L^2} \big( g^{TN}_{\rm{f}, \theta}, (\, \norm{\cdot}_{N}^{\rm{f}, \theta})^{2n} \big)
			= 
			\norm{\cdot}_{L^2} \big( g^{TN}, \norm{\cdot}_{N}^{2n} \big).
		\end{aligned}
	\end{equation}
	From  (\ref{cor_tors_conv_eqn}) and (\ref{eqn_l_2_conv}), we get (\ref{eq_lim_comp111}), as $\theta \to 0$.

\subsection{Proofs of Theorems  \ref{spec_gap_thm_A}, \ref{thm_A_bound_cups} }\label{sect_pf_aux_tight}
In this section we prove Theorems  \ref{spec_gap_thm_A}, \ref{thm_A_bound_cups}, which were announced in Section \ref{sect_tight}.
In the proof of Theorem  \ref{spec_gap_thm_A} we use the homogeneity of the Laplacian.
In the proof of Theorem \ref{thm_A_bound_cups}, we use the analytic localization techniques, the maximal principle and sup-characterization of the Bergman kernel. We recall that we suppose that $(\xi, h^{\xi})$ is trivial around the cusps.
	\begin{proof}[Proof of Theorem 	\ref{spec_gap_thm_A}]
	First of all, (\ref{eqn_ker_lapl_A}) is simply a consequence of Hodge theory for compact manifolds. 
	To prove (\ref{eqn_spec_gap_A}), by (\ref{lem_flat_compar}), it is enough to prove the following:
	\textit{let $g^{TM}_{0}$ be a Kähler metric on $\overline{M}$ and let $\, \norm{\cdot}_{M}^{0}$ be a Hermitian norm on $\omega_M(D)$ over $\overline{M}$ such that
		\begin{align}
				&g^{TM}_{0} \otimes (\, \norm{\cdot}_{M}^{0})^{2n} \leq 
				g^{TM}\otimes \norm{\cdot}_{M}^{2n}, 
				\label{lem_comp_cond_1}
				\\	
				\label{lem_comp_cond_2}
				&\norm{\cdot}_{M}^{0} \leq \norm{\cdot}_{M}.
		\end{align}
		Let $\langle \cdot, \cdot \rangle_{L^2_{0}}$ be the $L^2$-scalar product associated with $g^{TM}_{0}$, $h^{\xi}$, $\norm{\cdot}_{M}^{0}$, and let $\laplcomp^{E_M^{\xi, n}}_{0}$ be the associated Kodaira Laplacian. 
		Then for $n \leq 0$, we have
		\begin{equation}\label{lem_eig_comp_eqn}
			\inf \Big\{ \spec \, \big( \laplcomp^{E_M^{\xi, n}} \big) \setminus \{ 0 \} \Big\}
			\leq
			\inf \Big\{ \spec \, \big(  \laplcomp^{E_M^{\xi, n}}_{0} \big) \setminus \{ 0 \} \Big\}.
		\end{equation}
		}
		Now, by (\ref{eq_int_mu_fin}), we deduce
		\begin{equation}\label{eq_dom_lapl}
			\ccal^{\infty}(\overline{M}, E_M^{\xi, n}) \subset {\rm{Dom}}(\laplcomp^{E_M^{\xi, n}}).
		\end{equation}
		In the following series of transformations, we use (\ref{lem_comp_cond_2}) and $n \leq 0$ to get the inequality. For $s \in \ccal^{\infty}(\overline{M}, E_M^{\xi, n})$ we have
		\begin{equation}\label{lem_comp_1}
			\scal{ \laplcomp^{E_M^{\xi, n}}_{0} s }{s}_{L^{2}_{0}} 
			=
			\scal{ \overline{\partial}^{E_M^{\xi, n}} s }{\overline{\partial}^{E_M^{\xi, n}} s}_{L^{2}_{0}} 
			\geq
			\scal{ \overline{\partial}^{E_M^{\xi, n}} s }{\overline{\partial}^{E_M^{\xi, n}} s}_{L^{2}} 
			=
			\scal{ \laplcomp^{E_M^{\xi, n}} s }{s}_{L^{2}}.
		\end{equation}
		Also, from (\ref{lem_comp_cond_1}), we have
		\begin{equation}\label{lem_comp_10}
			\scal{s}{s}_{L^2_{0}} \leq \scal{s}{s}_{L^2}.
		\end{equation}
		From (\ref{lem_comp_1}) and (\ref{lem_comp_10}), we deduce
		\begin{equation}\label{lem_comp_111}
			\frac{\scal{ \laplcomp^{E_M^{\xi, n}}_{0} s }{s}_{L^{2}_{0}}}{\scal{s}{s}_{L^2_{0}}} \geq  \frac{\scal{ \laplcomp^{E_M^{\xi, n}} s }{s}_{L^{2}}}{\scal{s}{s}_{L^2}}.
		\end{equation}
		We denote $k = \dim H^0(\overline{M}, E_M^{\xi, n})$.
		By the min-max theorem (cf. \cite[(C.3.3)]{MaHol}) and (\ref{eq_dom_lapl}), we have
		\begin{equation}\label{lem_comp_2}
			\inf \Big\{ \spec \, \big( \laplcomp^{E_M^{\xi, n}} \big) \setminus \{ 0 \} \Big\}
			= \inf_{F \subset \ccal^{\infty}(\overline{M}, E_M^{\xi, n})} 
			\bigg\{
			 \sup_{s \in F}
			 \bigg\{
				\frac{\scal{ \laplcomp^{E_M^{\xi, n}} s }{s}_{L^{2}}}{\scal{s}{s}_{L^2}}
			\bigg\} : \dim F = k+1 
			\bigg\}.
		\end{equation}
		Then (\ref{lem_eig_comp_eqn}) follows from  (\ref{lem_comp_111}) and (\ref{lem_comp_2}).
	\end{proof}
	\begin{proof}[Proof of Theorem \ref{thm_A_bound_cups}.]
		This proof uses all the properties of tight families.
		The presence of the line bundle $\omega_M(D)$ makes analysis more difficult, and we have to consider 2 cases: $\theta^3 < |u| < |\log \theta|^{-1}$ and $|u| \leq \theta^3$. The main feature exploited in the first case is that we have elliptic estimate with the needed power of logarithm (\ref{lem_ell_est_eqn_A}), (\ref{def_rho_theta}). The main feature exploited in the second case is the \textit{property 2} of tight families along with the maximal principle (cf. \cite[p. 180]{Chav}).
		\par \textbf{Let's prove (\ref{eqn_A_bound_cups_perp_1}) for $\theta^3 \leq |u| \leq |\log \theta|^{-1}$.} 	
		We put $r = \dist(u, 1/2)$, then by (\ref{dist_hyp_comp}), $r \approx \ln |\ln  |u| |$.
		In this case, similarly to (\ref{eq_guh_equal_disc_rel}), by the fact that our flattenings are compatible and by the finite propagation speed of solutions of hyperbolic equations, we have
		\begin{equation}\label{fin_prop_luh_A}
			\widetilde{G}_{t,r}(\laplcomp^{E_M^{\xi, n}}_{\rm{f}, \theta}) \big( (z_i^{M})^{-1}(u), \cdot \big) = 
			{\rm{Id}_{\xi}} \cdot \widetilde{G}_{t,r} (\laplcomp^{E_N^{n}}_{\rm{f}, \theta}) \big( (z_i^{N})^{-1}(u), \cdot \big) .
		\end{equation}
		Then, similarly to (\ref{exp_scin_kuh_rel}), by (\ref{fin_prop_luh_A}), we have
		\begin{multline}\label{eqn_inf_kuh_repr_A}
			\exp(-t \laplcomp^{E_M^{\xi, n}}_{\rm{f}, \theta}) \big( (z_i^{M})^{-1}(u), \cdot \big)    - 
				{\rm{Id}_{\xi}} \cdot \exp(-t \laplcomp^{E_N^{n}}_{\rm{f}, \theta}) \big( (z_i^{N})^{-1}(u), \cdot \big) \\ 
				= 
				\widetilde{K}_{t,r}  (\laplcomp^{E_M^{\xi, n}}_{\rm{f}, \theta} ) \big( (z_i^{M})^{-1}(u), \cdot \big) - 
				{\rm{Id}_{\xi}}  \widetilde{K}_{t,r} (\laplcomp^{E_N^{n}}_{\rm{f}, \theta} ) \big( (z_i^{N})^{-1}(u), \cdot \big).
		\end{multline}
		Now, similarly to (\ref{eqn_kth_norm_sup_conv}), from (\ref{est_tilde_kuh}), (\ref{lem_ell_est_eqn_A}) and (\ref{eqn_inf_kuh_repr_A}), for any $\theta^3 \leq |u|, |v| \leq |\log \theta|^{-1}$,  we get 
		\begin{multline}\label{pf_thm_A_bound_cups_132}
			\Big|
				\exp(-t \laplcomp^{E_M^{\xi, n}}_{\rm{f}, \theta}) \big((z_i^{M})^{-1}(u), (z_i^{M})^{-1}(v) \big) 
				- 
				{\rm{Id}}_{\xi} \exp (-t \laplcomp^{E_N^{n}}_{\rm{f}, \theta})\big((z_i^{N})^{-1}(u), (z_i^{N})^{-1}(v)\big)
			\Big|_{h \times h, \theta}
			\\
			\leq C  | \ln |u| |  \exp ( - c' (\ln |\ln |u| |)^2/t ).	
		\end{multline}
		In particular, (\ref{pf_thm_A_bound_cups_132}) implies (\ref{eqn_A_bound_cups_perp_1}) for  $\theta^3 \leq |u| \leq |\log \theta|^{-1}$. 
		\begin{sloppypar}
		\textbf{Let's prove (\ref{eqn_A_bound_cups_perp_1}) for  $|u| \leq \theta^3$.} 
		This case is more subtle. We trivialize $(\omega_M(D), \norm{\cdot}_M)$, $(\omega_N(D), \norm{\cdot}_N)$ as in \textit{property 2} of tight families.
		Then, since $(\xi, h^{\xi})$ is trivial around the cusps, for $v, w \in D(\theta^3)$, we look at $\exp^{\perp}(-t \laplcomp^{E_M^{\xi, n}}_{\rm{f}, \theta}) ((z_i^{M})^{-1}(v), (z_i^{M})^{-1}(w) )$ and ${\rm{Id}}_{\xi} \exp^{\perp}(-t \laplcomp^{E_N^{n}}_{\rm{f}, \theta})((z_i^{N})^{-1}(v), (z_i^{N})^{-1}(w))$ as at the functions over $D(\theta^3) \times D(\theta^3)$ with values in $\enmr{\xi|_{P_i^{M}}}$.
		\par
		For $v, w \in D(\theta^3)$, we denote 
		\begin{multline}
			F(v, w, t) := \exp (-t \laplcomp^{E_M^{\xi, n}}_{\rm{f}, \theta}) \big((z_i^{M})^{-1}(v), (z_i^{M})^{-1}(w) \big)
			\\
			- 
				{\rm{Id}}_{\xi} \exp (-t \laplcomp^{E_N^{n}}_{\rm{f}, \theta}) \big((z_i^{N})^{-1}(v), (z_i^{N})^{-1}(w)\big). 
		\end{multline}
		\end{sloppypar}
		\noindent We write $F(v, w, t) = (F_{kl}(v, w, t))_{k,l = 1}^{\dim \xi}$ for the components of the matrix from $\enmr{\xi|_{P_i^{M}}}$.
		We notice that the functions $F_{kl}(v, w, t)$ satisfy the heat equation with zero initial data in $D(\theta^3) \times D(\theta^3) \times ]0, +\infty[$, i.e. for any $k,l = 1, \ldots, \dim \xi$, we have
		\begin{equation}
			\Big( \frac{\partial}{\partial t} + \laplcomp_{\rm{f}, \theta} \Big) F_{kl} (u, v, t) = 0
			\quad 
			\text{and}
			\quad
			\lim_{t \to 0} F_{kl} (u, v, t) = 0,
		\end{equation}
		where $\laplcomp_{\rm{f}, \theta}$ is the Laplace–Beltrami operator induced by $((z_i^{M})^{-1})^* g^{TM}_{\rm{f}, \theta}$ on $D(\theta^{3})$.
		Thus, by the maximal principle (cf. \cite[p. 180]{Chav}), for $|u| \leq \theta^3$, we get
		\begin{equation}\label{eqn_max_pr1}
			| F_{kl} (u, u, t) | \leq \sup_{\tau' \in [0, t]} 
			\sup_{|w| = \theta^{3}} | F_{kl} (u, w, \tau') |.
		\end{equation}
		By applying the maximal principle again, we get
		\begin{equation}\label{eqn_max_pr2}
			| F_{kl} (u, w, \tau') | \leq \sup_{\tau \in [0, \tau']} 
			\sup_{|v| = \theta^3} | F_{kl} (v, w, \tau) |.
		\end{equation}
		By (\ref{pf_thm_A_bound_cups_132}), there are $c', C > 0$ such that for any $\theta \in ]0, e^{-3}]$, and $|v|, |w| = \theta^3$, we have
			\begin{align}\label{est_off_diag}
				| F_{kl} (v, w, \tau) | \leq | \ln \theta|  \exp ( - c' (\ln |\ln \theta|)^2/\tau ).
			\end{align}
			By (\ref{eqn_max_pr1}), (\ref{eqn_max_pr2}) and (\ref{est_off_diag}), we get (\ref{eqn_A_bound_cups_perp_1}) for $|u| \leq \theta^3$. Thus, (\ref{eqn_A_bound_cups_perp_1}) is completely proved.
			\par \textbf{Now let's prove (\ref{eqn_A_bound_cups_perp_2}).}
			By Theorem \ref{thm_hk_estA}, there are $c, C > 0$ such that for any $|u| \leq | \ln \theta|^{-1}$, $\theta \in ]0, e^{-3}]$, $t > 0$, we have
			\begin{multline}\label{pf_est_diag_111}
				\Big|
					\exp^{\perp}(-t \laplcomp^{E_M^{\xi, n}}_{\rm{f}, \theta}) \big((z_i^{M})^{-1}(u), (z_i^{M})^{-1}(u) \big) 
					- 
					{\rm{Id}}_{\xi} \exp^{\perp}(-t \laplcomp^{E_N^{n}}_{\rm{f}, \theta})\big((z_i^{N})^{-1}(u), (z_i^{N})^{-1}(u)\big)
				\Big| 
				\\
				\leq C ( \ln \max( \theta, |u| ) )^{12} t^{-4} \exp ( - ct ).	
			\end{multline}
		Now, for any $x, x' \in M$, we have
		\begin{equation}\label{eq_rel_perp_noperp}
			\exp(-t \laplcomp^{E_M^{\xi, n}}_{\rm{f}, \theta})(x, x') = \exp^{\perp}(-t \laplcomp^{E_M^{\xi, n}}_{\rm{f}, \theta})(x, x') +  B^{E_M^{\xi, n}}_{\theta}(x,x'), 
		\end{equation}
		where $B^{E_M^{\xi, n}}_{\theta}(x,x')$ is the Bergman kernel, defined by
		\begin{equation}
			B^{E_M^{\xi, n}}_{\theta}(x,x') = \sum s_i(x) (s_i(x'))^*_{\theta},
		\end{equation}
		for an orthonormal base $\{s_i\}$ of $H^{0}(\overline{M}, E_M^{\xi, n})$ with respect to the $L^2$-scalar product induced by $g^{TM}_{\rm{f}, \theta}$, $h^{\xi}$, $\norm{\cdot}_M^{\rm{f}, \theta}$, and $(\cdot)^*_{\theta}$ is the dual with respect to $| \cdot |_{h, \theta}$. 
		By \cite[Lemma 3.1]{ComMar}, we have 
		\begin{equation}\label{eq_berg_CM}
			B^{E_M^{\xi, n}}_{\theta}(x,x) = \max \bigg\{ 
			\frac{|s(x)|_{h, \theta}^{2}}{\norm{s}_{L^2, \theta}^{2}} : s \in H^{0}(\overline{M}, E_M^{\xi, n}) \setminus \{0\}
			\bigg\}.
		\end{equation}
		By (\ref{lem_flat_compar}) and the fact that $n \leq 0$, we see that for any $s \in \ccal^{\infty}(\overline{M}, E_M^{\xi, n})$, we have
		\begin{equation}\label{eq_compar_berg}
			|s(x)|_{h, \theta} \leq |s(x)|_{h, {\rm{sm}}}, \qquad
			\norm{s}_{L^2, \theta} \geq \norm{s}_{L^2, {\rm{sm}}},
		\end{equation}
		where $|\cdot|_{h, {\rm{sm}}}$ is the pointwise norm induced by $h^{\xi}$, $\norm{\cdot}_{{\rm{sm}}}$, and $\norm{\cdot}_{L^2, {\rm{sm}}}$ is the $L^2$-norm induced by $h^{\xi}$, $\norm{\cdot}_{{\rm{sm}}}$, $g^{TM}_{{\rm{sm}}}$. From (\ref{eq_berg_CM}) and (\ref{eq_compar_berg}), we deduce 
		\begin{equation}\label{eq_compar_berg2}
			B^{E_M^{\xi, n}}_{\theta}(x,x) \leq B^{E_M^{\xi, n}}_{{\rm{sm}}}(x,x),
		\end{equation}
		where $B^{E_M^{\xi, n}}_{{\rm{sm}}}(x, x')$ is the Bergman kernel associated with $h^{\xi}$, $\norm{\cdot}_{{\rm{sm}}}$, $g^{TM}_{{\rm{sm}}}$. Thus, from (\ref{eqn_A_bound_cups_perp_1}), (\ref{eq_rel_perp_noperp}) and (\ref{eq_compar_berg2}), there is $C > 0$ such that for any $\theta \in ]0, 1/2], |u| < \theta^3$, we have
		\begin{multline}\label{pf_thm_A_bound_cups_11}
			\Big|
				\exp^{\perp}(-t \laplcomp^{E_M^{\xi, n}}_{\rm{f}, \theta}) \big((z_i^{M})^{-1}(u), (z_i^{M})^{-1}(u) \big) 
				- 
				{\rm{Id}}_{\xi} \exp^{\perp}(-t \laplcomp^{E_N^{n}}_{\rm{f}, \theta})\big((z_i^{N})^{-1}(u), (z_i^{N})^{-1}(u)\big)
			\Big| 
			\\
			\leq C \Big(1 +  | \ln \max (\theta, |u| ) |  \exp \big( - c' (\ln |\ln \max (\theta, |u| )|)^2/t \big) \Big).	
		\end{multline}
			By multiplying (\ref{pf_est_diag_111}) with power $\mu \in ]0, 1/2[$ and (\ref{pf_thm_A_bound_cups_11}) with power $1 - \mu$, we have
			\begin{multline}\label{pf_est_diag_112}
				\Big|
					\exp^{\perp}(-t \laplcomp^{E_M^{\xi, n}}_{\rm{f}, \theta}) \big((z_i^{M})^{-1}(u), (z_i^{M})^{-1}(u) \big) 
					- 
					{\rm{Id}}_{\xi} \exp^{\perp}(-t \laplcomp^{E_N^{n}}_{\rm{f}, \theta})\big((z_i^{N})^{-1}(u), (z_i^{N})^{-1}(u)\big)
				\Big| 
				\\
				\leq C | \ln \max (\theta, |u| )|^{1 + 11 \mu} t^{-4} \exp ( - c \mu t - c'  (\ln |\ln \max (\theta, |u| )|)^2/t )
				\\
				+
				C | \ln \max (\theta, |u| )|^{12 \mu} t^{-4} \exp ( - c \mu t ).	
			\end{multline}
			By (\ref{eq_cauchy_exp}) and (\ref{pf_est_diag_112}), we finally get (\ref{eqn_A_bound_cups_perp_2}) by taking $\mu$ small enough.
	\end{proof}

\section{The anomaly formula: a proof of Theorem \ref{thm_anomaly_cusp}}
	In this section we prove Theorem \ref{thm_anomaly_cusp}. 
	First of all, we recall that in Section \ref{sect_compact_pert} we proved Theorem \ref{thm_anomaly_cusp} for $g^{TM}_{0} = g^{TM}$, i.e. when we have only the variation of $h^{\xi}$.
	Thus, it's left to prove Theorem \ref{thm_anomaly_cusp} for $h^{\xi}_{0} = h^{\xi}$ and under the supposition that $(\xi, h^{\xi})$ is trivial around the cusps. 
	The idea of the proof is as follows: we construct a family of flattenings which “approach" the cusp metric and we use Theorem \ref{thm_comp_appr} to relate the corresponding relative Quillen norms. 
	Then we apply the anomaly formula Bismut-Gillet-Soulé \cite[Theorem 1.23]{BGS3} (see Theorem \ref{thm_anomaly_BGS}) and calculate the limit of the right-hand side of (\ref{eq_anomaly}), as the family of flattenings “approach" the cusp metric.  
	\par Before giving a proof of Theorem \ref{thm_anomaly_cusp}, let's fix some notation. By suppositions of Theorem \ref{thm_anomaly_cusp}, there are germs at $0 \in D(\epsilon)$, $\epsilon > 0$ of holomorphic functions $h_i^{\phi} : D(\epsilon) \to D(1)$, $i = 1,\ldots, m$, such that $g^{TM}_{0}$ is Poincaré-compatible with coordinates $h_i^{\phi}(z_i^{M})$ around $P_i^{M} \in D_M$. We note 
	\begin{equation}
		z_i^{0, M} := h_i^{\phi}(z_i^{M}).
	\end{equation}
	By Definition \ref{defn_wolpert_norm} of the Wolpert norm, we have the following identity
	\begin{equation}\label{eqn_rel_wolp_der_hol}
		\ln \Big( \norm{\cdot}^W / \norm{\cdot}^W_{0} \Big) =  \sum \ln \big| (h_i^{\phi})'(0) \big|.
	\end{equation}
	\par \textbf{First of all, let's describe why the right-hand side of (\ref{eq_anomaly_cusp}) is finite.} For $\epsilon > 0$, in  $V_i^{M}(\epsilon)$:
	\begin{equation}\label{est_c_1}
		c_1 \big( \omega_M(D), (\, \norm{\cdot}_{M})^{2} \big)|_{M} = \frac{\partial \dbar}{2 \pi \imun} \ln \big(\, \norm{s}_M^{2} \big) 
		= O \big( |z_i^{M}  \ln |z_i^{M}||^{-2}\big),
	\end{equation}
	where $s$ is a local holomorphic frame of $\omega(D_M)$. Similar estimation holds for the norm $ \norm{\cdot}_{M}^{0}$.
	The identity (\ref{anomaly_rel_metrics}) says
	\begin{equation}\label{eq_phi_conf_ch}
		\frac{e^{2 \phi} dz_i^{M} d\overline{z}_i^{M}}{ \big| z_i^{M}  \ln |z_i^{M}| \big|^2} 
		=  
		\frac{ dz_i^{0,M} d\overline{z}_i^{0,M}}{ \big| z_i^{0,M}  \ln |z_i^{0,M}| \big|^2}.
	\end{equation}
	By (\ref{eqn_norms_local}) and (\ref{eq_phi_conf_ch}), we see that over $V_i^{M}(\epsilon)$, we have
	\begin{equation}\label{est_phi}
		\ln \big(\, \norm{\cdot}_{M}^{0} / \norm{\cdot}_{M} \big) 
		= 
		O \big( | \ln |z_i^{M}| |^{-1} \big). 
	\end{equation}		
	By (\ref{ch_bc_0}), (\ref{ch_bc_2}), (\ref{eq_int_mu_fin}), (\ref{est_c_1}) and (\ref{est_phi}), we conclude that the right-hand side of (\ref{eq_anomaly_cusp}) is finite.
	\begin{sloppypar}
	\par \textbf{Now let's describe the precise family of flattenings we choose.} Recall that the function $\psi: \real \to [0,1]$ was defined in (\ref{eq_defn_psi}).
	Let $g^{TM}_{\rm{f}, \theta}$ be a metric over $\overline{M}$ such that it coincides with $g^{TM}$ away from $\cup_i V_i^{M}(\theta)$, and over $V_i^{M}(\theta)$ it is induced by
	\begin{equation}\label{defn_g_A_an}
		\Big| z_i^{M} \ln |z_i^{M}| \Big|^{-2 \psi(\ln |z_i^{M}| / \ln \theta)} \imun d z_i^{M} d \overline{z}_i^{M},
	\end{equation}
	for all $i = 1, \ldots, m$.
	Similarly, let $\norm{\cdot}_{M}^{\rm{f}, \theta}$ be the metric on $\omega_M(D)$ over $\overline{M}$ such that it coincides with $\norm{\cdot}_{M}$ away from $\cup_i V_i^{M}(\theta)$, and over $V_i^{M}(\theta)$, $i = 1, \ldots, m$, we have
	\begin{equation}\label{defn_norm_A_an}
		\Big\| dz_i^{M} \otimes s_{D_M} / z_i^{M} \Big\|_{M}^{\rm{f}, \theta}= \Big| \ln |z_i^{M}| \Big|^{\psi(\ln |z_i^{M}| / \ln \theta)},
	\end{equation}
	where $s_{D_M}$ is the canonical section of $\mathscr{O}_{\overline{M}}(D_M)$, ${\rm{div}}(s_{D_M}) = D_M$.
	By (\ref{eqn_norms_local}), this defines a smooth metric over $\overline{M}$.
	\end{sloppypar}
	For $\epsilon > 0$, $i = 1, \ldots, m$, we denote
	\begin{equation}
		V_i^{0, M}(\epsilon) := \{ x \in M : | z_i^{0, M}(x) | \leq \epsilon \}.
	\end{equation}		
	Let $g^{TM}_{0, {\rm{f}}, \theta}, \norm{\cdot}_{0,M}^{\rm{f}, \theta}$ be the flattenings of $g^{TM}_{0}, \norm{\cdot}_{M}^{0}$, compatible with the flattenings $g^{TM}_{\rm{f}, \theta}, \norm{\cdot}_{M}^{\rm{f}, \theta}$ (cf. (\ref{eq_gtm_compat_cusp}), (\ref{eq_comp_gtn})). More precisely, the metrics $g^{TM}_{0, {\rm{f}}, \theta}$, $\norm{\cdot}_{0, M}^{\rm{f}, \theta}$ coincide with $g^{TM}_{0}$, $\norm{\cdot}_{M}^{0}$ away from $\cup_i V_i^{0, M}(\theta)$, and over $V_i^{0, M}(\theta)$ the metric $g^{TM}_{0, {\rm{f}}, \theta}$ is induced by 
	\begin{equation}\label{defn_g_A_0}
		\Big| z_i^{0, M} \ln |z_i^{0, M}| \Big|^{-2 \psi(\ln |z_i^{0, M}| / \ln \theta)} \imun d z_i^{0, M} d \overline{z}_i^{0, M}.
	\end{equation}
	Also, for $s_{D_M}$ as in (\ref{defn_norm_A_an}), we have 
	\begin{equation}\label{defn_norm_A_0}
		\Big\| dz_i^{0, M} \otimes s_{D_M} /z_i^{0, M} \Big\|_{0, M}^{\rm{f}, \theta}= \Big| \ln |z_i^{0, M}| \Big|^{\psi(\ln |z_i^{0, M}| / \ln \theta)}.
	\end{equation}
	Let's denote by $\norm{\cdot}_{\rm{f}, \theta, M}^{\omega, 0}$, $\norm{\cdot}_{\rm{f}, \theta, M}^{\omega}$ the norms on $\omega_{\overline{M}}$ over $\overline{M}$ induced by $g^{TM}_{0, {\rm{f}}, \theta}$ and $g^{TM}_{{\rm{f}}, \theta}$ respectively.
	\par \textbf{Proof of 	(\ref{eq_anomaly_cusp}).}
	By Theorem \ref{thm_comp_appr}, for any $\theta \in ]0,1]$, we have
	\begin{equation}\label{eq_quil_0_id}
		2\ln \Bigg( 
		\frac{\norm{\cdot}_{Q}(g^{TM}_{0}, h^{\xi} \otimes \, \norm{\cdot}_{0, M}^{2n}) }{\norm{\cdot}_{Q} (g^{TM}, h^{\xi} \otimes \, \norm{\cdot}_{M}^{2n} ) }
		\Bigg) 
		=
		2\ln \Bigg( 
		\frac{\norm{\cdot}_{Q}  \big( g^{TM}_{0, {\rm{f}}, \theta}, h^{\xi} \otimes (\, \norm{\cdot}_{0, M}^{\rm{f}, \theta})^{2n} \big) }{\norm{\cdot}_{Q}  \big( g^{TM}_{\rm{f}, \theta}, h^{\xi} \otimes (\, \norm{\cdot}_{M}^{\rm{f}, \theta})^{2n}  \big)}
		\Bigg).
	\end{equation}
	We will show that the limit of the right-hand side of (\ref{eq_quil_0_id}), as $\theta \to 0$ is exactly the right-hand side of (\ref{eq_anomaly_cusp}). Once it will be done, Theorem \ref{thm_anomaly_cusp} would follow from (\ref{eq_quil_0_id}). 
	\begin{sloppypar} 
	We denote by $\norm{\cdot}_{\omega, M}^{\rm{f}, \theta}, \norm{\cdot}_{\omega, 0, M}^{\rm{f}, \theta}$ the norms on $\omega_M$ induced by $g^{TM}_{\rm{f}, \theta}$ and $g^{TM}_{0, {\rm{f}}, \theta}$.
	Set
	\begin{multline}
		\Phi(\theta) := \Big[ 
			\widetilde{\td} \big(\omega_{M}^{-1}, ( \, \norm{\cdot}_{\rm{f}, \theta, M}^{\omega} )^{-2}, ( \, \norm{\cdot}_{\rm{f}, \theta, M}^{\omega, 0} )^{-2} \big) \ch \big( \xi, h^{\xi} \big) \ch \big( \omega_M(D), (\, \norm{\cdot}_{M}^{\rm{f}, \theta})^{2n} \big)  
			\\ 
			+			 					 
			\td \big( \omega_{M}^{-1}, ( \, \norm{\cdot}_{\rm{f}, \theta, M}^{\omega, 0} )^{-2} \big) 
			\ch
			\big( \xi, h^{\xi} \big)			
			\widetilde{\ch}
			\big( 
			\omega_M(D), (\, \norm{\cdot}_{M}^{\rm{f}, \theta})^{2n}, (\, \norm{\cdot}_{0, M}^{\rm{f}, \theta})^{2n} \big) 			
		\Big]^{[2]}.
	\end{multline}
	Then, by Theorem \ref{thm_anomaly_BGS}, we have
	\begin{equation}\label{anom_flat_a_1}
		2\ln  
		\Bigg( 
		\frac{\norm{\cdot}_{Q}  \big( g^{TM}_{0, {\rm{f}}, \theta}, h^{\xi} \otimes (\, \norm{\cdot}_{0, M}^{\rm{f}, \theta})^{2n} \big) }{\norm{\cdot}_{Q}  \big( g^{TM}_{\rm{f}, \theta}, h^{\xi} \otimes (\, \norm{\cdot}_{M}^{\rm{f}, \theta})^{2n}  \big)}
		\Bigg) = \int_M \Phi(\theta).
	\end{equation}
	where $\widetilde{\td}$ and $\widetilde{\ch}$ are given by (\ref{ch_bc_0}) and (\ref{ch_bc_2}).
	We decompose the right-hand side of (\ref{anom_flat_a_1}) into integrals over $M \setminus ( \cup_i ( V_i^{M}(\theta) \cup V_i^{0, M}(\theta) ) )$ and over $V_i^{M}(\theta) \cup V_i^{0, M}(\theta)$, $i = 1, \ldots, m$. Since the flattenings $g^{TM}_{0, {\rm{f}}, \theta}, g^{TM}_{\rm{f}, \theta}$ and $\norm{\cdot}_{0, M}^{\rm{f}, \theta}, \norm{\cdot}_{M}^{\rm{f}, \theta}$ coincide with $g^{TM}_{0}, g^{TM}$ and $\norm{\cdot}_{M}^{0}, \norm{\cdot}_{M}$ over $M \setminus (\cup_i ( V_i^{M}(\theta) \cup V_i^{0, M}(\theta)))$, and the quantities under the integration in the anomaly formula are local, we see by Lebesgue dominated convergence theorem, by the finiteness of the right-hand side of (\ref{eq_anomaly_cusp}) and by (\ref{eq_ch_similarity}), that the integral of $\Phi(\theta)$ over $M \setminus (\cup_i ( V_i^{M}(\theta) \cup V_i^{0, M}(\theta) ))$ converges to the integral part in the right-hand side of (\ref{eq_anomaly_cusp}), as $\theta \to 0$. 
	\end{sloppypar}
	\par Now let's study the contribution over $\cup_i (V_i^{M}(\theta) \cup V_i^{0, M}(\theta))$ of the integral in (\ref{anom_flat_a_1}). We note that in the case when $\phi$ has compact support in $M$, this integral is actually zero for $\theta$ sufficiently small (which is consistent with the statement of Theorem \ref{thm_anomaly_cusp}).

	From the discussion above, (\ref{eqn_rel_wolp_der_hol}), (\ref{anom_flat_a_1}), and the fact that we restrict ourselves to the case $(\xi, h^{\xi})$ trivial around the cusps, Theorem \ref{thm_anomaly_cusp}  would follow from the following
	\begin{lem}
		As $\theta \to 0$, we have
		\begin{equation}\label{eqn_lim_h_j}
			\int_{V_i^{M}(\theta) \cup V_i^{0, M}(\theta)} \Phi(\theta) \to -\frac{\rk{\xi}}{6} \ln |(h_i^{\phi})'(0)|.
		\end{equation}
	\end{lem}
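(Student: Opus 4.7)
The plan is to evaluate $\int_{V_i^{M}(\theta) \cup V_i^{0, M}(\theta)} \Phi(\theta)$ by direct local computation in the coordinates $z = z_i^{M}$ and $z^0 = h_i^\phi(z) = z_i^{0, M}$ around $P_i^{M}$, and then pass to the limit $\theta \to 0$. Since $(\xi, h^{\xi})$ is trivial in a neighbourhood of $D_M$, on the integration domain we have $\ch(\xi, h^{\xi}) = \rk{\xi}$ as a $0$-form and $c_1(\xi, h^{\xi}) = 0$ for $\theta$ small, so the Chern character of $\xi$ pulls out of the integral. Hence $\Phi(\theta) = \rk{\xi} \cdot \Psi(\theta)^{[2]}$, where $\Psi(\theta)$ is obtained from the bracket in the definition of $\Phi(\theta)$ by replacing $\xi$ with the trivial line bundle. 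Using (\ref{ch_bc_0}), (\ref{ch_bc_2}) and the explicit formulas (\ref{defn_g_A_an})--(\ref{defn_norm_A_0}), $\Psi(\theta)^{[2]}$ decomposes into a finite sum of products of logarithmic ratios of the two pairs of norms and first Chern forms of the flattened metrics.

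The key observation is that on $V_i^{M}(\theta) \cap V_i^{0, M}(\theta)$ both flattened metrics are flat (Euclidean in the respective coordinates $z$ and $z^0$), hence all first Chern forms vanish and $\Psi(\theta)^{[2]}$ is identically zero on this set. The whole integral is therefore supported on the symmetric difference, on which exactly one of the two flattenings has not yet taken effect, so that the corresponding metric coincides with the original Poincaré metric $g^{TM}$ or $g^{TM}_0$ and its first Chern form equals the standard Poincaré form. Changing coordinates via $z^0 = h_i^\phi(z)$, the integrals over the shrinking annuli making up the symmetric difference reduce to explicit radial integrals. The surviving leading contribution comes from the product $\widetilde{\td}^{[2]}(\omega_M^{-1}) \cdot \ch^{[0]}(\omega_M(D)^n)$: by (\ref{ch_bc_2}) this carries the coefficient $\tfrac{1}{12}(c_1(h_1) + c_1(h_2))$, which combined with the Poincaré Chern mass of the shrinking annulus produces the factor $-\tfrac{1}{6}$, while the logarithmic factor $\ln|(h_i^\phi)'(0)|$ arises as the boundary value at $z = 0$ of $\alpha_\theta := \ln\bigl(\norm{\cdot}_{\rm{f},\theta,M}^{\omega,0}/\norm{\cdot}_{\rm{f},\theta,M}^{\omega}\bigr)$, whose holomorphic part on the Euclidean locus equals exactly $\ln|(h_i^\phi)'(z)|$. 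All remaining terms, in particular those involving $n$, $n^2$ or the ratio $\beta_\theta := \ln\bigl(\norm{\cdot}_{0,M}^{\rm{f},\theta}/\norm{\cdot}_{M}^{\rm{f},\theta}\bigr)$ on $\omega_M(D)$, contribute only subleading pieces which vanish as $\theta \to 0$ because the Poincaré area of the symmetric difference is $O(|\ln \theta|^{-1})$.

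The hard part will be the careful bookkeeping of the many secondary-class contributions in the expansion of $\Psi(\theta)^{[2]}$, together with the verification that the $n$- and $\beta_\theta$-dependent terms indeed tend to zero. Technically, one has to distinguish the first Chern forms of $\norm{\cdot}^{\omega}_M$ on $\omega_M$ from those of $\norm{\cdot}_M$ on $\omega_M(D)$, which numerically coincide as Poincaré forms on the punctured disc but enter with different coefficients through the degree expansion of $\td$ versus $\ch$; one also has to verify that no spurious contributions leak in from the transition annuli $\{\theta \leq |z| \leq \sqrt{\theta}\}$, which lie outside the integration domain but affect the flattened Chern forms near its boundary.
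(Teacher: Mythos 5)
Your proposal breaks down at its central claim about where the integrand lives. You assert that on the symmetric difference of the two $\theta$-discs "exactly one of the two flattenings has not yet taken effect, so that the corresponding metric coincides with the original Poincaré metric," and that the limit is produced by "the Poincaré Chern mass of the shrinking annulus" times the boundary value $\ln|(h_i^\phi)'(0)|$ of $\alpha_\theta$. Both halves of this are wrong. First, since $z_i^{0,M}=h_i^\phi(z_i^M)$ with $h_i^\phi(0)=0$ and $(h_i^\phi)'(0)\neq 0$, the symmetric difference is pinched between $|z_i^M|=\theta$ and $|z_i^M|\approx\theta/|(h_i^\phi)'(0)|$; on that thin annulus both flattened metrics are in (or at the edge of) their Euclidean regime, not the Poincaré one. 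Second, even if one of them were Poincaré there, the Poincaré Chern mass of an annulus $\{\theta\le|z|\le c\theta\}$ is $O\bigl(\ln|(h_i^\phi)'(0)|/(\ln\theta)^2\bigr)\to 0$, so your proposed mechanism yields $0$ in the limit, not $-\tfrac{\rk{\xi}}{6}\ln|(h_i^\phi)'(0)|$.

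The limit is in fact carried precisely by the transition annuli $\{\theta\le|z_i^M|\le\sqrt\theta\}$ (and their $z^0$-counterparts), which you explicitly set aside as contributing nothing. By (\ref{defn_g_A_an}), $\ln\|dz_i^M\|^{\omega}_{\rm{f},\theta,M}=\psi(\ln|z_i^M|/\ln\theta)\,(\ln|z_i^M|+\ln|\ln|z_i^M||)$, and while $\partial\dbar\ln|z|$ vanishes away from the origin, $\partial\dbar\bigl(\psi(\ln|z|/\ln\theta)\ln|z|\bigr)$ does not: its total mass over the transition annulus is $O(1)$, uniformly in $\theta$ (this is the curvature the flattening must absorb). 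Pairing this with the expansion of $\ln\bigl(\norm{\cdot}^{\omega,0}_{\rm{f},\theta,M}/\norm{\cdot}^{\omega}_{\rm{f},\theta,M}\bigr)=\ln|(h_i^\phi)'(0)|\bigl(-1+\psi+\psi'\cdot\ln|z_i^M|/\ln\theta\bigr)+o(1)$ on that annulus — not with its constant boundary value — and substituting $u=\ln r/\ln\theta$ reduces the limit to a universal integral $\int_{1/2}^{1}$ of a polynomial in $\psi,\psi',\psi''$, which integration by parts evaluates to $\tfrac14$, giving the coefficient $-\tfrac16$ after the Bott–Chern normalizations. Without this computation on the transition annuli your argument produces the wrong answer; the bookkeeping of the $n$- and $\beta_\theta$-dependent terms, which you flag as the hard part, is comparatively routine since those terms are genuinely $O(\ln|\ln|z||/|\ln|z||)$.
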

	\begin{proof}
	All the subsequent formulas should be regarded as being valid over $V_i^{M}(\theta) \cup V_i^{0, M}(\theta)$. 
	By (\ref{defn_g_A_an}) and (\ref{defn_norm_A_an}), we have
	\begin{multline}\label{est_an_1}
		c_1 \big(\omega_M, (\, \norm{\cdot}_{\rm{f}, \theta, M}^{\omega})^2 \big) = - \frac{\imun \partial \dbar}{2 \pi} 
		\bigg( 
			\psi
			\big(
				\ln |z_i^{M}| / \ln \theta
			\big)
			\cdot
			\big(
				2 \ln |z_i^{M}| +  2 \ln | \ln |z_i^{M}||
			\big)
		\bigg) 
		\\
		= 
		\bigg[
			\frac{\ln |z_i^{M}| 
			\psi''
			\big(
				\ln |z_i^{M}| / \ln \theta
			\big)	}{2 |z_i^{M} \ln \theta|^{2}}
			+
			\frac{\psi'
			\big(
				\ln |z_i^{M}| / \ln \theta
			\big)	}{|z_i^{M}|^2 \ln \theta}			
			\\
			+
			O \bigg(
			\frac{\ln | \ln |z_i^{M}||}{|z_i^{M} \ln |z_i^{M}||^{2} }			
			\bigg)
		\bigg]
		\frac{- \imun d z_i^{M} d \overline{z}_i^{M}}{2\pi},
	\end{multline}
	\vspace*{-0.5cm}
	\begin{multline}\label{est_an_2}
		c_1(\omega_M(D), (\, \norm{\cdot}_{M}^{\rm{f}, \theta})^2) = - \frac{\imun \partial \dbar}{2 \pi} 
		\Big( 
			\psi
			\big(
				\ln |z_i^{M}| / \ln \theta
			\big)
			\cdot
			\big(
				2 \ln | \ln |z_i||
			\big)
		\Big) 
		\\
		= 
			O \bigg(
			\frac{\ln | \ln |z_i^{M}||}{|z_i^{M} \ln |z_i^{M}||^{2} }			
			\bigg)
		d z_i^{M} d \overline{z}_i^{M}.
	\end{multline}
	By $dz_i^{0, M} = (h_i^{\phi})'(z_i^{M}) \cdot d z_i^{M}$ and $\ln | \ln |z_i^{0, M}| | = \ln | \ln |z_i^{M}| | + O(1/|\ln |z_i^{0, M}| |)$, we deduce
	\begin{multline}\label{est_an_3}
		\ln \big( 
		\norm{\cdot}_{\rm{f}, \theta, M}^{\omega, 0} / \norm{\cdot}_{\rm{f}, \theta, M}^{\omega}		
		\big)		
		 = \,  
		 	\psi
			\big(
				\ln |z_i^{0, M}| / \ln \theta
			\big)
			\Big( 
				\ln |z_i^{0, M}| + \ln | \ln |z_i^{0, M}| |
			\Big) 
			 \\ 
			-
			\psi
			\big(
				\ln |z_i^{M}| / \ln \theta
			\big)
			\Big( 
				\ln |z_i^{M}| + \ln | \ln |z_i^{M}| |
			\Big) 
			-
			\ln |(h_i^{\phi})'(z_i^{M})|				
		\\
		= 
		\ln |(h_i^{\phi})'(0)| 
		\Big(
			- 1 + 					
			\psi
			\big(
				\ln |z_i^{M}| / \ln \theta
			\big)
			+
			\psi'
			\big(
				\ln |z_i^{M}| / \ln \theta
			\big)
			\frac{\ln |z_i^{M}|}{\ln \theta}
		\Big)				
		\\ 
		+ O \bigg(
		\frac{\ln | \ln |z_i^{M}||}{|\ln |z_i^{M}|| }			
		\bigg),
	\end{multline}
	\vspace*{-0.5cm}
	\begin{multline} \label{est_an_5}
		\ln \big( 
		\norm{\cdot}_{0, M}^{\rm{f}, \theta} / \norm{\cdot}_{M}^{\rm{f}, \theta}		
		\big)		
		 =   
		 	\psi
			\big(
				\ln |z_i^{0, M}| / \ln \theta
			\big)
				\ln | \ln |z_i^{0, M}| | 
			\\
			-
			\psi
			\big(
				\ln |z_i^{M}| / \ln \theta
			\big)
			\ln | \ln |z_i^{M}| |
		= 
		 O \bigg(
		\frac{\ln | \ln |z_i^{M}||}{|\ln |z_i^{M}|| }			
		\bigg).
	\end{multline}
	Finally, from (\ref{est_an_1}) and the analogical statement for $\norm{\cdot}_{\rm{f}, \theta, M}^{\omega, 0}$, we easily get
	\begin{flalign} \label{est_an_4}
		\quad \partial \dbar \ln \big( 
		\norm{\cdot}_{\rm{f}, \theta, M}^{\omega, 0} / \norm{\cdot}_{\rm{f}, \theta, M}^{\omega}		
		\big)		
		 =  O \bigg(
		\frac{\ln | \ln |z_i^{M}||}{| z_i^{M} \ln |z_i^{M}| |^2 }			
		\bigg) d z_i^{M} d \overline{z}_i^{M} . &&
	\end{flalign}
	From Theorem \ref{thm_anomaly_BGS}, (\ref{ch_bc_0}), (\ref{ch_bc_2}) and (\ref{est_an_1}) - (\ref{est_an_4}), we get
	\begin{multline}\label{eqn_di_identity}
		\int_{V_i^{M}(\theta) \cup V_i^{0, M}(\theta)} \Phi(\theta)  =
		-\frac{\rk{\xi}}{3}
		\int_{V_i^{M}(\theta) \cup V_i^{0, M}(\theta)}
		\bigg[
		 c_1 \big( \omega_M, (\, \norm{\cdot}_{\rm{f}, \theta, M}^{\omega})^2 \big) 
		\ln \bigg( 
		\frac{\norm{\cdot}_{\rm{f}, \theta, M}^{\omega, 0}}{\norm{\cdot}_{\rm{f}, \theta, M}^{\omega}}
		\bigg)	
		\\ 
		+ 
		O \bigg(
		\frac{\ln | \ln |z_i^{M}||}{| z_i^{M} \ln |z_i^{M}| |^2 }			
		\bigg) d z_i^{M} d \overline{z}_i^{M}
		\bigg].
	\end{multline}
	From (\ref{est_an_1}),  (\ref{est_an_3}) and (\ref{eqn_di_identity}), we get
	\begin{align}
		\lim_{\theta \to 0} & \int_{V_i^{M}(\theta) \cup V_i^{0, M}(\theta)} \Phi(\theta) =   \frac{2  \ln |(h_i^{\phi})'(0)|}{3}
		\cdot		
		\rk{\xi}
		\\
		& \cdot
		\lim_{\theta \to 0}
		\int_{\theta}^{\theta^{1/2}} 
		\frac{1}{r} 
		\Big(
			\psi''
			\Big(
				\frac{\ln r}{\ln \theta}
			\Big) 
			\frac{\ln r}{2 (\ln \theta)^2} 
			+
			\psi'
			\Big(
				\frac{\ln r}{\ln \theta}
			\Big) 
			\frac{1}{\ln \theta}
		\Big)
		\Big(
			-1
			+
			\psi
			\Big(
				\frac{\ln r}{\ln \theta}
			\Big) 
			+
			\psi'
			\Big(
				\frac{\ln r}{\ln \theta}
			\Big) 
			\frac{\ln r}{\ln \theta} 
		\Big)
		dr
		\nonumber		
		\\
	 	 & =   -\frac{2 \ln |(h_i^{\phi})'(0)|}{3}
			\cdot		
			\rk{\xi}
			\cdot 
			\int_{1/2}^{1} \Big( - \psi'(u) + \psi'(u) \psi(u) + u \psi'(u)^2
			\nonumber			
			\\ \label{lim_d_i}
			&  
			\phantom{\frac{2 \ln |h_j'(0)|}{3} 
			\int_1^{2} \Big( }
			\qquad \qquad
			\quad 
			 - u \psi''(u)/2 + u \psi''(u) \psi(u)/2 + u^2 \psi'(u) \psi''(u)/2 \Big) du,
	\end{align}
	where in the last identity we used the change of variables $u := \ln r / \ln \theta$.
	By the integration by parts and (\ref{eq_defn_psi}), we have
	\begin{equation}\label{int_cal_1}
	\begin{aligned}
		&  \int_{1/2}^{1} \psi'(u) du = -1,
		&&
		\int_{1/2}^{1} u \psi''(u) \psi(u) du = \frac{1}{2} - \int_{1/2}^{1} u \psi'(u)^2 du,
		\\
		& \int_{1/2}^{1} u \psi''(u) du = 1,   
		&&
		\int_{1/2}^{1} u^2 \psi'(u) \psi''(u) du = - \int_{1/2}^{1} u \psi'(u)^2 du, 
		\\
		&  \int_{1/2}^{1} \psi'(u) \psi(u) du = -\frac{1}{2}.
		&& 
	\end{aligned}
	\end{equation}
	We get (\ref{eqn_lim_h_j}) from (\ref{lim_d_i}), (\ref{int_cal_1}).
	\end{proof}
	
\appendix

\section{Elliptic estimates}
	Here we collected the results related to elliptic estimates of Kodaira Laplacians. More precisely, in Section \ref{app_1} we prove Lemma \ref{lem_ell_est}, and in Section \ref{app_2} we prove the existence of $n$-tight flattenings, which essentially boils down to proving that a certain family of metrics satisfy the uniform elliptic estimate (\ref{lem_ell_est_eqn_A}). For $0 < a < b$, $i = 1, \ldots, m$, we denote (see (\ref{defn_v_i}))
	\begin{equation}\label{notat_ci}
		C_i^{M}(a, b) := V_i^{M}(b) \setminus V_i^{M}(a).
	\end{equation}
\subsection{A proof of Lemma \ref{lem_ell_est}}\label{app_1}
	\begin{proof}[Proof of Lemma \ref{lem_ell_est}.]
		First of all, for $\epsilon > 0$ small enough, (\ref{lem_ell_est_eqn}) holds for $x \in M \setminus (\cup_i V_i^{M}(\epsilon))$ by \cite[Lemma 1.6.2]{MaHol}, thus, we may concentrate on the proof of (\ref{lem_ell_est_eqn}) for $x \in V_i^{M}(\epsilon)$. 
		We will prove (\ref{lem_ell_est_eqn})  for metrics, which are scaled by a factor $1/4$ in the neighbourhood of cusps. In other words, instead of the constant scalar curvature $-1$, it will be $-4$ around the cusps. We choose to do so, since part of the calculation is already done with this normalization in \cite{Auvr}.
		\par
		We fix a holomorphic frame $e_1, \ldots, e_{\rk{\xi}}$ of $\xi$ over $V_i^{M}(\epsilon)$.
		We note that since $(\xi, h^{\xi})$ is defined over $\overline{M}$, by (\ref{eq_h_xi_appr_1}), for any $k \in \nat$, there is $C > 0$ such that for any $i=1, \ldots, \rk{\xi}$, we have $|\nabla^k e_i|_h \leq C$ over $V_i^{M}(\epsilon)$.
		By this and (\ref{eq_comp_e_lapl}), we see that we may suppose that $(\xi, h^{\xi})$ is trivial.
		We do so and remove $\xi$ from further notation.
		For simplicity, we prove (\ref{lem_ell_est_eqn}) for $\alpha = {\rm{diam}}(V_i^{M}(2^{-4}, 2^{-1}))$, where ${\rm{diam}}$ is the diameter of a set with respect to $g^{TM}$. We write $s_{D_M}$ for the canonical section of $\mathscr{O}(D_M)$:
		\begin{equation}\label{defn_sigma}
			\sigma = f \cdot ( dz_i^{M} \otimes s_{D_M} / z_i^{M} )^n,
		\end{equation}
		where $f : V_i^{M}(\epsilon) \to \comp$ is some smooth function.
		\par
		\textbf{Let's prove Lemma \ref{lem_ell_est} for $n = 0$, $k \in \nat$.} 	
		Let $g : \real \to [0, 1]$ be a smooth extension of
		\begin{equation}
			g(u) = 
			\begin{cases} 
      			\hfill 1 & \text{ for }u \in [2,3], \\
      			\hfill 0  & \text{ for }u \in ]- \infty,1] \cup [4, + \infty[. \\
 			\end{cases}
		\end{equation}
		We introduce the function $\psi_{\theta} : \overline{M} \to [0, 1]$ for $\theta \in ]0, \epsilon]$ by
		\begin{equation}\label{defn_psiteta}
			\psi_{\theta}(x) = 
			\begin{cases} 
      			\hfill g(\ln |z_i^{M}(x)|/ \ln \theta) & \text{for }x \in V_{i}^{M}(\theta), \\
      			\hfill 0  & \text{otherwise}. \\
 			\end{cases}
		\end{equation}
		Then ${\rm{supp}}(\psi_{\theta}) \subset V_i^{M}(\theta^4, \theta)$. Since $\psi_{\theta}(x) = 1$ for $x \in V_i^{M}(\theta^3, \theta^2)$, for any $\tau \in \ccal^{\infty}_{c}(M)$, we have
		\begin{equation}\label{ell_eqn_1}
			\norm{\tau}_{\mathscr{C}^{0}(V_i^{M}(\theta^3, \theta^2))} \leq \norm{ \psi_{\theta} \tau}_{\mathscr{C}^{0}}.
		\end{equation}
		We recall that the function $\rho_M$ was defined in (\ref{defn_rho}). For $k \in \nat$ and $q \geq 1$, the \textit{weighted Sobolev space} $\textbf{L}_{{\rm{wtd}}}^{k,q}(\overline{M}, g^{TM})$ is defined as the space of $\textbf{L}_{{\rm{wtd}}}^{k,q}$-functions $\tau$ on $\overline{M}$, where
		\begin{equation}
			\norm{\tau}_{\textbf{L}_{{\rm{wtd}}}^{k,q}}^{q} := \int_M \rho_M(x)^2 \Big( \big| \tau(x) \big|^q + \ldots + \big| \nabla^k \tau(x) \big|^q_{h} \Big) d \vol_{M}(x) < \infty.
		\end{equation}
		We define the \textit{Sobolev norm} $\norm{\cdot}_{H^{k}}$ by
		\begin{equation}
			\norm{\tau}_{H^{k}}^{2} := \int \Big( \big| \tau(x) \big|^2 + \ldots + \big| \nabla^k \tau(x) \big|^2_{h} \Big) d \vol_{M}(x).
		\end{equation} 
		From \cite[\S 4A and Lemme 4.5]{Biq} (cf. also \cite[Lemma 2.6]{Auvr} and \cite[Lemma 4.2]{AuvThese}), we get
		\begin{equation}\label{ell_eqn_2323}
			\norm{\tau}_{\mathscr{C}^{0}} \leq C \norm{\tau}_{\textbf{L}_{{\rm{wtd}}}^{3,1}},	
		\end{equation}
		for some $C > 0$ and for all $\tau \in \ccal^{\infty}_c(M)$.
		We fix $k \in \nat$.
		By induction from (\ref{ell_eqn_2323}), similarly to the induction step in \cite[Proposition 4.1]{Auvr} (cf. \cite[(4.11)]{Auvr}), there is $C > 0$ such that
		\begin{equation}\label{ell_eqn_2}
			\big \| \nabla^k \tau \big\|_{\mathscr{C}^{0}} \leq C \norm{\tau}_{\textbf{L}_{{\rm{wtd}}}^{3 + k,1}}.	
		\end{equation}
		We remark that by (\ref{reqr_poincare}), for any $\theta \in ]0,1/2]$, we have
		\begin{equation}\label{bound_hold}
			\int_{V_i^{M}(\theta^4, \theta)} \rho_M(x)^4 dv_M(x) = \int_{\theta^4 < |z| < \theta} \frac{\imun dz d \overline{z}}{4 |z|^2} = 3 \pi |\ln \theta|. 
		\end{equation}
		Now, by Hölder's inequality, (\ref{ell_eqn_2}) and (\ref{bound_hold}), we have
		\begin{multline}\label{ell_eqn_3}
			\big \| \nabla^k \tau \big \|_{\mathscr{C}^{0}} 
			\leq 
			\Big( 
				\int_{V_i^{M}(\theta^4, \theta)} \rho_M(x)^4 dv_M(x)		
			\Big)^{1/2}
			\Big(
				\int \Big( \big| \tau(x) \big| + \ldots + \big| \nabla^{3 + k} \tau(x) \big|_{h} \Big)^2
			\Big)^{1/2}
			\\
			\leq
			16 C \sqrt{| \ln \theta|} \norm{h}_{H^{3 + k}}.
		\end{multline}
		The hyperbolic Laplacian $\laplcomp$ associated with $g^{T\dd^*}$ is given by
		\begin{equation}\label{defn_hyp_lapl}
			\laplcomp := \Big| z_i^{M} \ln |z_i^{M}|^2 \Big|^2 \frac{\partial^2}{\partial z_i^{M} \partial \overline{z}_i^{M}}.
		\end{equation}
		By repeating the proof of \cite[(4.29)]{Auvr} for $p=0$, we get for some $C > 0$ and for any $\tau \in \ccal^{\infty}_{c}(M)$:
		\begin{equation}\label{ell_eqn_3705}
			\textstyle \norm{\tau}_{H^{2}}^{2} \leq C \Big( \norm{\tau}_{L^2}^{2} + \norm{\laplcomp \tau}_{L^2}^{2} \Big).
		\end{equation}
		As it is explained in \cite[Proposition 4.2]{Auvr}, by induction, there is $C > 0$ such that for any $\tau \in \ccal^{\infty}_{c}(M)$: 
		\begin{equation}\label{ell_eqn_37}
			\textstyle \norm{\tau}_{H^{2 + k}}^{2} \leq C \Big( \norm{\tau}_{L^2}^{2} + \norm{\laplcomp \tau}_{L^2}^{2} + \cdots + \norm{\laplcomp^{k + 1} \tau}_{L^2}^{2}  \Big).
		\end{equation}
		By (\ref{ell_eqn_3}) and (\ref{ell_eqn_37}), there is $C > 0$, such that for any  $\theta \in ]0,1/2]$, we have
		\begin{equation}\label{ell_eqn_3.7}
		\textstyle \norm{ \nabla^k (\psi_{\theta} f)}_{\mathscr{C}^{0}} \leq C \sqrt{ |\ln \theta |} \cdot \Big( \norm{\psi_{\theta} f }_{L^2}^{2} + \norm{\laplcomp (\psi_{\theta} f) }_{L^2}^{2} + \cdots + \norm{\laplcomp^{k + 1} (\psi_{\theta} f) }_{L^2}^{2}  \Big).
		\end{equation}
		By the fact that ${\rm{supp}}(\psi_{\theta}) \subset V_i^{M}(\theta^4, \theta)$, there is $C > 0$ such that for $\theta \in ]0, 1/2]$, we have 
		\begin{multline}\label{ell_eqn_4}
			\textstyle \norm{\psi_{\theta} f }_{L^2}^{2} + \norm{\laplcomp (\psi_{\theta} f) }_{L^2}^{2} +  \cdots + \norm{\laplcomp^{k + 1} (\psi_{\theta} f) }_{L^2}^{2} \\
			\textstyle
			\leq 
			C C_{\theta, k} 
			\Big( 
				\norm{f }_{L^2(V_i^{M}(\theta^4, \theta))}^{2} 
				+ \norm{\laplcomp f }_{L^2(V_i^{M}(\theta^4, \theta))}^{2}
				+ \cdots 			
				+ \norm{\laplcomp^{k + 1} f }_{L^2(V_i^{M}(\theta^4, \theta))}^{2}
			\Big),
		\end{multline}
		where 
		\begin{equation}\label{ell_eqn_444}
			\textstyle C_{\theta, k} := 1 + \max \Big\{  
				\norm{\partial \psi_{\theta}}_{\mathscr{C}^{0}},  
				\norm{\laplcomp \psi_{\theta}}_{\mathscr{C}^{0}}, 
				\norm{\partial \laplcomp \psi_{\theta}}_{\mathscr{C}^{0}},  
				\cdots,
				\norm{\laplcomp^{k + 1} \psi_{\theta}}_{\mathscr{C}^{0}}
			\Big\}.
		\end{equation}
		Now, by (\ref{defn_hyp_lapl}), we see that there is $C > 0$, such that for any $\theta \in ]0,1/2]$, we have
		\begin{equation}\label{eqn_der_psi_thet}
			\big \| \laplcomp \psi_{\theta} \big \|_{\mathscr{C}^{0}} \leq 16 \norm{ g'' ( \ln |z_i^{M}|/\ln \theta ) }_{\mathscr{C}^{0}} \leq C.
		\end{equation}
		Similarly, for any $k \in \nat$, there is $C  > 0$, such that for any $\theta \in ]0, 1/2]$, we have $C_{\theta, k} \leq C$. Now, for $\theta = \sqrt{|z_i^{M}(x)|}$, we have $V_i^{M}(\theta^4, \theta) \subset B^M(x, \alpha)$ and 
		\begin{equation}\label{eqn_triv_est_ell}
			\big| \nabla^k f(x) \big|_h \leq \big\| \nabla^k(\psi_{\theta} f) \big\|_{\ccal^0}.
		\end{equation}				
		Thus, by (\ref{ell_eqn_1}), (\ref{ell_eqn_3.7}), (\ref{ell_eqn_4}), (\ref{eqn_triv_est_ell}), we get (\ref{lem_ell_est_eqn}) for $n = 0$ and any $k \in \nat$.
		\par \textbf{Now we work for any $n \in \integ$, $k \in \nat$.}
		We prove the general result by reducing it to $n = 0$.
		We denote $z := z_i^{M}$. By (\ref{defn_sigma}), we have (cf. \cite[(4.30)]{Auvr}):
		\begin{equation}\label{lapl_nontriv}
			\laplcomp^{E_M^{\xi, n}} \sigma = \laplcomp f \cdot \Big( \frac{dz \otimes s_{D_M}}{z} \Big)^n - n \overline{z} \ln |z|^2 \frac{\partial f}{\partial \overline{z}} \Big( \frac{dz \otimes s_{D_M}}{z} \Big)^n.
		\end{equation}
		As $\norm{dz \otimes s_{D_M}/z}_M = |\ln |z|^2|$ over $V_i^{M}(\epsilon_0)$, we have
		\begin{equation}\label{eq_app_111}
			\norm{ \laplcomp^{E_M^{\xi, n}} \sigma}_{L^2} \geq 
				\Big\lVert \big| \ln |z|^2 \big|^n \laplcomp f \Big\rVert_{L^2}
				 - 
				|n| \Big\lVert \overline{z} \big| \ln |z|^2 \big|^{n+1} \frac{\partial f}{\partial \overline{z}} \Big\rVert_{L^2}.
		\end{equation}
		By (\ref{reqr_poincare}), (\ref{defn_hyp_lapl}) and integration by parts, we get
		\begin{multline}\label{eq_app_113}
			\Big\| \overline{z} \big| \ln |z|^2 \big|^{n+1} \frac{\partial f}{\partial \overline{z}} \Big\|_{L^2}^{2} 
			\\
			= 
			\Big\langle \big| \ln |z|^2 \big|^n \laplcomp f, \big| \ln |z|^2 \big|^n f \Big\rangle
			+
			2n \Big\langle \overline{z} \big| \ln |z|^2 \big|^{n+1} \frac{\partial f}{\partial \overline{z}} , \overline{z} \big| \ln |z|^2 \big|^n f \Big\rangle.
		\end{multline}
		By (\ref{eq_app_113}) and Cauchy inequality, for $\epsilon > 0$, we get
		\begin{equation}\label{eq_app_112}
			\Big\| \overline{z} \big| \ln |z|^2 \big|^{n+1} \frac{\partial f}{\partial \overline{z}} \Big\|_{L^2}^{2} 
			 \leq 
			\epsilon \norm{\big| \ln |z|^2 \big|^{n} \laplcomp f}_{L^2}^{2} + 
			\frac{n^2 + 1}{\epsilon}
			\norm{\big| \ln |z|^2 \big|^{n} f}_{L^2}^{2}.
		\end{equation}
		By (\ref{eq_app_111}) and (\ref{eq_app_112}), applied for $\epsilon = 1/(2n)$, we conclude that there is $C > 0$ such that (cf. \cite[(4.33)]{Auvr})
		\begin{equation}\label{eqn_ell_2_1}
			\textstyle \big\| \laplcomp^{E_M^{\xi, n}} \sigma \big\|_{L^2} 
			\geq
			C \Big( 
				\big\| \big| \ln |z|^2 \big|^{n} \laplcomp f \big\|_{L^2}
				-   (n^4 + 1) \big\| \big| \ln |z|^2 \big|^{n} f \big\|_{L^2}
			\Big).
		\end{equation}
		\par \textbf{We would like to prove a similar statement, that for some $C > 0$, the following holds: }
		\begin{multline}\label{eqn_ell_2_2}
			\big\| ( \laplcomp^{E_M^{\xi, n}})^2 \sigma \big\|_{L^2} 
			\geq
			C \Big( 
				\big\| \big| \ln |z|^2 \big|^{n} \laplcomp^2  f \big\|_{L^2} \\
				 - (n^4 + 1) \big\| \big| \ln |z|^2 \big|^{n} \laplcomp f \big\|_{L^2} 
				 - (n^8 + 1) \big\| \big| \ln |z|^2 \big|^{n} f \big\|_{L^2}
			\Big).
		\end{multline}
		The estimation (\ref{eqn_ell_2_2}) would follow from (\ref{lapl_nontriv}) if we will be able to prove that there is $C > 0$ such that for any $\epsilon \in ]0,1]$ and $\tau \in \ccal^{\infty}(M)$, we have
		\begin{align}			
			&
			\Big\| \overline{z} \big| \ln |z|^2 \big|^{n+1} \frac{\partial (\laplcomp \tau)}{\partial \overline{z}} \Big\|_{L^2}
			\leq \epsilon \norm{ \big| \ln |z|^2 \big|^{n} \cdot \laplcomp^2 \tau }_{L^2}  
			\nonumber
			\\
			&
			\qquad  \qquad  \qquad  \qquad 	\qquad  \qquad  \qquad  \qquad   \qquad  \quad 	 \label{ell_est_2} 	
			+
			\frac{n^2 + 1}{\epsilon} \norm{ \big| \ln |z|^2 \big|^{n} \cdot \laplcomp \tau }_{L^2},
			\\ 
			& \Big\lVert \overline{z} \big| \ln |z|^2 \big|^{n + 1} \frac{\partial}{\partial \overline{z}} \Big( \overline{z} \ln |z|^2 \frac{\partial \tau}{\partial \overline{z}} \Big) \Big\rVert_{L^2}^{2}		
			\leq \epsilon \norm{  \big| \ln |z|^2 \big|^{n} \cdot \laplcomp^2 \tau }_{L^2} 
			\nonumber
			\\ 			\label{ell_est_1} 
			 & \qquad  \qquad  \qquad  \qquad \qquad 
			 + \frac{C(n^2 + 1)}{\epsilon}  \Big(  \norm{  \big| \ln |z|^2 \big|^{n} \cdot \laplcomp \tau }_{L^2} + \norm{ \big| \ln |z|^2 \big|^{n} \tau}_{L^2}  \Big),
			\\ \nonumber
			 &
			 \Big\|  \big| \ln |z|^2 \big|^{n} \laplcomp \Big( \overline{z} \ln |z|^2 \frac{\partial \tau}{\partial \overline{z}} \Big) \Big\|_{L^2} 
			\leq \epsilon \norm{  \big| \ln |z|^2 \big|^{n} \laplcomp^2 \tau }_{L^2}  
			\\ \label{ell_est_3} 
			&						
			  \qquad  \qquad  \qquad  \qquad \qquad + \frac{C(n^2 + 1)}{\epsilon}  \Big( \norm{  \big| \ln |z|^2 \big|^{n} \laplcomp \tau }_{L^2} +  \norm{ \big| \ln |z|^2 \big|^{n} \tau}_{L^2}  \Big).
		\end{align}
		\par \textbf{Let's prove (\ref{ell_est_2}).} It simply follows from (\ref{eq_app_112}) by making a substitution $\tau \mapsto \laplcomp \tau$.
		\par \textbf{Let's prove (\ref{ell_est_1}) and (\ref{ell_est_3}).}
		First of all, by (\ref{eq_app_112}), we have
		\begin{multline}\label{ell_est_3_suppl}
			\Big\lVert \overline{z} \big| \ln |z|^2 \big|^{n+1} \frac{\partial}{\partial \overline{z}} \Big( \overline{z} \ln |z|^2 \frac{\partial \tau}{\partial \overline{z}} \Big) \Big\rVert_{L^2}^{2}
			\\			 
			 \leq \frac{1}{2} \Big\lVert \big| \ln |z|^2 \big|^{n} \laplcomp
				\Big( 
					\overline{z} \ln |z|^2 \frac{\partial \tau}{\partial \overline{z}}
				\Big) \Big\rVert_{L^2}
			+
			\frac{2n^2  + 1}{2} \Big\lVert
					\overline{z} \big| \ln |z|^2 \big|^{n + 1} \frac{\partial \tau}{\partial \overline{z}}
				\Big\rVert_{L^2}.
		\end{multline}
		By (\ref{defn_hyp_lapl}), we have
		\begin{equation}\label{eqn_ell_aux_25705}
			\laplcomp \Big( \overline{z} \ln |z|^2 \frac{\partial \tau}{\partial \overline{z}} \Big)
			=
			\Big| z \ln |z|^2 \Big|^2 \frac{\partial}{\partial \overline{z}} 
				\Big(
				\frac{\overline{z}}{z} \cdot \frac{\partial \tau}{\partial \overline{z}} + \overline{z} \ln |z|^2 \frac{\partial^2 \tau}{\partial z \partial \overline{z}}
				\Big)
			\end{equation}
			However, we have
			\begin{equation}\label{eqn_ell_aux_25710}
				z \ln |z|^2 \frac{\partial \tau}{\partial \overline{z}} = \frac{\partial}{\partial \overline{z}} \Big( z \ln |z|^2 \tau \Big) - \frac{z \tau}{\overline{z}}.
			\end{equation}
			Now, from (\ref{eqn_ell_aux_25705}) and (\ref{eqn_ell_aux_25710}), we have
			\begin{equation}\label{eqn_ell_aux_257}	
			\laplcomp \Big( \overline{z} \ln |z|^2 \frac{\partial \tau}{\partial \overline{z}} \Big)
			=
			\overline{z} \ln |z|^2 \frac{\partial}{\partial \overline{z}}
			\Big(
				\laplcomp \tau + \overline{z} \ln |z|^2 \frac{\partial \tau}{\partial \overline{z}}
			\Big)
			-
			\laplcomp \tau
			- 
			\overline{z} \ln |z|^2 \frac{\partial \tau}{\partial \overline{z}}.
		\end{equation}
		By (\ref{eq_app_112}), (\ref{ell_est_2}) and (\ref{eqn_ell_aux_257}), for any $\epsilon \in ]0,1]$, we get
		\begin{multline}\label{eq_fin_a20}
			\Big\lVert \big |\ln |z|^2 \big|^{n} \laplcomp \Big( \overline{z} \ln |z|^2 \frac{\partial \tau}{\partial \overline{z}} \Big) \Big\rVert_{L^2} 
			\leq 
			\epsilon
			\norm{ \big| \ln |z|^2 \big|^{n} \cdot \laplcomp^2 \tau}_{L^2}
			+
			\frac{C (n^2 + 1)}{\epsilon}
			\norm{ \big| \ln |z|^2 \big|^{n} \cdot \laplcomp \tau}_{L^2}
			\\
			+
			(n^2 + 1) \norm{ \big| \ln |z|^2 \big|^{n} \tau}_{L^2}
			 +
			\Big\lVert \overline{z} \big| \ln |z|^2 \big|^{n+1} \frac{\partial}{\partial \overline{z}} \Big( \overline{z} \ln |z|^2 \frac{\partial \tau}{\partial \overline{z}} \Big)\Big\rVert_{L^2}.
		\end{multline}
		By  (\ref{eq_app_112}), (\ref{ell_est_3_suppl}) and (\ref{eq_fin_a20}), for $\epsilon \in ]0,1]$ we have
		\begin{multline}\label{ell_est_3_supplasd}
			\Big\lVert \overline{z} \big| \ln |z|^2 \big|^{n+1} \frac{\partial}{\partial \overline{z}} \Big( \overline{z} \ln |z|^2 \frac{\partial \tau}{\partial \overline{z}} \Big) \Big\rVert_{L^2}
			\\
			 \leq 
			 \frac{1}{2} \Big\lVert \overline{z} \big| \ln |z|^2 \big|^{n+1} \frac{\partial}{\partial \overline{z}} \Big( \overline{z} \ln |z|^2 \frac{\partial \tau}{\partial \overline{z}} \Big) \Big\rVert_{L^2}
			 +
			\epsilon \norm{  \big| \ln |z|^2 \big|^{n} \cdot \laplcomp^2 \tau}_{L^2}  
			\\
			 + \frac{C (n^2 + 1)}{\epsilon} \Big(   \norm{  \big| \ln |z|^2 \big|^{n} \cdot \laplcomp \tau}_{L^2} + \norm{ \big| \ln |z|^2 \big|^{n} \tau}_{L^2}  \Big).
		\end{multline}		
		By (\ref{ell_est_3_supplasd}), we get (\ref{ell_est_1}).
		By (\ref{ell_est_1}) and (\ref{eq_fin_a20}), we get (\ref{ell_est_3}). 
		\par 
		By  (\ref{defn_sigma}), (\ref{eqn_ell_2_1}), (\ref{eqn_ell_2_2}), there is $C > 0$ such that for any $n \in \integ$, $j = 0,1,2$, we have
		\begin{equation}\label{ell_est_sigma_log}
			\sum_{i=0}^{j}  (n^{4j - 4i}+1) \norm{( \laplcomp^{E_M^{\xi, n}})^i \sigma}_{L^2} 
			\geq 
			C \sum_{i=0}^{j} (n^{4j - 4i}+1) \norm{ \big| \ln |z|^2 \big|^{n} \cdot \laplcomp^i f}_{L^2}.
		\end{equation} 
		By induction, using (\ref{lapl_nontriv}), (\ref{eq_app_112}), (\ref{ell_est_3_suppl}) and (\ref{eqn_ell_aux_257}), we get (\ref{ell_est_sigma_log}) for any $j \in \nat$ in the same way as we did it for $j = 2$.
		\par By (\ref{defn_hyp_lapl}), we remark that for any $j \in \nat$, there is $C > 0$ such that we have
		\begin{equation}\label{eqn_major_der_log}
			\begin{aligned}
				& | \partial \laplcomp^j ((\ln |z|^2)^n) / (\ln |z|^2)^n |_h \leq C |n|^{2j + 1}, 
				&& | \laplcomp^{j + 1} ( (\ln |z|^2)^n) / (\ln |z|^2)^n | \leq C n^{2j + 2}.
			\end{aligned}
		\end{equation}
		By (\ref{eqn_major_der_log}), for any $j \in \nat$, there is $C > 0$, such that we have
		\begin{equation}\label{eqn_comm_log}
			\sum_{i=0}^{j} (n^{4j - 4i}+1) \norm{ \big| \ln |z|^2 \big|^{n} \laplcomp^i f}_{L^2} \geq C \sum_{i=0}^{j} (n^{4j - 4i}+1) \norm{ \laplcomp^i ( \big| \ln |z|^2 \big|^{n}  f)}_{L^2}.
		\end{equation}
		However, if we apply (\ref{lem_ell_est_eqn}) for functions, i.e. $n = 0$, there is  $C>0$ such that
		\begin{equation}\label{eq_appl_n0}
			\Big| \nabla^k \big(|\ln |z|^2|^{n}  f\big) \Big|_{h} \leq C \rho_M(x)
			\sum_{i=0}^{k} \norm{ \laplcomp^i \big(|\ln |z|^2|^{n} f\big)}_{L^2}
		\end{equation}
		Now, by \cite[(4.16)]{Auvr}, we see that for $\tau \in \ccal^{\infty}(M)$,
		\begin{equation}
			\sigma' = \tau \otimes \Big( \frac{dz \otimes s_D}{z} \Big)^n \otimes (dz)^{k} \otimes (d \overline{z})^{l},
		\end{equation}				
		we have
		\begin{equation}\label{eqn_nabla_full}
			\nabla \sigma' = 
			\Big( 
			d \tau
			+
			(n+k+l) \tau
				\Big( \frac{dz}{z \ln |z|^2} + \frac{d \overline{z}}{\overline{z} \ln |z|^2} \Big)
			\Big)
			\otimes 
			\Big( \frac{dz \otimes s_D}{z} \Big)^n \otimes (dz)^{k} \otimes (d \overline{z})^{l}.
		\end{equation}
		From  (\ref{defn_sigma}), (\ref{eqn_nabla_full}), for any $k \in \nat$, there is $C > 0$ such that
		\begin{equation}\label{eqn_nabla_bounds000}
			\big| \nabla^k \sigma(x) \big|_{h} \leq C \sum_{i = 0}^{k} (1 + |n|^{k - i}) \Big|  (\ln |z|^2)^n \nabla^i \big( f(x) \big) \Big|_{h}.
		\end{equation}
		By (\ref{eqn_major_der_log}), we have
		\begin{equation}\label{eqn_nabla_bounds00}
			\Big|  (\ln |z|^2)^n \nabla^i \big( f(x) \big) \Big|_{h} \leq C \sum_{j = 0}^{i} (1 + |n|^{i - j}) \Big| \nabla^j \big( (\ln |z|^2)^n f(x) \big) \Big|_{h}.
		\end{equation}
		Thus, by (\ref{eqn_nabla_bounds000}) and (\ref{eqn_nabla_bounds00}), we deduce
		\begin{equation}\label{eqn_nabla_bounds}
			\big| \nabla^k \sigma(x) \big|_{h} \leq C \sum_{i = 0}^{k} (1 + |n|^{k - i}) \Big| \nabla^i \big( (\ln |z|^2)^n f(x) \big) \Big|_{h}.
		\end{equation}
		From (\ref{ell_est_sigma_log}), (\ref{eqn_comm_log}), (\ref{eq_appl_n0}) and (\ref{eqn_nabla_bounds}), we conclude (\ref{lem_ell_est_eqn}) for any $n \in \integ$, $k \in \nat$.
	\end{proof}
	\begin{rem}\label{rem_a1}
		Let's define the function
		\begin{equation}
			\psi_{\theta, 0}(x) = 
			\begin{cases} 
      			\hfill g(|z(x)| / \theta) & \text{for }x \in V_{i}^{M}(\theta), \\
      			\hfill 0  & \text{otherwise}. \\
 			\end{cases}
		\end{equation}
		We can do the same analysis as in Lemma \ref{lem_ell_est_eqn} with function $\psi_{\theta, 0}$ instead of $\psi_{\theta}$. 
		We have ${\rm{supp}} (\psi_{\theta, 0}) \subset V_i^{M}(\theta, 4 \theta)$.
		Similarly to (\ref{bound_hold}), for some $C>0$, we have
		\begin{equation}\label{bound_hold2}
			\int_{V_i^{M}(\theta, 4\theta)} \rho_M(x)^4 dv_M(x) \leq C. 
		\end{equation}
		Thus, instead of (\ref{ell_eqn_3}), we have
		\begin{equation}\label{ell_eqn_32222}
			\norm{ \psi_{\theta, 0} f}_{\mathscr{C}^{0}} \leq C \norm{\psi_{\theta, 0} f }_{H^3}.
		\end{equation}
		Also, like in (\ref{ell_eqn_444}), we denote 
		\begin{equation}\label{ell_eqn_4445}
			\textstyle C_{\theta, 0} := 1 + \max \Big\{  
				\norm{\partial \psi_{\theta, 0}}_{\mathscr{C}^{0}},  
				\norm{\laplcomp \psi_{\theta, 0}}_{\mathscr{C}^{0}}, 
				\norm{\partial \laplcomp \psi_{\theta, 0}}_{\mathscr{C}^{0}},  
				\norm{\laplcomp^2 \psi_{\theta, 0}}_{\mathscr{C}^{0}}  
			\Big\}.
		\end{equation}
		Like in (\ref{eqn_der_psi_thet}), there is $C>0$ such that for any $\theta \in ]0,1/2]$, we have 
		\begin{equation}
			\norm{\laplcomp \psi_{\theta, 0}}_{\mathscr{C}^{0}} 
			= 
			\Big\| \big| \ln|z|^2 \big|^2 |z|^2 \Big( \frac{|z|^2}{\theta^4} g''(|z_i(x)| / \theta) + \frac{g'(|z(x)| / \theta)}{\theta^2} \Big) \Big\|_{\mathscr{C}^{0}} \leq C |\ln \theta|^2.
		\end{equation}				
		Similarly, there is $C>0$ such that for any $\theta \in ]0,1/2]$, we have $C_{\theta, 0} \leq C (\ln \theta)^4$.
		By imitating the proof of Lemma \ref{lem_ell_est_eqn} with $\psi_{\theta}$ replaced by $\psi_{\theta, 0}$, we deduce that for any $\beta > 1$, there are $\epsilon, C >0$ such that for any $n \in \integ$, $\sigma \in \ccal^{\infty}(\overline{M}, E_M^{\xi, n})$, $x \in V_i^{M}(\epsilon)$, we have
		\begin{equation}\label{eqn_est_ell_dill}
			\big| \sigma(x) \big|_{h} \leq C \rho_M(x)^8
			\sum_{i=0}^{2} (n^{8 - 4i} + 1) \big\lVert (\laplcomp^{E_M^{\xi, n}})^i \sigma \big\rVert_{L^2(V_i^{M}(\beta^{-1} |z_i^{M}(x)|, \beta |z_i^{M}(x)|))}.
		\end{equation}
		We will use this estimation in Section \ref{app_2} to prove tightness of a certain family of flattenings.
	\end{rem}

\subsection{Existence of tight families of flattenings}\label{app_2}
	\par In this section, we prove by an explicit construction that for any $n \in \integ$, there are $n$-tight families of flattenings  $g^{TM}_{\rm{f}, n, \theta}$, $\norm{\cdot}_{M}^{\rm{f}, \theta}$ (see Definition \ref{defn_tight}).
	Before giving the formulas, let's describe in words this construction. 
	In this section, as in Section \ref{sect_tight}, we suppose that $(\xi, h^{\xi})$ is trivial near the cusps.
	\par
	The metrics $g^{TM}_{\rm{f}, n, \theta}$, $\norm{\cdot}_{M}^{\rm{f}, \theta}$ are equal to $g^{TM}$, $\norm{\cdot}_{M}$ over  $M \setminus (\cup_i V_i^{M}(\theta))$. The metric $\norm{\cdot}_{M}^{\rm{f}, \theta}$ gets “flattened" over the set $V_i^{M}(\theta^2, \theta)$ (see (\ref{notat_ci})), so that it differs from  $\norm{\cdot}_{M}$ by a multiplication by a function, which is bounded by a constant independent of $\theta$. Over $V_i^{M}(\theta^2)$, $\norm{\cdot}_{M}^{\rm{f}, \theta}$ is flat with a normalization as in \textit{property 2} of tight families. 
	The metric $g^{TM}_{\rm{f}, n, \theta}$ verifies $g^{TM}_{\rm{f}, n, \theta} \otimes (\, \norm{\cdot}_{M}^{\rm{f}, \theta})^{2n} = g^{TM} \otimes \, \norm{\cdot}_{M}^{2n}$  over $V_i^{M}(\theta^4, \theta)$, i.e. \textit{property 3} of tight families is trivially satisfied  over $V_i^{M}(\theta^4, \theta)$. It gets “flattened" over the set $V_i^{M}(\theta^4/4, \theta^4)$, so that it differs from  $g^{TM}$ by a bounded function. Finally, over $V_i^{M}(\theta^4/4)$ it is flat with a normalization constant such that the Riemannian manifolds $(V_i^{M}(\theta^4/4), g^{TM}_{\rm{f}, n, \theta})$ and $(D(2), (dx^2 + dy^2)/(\ln \theta)^2)$ are isometric up to a multiplication by a constant independent of $\theta$.
	\par Now let's make this description more precise by giving explicit formulas. 
	Let $\phi: [1, + \infty[ \to [0,1]$ be some smooth decreasing function satisfying
	\begin{equation}
		\phi(u) = 
		\begin{cases} 
      		\hfill 1 & \text{ for }u \in [1,5/4], \\
      		\hfill 0  & \text{ for } u \in [7/4,+ \infty[. \\
 		\end{cases}
	\end{equation}
	Let  $\chi: [0, 1] \to [0, 1]$ be a some smooth increasing function satisfying
	\begin{equation}
		\chi(u) = 
		\begin{cases} 
      		\hfill 0 & \text{ for }u \in [0,1/2], \\
      		\hfill 1  & \text{ for } u \in [3/4,1]. \\
 		\end{cases}
	\end{equation}
	\par Fix $\theta \in ]0, 1/2]$. We denote by $\norm{\cdot}_{M}^{\rm{f}, \theta}$ the Hermitian norm on $\omega_M(D)$ such that $\norm{\cdot}_{M}^{\rm{f}, \theta}$ coincides with $\norm{\cdot}_{M}$ over $M \setminus (\cup_i V_i^{M}(\theta))$, and 
	\begin{equation}\label{defn_tight_1}
      \norm{ dz_i^{M} \otimes s_{D_M} / z_i^{M}}_{M}^{\rm{f}, \theta}(x) = 	| \ln \theta | \cdot \Big( \frac{\ln |z_i^{M}(x)|}{\ln \theta} \Big)^{\phi(\ln |z_i^{M}(x)|/ \ln \theta)}  \qquad \text{over }  x \in V_i^{M}(\theta).
	\end{equation}
	 Let the metric $g^{TM}_{\rm{f}, n, \theta}$ coincide with $g^{TM}$ over $M  \setminus (\cup_i V_i^{M}(\theta))$, and over  $V_i^{M}(\theta)$ be induced by
	\begin{equation}\label{defn_tight_2}		
	\begin{aligned}
      	& 
      	\Big( \frac{\ln |z_i^{M}(x)|}{\ln \theta} \Big)^{2n(1-\phi(\ln |z_i^{M}(x)|/ \ln \theta))} \cdot
      	\frac{ \imun dz_{i}^{M} d\overline{z}_{i}^{M}}{|z_{i}^{M}|^2 | \ln |z_{i}^{M}| |^{2}}
      	&& \text{over }  x \in V_i^{M}(\theta^4, \theta), 
      	\\ 
      	& 
      	\bigg[
      		\Big( \frac{\ln |z_i^{M}(x)|}{\ln \theta} \Big)^{2n}
      			\cdot
      		\frac{\theta^8 |\ln \theta^{4}|^2}{|z_{i}^{M} \ln |z_i^{M}||^2}
      	\bigg]^{\chi(|z_i^{M}|^2 / \theta^8)}
      	\cdot \frac{ \imun dz_{i}^{M} d\overline{z}_{i}^{M}}{\theta^8 |\ln \theta^{4}|^2}
      	&& \text{over }  x \in  V_i^{M}(\theta^4).
    \end{aligned}
	\end{equation}
	Then we see that the metrics $g^{TM}_{\rm{f}, n, \theta}$, $\norm{\cdot}_{M}^{\rm{f}, \theta}$  verify the description given in the beginning of the section.
	We note by $\laplcomp^{E_M^{\xi, n}}_{\rm{f}, \theta}$, $\norm{\cdot}_{L^2, \theta}$ the Kodaira Laplacian and the $L^2$-norm induced by $g^{TM}_{\rm{f}, n, \theta}$, $h^{\xi}$, $\norm{\cdot}_M^{\rm{f}, \theta}$.
	\begin{thm}\label{tight_fl_const}
		The flattenings $g^{TM}_{\rm{f}, n, \theta}, \norm{\cdot}_{M}^{\rm{f}, \theta}$, $\theta \in ]0, 1/2]$ are $n$-tight.
	\end{thm}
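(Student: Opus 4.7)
The plan is to verify each of the four conditions defining $n$-tightness in Definition \ref{defn_tight} directly from the explicit formulas (\ref{defn_tight_1}) and (\ref{defn_tight_2}). Properties 1 and 2 are immediate: property 1 holds because $\phi$ and $\chi$ were chosen so that the flattenings coincide with the originals outside $\cup_i V_i^M(\theta)$; property 2 holds because on $V_i^M(\theta^2)$ we have $\ln|z_i^M|/\ln\theta \geq 2 > 7/4$, forcing $\phi(\ln|z_i^M|/\ln\theta) = 0$ and hence $\| dz_i^M \otimes s_{D_M}/z_i^M \|_M^{\rm{f},\theta} = |\ln\theta|$ by (\ref{defn_tight_1}).

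The central algebraic observation for property 3 is that the seemingly complicated exponents in (\ref{defn_tight_2}) and (\ref{defn_tight_1}) are arranged so that on the annulus $V_i^M(\theta^4, \theta)$ one has the exact equality $g^{TM}_{\rm{f}, n, \theta} \otimes (\,\|\cdot\|_M^{\rm{f},\theta})^{2n} = g^{TM} \otimes \|\cdot\|_M^{2n}$: the factor $(\ln|z|/\ln\theta)^{2n(1-\phi)}$ from the metric and $[|\ln\theta|(\ln|z|/\ln\theta)^{\phi}]^{2n}$ from the fiber norm multiply to $|\ln|z||^{2n}$. On $V_i^M(\theta^4)$ the $\chi$-cutoff produces a strictly smaller combined metric, yielding the upper bound. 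The upper norm inequality $\|\cdot\|_M^{\rm{f},\theta} \leq \|\cdot\|_M$ reduces to $(|\ln|z||/|\ln\theta|)^{\phi-1} \leq 1$, which holds since $\phi \leq 1$ and $|z| \leq \theta$. For the lower bound by a fixed smooth flattening, one takes $g^{TM}_{\rm{sm}} = g^{TM}_{\rm{f}, n, e^{-3}}$ and $\|\cdot\|_M^{\rm{sm}} = \|\cdot\|_M^{\rm{f}, e^{-3}}$, and uses an elementary monotonicity comparison in $\theta$ based on the explicit dependence in (\ref{defn_tight_2}).

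The main obstacle is property 4, the uniform elliptic estimate (\ref{lem_ell_est_eqn_A}). I would split the analysis by the location of the pair $(x, x')$. In the bulk case $x, x' \in M \setminus \cup_i V_i^M(\theta^3)$, the identity from the previous paragraph implies that $\laplcomp^{E_M^{\xi, n}}_{\rm{f},\theta}$ coincides as a differential operator with $\laplcomp^{E_M^{\xi, n}}$ on $V_i^M(\theta^4, \theta) \supset V_i^M(\theta^3, \theta)$, and the hyperbolic distance from $\partial V_i^M(\theta^3)$ to $\partial V_i^M(\theta^4)$ equals $\ln(4/3)$, a $\theta$-independent constant. Hence Lemma \ref{lem_ell_est} applied with $\alpha < \ln(4/3)/2$, extended from the one-point to the two-point estimate by applying it in each factor of $\overline M \times \overline M$ as in its proof, yields the desired inequality with weight $\rho_M(x)\rho_M(x') = \rho_{M,\theta}(x)\rho_{M,\theta}(x')$. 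In the core case $x = x' \in V_i^M(\theta^3)$, I rescale by $w = z_i^M/\theta^4$ to conjugate $g^{TM}_{\rm{f}, n, \theta}|_{V_i^M(\theta^4/4)}$ with the flat metric $|\ln\theta^4|^{-2} \imun dw d\overline{w}$ on $D(1/4)$; standard Euclidean Sobolev embedding $H^4 \hookrightarrow C^0$ and elliptic regularity then give a $\theta$-independent estimate, and after undoing the rescaling the scale factors accumulate as at most a power of $|\ln\theta|$, comfortably absorbed by the generous weight $\rho_{M,\theta}(x)^2 = (\ln\theta)^{12}$. The delicate point, and the real obstacle, is the intermediate annulus $V_i^M(\theta^4, \theta^4/4)$ where the cutoffs $\phi$ and $\chi$ transition: here one verifies that after rescaling the coefficients of $\laplcomp^{E_M^{\xi,n}}_{\rm{f},\theta}$ have $C^k$ norms bounded in $\theta$, since the transition occurs over a $\theta$-independent range of the variable $|z|^2/\theta^8$; a partition-of-unity argument then patches the local estimates into the global inequality (\ref{lem_ell_est_eqn_A}).
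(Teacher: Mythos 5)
Your treatment of properties 1--3 is fine (the paper likewise dismisses them as immediate from (\ref{defn_tight_1})--(\ref{defn_tight_2})), but your proof of property 4 has two genuine gaps. First, the bulk case rests on the claim that $\laplcomp^{E_M^{\xi,n}}_{{\rm f},\theta}$ coincides with $\laplcomp^{E_M^{\xi,n}}$ as a differential operator on $V_i^M(\theta^4,\theta)$ because $g^{TM}_{{\rm f},n,\theta}\otimes(\,\norm{\cdot}_M^{{\rm f},\theta})^{2n}=g^{TM}\otimes\norm{\cdot}_M^{2n}$ there. This is false for $n\neq 0$: writing $g^{TM}=\lambda\,|dz|^2$ and $h$ for the fiber metric on $E_M^{\xi,n}$, the Kodaira Laplacian has the form $-\lambda^{-1}(\partial_z+\partial_z\ln h)\partial_{\overline z}$ (cf. (\ref{eq_dbar_dual})), so it depends on $\lambda$ and $h$ separately, not only on the product $\lambda h$. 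On the part of the annulus where $\phi=0$ one has $\lambda_{{\rm f},\theta}=(\ln|z|/\ln\theta)^{2n}\lambda\ne\lambda$, and the two operators differ both by this conformal factor and by the extra connection term coming from $\partial_z\ln(\ln|z|/\ln\theta)^{2n}$. This is precisely why the paper first proves the estimate for $n=0$ (where only the bounded ratio $g^{TM}/g^{TM}_{{\rm f},\theta}$ and its derivatives, (\ref{eq_ex_fl_3332}) and (\ref{eq_ex_fl_333}), enter) and then reduces general $n$ to $n=0$ through a commutation identity of the type (\ref{lapl_nontriv}) together with the weighted inequalities (\ref{eq_ex_fl_7})--(\ref{eq_ex_fl_713}); that reduction, and the attendant conversion between the flattened norms $\norm{\cdot}_{L^2,\theta}$, $|\cdot|_{h,\theta}$ and the unflattened ones (which costs factors of $|\ln\max(\theta,\theta')|^{n}$), is absent from your argument.

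Second, your case analysis leaves the region $x=x'\in V_i^M(\theta^3)\setminus V_i^M(\theta^4)$ uncovered. There the metric is still $(\ln|z|/\ln\theta)^{2n}$ times the hyperbolic one, so the Euclidean rescaling of the flat core does not apply; and the bulk argument fails because the hyperbolic distance from such a point to $\partial V_i^M(\theta^4)$ can be arbitrarily small, so a ball of any fixed radius enters the flat core, where the ratio of the two metrics is of order $\theta^{-8}$ and no $\theta$-uniform comparison of $\laplcomp^{E_M^{\xi,n}}_{{\rm f},\theta}$ with $\laplcomp^{E_M^{\xi,n}}$ survives. The paper handles this region with the shrinking-annulus elliptic estimate (\ref{eqn_est_ell_dill}) of Remark \ref{rem_a1}, localized on $V_i^M(|z|/2,2|z|)$, which costs the larger weight $\rho_M(x)^8$ and constants of size $(\ln\theta)^4$ --- exactly what the value $(\ln\theta)^6$ of $\rho_{M,\theta}$ in (\ref{def_rho_theta}) is designed to absorb. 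Without a substitute for this localized estimate your argument does not close.
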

	\begin{proof}
		We see directly from (\ref{defn_tight_1}) and (\ref{defn_tight_2}) that all the requirements for tightness are trivially satisfied with only one exception - the estimate (\ref{lem_ell_est_eqn_A}). By Lemma \ref{lem_ell_est}, (\ref{defn_tight_1}) and (\ref{defn_tight_2}), it is enough to prove (\ref{lem_ell_est_eqn_A}) for $x, x' \in V_i^{M}(\theta^{1/2})$, $i = 1, \ldots, m$.
		\par 
		\textbf{Let's prove (\ref{lem_ell_est_eqn_A}) for $x, x' \in M \setminus (\cup_i V_i^{M}(\theta^3))$.}
		By (\ref{dist_hyp_comp}), for $x \in M \setminus (\cup_i V_i^{M}(\theta^3))$, we have $B(x, \ln(4/3)) \subset M \setminus (\cup_i V_i^{M}(\theta^4))$. Thus, it is enough to prove that for all $n \in \integ$, there is $C>0$ such that for any $\sigma \in \ccal^{\infty}(\overline{M}, E_M^{\xi, n})$, $x \in M \setminus (\cup_i V_i^{M}(\theta^3))$, we have
		\begin{equation}\label{tight_1010}
			\big| \sigma(x) \big|_{h, \theta} \leq C \rho_{M, \theta}(x) \sum_{i=0}^{2}
			\big\lVert (\laplcomp^{E_M^{\xi, n}}_{{\rm{f}}, \theta})^{i} \sigma \big\lVert_{L^2(B^M(x, \ln(4/3))), \theta}.
		\end{equation}
		\par
		\textbf{Let's prove (\ref{tight_1010}) for $n=0$.}
		By Lemma \ref{lem_ell_est}, there is $C > 0$ such that for any $\sigma \in \ccal^{\infty}(\overline{M}, E_M^{\xi, n})$, $x \in M \setminus (\cup_i V_i^{M}(\theta^3))$, we have
		\begin{equation}\label{eq_ex_fl_1}
			\big| \sigma(x) \big|_{h} \leq C \rho_M(x)
			\sum_{i = 0}^{2}
			\big\lVert (\laplcomp^{E_M^{\xi, n}})^{i} \sigma \big\lVert_{L^2(B^M(x, \ln(4/3)))}.
		\end{equation}
		Now, by (\ref{defn_tight_2}), there is $C > 0$ such that for all $\theta \in ]0, 1/2]$, we have
		\begin{equation}\label{eq_ex_fl_2}
			1 \leq ( g^{TM} / g^{TM}_{{\rm{f}}, \theta} ) \leq C \qquad \text{over} \qquad M \setminus (\cup_i V_i^{M}(\theta^4/4)).
		\end{equation}
		We recall that $\rho_M(x) \leq \rho_{M, \theta}(x)$ for $x \in M \setminus (\cup_i V_i^{M}(\theta^4/4))$, and  since $n = 0$, we have
		\begin{equation}\label{eq_ex_fl_3}
			\laplcomp^{E_M^{\xi, n}}_{{\rm{f}}, \theta} = ( g^{TM} / g^{TM}_{{\rm{f}}, \theta} ) \cdot \laplcomp^{E_M^{\xi, n}}.
		\end{equation}
		Now, the following identity holds
		\begin{multline}\label{eq_ex_fl_332}
			\Big(
				\frac{\ln |z|}{\ln \theta}
			 \Big)^{-(1-\phi(\ln |z|/ \ln \theta))}
			 \frac{\partial}{\partial z} \Big(
				\frac{\ln |z|}{\ln \theta}
			 \Big)^{(1-\phi(\ln |z|/ \ln \theta))} 
			\\ = 
			 \Big(
			 	\frac{1-\phi(\ln |z|/ \ln \theta)}{2 z \ln |z|} -
			 	\ln \Big( \frac{\ln |z|}{ \ln \theta} \Big) \frac{\phi'(\ln |z|/ \ln \theta))}{2 z \ln \theta} 
			 \Big).
		\end{multline}		
		By (\ref{eq_ex_fl_332}), we deduce that there such $C > 0$ such that for any $x \in M \setminus (\cup_i V_i^{M}(\theta^3))$, we have
		\begin{equation}\label{eq_ex_fl_3332}
			\big| \partial ( g^{TM} / g^{TM}_{{\rm{f}}, \theta})(x) \big|_{h, \theta} < \theta.
		\end{equation}
		Similarly (\ref{defn_tight_2}), (\ref{eq_ex_fl_3}), (\ref{eq_ex_fl_332}), we see that there is $C > 0$ such that for any $\theta \in ]0, 1/2]$, $x \in M \setminus (\cup_i V_i^{M}(\theta^3))$, we have
		\begin{equation}\label{eq_ex_fl_333}
			\big| \laplcomp ( g^{TM} / g^{TM}_{{\rm{f}}, \theta} ) \big| < C, 
			\quad
			\big| g^{TM} / g^{TM}_{{\rm{f}}, \theta} \big| < C,
		\end{equation}
		where $\laplcomp$ is the hyperbolic Laplacian defined in (\ref{defn_hyp_lapl}).
		By (\ref{eq_ex_fl_1}), (\ref{eq_ex_fl_2}), (\ref{eq_ex_fl_3}), (\ref{eq_ex_fl_3332}) and (\ref{eq_ex_fl_333}), we get  (\ref{tight_1010}) for $n = 0$.
		\par 
		\textbf{Now let's prove (\ref{tight_1010}) for $n \in \integ$.}
		The idea is to reduce (\ref{tight_1010}) to the case $n=0$ by using the same trick as we did in the proof of Lemma \ref{lem_ell_est}, i.e. identity of type (\ref{lapl_nontriv}).
		By (\ref{defn_tight_1}), (\ref{eq_ex_fl_332}), there is $C > 0$ such that for $i=0,1,2$:
		\begin{equation}\label{eq_ex_fl_6}
			\big| \laplcomp^i \big( \ln \big\lVert (dz_i^{M} \otimes s_{D_M} ) / z_i^{M} \big\rVert_{M}^{\rm{f}, \theta} \big) \big|, \quad
			\big| \partial \laplcomp^i \big( \ln \big\lVert (dz_i^{M} \otimes s_{D_M} ) / z_i^{M} \big\rVert_{M}^{\rm{f}, \theta} \big) \big|_{h, \theta} \leq C,
		\end{equation}
		over $V_i^{M}(\theta^4, \theta)$.
		Also, by (\ref{defn_tight_1}) we conclude that there is $C > 0$ such that
		\begin{equation}
			1 \leq (\, \norm{\cdot}_{M}^{\rm{f}, \theta}  /  \norm{\cdot}_{M}) \leq C \qquad \text{over} \qquad M \setminus (\cup_i V_i^{M}(\theta^4/4)).
		\end{equation}
		Then by (\ref{eq_ex_fl_6}), similarly to (\ref{eq_app_111}) and (\ref{eqn_ell_2_2}), for any $n \in \integ$, there is $C > 0$ such that for any $\theta' \in ]0,1/2]$ and $\phi_{\theta'}$, defined in (\ref{defn_psiteta}), $f$ defined in (\ref{defn_sigma}) we have
		\begin{align}\label{eq_ex_fl_7}
			& 
			\big \lVert \laplcomp^{E_M^{\xi, n}}_{{\rm{f},  \theta}} (\psi_{\theta'} \sigma) \big \rVert_{L^2, \theta} 
			\geq C |\ln \max(\theta, \theta')|^{n} \Big(
				\norm{ \laplcomp_{\rm{f}, \theta} (\psi_{\theta'} f)}_{L^2, \theta}
				 - 
				\norm{\psi_{\theta'} f}_{L^2, \theta}
			\Big),
			\\
			& 
			\big \lVert ( \laplcomp^{E_M^{\xi, n}}_{{\rm{f}, \theta}})^2 (\psi_{\theta'} \sigma) \big \rVert_{L^2, \theta} 
			\geq
			C |\ln \max(\theta, \theta')|^n \Big( 
				\big \lVert \laplcomp_{{\rm{f}, \theta}}^{2} (\psi_{\theta'} f) \big \rVert_{L^2, \theta}	 
				 - \norm{\laplcomp_{{\rm{f}, \theta}} (\psi_{\theta'} f)}_{L^2, \theta} 
				 - \norm{\psi_{\theta'} f}_{L^2, \theta}
			\Big),
			\nonumber
		\end{align}
		where $\laplcomp_{{\rm{f}, \theta}}$ is is the Kodaira Laplacian on functions on $\dd$ induced by the metric $((z_i^{M})^{-1})^* g^{TM}_{{\rm{f}, n, \theta}}$.
		Thus, by (\ref{eq_ex_fl_7}) there is $C > 0$ such that for any $\theta' \in ]0,1/2]$ and $\theta \in ]0,1/2]$, we have
		\begin{equation}\label{eq_ex_fl_711}
			\textstyle \sum_{i=0}^{2} \big \lVert ( \laplcomp^{E_M^{\xi, n}}_{{\rm{f}, \theta}})^i (\psi_{\theta'} \sigma) \big \rVert_{L^2, \theta} \geq
			C |\ln  \max(\theta, \theta')|^n \sum_{i=0}^{2} \big \lVert \laplcomp_{{\rm{f}, \theta}}^{i} (\psi_{\theta'} f) \big \rVert_{L^2, \theta}.
		\end{equation}
		By (\ref{tight_1010}) for $n=0$ and (\ref{eq_ex_fl_711}), we get for any $n \in \integ$ and $x \in M \setminus (\cup_i V_i^{M}(\theta^3))$:
		\begin{equation}\label{eq_ex_fl_712}
			\textstyle \sum_{i=0}^{2} \big \lVert ( \laplcomp^{E_M^{\xi, n}}_{{\rm{f}, \theta}})^i (\psi_{\theta'} \sigma) \big \rVert_{L^2, \theta} \geq
			C   \rho_{M, \theta}(x)  |\ln  \max(\theta, \theta')|^n  \big| \psi_{\theta'} f(x) \big|.
		\end{equation}
		For any $n \in \integ$ and $x \in M \setminus (\cup_i V_i^{M}(\theta^3))$, we have
		\begin{equation}\label{eq_ex_fl_713}
			|\ln \theta|^n  \big| \psi_{\theta} f(x) \big| \geq |\sigma(x)|_{h, \theta}.
		\end{equation}
		By (\ref{eq_ex_fl_712}) and (\ref{eq_ex_fl_713}), we get (\ref{tight_1010}) for any $x \in M \setminus (\cup_i V_i^{M}(\theta^3))$ and $n \in \integ$. This implies (\ref{lem_ell_est_eqn_A}) for $x, x' \in M \setminus (\cup_i V_i^{M}(\theta^3))$ and $n \in \integ$.
		\par \textbf{We suppose $x = x' = z, z \in V_i^{M}(\theta^4, \theta^3)$.}
		Since the Hermitian line bundle $(\omega_M(D), \norm{\cdot}_M^{{\rm{f},  \theta}})$ is trivial over $V_i^{M}(\theta^2)$, without loss of generality we may and we will suppose $n = 0$.
		By (\ref{eqn_est_ell_dill}):
		\begin{equation}\label{lem_ell_est_eqnaa}
			\big| \sigma(z, z) \big| \leq C (\ln \theta)^{8}
			\sum_{i,j = 0}^{2}
			\big\lVert (\laplcomp^{E_M^{\xi, n}}_{z})^{i}(\laplcomp^{E_M^{\xi, n}}_{z'})^{j} \sigma \big\lVert_{L^2(V_i^{M}(|z|/2, 2 |z|) \times V_i^{M}(|z|/2, 2 |z|))}.
		\end{equation}
		Similarly to (\ref{eq_ex_fl_332}), we have
		\begin{multline}\label{lem_ell_est_eqne}
			\Big(
				\frac{|z \ln |z||^2}{\theta^8 |\ln \theta^4|^2}
			 \Big)^{-\psi(|z|^2/ \theta^8)}
			\frac{\partial}{\partial z} \Big(
				\frac{|z \ln |z||^2}{\theta^8 |\ln \theta^4|^2}
			 \Big)^{\psi(|z|^2/ \theta^8)}  
			\\
			= 
			 \Big(
			 	\frac{\psi(|z|^2/ \theta^8)(\ln |z| + 1/2)}{z \ln |z|}
			 	+
			 	\ln \Big(
				\frac{|z \ln |z||^2}{\theta^8 |\ln \theta^4|^2}
			 \Big) \psi'(|z|^2/ \theta^2)) \overline{z} \theta^{-8} 
			 \Big),
		\end{multline}	
		By (\ref{defn_tight_2}), (\ref{lem_ell_est_eqne}) and the fact that $n = 0$, similarly to (\ref{eq_ex_fl_333}), there is $C > 0$ such that for any $\theta \in ]0, 1/2]$, we have
		\begin{equation}\label{lem_ell_est_eqnaabbcc}
			| \laplcomp ( g^{TM} / g^{TM}_{{\rm{f}}, \theta} ) | < C (\ln \theta)^2, \qquad
			| \partial ( g^{TM} / g^{TM}_{{\rm{f}}, \theta} ) |_{h, \theta} < C |\ln \theta|, 
		\end{equation}
		over $M \setminus (\cup_i V_i^{M}(\theta^4/4))$.
		Since $V_i^{M}(|z|/2, 2 |z|) \subset M \setminus (\cup_i V_i^{M}(\theta^4/4))$, by (\ref{eq_ex_fl_2}), (\ref{eq_ex_fl_3}) and (\ref{lem_ell_est_eqnaabbcc}), similarly to (\ref{tight_1010}), from (\ref{lem_ell_est_eqnaa}), we get (\ref{lem_ell_est_eqn_A}) for $x = x',x  \in V_i^{M}(\theta^4, \theta^3)$.
		\par 
		\textbf{Now, we suppose $x = x', x \in V_i^{M}(\theta^4)$.}
		First of all, we recall that by Sobolev inequality and standard elliptic estimates, we have for some $C > 0$ and any $h \in \ccal^{\infty}(D(2))$, $x \in D(1)$:
		\begin{equation}\label{eq_ex_fl_4}
			\textstyle \big| h(x) \big| \leq C \sum_{i=0}^{2} \big\lVert \laplcomp_{{\rm{st}}}^{i} h \big\rVert_{L^2_{{\rm{st}}}},
		\end{equation}
		where $\norm{\cdot}_{L^2_{{\rm{st}}}}$ is the $L^2$-norm induced by the standard Euclidean metric $g_{\rm{st}}$ over $D(2)$, and $\laplcomp_{{\rm{st}}}$ is the Kodaira Laplacian induced by $g_{\rm{st}}$.
		Analogically to (\ref{eq_ex_fl_3}), the estimation (\ref{eq_ex_fl_4}) implies that 
		\begin{equation}\label{eq_ex_fl_5}
			\textstyle \big| h(x) \big| \leq C (\ln \theta)^4  \sum_{i=0}^{2} \big\lVert \laplcomp_{{\rm{st}}, \theta}^{i} h \big\rVert_{L^2_{{\rm{st}}}, \theta},
		\end{equation}
		where $\norm{\cdot}_{L^2_{{\rm{st}}}, \theta}$ is the $L^2$-norm induced by the rescaled Euclidean metric $g_{{\rm{st}}, \theta}$, and $\laplcomp_{{\rm{st}}, \theta}$ is the Kodaira Laplacian induced by $g_{{\rm{st}}, \theta}$ for
		\begin{equation}
			g_{{\rm{st}}, \theta} := \frac{dx^2 + dy^2}{(\ln \theta)^2}.
		\end{equation}
		By (\ref{defn_tight_2}), the spaces $(D(2), g_{{\rm{st}}, \theta})$ and $(V_i^{M}(\theta^4), g^{TM}_{\rm{f}, n, \theta})$ are isometric up to a constant independent of $\theta$. Thus, by (\ref{eq_ex_fl_5}), we deduce (\ref{lem_ell_est_eqn_A}) for $x = x', x \in V_i^{M}(\theta^4)$. 
		\par Now, all the cases have been considered, thus, the proof of Theorem \ref{tight_fl_const} is finished.
	\end{proof}

		\bibliographystyle{abbrv}

\Addresses

\end{document}